\newtheorem{theorem}{Theorem}[section]
\newtheorem{lemma}[theorem]{Lemma}
\newtheorem{proposition}[theorem]{Proposition}
\newtheorem{corollary}[theorem]{Corollary}
\theoremstyle{definition}
\newtheorem{definition}[theorem]{Definition}
\newtheorem{remark}[theorem]{Remark}
\newtheorem{notation}[theorem]{Notation}
\numberwithin{equation}{section}
\begin{document}
\title[Generalized Ricci flow II]{Generalized Ricci flow II: existence for noncompact complete manifolds}
\author{Yi Li}
\address{Department of Mathematics, Shanghai Jiao Tong University, 800 Dongchuan Road, Shanghai, 200240 China}
\email{yilicms@gmail.com}

\subjclass[2000]{Primary 54C40, 14E20; Secondary 46E25, 20C20}



\keywords{Steady gradient Ricci solitons, geometric flow, Ricci-flat metrics}


\begin{abstract} In this paper, we continue to study the generalized Ricci flow. We
give a criterion on steady gradient Ricci soliton on complete and noncompact
Riemannian manifolds that is Ricci-flat, and then introduce a natural flow
whose stable points are Ricci-flat metrics. Modifying the argument used
by Shi and List, we prove the short time existence and higher order derivatives
estimates.
\end{abstract}
\maketitle

\tableofcontents

\section{Introduction}\label{section1}

Ricci-flat metrics play an important role in geometry and physics. For compact K\"ahler manifold with trivial first Chern class, the existence of a (K\"ahler) Ricci-flat metric was proved by Yau in his famous paper \cite{Yau78} on the Calabi conjecture. In the Riemannian setting, Ricci-flat metrics are stationary solutions of the Ricci flow introduced by Hamilton \cite{H82} as a
powerful tool, together with Perelman's breakthrough \cite{P02, P03a, P03b}, to study the Poincar\'e conjecture.

In the study of the singularities of the Ricci flow,
Ricci solitons naturally arises as the self-similar solutions. From the definition, A Ricci-flat metric is indeed a Ricci soliton.

\subsection{Compact steady gradient Ricci solitons}\label{subsection1.1}

In particular, we consider a steady gradient Ricci solition which
is a triple $(M, g, f)$, where $M$ is a smooth manifold, $g$ is a Riemannian metric on $M$ and $f$ is a smooth function, such that
\begin{equation}
{\rm Ric}_{g}+\nabla^{2}_{g}f=0 \ \ \ \text{or} \ \ \
R_{ij}+\nabla_{i}\nabla_{j}f=0.\label{1.1}
\end{equation}
Hamilton \cite{H93} showed that on a compact manifold any steady gradient Ricci soliton must be Ricci-flat; this, together with Perelman's result \cite{P02} that any compact Ricci soliton is necessarily a gradient Ricci soliton, implies that any compact
steady Ricci soliton must be Ricci-flat (cf. \cite{Cao10, P02})

\subsection{Complete noncompact steady Ricci solitons}
\label{subsection1.2}

Now we suppose $(M,g,f)$ is a complete noncompact steady
gradient Ricci soliton. The simplest example is Hamilton's cigar soliton or Witten's black hole (\cite{CLN06, CCGGIIKLLN07}), which is the complete Riemann surface $({\bf R}^{2}, g_{{\rm cs}})$ where
\begin{equation*}
g_{{\rm cs}}:=\frac{dx\otimes dx+dy\otimes dy}{1+x^{2}+y^{2}}.
\end{equation*}
If we define
\begin{equation*}
f(x,y):=-\ln\left(1+x^{2}+y^{2}\right),
\end{equation*}
then
\begin{equation*}
{\rm Ric}_{g_{{\rm cs}}}+\nabla^{2}_{g_{{\rm cs}}}
f=0.
\end{equation*}
The cigar soliton is rotationally symmetric, has positive Gaussian curvature, and is asymptotic to a cylinder near infinity; moreover, up to homothety, the cigar soliton is the uniques rotationally
symmetric gradient Ricci soliton of positive curvature on ${\bf R}^{2}$ (cf. \cite{CLN06, CCGGIIKLLN07}). The classification of two-dimensional complete compact steady gradient Ricci solitons was achieved by Hamilton \cite{H93}, which states that Any complete noncompact steady gradient Ricci soliton with positive Gaussian curvature is indeed the cigar soliton.

The cigar soliton can be generalized to a rotationally symmetric
steady gradient Ricci soliton in higher dimensions on
${\bf R}^{n}$. The resulting solitons are referred to be Bryant's solitons (see \cite{CCGGIIKLLN07} for the construction), which is
rotationally symmetric and has positive Riemann curvature
operator. Other examples of steady gradient Ricci solions were constructed by Cao \cite{Cao94} and Ivey \cite{Ivey94}.

For three-dimensional case, Perelman \cite{P02} conjectured a classification of complete noncompact steady gradient Ricci soliton with positive sectional curvature which satisfies a non-collapsing assumption at infinity. Namely, A three-dimensional complete and noncompact steady gradient
Ricci soliton which is nonflat and $\kappa$-noncollapsed, is isometric
to the Bryant soliton up to scaling. Under some extra assumptions, it was
proved in \cite{B11, CaoChen12, CM11}. A complete proof was recently achieved
by Brendle \cite{B12a} and its generalization can be found in \cite{B12b}.

Another important result is Chen's result \cite{Chen09} saying that any complete noncompact steady gradient Ricci soliton has nonnegative scalar curvature. For certain cases, the lower bounded for the scalar curvature can be improved \cite{CLY11, FG13}. When the scalar curvature of a complete steady gradient Ricci soliton achieves
its minimum, Petersen and William \cite{PW09} proved that such a soliton must be Ricci-flat. On the other hand, if a complete noncompact steady gradient Ricci soliton has positive Ricci curvature and its scalar curvature achieves its maximum, then it must be diffeomorphic to the Euclidean space with the standard metric (\cite{H93, Cao10}); in particular, in this case, such a soliton is Ricci-flat.

To remove the curvature condition, we can prove the following

\begin{proposition}\label{p1.1} Suppose $M$ is a compact or complete noncompact manifold of dimension $n$. Then the following conditions are equivalent:

\begin{itemize}

\item[(i)] there exists a Ricci-flat Riemannian metric on $M$;

\item[(ii)] there exists real numbers $\alpha,\beta$, a smooth function $\phi$ on $M$, and a Riemannian metric $g$ on $M$ such that
    \begin{equation}
    0=-R_{ij}+\alpha\nabla_{i}\nabla_{j}\phi, \ \ \
    0=\Delta_{g}\phi+\beta|\nabla_{g}\phi|^{2}_{g}.\label{1.2}
    \end{equation}

\end{itemize}

\end{proposition}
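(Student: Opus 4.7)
My plan is to handle the two implications separately, with the substantive content lying in (ii) $\Rightarrow$ (i). The direction (i) $\Rightarrow$ (ii) is immediate: starting from a Ricci-flat metric $g$, take $\phi \equiv 0$ and any real numbers $\alpha, \beta$, so that both equations of (1.2) hold tautologically.

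For (ii) $\Rightarrow$ (i), I first dispose of the degenerate case $\alpha = 0$, in which the first equation of (1.2) already reads $R_{ij} = 0$, so $g$ itself is Ricci-flat. When $\alpha \neq 0$, I set $f := -\alpha \phi$, so that the first equation of (1.2) rewrites as $R_{ij} + \nabla_i \nabla_j f = 0$; thus $(M, g, f)$ is a steady gradient Ricci soliton. Tracing this equation and substituting the second equation of (1.2) yields the working identities $R = -\alpha\beta\, |\nabla \phi|^2_g$ and $|\nabla f|^2_g = \alpha^2 |\nabla \phi|^2_g$. If $M$ is compact, Hamilton's theorem recalled in Subsection~\ref{subsection1.1} immediately concludes that $g$ is Ricci-flat, and no further work is required.

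For complete noncompact $M$, my plan is to combine these formulas with the classical steady-soliton identity $R + |\nabla f|^2_g \equiv C$ (a constant, derivable from the contracted second Bianchi identity applied to the soliton equation). Substitution gives $\alpha(\alpha-\beta)|\nabla\phi|^2_g \equiv C$, which bifurcates into two subcases. If $\alpha = \beta$, then $C = 0$, and Chen's nonnegativity $R \geq 0$ (recalled in Subsection~\ref{subsection1.2}) combined with $|\nabla f|^2_g \geq 0$ forces both quantities to vanish identically; hence $f$ is constant and $R_{ij} \equiv 0$. If $\alpha \neq \beta$, then $|\nabla\phi|^2_g$, and consequently $R$, is a constant function on $M$; in particular $R$ trivially attains its minimum, and the Petersen--Wylie theorem recalled in Subsection~\ref{subsection1.2} delivers $R_{ij} = 0$. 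The algebraic reductions are elementary; the main obstacle is organizational, namely identifying the substitution $f := -\alpha\phi$ as the bridge to the steady-soliton theory, and then separately treating the three regimes ($\alpha = 0$, $\alpha = \beta \neq 0$, and generic) so that the cited theorems of Hamilton, Chen, and Petersen--Wylie each apply in the appropriate range of parameters.
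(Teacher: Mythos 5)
Your proof is correct, and it takes a genuinely different route from the paper's.  The paper cases on the sign of $\alpha\beta$: for $\alpha\beta\geq 0$ it combines $R_{g}=-\alpha\beta|\nabla_{g}\phi|^{2}_{g}\leq 0$ with Chen's $R_{g}\geq 0$ directly; for $\alpha\beta<0$ it applies $\nabla^{i}$ to the soliton equation, obtains $\nabla_{j}\bigl[(\tfrac{\alpha^{2}}{2}-\alpha\beta)|\nabla_{g}\phi|^{2}_{g}-\tfrac12 R_{g}\bigr]=0$, and then asserts the pointwise identity $\alpha(\alpha-2\beta)|\nabla_{g}\phi|^{2}_{g}=R_{g}$, which on a complete noncompact manifold tacitly sets the integration constant to zero without justification.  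You instead pass through the substitution $f=-\alpha\phi$ and invoke the classical steady-soliton identity $R_{g}+|\nabla_{g}f|^{2}_{g}\equiv C$; substitution gives $\alpha(\alpha-\beta)|\nabla_{g}\phi|^{2}_{g}\equiv C$, and you then case on $\alpha$ versus $\beta$: when $\alpha=\beta$ the constant vanishes for free and Chen plus $|\nabla_{g}f|^{2}_{g}\geq 0$ forces $R_{g}\equiv 0$ and $\nabla_{g}f\equiv 0$; when $\alpha\neq\beta$ (and $\alpha\neq 0$) the quantity $|\nabla_{g}\phi|^{2}_{g}$ is forced to be a constant, hence so is $R_{g}$, and Petersen--Wylie applies because a constant function trivially attains its minimum.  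This brings in Petersen--Wylie, which the paper recalls in the introduction but does not actually use in its proof, and it carries the constant $C$ explicitly rather than discarding it --- arguably a cleaner and more defensible argument on a complete noncompact manifold, at the modest cost of citing one more theorem from the soliton literature.
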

The proof is given in subsection \ref{subsection2.1}.

\begin{remark}\label{r1.2} In the compact case, the second condition in (\ref{1.2}) can be removed. However, in the complete
noncompact case, the second condition in (\ref{1.2}) is
necessarily. For example, the cigar soliton is a steady gradient Ricci soliton with nonzero scalar curvature $4/(1+x^{2}+y^{2})$.
\end{remark}

The equation (\ref{1.2}) suggests us to study the parabolic flow
\begin{equation}
\partial_{t}g(t)=-2{\rm Ric}_{g(t)}+2\alpha\nabla^{2}_{g(t)}
\phi(t), \ \ \
\partial_{t}\phi_{t}=\Delta_{g(t)}\phi(t)
+\beta\left|\nabla_{g(t)}\phi(t)\right|^{2}_{g(t)}.\label{1.3}
\end{equation}

The system (\ref{1.3}) is similar to the gradient flow of
Perelman's entropy functional $\mathcal{W}$ \cite{P02}. Let $\mathfrak{Met}(M)$ denote the space of smooth Riemannian metrics on a compact smooth manifold $M$ of dimension $m$. We define Perelman's entropy functional
\begin{equation*}
\mathcal{W}: \mathfrak{Met}(M)\times C^{\infty}(M)
\times{\bf R}^{+}\longrightarrow{\bf R}
\end{equation*}
by
\begin{equation}
\mathcal{W}(g,f,\tau):=\int_{M}\left[\tau
\left(R_{g}+|\nabla_{g}f|^{2}_{g}\right)+f-m\right]
\frac{e^{-f}}{(4\pi \tau)^{m/2}}\!\ dV_{g},\label{1.4}
\end{equation}
where $dV_{g}$ stands for the volume form of $g$. Perelman's showed that the gradient flow of (\ref{1.4}) is
\begin{eqnarray}
\partial_{t}g(t)&=&-2{\rm Ric}_{g(t)}-2\nabla^{2}_{g(t)}
f(t),\nonumber\\
\partial_{t}f(t)&=&-\Delta_{g(t)}f(t)-R_{g(t)}
+\frac{m}{2\tau(t)},\label{1.5}\\
\frac{d}{dt}\tau(t)&=&-1;\nonumber
\end{eqnarray}
moreover, the entropy $\mathcal{W}$ is nondecreasing along (\ref{1.5}). Since $\mathcal{W}$ is diffeomorphic invariant, i.e.,
\begin{equation*}
\mathcal{W}(\Phi^{\ast}g,\Phi^{\ast}f,\tau)=
\mathcal{W}(g,f,\tau)
\end{equation*}
for any diffeomorphisms $\Phi$ on $M$, it follows that the system (\ref{1.5}) is equivalent to
\begin{eqnarray}
\partial_{t}g(t)&=&-2{\rm Ric}_{g(t)},\nonumber\\
\partial_{t}f(t)&=&-\Delta_{g(t)}f(t)+\left|
\nabla_{g(t)}f(t)\right|^{2}_{g(t)}-R_{g(t)}
+\frac{m}{2\tau(t)},\label{1.6}\\
\frac{d}{dt}\tau(t)&=&-1;\nonumber
\end{eqnarray}
Thus, (\ref{1.3}) is a mixture of (\ref{1.5}) and (\ref{1.6}). There also are lots of interesting generalized Ricci flows, for example,
see \cite{HHKL08, LY12, LY13, L05, M10, M12}.

\subsection{A parabolic flow}\label{subsection1.3}

In this paper, we consider a class of Ricc flow type parabolic differential equation:
\begin{eqnarray}
\partial_{t}g(t)&=&-2{\rm Ric}_{g(t)}
+2\alpha_{1}\nabla_{g(t)}\phi(t)\otimes\nabla_{g(t)}\phi(t)
+2\alpha_{2}\nabla^{2}_{g(t)}\phi(t),\label{1.7}\\
\partial_{t}\phi(t)&=&\Delta_{g(t)}
\phi(t)+\beta_{1}\left|\nabla_{g(t)}\phi(t)\right|^{2}_{g(t)}
+\beta_{2}\phi(t),\label{1.8}
\end{eqnarray}
where $\alpha_{1},\alpha_{2},\beta_{1},\beta_{2}$ are given constants. When $\alpha_{1}=\alpha_{2}=\beta_{1}
=\beta_{2}=\phi(t)=0$, the system (\ref{1.7})--(\ref{1.8}) is exactly the Ricci flow introduced by Hamilton \cite{H82}. When $\alpha_{2}
=\beta_{1}=\beta_{2}=0$, it reduces to List's flow \cite{L05}. Recently,
Hu and Shi \cite{HS12} introduced a static flow on complete noncompact
manifold that is similar to our flow. The main result is

\begin{theorem}\label{t1.3} Let $(M,g)$ be an $m$-dimensional complete and noncompact Riemannian manifold with $|{\rm Rm}_{g}|^{2}\leq k_{0}$ on $M$
and $\phi$ a smooth function on $M$ satisfying $|\phi|^{2}+|\nabla_{g}\phi|^{2}_{g}\leq k_{1}$ and $|\nabla^{2}_{g}\phi|^{2}_{g}\leq k_{2}$. Then there exists
a positive constant $T$, depending only on $m, k_{0}, k_{1}, k_{2}, \alpha_{1},\alpha_{2}, \beta_{1}, \beta_{2}$, such that the $\star$-regular $(\alpha_{1},\alpha_{2},
\beta_{1},\beta_{2})$-flow (\ref{1.7})--(\ref{1.8}) with the initial data $(g,\phi)$ has a smooth solution $(g(t),\phi(t))$ on $M\times[0,T]$ and satisfies
the following curvature estimate. For any nonnegative integer $n$, there exist uniform positive constants $C_{k}$, depending only on $m, n, k_{0}, k_{1}, k_{2}, \alpha_{1}, \alpha_{2}, \beta_{1}, \beta_{2}$, such that
\begin{equation*}
\left|\nabla^{n}_{g(t)}{\rm Rm}_{g(t)}\right|^{2}_{g(t)}\leq\frac{C_{n}}{t^{n}}, \ \ \ \left|\nabla^{n+2}_{g(t)}\phi(t)\right|^{2}_{g(t)}
\leq\frac{C_{n}}{t^{n}}
\end{equation*}
on $M\times[0,T]$.
\end{theorem}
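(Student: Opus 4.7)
The strategy is to adapt Shi's existence theorem for Ricci flow and List's modification for coupled metric--scalar systems to the four-parameter flow (\ref{1.7})--(\ref{1.8}). I would proceed in three stages.

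First I would fix the diffeomorphism gauge via the DeTurck trick. Taking $\tilde g := g$ as the background and $W^k := g^{ij}(\Gamma^k_{ij}-\tilde\Gamma^k_{ij})$, I add $\mathcal{L}_W g$ to the right-hand side of (\ref{1.7}); since the extra terms in (\ref{1.7}) beyond $-2\mathrm{Ric}$ are of lower order in $g$, the resulting system in $(g,\phi)$ is strictly parabolic. A solution of the original flow is recovered from a solution of the gauge-fixed one by pulling back along the associated Hamilton--DeTurck diffeomorphisms. Since $M$ is noncompact, I would follow Shi and fix an exhaustion $\Omega_{1}\subset\Omega_{2}\subset\cdots$ by relatively compact open sets with smooth boundary, solve the Dirichlet initial-boundary problem for the gauge-fixed system on each $\bar\Omega_{j}$ with boundary data inherited from $(g,\phi)$, and obtain short-time smooth solutions $(g_{j},\phi_{j})$ on $\bar\Omega_{j}\times[0,T_{j}]$ by standard nonlinear parabolic theory.

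The main work is then a priori Bernstein--Shi estimates uniform in $j$. I would first compute the evolution equations for $|\mathrm{Rm}|^{2}$, $\phi^{2}$, $|\nabla\phi|^{2}$, and $|\nabla^{2}\phi|^{2}$ under the flow; each is of heat type with reaction terms polynomial in the four quantities and in the coupling constants. For a suitably large $A_{0}$, the auxiliary function
\[F_{0} := |\mathrm{Rm}|^{2} + A_{0}\bigl(|\nabla^{2}\phi|^{2}+|\nabla\phi|^{2}+\phi^{2}\bigr)\]
satisfies a closed differential inequality, and the maximum principle produces a uniform time $T>0$ depending only on $m,k_{0},k_{1},k_{2},\alpha_{i},\beta_{i}$, together with $C^{0}$ bounds on all four quantities over $[0,T]$. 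Higher derivatives are handled by induction on $n$: compute $\partial_{t}|\nabla^{n}\mathrm{Rm}|^{2}$ and $\partial_{t}|\nabla^{n+2}\phi|^{2}$, then apply the maximum principle to a Bernstein-type quantity
\[F_{n} := t^{n}\bigl(|\nabla^{n}\mathrm{Rm}|^{2}+|\nabla^{n+2}\phi|^{2}\bigr)+B_{n}\, t^{n-1}\bigl(|\nabla^{n-1}\mathrm{Rm}|^{2}+|\nabla^{n+1}\phi|^{2}\bigr)+\cdots,\]
where the constants $B_{n},\ldots$ are chosen so that the inductively known lower-order bounds absorb the cross-terms produced when covariant derivatives are commuted. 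This yields the $C_{n}/t^{n}$ decay asserted in the theorem.

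Finally, with uniform smooth bounds on every compact $K\subset M$ for $t\in[0,T]$, a diagonal Arzel\`a--Ascoli argument extracts a subsequence of $(g_{j_{k}},\phi_{j_{k}})$ converging in $C^{\infty}_{\mathrm{loc}}$ to a smooth pair $(g(t),\phi(t))$ on $M\times[0,T]$ solving (\ref{1.7})--(\ref{1.8}) with the prescribed initial data and inheriting the estimates. The main obstacle I expect is controlling the coupling: the $\alpha_{i}$-terms in the $g$-equation feed $\nabla^{2}\phi$ and $\nabla\phi\otimes\nabla\phi$ back into the evolution of $|\nabla^{n}\mathrm{Rm}|^{2}$, while $\beta_{1}|\nabla\phi|^{2}$ and the zeroth-order $\beta_{2}\phi$ contribute persistent reaction terms that do not vanish as $t\to 0^{+}$. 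Tracking these cross-couplings carefully, so that the Bernstein multipliers can be chosen uniformly in $n$ and $j$, is the delicate step of the argument.
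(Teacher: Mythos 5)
Your outline follows Shi's general template—DeTurck gauge fixing, exhaustion by Dirichlet problems, a priori derivative estimates, and a diagonal limit—and that is indeed the skeleton of the paper's argument. But there are two concrete gaps, one of which is decisive.

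The decisive gap is that you never use the $\star$-regular hypothesis, and without it your $C^{0}$ step does not close. The paper first eliminates the $\alpha_{2}\nabla^{2}\phi$ term by pulling back along the flow of $-\alpha_{2}\nabla_{g(t)}\phi(t)$ (Proposition \ref{p2.12}), reducing to an $(\alpha_{1},0,\beta_{1}-\alpha_{2},\beta_{2})$-flow; $\star$-regular is precisely the condition that this reduced flow be \emph{regular} in the sense of Definition \ref{d2.11}, and regularity is what makes Corollary \ref{c2.10} give a uniform pointwise bound on $|\nabla_{g(t)}\phi(t)|^{2}_{g(t)}$. Without that bound, the upper bound $g(t)\leq(1+\theta)\tilde g$ in Lemma \ref{l3.4} fails, and the entire cascade of derivative estimates collapses. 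In your scheme, the evolution of $|\nabla\phi|^{2}$ (Lemma \ref{l2.6}) contains $-2\alpha_{1}|\nabla\phi|^{4}+(4\beta_{1}-2\alpha_{2})\langle\nabla\phi\otimes\nabla\phi,\nabla^{2}\phi\rangle$; if $\alpha_{1}<0$ the quartic term has the wrong sign, and the cross term feeds $|\nabla\phi|^{2}$ into $|\nabla^{2}\phi|$. No choice of constant $A_{0}$ in $F_{0}=|\mathrm{Rm}|^{2}+A_{0}(|\nabla^{2}\phi|^{2}+|\nabla\phi|^{2}+\phi^{2})$ turns this into a closed differential inequality without precisely the coefficient constraints that define regularity: you would have to reproduce the algebraic analysis in the proposition above Corollary \ref{c2.10}. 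You should state where the $\star$-regular hypothesis enters and verify the corresponding sign condition.

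A secondary issue is the method of estimate. You apply the maximum principle directly to $F_{0}$ and to the Bernstein-type quantity $F_{n}$ on all of the noncompact manifold, but a direct maximum principle argument on $M$ requires either boundedness at infinity (which is precisely what you are trying to prove) or a cutoff localization. The paper follows Shi exactly: a pointwise $C^{0}$ comparison $(1-\epsilon)\tilde g\leq g(t)\leq(1+\epsilon)\tilde g$ (Theorem \ref{t3.5}) via an $n$-th power comparison function, pointwise $C^{1}$ estimates via the auxiliary weight $\varphi|\tilde\nabla\boldsymbol{\Theta}|^{2}$ with cutoff functions (Lemma \ref{l3.6}), \emph{integral} $C^{2}$ estimates for $|\mathrm{Rm}|^{2}+|\nabla^{2}\phi|^{2}+|\nabla V|^{2}$ (Lemmas \ref{l3.10}--\ref{l3.13}), then pointwise $C^{2}$ (Theorem \ref{t3.14}), and only then the $C_{n}/t^{n}$ decay by a cutoff-weighted maximum principle (Lemma \ref{l3.17}). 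The integral step is not optional: a direct maximum principle on $|\mathrm{Rm}|^{2}$ faces the usual $|\mathrm{Rm}|^{3}$ reaction term with no a priori control, and Shi's integral $L^{2n}$ bootstrap is what supplies the pointwise bound. Your $F_{0}$ proposal bundles the $C^{0}$ curvature bound with the $C^{0}$ metric comparison into one step, and I do not see how the reaction terms close at that level.

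Finally, your remark that the extra terms in (\ref{1.7}) are ``of lower order in $g$'' is correct for $g$ but misses the point for $\phi$: $2\alpha_{2}\nabla^{2}_{g(t)}\phi(t)$ is second order in $\phi$, so the coupled system after DeTurck has an upper-triangular rather than diagonal principal symbol. This is still parabolic, but it is exactly the reason the paper first removes $\alpha_{2}\nabla^{2}\phi$ by reparametrizing with $2\nabla^{2}\phi=\mathscr{L}_{\nabla\phi}g$. Incorporating that reduction makes the $\phi$-equation decouple cleanly and lets you work with the regular flow, to which the $|\nabla\phi|^{2}$ maximum-principle bound applies.
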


For the definition of regular flow and $\star$-regular flow, see Definition
\ref{d2.11} and Section \ref{section3}.

\subsection{Notions and convenience}\label{subsection1.4}

Manifolds are denote by $M, N,\cdots$. If $g$ is a Riemannian metric on $M$, we write ${\rm Rm}_{g}, {\rm Ric}_{g}, R_{g},
\nabla_{g(t)}$, and $dV_{g}$ the Riemann curvature, Ricci curvature, scalar curvature, Levi-Civita connection, and volume form of $g$, respectively. In our notions, we have
\begin{equation*}
\Gamma^{k}_{ij}=\frac{1}{2}g^{k\ell}
\left(\frac{\partial}{\partial x^{i}}g_{j\ell}
+\frac{\partial}{\partial x^{j}}g_{i\ell}
-\frac{\partial}{\partial x^{\ell}}g_{ij}\right), R^{\ell}_{ijk}=\frac{\partial}{\partial x^{i}}
\Gamma^{\ell}_{jk}-\frac{\partial}{\partial x^{j}}
\Gamma^{\ell}_{ik}
+\Gamma^{p}_{jk}\Gamma^{\ell}_{ip}
-\Gamma^{p}_{ik}\Gamma^{\ell}_{jp}.
\end{equation*}
$\nabla^{2}\alpha$ stands for the Hessian of a tensor field $\alpha$. The Lie derivative along a vector field $X$ is denoted by $\mathscr{L}_{X}$.

If we have a family of Riemannian metrics indexed by time $t$, we then write $g(t)$ or $g(x,t)$ for the Riemannian metric at
time $t$. The time derivative is denoted by $\partial_{t}$ or $\partial/\partial t$.

For two differential operators $A, B$ on a manifold $M$, define
\begin{equation*}
[A,B]:=AB-BA.
\end{equation*}
For example, the Riemann curvature tensor field ${\rm Rm}_{g}$ can be written as
\begin{equation*}
{\rm Rm}_{g}(X,Y)Z=[\nabla_{X},\nabla_{Y}]Z
-\nabla_{[X,Y]}Z.
\end{equation*}

A uniform constant $C$ is a constant depending only on the given
data not on the time $t$. Different uniform constants may be
labeled by $C_{1}, C_{2},\cdots$, according to the context.

If $\mathcal{P}$ and $\mathcal{Q}$ are two quantities (may depend on time) satisfying $\mathcal{P}\leq C\mathcal{Q}$ for some
{\it positive uniform} constant $C$, then we set
\begin{equation*}
\mathcal{P}\lesssim\mathcal{Q}.
\end{equation*}
Similarly, we can define $\mathcal{P}\thickapprox\mathcal{Q}$
if $\mathcal{P}\lesssim\mathcal{Q}$ and $\mathcal{Q}\lesssim\mathcal{P}$; that is
\begin{equation*}
\frac{1}{C}\mathcal{Q}\leq\mathcal{P}\leq C\mathcal{Q}
\end{equation*}
for some positive uniform constant $C$.

We usually raise and lower indices for tensor fields; for
example,
\begin{equation*}
R_{ijk}{}^{\ell}=g^{p\ell}R_{ijkp}=R^{\ell}_{ijk}, \ \ \
R_{i}{}^{j}{}_{k}{}^{\ell}=g^{jp}g^{\ell q}R_{ipkq}.
\end{equation*}
We also use the Einstein summation for tensor fields; for example,
\begin{equation*}
\langle a,b\rangle_{g}=a_{ij}b^{ij}:=\sum_{1\leq i,j\leq m}a_{ij}b^{ij}
=\sum_{1\leq i,j,k,\ell\leq m} g^{ik}g^{j\ell}a_{ij}b_{k\ell}
\end{equation*}
for any any two $2$-tensor fields $a=(a_{ij})$
and $b=(b_{ij})$ on a Riemannian manifold $(M,g)$ of dimension $m$. Moreover we have the Ricci identity
\begin{eqnarray*}
[\nabla_{i},\nabla_{j}]
\alpha_{k_{1}\cdots k_{r}}{}^{\ell_{1}\cdots\ell_{s}}
&=&-\sum^{r}_{h=1}R^{p}_{ijk_{h}}\alpha_{k_{1}\cdots k_{h-1} pk_{h+1}\cdots h_{r}}{}^{\ell_{1}
\cdots\ell_{s}}\\
&&+ \ \sum^{s}_{h=1}R^{\ell_{h}}_{ijp}\alpha_{k_{1}\cdots k_{r}}{}^{\ell_{1}
\cdots\ell_{h-1}p \ell_{h+1}\cdots \ell_{s}},
\end{eqnarray*}
and the contracted Bianchi identities:
\begin{equation*}
\nabla^{i}R_{ij}=\frac{1}{2}\nabla_{j}R_{g}, \ \ \
\nabla^{\ell}R_{ijk\ell}=\nabla_{i}R_{jk}-\nabla_{j}R_{ik}.
\end{equation*}

If $A$ and $B$ are two tensor fields on a Riemannian manifold $(M,g)$ we denote by $A\ast B$ any quantity obtained from $A\otimes B$ by
one or more of these operations (a slightly different from that in \cite{CLN06}):
\begin{itemize}

\item[(1)] summation over pairs of matching upper and lower indices,

\item[(2)] multiplication by constants depending only on the dimension of $M$ and
the ranks of $A$ and $B$.

\end{itemize}
We also denote by $A^{\*k}$ any $k$-fold product $A\*\cdots\* A$. The above product $\langle a,b\rangle_{g}$ can be written as
\begin{equation*}
\langle a,b\rangle_{g}=a\ast b;
\end{equation*}
in order to stress the metric $g$, we also write it as
\begin{equation*}
\langle a,b\rangle_{g}=g^{-1}\ast g^{-1}\ast a\ast b.
\end{equation*}

\section{A parabolic geometric flow}\label{section2}

In this section we introduce a parabolic geometric flow motivated by (\ref{1.2}). At first we will prove Proposition \ref{p1.1}

\subsection{A characterization of Ricci-flat metrics}\label{subsection2.1}

Recall that a steady gradient Ricci soliton is a triple $(M,g,f)$ satisfying (\ref{1.1}).

\begin{proposition}\label{p2.1}{\rm (See also Proposition \ref{p1.1})} Suppose $M$ is a compact or complete noncompact manifold of dimension $m$. Then the following conditions are equivalent:

\begin{itemize}

\item[(i)] there exists a Ricci-flat Riemannian metric on $M$;

\item[(ii)] there exists real numbers $\alpha,\beta$, a smooth function $\phi$ on $M$, and a Riemannian metric $g$ on $M$ such that
    \begin{equation}
    0=-R_{ij}+\alpha\nabla_{i}\nabla_{j}\phi, \ \ \
    0=\Delta_{g}\phi+\beta|\nabla_{g}\phi|^{2}_{g}.\label{2.1}
    \end{equation}

\end{itemize}

\end{proposition}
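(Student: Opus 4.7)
The implication (i) $\Rightarrow$ (ii) is immediate: given any Ricci-flat metric $g$ on $M$, take $\phi$ to be a constant function and $\alpha,\beta$ arbitrary; both equations in (\ref{2.1}) hold trivially.

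For (ii) $\Rightarrow$ (i), the plan is to show that the metric $g$ produced by (ii) is itself Ricci-flat. The central observation is that, upon setting $f:=-\alpha\phi$, the first equation in (\ref{2.1}) is exactly the steady gradient Ricci soliton equation $R_{ij}+\nabla_i\nabla_j f=0$, so $(M,g,f)$ is a steady gradient Ricci soliton whenever $\alpha\ne 0$; the degenerate case $\alpha=0$ forces $R_{ij}=0$ directly from the first equation. Tracing the first equation and then using the second equation yields
\begin{equation*}
R_g=\alpha\Delta_g\phi=-\alpha\beta|\nabla_g\phi|^2_g.
\end{equation*}
On the other hand, a short calculation combining the contracted Bianchi identity with the Bochner commutation formula $\Delta\nabla_jf-\nabla_j\Delta f=R_{jk}\nabla^kf$ produces the standard soliton identity $R_g+|\nabla_g f|^2_g\equiv C$ on the connected manifold $M$. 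Substituting $|\nabla_g f|^2_g=\alpha^2|\nabla_g\phi|^2_g$ and comparing with the display above, one obtains
\begin{equation*}
\alpha(\alpha-\beta)\,|\nabla_g\phi|^2_g=C.
\end{equation*}

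If $M$ is compact, the second equation in (\ref{2.1}) asserts that $e^{\beta\phi}$ (or $\phi$ itself when $\beta=0$) is $g$-harmonic; the maximum principle on a closed manifold forces this function to be constant, whence $\phi$ is constant and the first equation immediately gives $R_{ij}=0$.

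If $M$ is complete noncompact (and $\alpha\ne 0$) the argument splits into two cases. When $\alpha=\beta$ the identity forces $C=0$, so $R_g=-|\nabla_g f|^2_g\le 0$; combined with Chen's theorem \cite{Chen09} that any complete noncompact steady gradient Ricci soliton satisfies $R_g\ge 0$, this yields $R_g\equiv 0$ and $\nabla_g f\equiv 0$, hence $\phi$ is constant and $R_{ij}=0$. When $\alpha\ne\beta$, the identity says $|\nabla_g\phi|^2_g$ is a constant function on $M$, so $R_g$ is also a constant function and therefore attains its infimum at every point; Petersen and Wylie \cite{PW09} then conclude that $g$ is Ricci-flat. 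The main obstacle is bookkeeping in the degenerate regimes $\alpha=0$ and $\alpha=\beta$, where the soliton identity alone fails to pin down $|\nabla_g\phi|^2_g$; both deep inputs (Chen and Petersen--Wylie) are needed only in the noncompact setting, precisely where Remark \ref{r1.2} warns that the second equation in (\ref{2.1}) cannot be dropped.
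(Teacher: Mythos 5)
Your proof is correct and more careful than the paper's, and it takes a genuinely different route at the two crucial points. For the compact case the paper invokes Hamilton's theorem that compact steady gradient Ricci solitons are Ricci-flat, whereas you observe that $e^{\beta\phi}$ (or $\phi$ when $\beta=0$) is $g$-harmonic and apply the maximum principle directly; this is more elementary and self-contained. For the noncompact case, the paper takes the divergence of the soliton equation and obtains $\nabla_j\bigl[(\tfrac{\alpha^2}{2}-\alpha\beta)|\nabla_g\phi|^2_g-\tfrac12 R_g\bigr]=0$, then writes $\alpha(\alpha-2\beta)|\nabla_g\phi|^2_g=R_g$ \emph{as though the integration constant were zero}; combined with $R_g=-\alpha\beta|\nabla_g\phi|^2_g$ this is used to conclude $\alpha(\alpha-\beta)|\nabla_g\phi|^2_g\equiv 0$, and the subsequent casework relies on this vanishing. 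That step is not justified: the divergence argument only shows $\alpha(\alpha-\beta)|\nabla_g\phi|^2_g$ is \emph{constant}, not zero. You reach the same constancy via the standard soliton identity $R_g+|\nabla_g f|^2_g\equiv C$, and you handle the possibly nonzero constant correctly by splitting on $\alpha=\beta$ (which forces $C=0$, and then Chen's nonnegativity theorem finishes) versus $\alpha\neq\beta$ (where $|\nabla_g\phi|^2_g$, hence $R_g$, is constant and Petersen--Wylie applies). Thus your appeal to Petersen--Wylie is not merely an alternative --- it is the ingredient that repairs the unjustified assumption in the paper's treatment of the case $\alpha\beta<0$, and it simultaneously absorbs the paper's slightly incomplete treatment of $\alpha\beta=0$ with $\beta=0$.
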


\begin{proof} One direction (i)$\Rightarrow$(ii) is trivial, since we can take $\alpha=\beta=\phi=0$. In the following we assume that the equation (\ref{2.1}) holds for some $\alpha,\beta$,
and $\phi, g$. When $M$ is compact, a result of Hamilton \cite{H93} tells us that $g$ must be Ricci-flat. Now we assume that $M$ is complete noncompact.

Taking the trace of the first equation in (\ref{2.1}, we get
\begin{equation}
R_{g}=\alpha\Delta_{g}\phi.\label{2.2}
\end{equation}
In particular,
\begin{equation}
R_{g}=-\alpha\beta|\nabla_{g}\phi|^{2}_{g}.\label{2.3}
\end{equation}
Hence, if $\alpha\beta\geq0$, then $R_{g}\leq0$; on the other hand, by a result of Chen \cite{Chen09}, we know that any complete noncompact steady gradient Ricci soliton has nonnegative scalar
curvature. Together with those two inequalities, we must have $R_{g}=0$ and $\nabla_{g}\phi=0$ by (\ref{2.3}). Consequently, from (\ref{2.1}), we see that $R_{ij}=0$.

To deal with the case $\alpha\beta<0$, we take the derivative $\nabla^{i}$ on the first equation of (\ref{2.1}):
\begin{equation*}
0=-\frac{1}{2}\nabla_{j}R_{g}+\alpha\Delta_{g}\nabla_{j}\phi
\end{equation*}
since $\nabla^{i}R_{ij}=\frac{1}{2}\nabla_{j}R_{g}$. According to the identity $\Delta_{g}\nabla_{j}\phi=\nabla_{j}\Delta_{g}\phi
+R_{jk}\nabla^{k}\phi$, we arrive at
\begin{equation*}
0=-\frac{1}{2}\nabla_{j}R_{g}+\alpha\left(
\nabla_{j}\Delta_{g}\phi+R_{jk}\nabla^{k}\phi\right).
\end{equation*}
Using (\ref{2.1}), we obtain
\begin{eqnarray*}
0&=&-\frac{1}{2}\nabla_{j}R_{g}+\alpha\nabla_{j}
\left(-\beta|\nabla_{g}\phi|^{2}_{g}\right)
+\alpha^{2}\nabla_{j}\nabla_{k}\phi\nabla^{k}\phi\\
&=&-\frac{1}{2}\nabla_{j}
R_{g}-\alpha\beta\nabla_{j}|\nabla_{g}\phi|^{2}_{g}
+\frac{\alpha^{2}}{2}\nabla_{j}|\nabla_{g}\phi|^{2}_{g}\\
&=&\nabla_{j}\left[\left(\frac{\alpha^{2}}{2}
-\alpha\beta\right)|\nabla_{g}\phi|^{2}_{g}
-\frac{1}{2}R_{g}\right].
\end{eqnarray*}
Hence
\begin{equation*}
\alpha(\alpha-2\beta)|\nabla_{g}\phi|^{2}_{g}=R_{g}=-\alpha\beta
|\nabla_{g}\phi|^{2}_{g}
\end{equation*}
by (\ref{2.3}). Thus $\alpha(\alpha-\beta)|\nabla_{g}
\phi|^{2}_{g}=0$. If $\alpha=0$, then $R_{ij}=0$ directly followed by (\ref{2.1}). If $\alpha\neq0$ and $\alpha\neq\beta$, we must have $\nabla_{g}\phi=0$ and then $R_{ij}=0$. If $\alpha\neq0$ but $\alpha=\beta$, in this case, $\alpha\beta=\alpha^{2}>0$, contradicting with the assumption that $\alpha\beta<0$. In each case, we get a Ricci-flat metric.
\end{proof}

\subsection{Evolution equations}\label{subsection2.2}

Motivated by Proposition \ref{p2.1}, we consider a class of Ricc flow type parabolic differential equation:
\begin{eqnarray}
\partial_{t}g(t)&=&-2{\rm Ric}_{g(t)}
+2\alpha_{1}\nabla_{g(t)}\phi(t)\otimes\nabla_{g(t)}\phi(t)
+2\alpha_{2}\nabla^{2}_{g(t)}\phi(t),\label{2.4}\\
\partial_{t}\phi(t)&=&\Delta_{g(t)}
\phi(t)+\beta_{1}\left|\nabla_{g(t)}\phi(t)\right|^{2}_{g(t)}
+\beta_{2}\phi(t),\label{2.5}
\end{eqnarray}
where $\alpha_{1},\alpha_{2},\beta_{1},\beta_{2}$ are given constants. When $\alpha_{1}=\alpha_{2}=\beta_{1}
=\beta_{2}=\phi(t)=0$, the system (\ref{1.7})--(\ref{1.8}) is exactly the Ricci flow introduced by Hamilton \cite{H82}. When $\alpha_{2}
=\beta_{1}=\beta_{2}=0$, it reduces to List's flow \cite{L05}.

To compute evolution equations for (\ref{2.4})--(\ref{2.5}), we recall variation formulas stated in \cite{CLN06}. Consider a flow
\begin{equation*}
\partial_{t}g(t)=h(t)
\end{equation*}
where $h(t)$ is a family of symmetric $2$-tensor fields. Then
\begin{eqnarray*}
\partial_{t}g^{ij}&=&-g^{ik}g^{j\ell} h_{k\ell},\\
\partial_{t}\Gamma^{k}_{ij}&=&
\frac{1}{2}g^{k\ell}\left(\nabla_{i}h_{j\ell}
+\nabla_{j}h_{i\ell}-\nabla_{\ell}h_{ij}\right),\\
\partial_{t}R^{\ell}_{ijk}&=&\frac{1}{2}
g^{\ell p}\left(\nabla_{i}\nabla_{j}h_{kp}
+\nabla_{i}\nabla_{k}h_{jp}-\nabla_{i}\nabla_{p}h_{jk}\right.\\
&& \ -\left.\nabla_{j}\nabla_{i}h_{kp}-\nabla_{j}\nabla_{k}
h_{ip}+\nabla_{j}\nabla_{p}h_{ik}\right),\\
\partial_{t}R_{jk}&=&
\frac{1}{2}g^{pq}\left(\nabla_{q}\nabla_{j}
h_{kp}+\nabla_{q}\nabla_{k}h_{jp}
-\nabla_{q}\nabla_{p}h_{jk}-\nabla_{j}\nabla_{k}h_{qp}\right),\\
\partial_{t}R_{g(t)}&=&
-\Delta_{g(t)}{\rm tr}_{g(t)}
h(t)+{\rm div}_{g(t)}\left({\rm div}_{g(t)}
h(t)\right)-\langle h(t),{\rm Ric}_{g(t)}\rangle_{g(t)},\\
\partial_{t}dV_{g(t)}&=&\frac{1}{2}{\rm tr}_{g(t)}h(t)\!\ dV_{g(t)}.
\end{eqnarray*}
We now take
\begin{equation*}
h(t):=-2{\rm Ric}_{g(t)}
+2\alpha_{1}d\phi(t)\otimes d\phi(t)+2\alpha_{2}
\nabla^{2}_{g(t)}\phi(t).
\end{equation*}

\begin{lemma}\label{l2.2} Under (\ref{2.4})--(\ref{2.5}), we have
\begin{eqnarray*}
\partial_{t}\Gamma^{k}_{ij}&=&-\nabla_{i}R_{j}{}^{k}-\nabla_{j}R_{i}{}^{k}
+\nabla^{k}R_{ij}
+2\alpha_{1}\nabla_{i}\nabla_{j}\phi(t)\cdot\nabla^{k}\phi(t)\\
&&+ \ \alpha_{2}\nabla^{k}\nabla_{i}\nabla_{j}\phi(t)-\alpha_{2}\left(R_{i}{}^{k}{}_{j}{}^{p}
\nabla_{p}\phi(t)+R_{j}{}^{k}{}_{i}{}^{p}\nabla_{p}\phi(t)\right).
\end{eqnarray*}
\end{lemma}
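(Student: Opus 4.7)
The plan is to start from the general formula
\[
\partial_{t}\Gamma^{k}_{ij}=\frac{1}{2}g^{k\ell}\left(\nabla_{i}h_{j\ell}+\nabla_{j}h_{i\ell}-\nabla_{\ell}h_{ij}\right),
\]
recalled just before the lemma, and plug in the three pieces of $h=-2\mathrm{Ric}+2\alpha_{1}d\phi\otimes d\phi+2\alpha_{2}\nabla^{2}\phi$ one at a time, then linearly combine. Each of the three contributions is treated separately because they involve different technical issues.

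First, for the Ricci piece $h_{ij}=-2R_{ij}$, I would simply distribute the $g^{k\ell}$ inside the covariant derivatives (which is allowed since $\nabla g=0$), raising an index to land on the three terms $-\nabla_{i}R_{j}{}^{k}-\nabla_{j}R_{i}{}^{k}+\nabla^{k}R_{ij}$. No curvature identities are needed here. Second, for the gradient-squared piece $h_{ij}=2\alpha_{1}\nabla_{i}\phi\,\nabla_{j}\phi$, I would apply the product rule inside each $\nabla$ and then exploit the symmetry of the Hessian $\nabla_{a}\nabla_{b}\phi=\nabla_{b}\nabla_{a}\phi$. The four cross terms involving one derivative of $\phi$ paired with a Hessian pair up and cancel in pairs, leaving only the two terms $\alpha_{1}g^{k\ell}(\nabla_{i}\nabla_{j}\phi+\nabla_{j}\nabla_{i}\phi)\nabla_{\ell}\phi=2\alpha_{1}\nabla_{i}\nabla_{j}\phi\,\nabla^{k}\phi$.

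The main obstacle, and the only computationally interesting step, is the Hessian piece $h_{ij}=2\alpha_{2}\nabla_{i}\nabla_{j}\phi$. Direct substitution gives
\[
\alpha_{2}g^{k\ell}\bigl(\nabla_{i}\nabla_{j}\nabla_{\ell}\phi+\nabla_{j}\nabla_{i}\nabla_{\ell}\phi-\nabla_{\ell}\nabla_{i}\nabla_{j}\phi\bigr),
\]
and I would normalize each of the first two terms so that the $g^{k\ell}$ contracts on the outermost derivative. Using that the Hessian of $\phi$ is symmetric (so $\nabla_{i}\nabla_{j}\nabla_{\ell}\phi=\nabla_{i}\nabla_{\ell}\nabla_{j}\phi$) followed by the Ricci identity to commute $\nabla_{i}$ past $\nabla_{\ell}$ on the $1$-form $d\phi$ yields $\nabla_{i}\nabla_{\ell}\nabla_{j}\phi=\nabla_{\ell}\nabla_{i}\nabla_{j}\phi-R_{i\ell j}{}^{p}\nabla_{p}\phi$. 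After contracting with $g^{k\ell}$ this gives $\nabla^{k}\nabla_{i}\nabla_{j}\phi-R_{i}{}^{k}{}_{j}{}^{p}\nabla_{p}\phi$, and analogously for the $\nabla_{j}\nabla_{i}\nabla_{\ell}\phi$ term one gets $\nabla^{k}\nabla_{i}\nabla_{j}\phi-R_{j}{}^{k}{}_{i}{}^{p}\nabla_{p}\phi$. Subtracting the third term $\nabla^{k}\nabla_{i}\nabla_{j}\phi$ leaves exactly the expression $\alpha_{2}\nabla^{k}\nabla_{i}\nabla_{j}\phi-\alpha_{2}(R_{i}{}^{k}{}_{j}{}^{p}+R_{j}{}^{k}{}_{i}{}^{p})\nabla_{p}\phi$ appearing in the lemma.

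Summing the three contributions produces the stated formula. The only place where care is genuinely required is keeping track of index positions in the Ricci identity and the sign convention $[\nabla_{i},\nabla_{\ell}]\omega_{j}=-R_{i\ell j}{}^{p}\omega_{p}$ consistent with the curvature convention fixed in Section~\ref{subsection1.4}; everything else is bookkeeping via the variation formula and the product rule.
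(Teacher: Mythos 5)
Your proposal is correct and follows essentially the same route as the paper: starting from the variation formula for the Christoffel symbols, distributing $h$ into its three pieces, and handling the $\alpha_{2}$ contribution by first exploiting the symmetry of $\nabla^{2}\phi$ and then applying the Ricci identity to commute the outer derivative onto $d\phi$; the paper merely elides the $\alpha_{1}$ cancellation that you spell out. The sign and index conventions you use ($[\nabla_{i},\nabla_{\ell}]\omega_{j}=-R_{i\ell j}{}^{p}\omega_{p}$ and $g^{k\ell}R_{i\ell j}{}^{p}=R_{i}{}^{k}{}_{j}{}^{p}$) agree with those fixed in Section~\ref{subsection1.4}, so the final assembly is sound.
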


\begin{proof} Compute
\begin{eqnarray*}
\partial_{t}\Gamma^{k}_{ij}
&=&-\nabla_{i}R_{j}{}^{k}-\nabla_{j}R_{i}{}^{k}
+\nabla^{k}R_{ij}+2\alpha_{1}\nabla_{i}\nabla_{j}\phi(t)\cdot\nabla^{k}
\phi(t)\\
&&+ \ \alpha_{2}g^{k\ell}\left(\nabla_{i}\nabla_{j}\nabla_{\ell}
\phi(t)+\nabla_{j}\nabla_{i}\nabla_{\ell}\phi(t)
-\nabla_{\ell}\nabla_{i}\nabla_{j}\phi(t)\right).
\end{eqnarray*}
According to the Ricci identity, we have
\begin{eqnarray*}
\nabla_{i}\nabla_{j}\nabla_{\ell}\phi(t)
&=&\nabla_{i}\nabla_{\ell}\nabla_{j}\phi(t) \ \
= \ \ \nabla_{\ell}\nabla_{i}\nabla_{j}\phi(t)
-R^{p}_{i\ell j}\nabla_{p}\phi(t),\\
\nabla_{j}\nabla_{i}\nabla_{\ell}\phi(t)
&=&\nabla_{j}\nabla_{\ell}\nabla_{i}\phi(t) \ \
= \ \ \nabla_{\ell}\nabla_{j}\nabla_{i}\phi(t)
-R^{p}_{j\ell i}\nabla_{p}\phi(t);
\end{eqnarray*}
thus we prove the desired result.
\end{proof}

\begin{lemma}\label{l2.3} Under (\ref{2.4})--(\ref{2.5}), we have
\begin{eqnarray*}
\partial_{t}R_{ij}&=&\Delta_{g(t)}R_{ij}-2R_{ik}R^{k}{}_{j}
+2R_{pijq}R^{pq}
-2\alpha_{1}R_{pijq}\nabla^{p}\phi(t)\nabla^{q}\phi(t)\\
&&+ \ 2\alpha_{1}\Delta_{g(t)}\phi(t)\cdot\nabla_{i}\nabla_{j}\phi(t)
-2\alpha_{1}\nabla_{i}\nabla_{k}\phi(t)\nabla^{k}\nabla_{j}\phi(t)\\
&&+ \ \alpha_{2}\left(R_{i}{}^{p}\nabla_{p}\nabla_{j}
\phi(t)
+R_{j}{}^{p}\nabla_{p}\nabla_{i}\phi(t)+\nabla_{p}R_{ij}\nabla^{p}\phi(t)\right).
\end{eqnarray*}
\end{lemma}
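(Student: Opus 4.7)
The plan is to apply the general variation formula
\begin{equation*}
\partial_{t} R_{jk} \;=\; \tfrac{1}{2}\, g^{pq}\bigl(\nabla_{q}\nabla_{j} h_{kp} + \nabla_{q}\nabla_{k} h_{jp} - \nabla_{q}\nabla_{p} h_{jk} - \nabla_{j}\nabla_{k} h_{qp}\bigr)
\end{equation*}
recorded in Subsection~\ref{subsection2.2}, specialized to
\begin{equation*}
h_{ij} \;=\; -2R_{ij} + 2\alpha_{1}\,\nabla_{i}\phi\,\nabla_{j}\phi + 2\alpha_{2}\,\nabla_{i}\nabla_{j}\phi
\end{equation*}
dictated by (\ref{2.4}). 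Since the right-hand side is linear in $h$, I will compute the contributions of the three summands separately and add them.

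For the piece $h^{(0)} = -2\,{\rm Ric}$ the calculation is Hamilton's classical derivation of the Ricci evolution under Ricci flow: two applications of the contracted second Bianchi identity $\nabla^{p} R_{pj} = \tfrac{1}{2}\nabla_{j} R_{g}$, combined with the Ricci identity used to commute one pair of covariant derivatives, produce the rough Laplacian $\Delta_{g(t)} R_{jk}$ together with the zeroth-order curvature terms $-2 R_{jp} R^{p}{}_{k} + 2 R_{pjkq} R^{pq}$. I will quote this step from \cite{CLN06}. For the piece $h^{(1)} = 2\alpha_{1}\,d\phi\otimes d\phi$, I expand the two covariant derivatives acting on $\nabla_{k}\phi\,\nabla_{p}\phi$ by the product rule, contract with $g^{pq}$, and invoke the Ricci identity $[\nabla_{q},\nabla_{j}]\nabla_{p}\phi = -R^{a}{}_{qjp}\nabla_{a}\phi$ once to gather the cross term into $-2\alpha_{1} R_{pjkq}\,\nabla^{p}\phi\,\nabla^{q}\phi$; the other terms collect directly into $2\alpha_{1}\,\Delta_{g(t)}\phi\cdot\nabla_{j}\nabla_{k}\phi - 2\alpha_{1}\,\nabla_{j}\nabla_{a}\phi\,\nabla^{a}\nabla_{k}\phi$, reproducing lines two and three of the statement.

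The main obstacle is the Hessian piece $h^{(2)} = 2\alpha_{2}\,\nabla^{2}\phi$, because each of the four summands in the variation formula becomes a \emph{fourth} derivative of $\phi$, e.g.\ $g^{pq}\nabla_{q}\nabla_{j}\nabla_{k}\nabla_{p}\phi$. The strategy is to reorder derivatives in each term via the Ricci identity for $(0,2)$-tensors until the contracted pair $p,q$ sits in a Laplacian position, picking up at each commutation a Riemann-curvature remainder contracted against $\nabla^{2}\phi$ or $\nabla\phi$. The decisive point is that, after the identity
\begin{equation*}
\Delta_{g(t)} \nabla_{j}\nabla_{k}\phi \;=\; \nabla_{j}\nabla_{k}\,\Delta_{g(t)}\phi \,+\, (\text{curvature terms}),
\end{equation*}
the four genuine fourth-order contributions cancel pairwise, so no $\nabla^{4}\phi$ term survives in the final answer. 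What remains is a combination of third-order curvature-Hessian pieces which, after a final use of the contracted Bianchi identity, collapses exactly into $\alpha_{2}\bigl(R_{i}{}^{p}\nabla_{p}\nabla_{j}\phi + R_{j}{}^{p}\nabla_{p}\nabla_{i}\phi + \nabla_{p} R_{ij}\,\nabla^{p}\phi\bigr)$. Tracking signs and index placements carefully in this step is the only nontrivial aspect of the proof; everything else is algebraic.

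Summing the three contributions yields the formula in the statement.
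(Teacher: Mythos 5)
Your proposal takes essentially the same route as the paper: both start from the variation formula $\partial_{t}R_{jk}=\tfrac{1}{2}g^{pq}\bigl(\nabla_{q}\nabla_{j}h_{kp}+\nabla_{q}\nabla_{k}h_{jp}-\nabla_{q}\nabla_{p}h_{jk}-\nabla_{j}\nabla_{k}h_{qp}\bigr)$, specialize to the same $h$, and ultimately rely on Ricci-identity commutations and the contracted second Bianchi identity. The only organizational difference is that you split by the three pieces of $h$ (by bilinearity), whereas the paper splits by the four summands $I_{1},\dots,I_{4}$ of the variation formula and only isolates the $\alpha_{2}$-part (called $\Lambda$) at the end; these are reshufflings of the same computation. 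One phrase in your treatment of the Hessian piece is imprecise: the four fourth-order contributions do not ``cancel pairwise'' in the naive sense. What actually happens is that after commuting derivatives, the two mixed terms $\nabla^{q}\nabla_{j}\nabla_{q}\nabla_{i}\phi+\nabla^{q}\nabla_{i}\nabla_{q}\nabla_{j}\phi$ produce $2\Delta\nabla_{i}\nabla_{j}\phi$ plus curvature remainders, so that $\Lambda$ reduces (up to lower order) to the commutator $\alpha_{2}\bigl[\Delta(\nabla_{i}\nabla_{j}\phi)-\nabla_{i}\nabla_{j}(\Delta\phi)\bigr]$; this commutator then collapses to lower-order curvature terms by the Ricci identity, and the Bianchi identity finally yields $\alpha_{2}\bigl(R_{i}{}^{p}\nabla_{p}\nabla_{j}\phi+R_{j}{}^{p}\nabla_{p}\nabla_{i}\phi+\nabla_{p}R_{ij}\nabla^{p}\phi\bigr)$. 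With that wording corrected, the plan is sound and matches the paper's argument step for step.
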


\begin{proof} Note that
\begin{equation*}
\partial_{t}R_{ij}=-\frac{1}{2}\Delta_{g(t)}h_{ij}-\frac{1}{2}\nabla_{i}
\nabla_{j}\left(g^{pq}h_{pq}\right)+\frac{1}{2}g^{pq}\left(\nabla_{p}\nabla_{j}
h_{iq}+\nabla_{p}\nabla_{i}h_{jq}\right).
\end{equation*}
Denote by $I_{i}$, $i=1,2,3,4$, the $i$th term on the right-hand side of the above equation. For $I_{1}$ we have
\begin{eqnarray*}
I_{1}&=&-\frac{1}{2}\Delta_{g(t)}
\left[-2R_{ij}+2\alpha_{1}\nabla_{i}\phi(t)\nabla_{j}\phi(t)
+2\alpha_{2}\nabla_{i}\nabla_{j}\phi(t)\right]\\
&=&\Delta_{g(t)}R_{ij}-\alpha_{1}\Delta_{g(t)}\nabla_{i}\phi(t)
\nabla_{j}\phi(t)-\alpha_{1}
\nabla_{i}\phi(t)\Delta_{g(t)}\nabla_{j}\phi(t)\\
&&- \ 2\alpha_{1}\nabla_{k}\nabla_{i}\phi(t)\nabla^{k}
\nabla_{j}\phi(t)-\alpha_{2}\Delta_{g(t)}
\left(\nabla_{i}\nabla_{j}\phi(t)\right).
\end{eqnarray*}
Since $|\nabla_{g(t)}\phi|^{2}_{g(t)}$ is a function, it
follows $\nabla_{i}\nabla_{j}|\nabla_{g(t)}\phi(t)|^{2}_{g(t)}
=\nabla_{j}\nabla_{i}|\nabla_{g(t)}\phi(t)|^{2}_{g(t)}$. Hence
\begin{eqnarray*}
I_{2}&=&-\frac{1}{2}\nabla_{i}\nabla_{j}
\left(-2R_{g(t)}+2\alpha_{1}|\nabla_{g(t)}
\phi(t)|^{2}_{g(t)}+2\alpha_{2}\Delta_{g(t)}\phi(t)\right)\\
&=&\nabla_{i}\nabla_{j}R_{g(t)}
-\frac{1}{2}\alpha_{1}\nabla_{i}\nabla_{j}|\nabla_{g(t)}
\phi(t)|^{2}_{g(t)}-\frac{1}{2}\alpha_{1}
\nabla_{j}\nabla_{i}|\nabla_{g(t)}
\phi(t)|^{2}_{g(t)}\\
&&- \ \alpha_{2}\nabla_{i}
\nabla_{j}\left(\Delta_{g(t)}\phi(t)\right)\\
&=&\nabla_{i}\nabla_{j}R_{g(t)}
-\alpha_{1}\left(\nabla_{i}\nabla_{j}\nabla_{k}
\phi(t)+\nabla_{j}\nabla_{i}\nabla_{k}\phi(t)\right)\nabla^{k}
\phi(t)\\
&&- \ 2\alpha_{1}\nabla_{i}\nabla_{k}\phi(t)\nabla_{j}
\nabla^{k}\phi(t)-\alpha_{2}\nabla_{i}\nabla_{j}\left(\Delta_{g(t)}\phi(t)\right).
\end{eqnarray*}
The symmetry of $I_{3}$ and $I_{4}$ allows us to consider only one term, saying for example $I_{3}$. Then
\begin{eqnarray*}
I_{3}&=&\frac{1}{2}g^{pq}\nabla_{p}\nabla_{j}
\left(-2R_{iq}+2\alpha_{1}\nabla_{i}\phi(t)\nabla_{q}\phi(t)+2\alpha_{2}\nabla_{i}
\nabla_{q}\phi(t)\right)\\
&=&-g^{pq}\left(\nabla_{j}\nabla_{p}
R_{iq}-R^{k}_{pji}R_{kq}-R^{k}_{pjq}R_{ik}\right)+
\alpha_{1}g^{pq}\left[\nabla_{p}\nabla_{j}\nabla_{i}\phi(t)
\nabla_{q}\phi(t)\right.\\
&& \ +\left.\nabla_{j}\nabla_{i}\phi(t)
\nabla_{p}\nabla_{q}\phi(t)+\nabla_{p}
\nabla_{i}\phi(t)\nabla_{j}\nabla_{q}\phi(t)
+\nabla_{i}\phi(t)\nabla_{p}\nabla_{j}\nabla_{q}\phi(t)\right]\\
&&+ \ \alpha_{2}\nabla^{q}\nabla_{j}
\left(\nabla_{i}\nabla_{q}\phi(t)\right);
\end{eqnarray*}
since
\begin{equation*}
\nabla_{p}\nabla_{j}\nabla_{i}\phi(t)
=\nabla_{p}\nabla_{i}\nabla_{j}\phi(t)
=\nabla_{i}\nabla_{p}\nabla_{j}\phi(t)-R^{k}_{pij}\nabla_{k}\phi(t),
\end{equation*}
it follows that
\begin{eqnarray*}
I_{3}&=&-\frac{1}{2}\nabla_{i}\nabla_{j}R_{g(t)}
+R_{pijq}R^{pq}-R_{ik}R^{k}{}_{j}
+\alpha_{1}\left[\nabla_{i}\nabla_{j}\nabla_{p}\phi(t)
\nabla^{p}\phi(t)\right.\\
&& \ -\left.R_{pijq}\nabla^{p}\phi(t)\nabla^{q}\phi(t)\right]
+\alpha_{1}\left[\Delta_{g(t)}\phi(t)\nabla_{i}
\nabla_{j}\phi(t)+\nabla_{i}\nabla_{p}\phi(t)\nabla_{j}
\nabla^{p}\phi(t)\right.\\
&& \ +\left.\nabla_{i}\phi(t)\Delta_{g(t)}\nabla_{j}\phi(t)
\right]+\alpha_{2}\nabla^{q}\nabla_{j}\left(\nabla_{i}\nabla_{q}\phi(t)\right).
\end{eqnarray*}
Consequently, we arrive at
\begin{eqnarray*}
\partial_{t}R_{ij}&=&\Delta_{g(t)}R_{ij}-2R_{ik}R^{k}{}_{j}
+2R_{pijq}R^{pq}
-2\alpha_{1}R_{pijq}\nabla^{p}\phi(t)\nabla^{q}\phi(t)\\
&&+ \ 2\alpha_{1}\Delta_{g(t)}\phi(t)\cdot\nabla_{i}\nabla_{j}\phi(t)
-2\alpha_{1}\nabla_{i}\nabla_{k}\phi(t)\nabla^{k}\nabla_{j}\phi(t)
+\Lambda,
\end{eqnarray*}
where
\begin{eqnarray*}
\Lambda&:=&-\alpha_{2}\Delta_{g(t)}
\left(\nabla_{i}\nabla_{j}\phi(t)\right)
-\alpha_{2}\nabla_{i}\nabla_{j}\left(\Delta_{g(t)}
\phi(t)\right)\\
&&+ \ \alpha_{2}\nabla^{q}\nabla_{j}
\left(\nabla_{i}\nabla_{q}\phi(t)\right)
+\alpha_{2}\nabla^{q}\nabla_{i}\left(\nabla_{j}
\nabla_{q}\phi(t)\right).
\end{eqnarray*}
This term can be simplified as
\begin{eqnarray*}
\Lambda&=&\alpha_{2}\bigg[\nabla^{q}\nabla_{j}\left(
\nabla_{q}\nabla_{i}\phi(t)\right)
+\nabla^{q}\nabla_{i}\left(\nabla_{q}\nabla_{j}\phi(t)\right)\\
&& \ -\Delta_{g(t)}\left(\nabla_{i}\nabla_{j}\phi(t)\right)
-\nabla_{i}\nabla_{j}\left(\Delta_{g(t)}
\phi(t)\right)\bigg]\\
&=&\alpha_{2}\bigg[\nabla^{q}\left(\nabla_{q}
\nabla_{j}\nabla_{i}\phi(t)-R^{p}_{jqi}\nabla_{p}
\phi(t)\right)+\nabla^{q}\left(\nabla_{q}\nabla_{i}
\nabla_{j}\phi(t)-R^{p}_{iqj}\nabla_{p}\phi(t)\right)\\
&&- \ \Delta_{g(t)}\left(\nabla_{i}\nabla_{j}
\phi(t)\right)-\nabla_{i}\nabla_{j}\left(\Delta_{g(t)}
\phi(t)\right)\bigg]\\
&=&\alpha_{2}\bigg[\Delta_{g(t)}
\left(\nabla_{i}\nabla_{j}\phi(t)\right)
-\nabla_{i}\nabla_{j}\left(\Delta_{g(t)}\phi(t)\right)
-2R_{ipjq}\nabla^{p}\nabla^{q}\phi(t)\\
&&- \ \nabla^{q}R_{jqip}\cdot\nabla^{p}\phi(t)-\nabla^{q}
R_{iqjp}\cdot\nabla^{p}\phi(t)\bigg]\\
&=&\alpha_{2}\bigg[\Delta_{g(t)}
\left(\nabla_{i}\nabla_{j}\phi(t)\right)
-\nabla_{i}\nabla_{j}\left(\Delta_{g(t)}\phi(t)\right)
-2R_{ipjq}\nabla^{p}\nabla^{q}\phi(t)\\
&&- \ \nabla_{i}R_{jp}\cdot\nabla^{p}\phi(t)
-\nabla_{j}R_{ip}\cdot\nabla^{p}\phi(t)
+2\nabla_{p}R_{ij}\cdot\nabla^{p}\phi(t)\bigg]
\end{eqnarray*}
where we used the contract Bianchi identity $\nabla^{p}R_{jk\ell p}
=\nabla_{j}R_{k\ell}-\nabla_{k}R_{j\ell}$ in the last line. The final step is to simplify the difference $[\Delta_{g(t)},\nabla_{i}\nabla_{j}]\phi(t)$. According to the Ricci identity, we have
\begin{eqnarray*}
[\Delta_{g(t)},\nabla_{i}\nabla_{j}]
\phi(t)&=&\nabla^{k}\nabla_{k}\nabla_{i}\nabla_{j}\phi(t) \ \
= \ \ \nabla^{k}\left(\nabla_{i}\nabla_{k}\nabla_{j}
\phi(t)-R^{\ell}_{kij}\nabla_{\ell}\phi(t)\right)\\
&=&\nabla_{k}\nabla_{i}\nabla^{k}\nabla_{j}
\phi(t)-\nabla^{k}R^{\ell}_{kij}\nabla_{\ell}\phi(t)
-R^{\ell}_{kij}\nabla^{k}\nabla_{\ell}\phi(t)\\
&=&\nabla_{i}\nabla_{k}\nabla_{j}\nabla^{k}\phi(t)
-R^{\ell}_{kij}\nabla_{\ell}\nabla^{k}\phi(t)+R^{k}_{ki\ell}
\nabla_{j}\nabla^{\ell}\phi(t)\\
&&- \ \nabla^{k}R_{\ell jik}\nabla^{\ell}\phi(t)-R_{kij\ell}\nabla^{k}\nabla^{\ell}
\phi(t)\\
&=&\nabla_{i}\nabla_{j}\left(\Delta_{g(t)}\phi(t)\right)
+\nabla_{i}R_{j\ell}\cdot\nabla^{\ell}\phi(t)+\nabla_{j}
R_{i\ell}\nabla^{\ell}\phi(t)\\
&&+ \ R_{j\ell}\nabla_{i}\nabla_{\ell}\phi(t)+R_{i\ell}
\nabla_{j}\nabla^{\ell}\phi(t)\\
&&- \ 2R_{kij\ell}\nabla^{k}\nabla^{\ell}\phi(t)
-\nabla_{\ell}R_{ij}\nabla^{\ell}\phi(t).
\end{eqnarray*}
Consequently, $\Lambda=\alpha_{2}[R_{i\ell}\nabla^{\ell}
\nabla_{j}\phi(t)+R_{j\ell}\nabla^{\ell}\nabla_{i}\phi(t)
+\nabla_{\ell}R_{ij}\nabla^{\ell}\phi(t)]$.
\end{proof}

\begin{lemma}\label{l2.4} Under (\ref{2.4})--(\ref{2.5}), we have
\begin{eqnarray*}
\partial_{t}R_{g(t)}&=&\Delta_{g(t)}R_{g(t)}
+2\left|{\rm Ric}_{g(t)}\right|^{2}_{g(t)}
+2\alpha_{1}|\Delta_{g(t)}\phi(t)|^{2}_{g(t)}
-2\alpha_{1}\left|\nabla^{2}_{g(t)}
\phi(t)\right|^{2}_{g(t)}\\
&&- \ 4\alpha_{1}\left\langle{\rm Ric}_{g(t)},
\nabla_{g(t)}\phi(t)\otimes\nabla_{g(t)}
\phi(t)\right\rangle_{g(t)}\\
&&+ \ \alpha_{2}\left\langle \nabla_{g(t)}R_{g(t)},\nabla_{g(t)}\phi(t)\right\rangle_{g(t)}.
\end{eqnarray*}
\end{lemma}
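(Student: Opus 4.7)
The natural strategy is to trace Lemma \ref{l2.3}. Since $R_{g(t)} = g^{ij}(t) R_{ij}(t)$, we have
\begin{equation*}
\partial_{t}R_{g(t)} = \left(\partial_{t}g^{ij}\right) R_{ij} + g^{ij}\!\ \partial_{t}R_{ij},
\end{equation*}
and the variation formulas from Section \ref{subsection2.2} with $h_{k\ell} = -2R_{k\ell} + 2\alpha_{1}\nabla_{k}\phi\nabla_{\ell}\phi + 2\alpha_{2}\nabla_{k}\nabla_{\ell}\phi$ give
\begin{equation*}
\partial_{t}g^{ij} = 2R^{ij} - 2\alpha_{1}\nabla^{i}\phi\nabla^{j}\phi - 2\alpha_{2}\nabla^{i}\nabla^{j}\phi.
\end{equation*}
So my first step is to contract the evolution equation for $R_{ij}$ proved in Lemma \ref{l2.3} with $g^{ij}$, and add the contribution from $(\partial_{t}g^{ij})R_{ij}$.

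In the second step I would simplify each traced term: $g^{ij}\Delta_{g(t)}R_{ij} = \Delta_{g(t)}R_{g(t)}$ since $\nabla g = 0$; the two curvature-squared pieces give $g^{ij}(-2R_{ik}R^{k}{}_{j}+2R_{pijq}R^{pq}) = -2|{\rm Ric}|^{2}+2|{\rm Ric}|^{2}=0$; the Riemann term contracts as $g^{ij}R_{pijq}\nabla^{p}\phi\nabla^{q}\phi = R_{pq}\nabla^{p}\phi\nabla^{q}\phi = \langle{\rm Ric},\nabla\phi\otimes\nabla\phi\rangle$; the $\alpha_{1}$-Hessian pieces yield $2\alpha_{1}|\Delta_{g(t)}\phi|^{2} - 2\alpha_{1}|\nabla^{2}_{g(t)}\phi|^{2}$; and the $\alpha_{2}$-tail gives $2\alpha_{2}\langle{\rm Ric},\nabla^{2}\phi\rangle + \alpha_{2}\langle\nabla R,\nabla\phi\rangle$ after using $g^{ij}\nabla_{p}R_{ij} = \nabla_{p}R_{g(t)}$. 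The $(\partial_{t}g^{ij})R_{ij}$ term contributes $2|{\rm Ric}|^{2} - 2\alpha_{1}\langle{\rm Ric},\nabla\phi\otimes\nabla\phi\rangle - 2\alpha_{2}\langle{\rm Ric},\nabla^{2}\phi\rangle$. Adding everything, the $\alpha_{2}\langle{\rm Ric},\nabla^{2}\phi\rangle$ contributions cancel, the $|{\rm Ric}|^{2}$ pieces add up to $+2|{\rm Ric}|^{2}$, and the $\langle{\rm Ric},\nabla\phi\otimes\nabla\phi\rangle$ contributions combine to $-4\alpha_{1}\langle{\rm Ric},\nabla\phi\otimes\nabla\phi\rangle$, giving exactly the claimed formula.

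Since Lemma \ref{l2.3} is already in hand, the calculation is essentially bookkeeping, so the main obstacle is just keeping sign conventions and metric contractions consistent, particularly the cancellation $g^{ij}(-2R_{ik}R^{k}{}_{j}+2R_{pijq}R^{pq})=0$ and the offsetting of the two $\alpha_{2}\langle{\rm Ric},\nabla^{2}\phi\rangle$ contributions with opposite signs. If one instead preferred a self-contained derivation, the alternative would be to apply the variation formula
\begin{equation*}
\partial_{t}R_{g(t)} = -\Delta_{g(t)}{\rm tr}_{g(t)}h + {\rm div}_{g(t)}({\rm div}_{g(t)}h) - \langle h,{\rm Ric}_{g(t)}\rangle_{g(t)}
\end{equation*}
directly with the above $h$; this requires computing ${\rm div}\,{\rm div}(\nabla^{2}\phi)$ and ${\rm div}\,{\rm div}(d\phi\otimes d\phi)$, which in turn invokes the contracted Bianchi identity and the Ricci identity exactly as in the simplification of $\Lambda$ in the proof of Lemma \ref{l2.3}. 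Either route arrives at the same answer, but tracing is shorter and reuses the commutator work already completed.
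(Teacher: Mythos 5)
Your tracing computation is correct, and I checked the bookkeeping: contracting Lemma~\ref{l2.3} with $g^{ij}$ gives $\Delta R + (2-2)|{\rm Ric}|^{2} - 2\alpha_{1}\langle{\rm Ric},\nabla\phi\otimes\nabla\phi\rangle + 2\alpha_{1}|\Delta\phi|^{2} - 2\alpha_{1}|\nabla^{2}\phi|^{2} + 2\alpha_{2}\langle{\rm Ric},\nabla^{2}\phi\rangle + \alpha_{2}\langle\nabla R,\nabla\phi\rangle$, and the $(\partial_{t}g^{ij})R_{ij}$ piece contributes $2|{\rm Ric}|^{2} - 2\alpha_{1}\langle{\rm Ric},\nabla\phi\otimes\nabla\phi\rangle - 2\alpha_{2}\langle{\rm Ric},\nabla^{2}\phi\rangle$, so the $\alpha_{2}\langle{\rm Ric},\nabla^{2}\phi\rangle$ terms cancel and the final sum is exactly the stated formula. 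However, this is not the paper's route: the paper goes directly from the general variation formula $\partial_{t}R = -\Delta({\rm tr}\,h) + {\rm div}\,{\rm div}\,h - \langle h,{\rm Ric}\rangle$ with the specific $h$, and it unpacks the ${\rm div}\,{\rm div}$ term by isolating $I := -2\alpha_{2}\Delta(\Delta\phi) + 2\alpha_{2}\nabla^{i}\nabla^{j}(\nabla_{i}\nabla_{j}\phi) - 2\alpha_{2}R^{ij}\nabla_{i}\nabla_{j}\phi$ and showing via the commutators $\Delta\nabla_{i}\phi = \nabla_{i}\Delta\phi + R_{ik}\nabla^{k}\phi$ and $\nabla^{i}\nabla^{j}\nabla_{i}\nabla_{j}\phi = \Delta\Delta\phi + \tfrac{1}{2}\nabla_{k}R\,\nabla^{k}\phi + R_{ik}\nabla^{i}\nabla^{k}\phi$ that $I = \alpha_{2}\langle\nabla R,\nabla\phi\rangle$ --- this is the ``alternative'' route you sketch at the end. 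The trade-off is essentially the one you describe: your contraction route is shorter and recycles the $\Lambda$-simplification already done in Lemma~\ref{l2.3}'s proof, at the cost of depending on that lemma; the paper's route is independent of Lemma~\ref{l2.3} but repeats the same commutator work in scalar form. Both are valid, and your careful observation that the two $\alpha_{2}\langle{\rm Ric},\nabla^{2}\phi\rangle$ contributions cancel (one from the trace, one from $\partial_{t}g^{ij}$) is exactly the nontrivial step that makes the tracing route come out clean.
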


\begin{proof} By the above formula for $\partial_{t}R_{g(t)}$, we obtain
\begin{eqnarray*}
\partial_{t}R_{g(t)}
&=&\Delta_{g(t)}R_{g(t)}+2|{\rm Ric}_{g(t)}|^{2}_{g(t)}+2
\alpha_{1}|\Delta_{g(t)}\phi(t)|^{2}_{g(t)}
-2\alpha_{1}\left|\nabla^{2}_{g(t)}
\phi(t)\right|^{2}_{g(t)}\\
&&- \ 2\alpha_{1}R^{ij}\nabla_{i}\phi(t)\nabla_{j}\phi(t)-2\alpha_{1}\left(\Delta_{g(t)}
\nabla_{i}\phi(t)-\nabla_{i}\Delta_{g(t)}\phi(t)\right)
\nabla^{i}\phi(t)+I
\end{eqnarray*}
where
\begin{equation*}
I:=-2\alpha_{2}\Delta_{g(t)}\left(\Delta_{g(t)}
\phi(t)\right)
+2\alpha_{2}\nabla^{i}\nabla^{j}\left(\nabla_{i}\nabla_{j}
\phi(t)\right)
-2\alpha_{2}R^{ij}\nabla_{i}\nabla_{j}\phi(t).
\end{equation*}
By the Ricci identity we have
\begin{eqnarray*}
\Delta_{g(t)}\nabla_{i}\phi(t)&=&\nabla^{j}\nabla_{j}\nabla_{i}\phi(t) \ \ = \ \ \nabla^{j}\nabla_{i}\nabla_{j}\phi(t) \ \
= \ \ \nabla_{j}\nabla_{i}\nabla^{j}\phi(t)\\
&=&\nabla_{i}\Delta_{g(t)}
\phi(t)+R^{j}_{jik}\nabla^{k}\phi(t) \ \
= \ \ \nabla_{i}\Delta_{g(t)}\phi(t)+R_{ik}\nabla^{k}\phi(t),
\end{eqnarray*}
and
\begin{eqnarray*}
\nabla^{i}\nabla_{j}\left(
\nabla_{i}\nabla_{j}\phi(t)\right)
&=&\nabla^{i}\left(\Delta_{g(t)}\nabla_{i}\phi(t)\right) \ \
= \ \ \nabla^{i}\left(\nabla_{i}\Delta_{g(t)}\phi(t)
+R_{ik}\nabla^{k}\phi(t)\right)\\
&=&\Delta_{g(t)}\left(\Delta_{g(t)}
\phi(t)\right)+\frac{1}{2}\nabla_{k}R_{g(t)}
\nabla^{k}\phi(t)+R_{ik}\nabla^{i}\nabla^{k}\phi(t).
\end{eqnarray*}
Consequently, we get
\begin{equation*}
I=\alpha_{2}\left\langle\nabla_{g(t)}R_{g(t)},
\nabla_{g(t)}\phi(t)\right\rangle_{g(t)}
\end{equation*}
and then the desired formula.
\end{proof}

Following Hamilton, we introduce the tensor field
\begin{equation}
B_{ijk\ell}:=-g^{pr}g^{qs}R_{ipjq}R_{kr\ell s}.
\end{equation}
Note that $B_{ji\ell k}=B_{ijk\ell}$ and
$B_{ijk\ell}=B_{k\ell ij}$.

\begin{lemma}\label{l2.5} Under (\ref{2.4})--(\ref{2.5}), we have
\begin{eqnarray*}
\partial_{t}R_{ijk\ell}
&=&\Delta_{g(t)}R_{ijk\ell}+2(B_{ijk\ell}-B_{ij\ell k}
+B_{ikj\ell}-B_{i\ell jk})\\
&&- \ (R_{i}{}^{p}R_{pjk\ell}+R_{j}{}^{p}R_{ipk\ell}
+R_{k}{}^{p}R_{ijp\ell}+R_{\ell}{}^{p}R_{ijkp})\\
&&+ \ 2\alpha_{1}\bigg[\nabla_{i}\nabla_{\ell}\phi(t)
\nabla_{j}\nabla_{k}
\phi(t)-\nabla_{i}\nabla_{k}\phi(t)\nabla_{j}
\nabla_{\ell}\phi(t)\bigg]\\
&&+ \ \alpha_{2}\bigg[\nabla^{p}R_{ijk\ell}\nabla_{p}\phi(t)
-R_{ijk}{}^{p}\nabla_{p}\nabla_{\ell}\phi(t)
+R^{p}{}_{jk\ell}\nabla_{i}\nabla_{p}\phi(t)\\
&&+ \ R_{i}{}^{p}{}_{k\ell}\nabla_{j}\nabla_{p}\phi(t)
+R_{ij}{}^{p}{}_{\ell}\nabla_{k}\nabla_{p}
\phi(t)\bigg].
\end{eqnarray*}
\end{lemma}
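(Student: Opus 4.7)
The plan is to imitate the proof of Lemma \ref{l2.3} and linearize the general variation formula for the curvature along the specific choice
\[
h_{ij}=-2R_{ij}+2\alpha_1\nabla_i\phi(t)\nabla_j\phi(t)+2\alpha_2\nabla_i\nabla_j\phi(t).
\]
Because the right--hand side of the formula for $\partial_t R^{\ell}_{ijk}$ is linear in $h$, it is convenient to break the computation into three pieces: a ``Ricci'' piece coming from $-2R_{ij}$, an ``$\alpha_1$'' piece coming from $2\alpha_1\,d\phi\otimes d\phi$, and an ``$\alpha_2$'' piece coming from $2\alpha_2\nabla^2\phi$. I will then lower the index using $R_{ijk\ell}=g_{\ell p}R^{p}_{ijk}$, remembering that under $\partial_t$ the metric factor contributes only terms that recombine into the stated expression.

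For the first piece I plan to quote (or reproduce) the standard Hamilton computation: when $h=-2\operatorname{Ric}$ one obtains exactly
\[
\Delta_{g(t)}R_{ijk\ell}+2(B_{ijk\ell}-B_{ij\ell k}+B_{ikj\ell}-B_{i\ell jk})-(R_i{}^pR_{pjk\ell}+R_j{}^pR_{ipk\ell}+R_k{}^pR_{ijp\ell}+R_\ell{}^pR_{ijkp}).
\]
For the $\alpha_1$ piece, I substitute $h_{ij}=2\alpha_1\nabla_i\phi\nabla_j\phi$ into the variation formula, expand the second covariant derivatives by Leibniz, and then use the Ricci identity $[\nabla_a,\nabla_b]\nabla_c\phi=-R^{d}_{abc}\nabla_d\phi$ to move derivatives around; the first and second derivatives of $\phi$ that only appear as $\nabla_i\phi\nabla_j\phi\cdot(\cdots)$ cancel in antisymmetric pairs, leaving precisely the Hessian--product terms $\nabla_i\nabla_\ell\phi\,\nabla_j\nabla_k\phi-\nabla_i\nabla_k\phi\,\nabla_j\nabla_\ell\phi$.

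The main technical obstacle is the $\alpha_2$ piece, which involves third and fourth covariant derivatives of $\phi$. The approach is the same one that worked for $\Lambda$ in Lemma \ref{l2.3}: introduce the provisional expression
\[
\Lambda_{ijk\ell}:=\alpha_2\bigl[\nabla_i\nabla_j\nabla_k\nabla_\ell\phi+\nabla_i\nabla_k\nabla_j\nabla_\ell\phi-\nabla_i\nabla_\ell\nabla_j\nabla_k\phi-(\text{swap }i\leftrightarrow j)\bigr]
\]
coming from $h_{ij}=2\alpha_2\nabla_i\nabla_j\phi$, and then systematically commute covariant derivatives with the Ricci identity until every fourth derivative of $\phi$ is rewritten as $\nabla^p\nabla^q$ (or as $\nabla^p$ applied to $\Delta_{g(t)}\phi$, which never actually appears in the final answer). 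The curvature commutators generated this way produce the combinations $\nabla^p R_{ijk\ell}\nabla_p\phi$ (using the second Bianchi identity $\nabla^p R_{ijk\ell}$ on one of the pairs), $R_{ipjq}\nabla^p\nabla^q\phi$--type contractions, and terms like $R_{ijk}{}^p\nabla_p\nabla_\ell\phi$. The bookkeeping is delicate but mechanical; careful tracking of signs and of which pair of indices the Bianchi identity is applied to will collapse the expression to
\[
\alpha_2\bigl[\nabla^p R_{ijk\ell}\nabla_p\phi-R_{ijk}{}^p\nabla_p\nabla_\ell\phi+R^{p}{}_{jk\ell}\nabla_i\nabla_p\phi+R_{i}{}^{p}{}_{k\ell}\nabla_j\nabla_p\phi+R_{ij}{}^{p}{}_\ell\nabla_k\nabla_p\phi\bigr],
\]
which together with the Ricci-part and $\alpha_1$-part yields the stated evolution equation.
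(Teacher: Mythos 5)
Your plan follows the same route as the paper: substitute $h_{ij}=-2R_{ij}+2\alpha_1\nabla_i\phi\nabla_j\phi+2\alpha_2\nabla_i\nabla_j\phi$ into the general variation formula for $\partial_t R^{\ell}_{ijk}$, split by linearity into the Ricci piece (quoted from Hamilton), the $\alpha_1$ piece, and the $\alpha_2$ piece, simplify via the Ricci and second Bianchi identities, and lower the free index at the end while tracking the $\partial_t g_{\ell s}$ contribution. The only caveat is that the provisional expression you label $\Lambda_{ijk\ell}$ does not match what the variation formula actually produces for the $\alpha_2$ part (the correct combination, after lowering, is $\alpha_2(\nabla_i\nabla_k\nabla_j\nabla_\ell\phi+\nabla_j\nabla_\ell\nabla_i\nabla_k\phi-\nabla_i\nabla_\ell\nabla_j\nabla_k\phi-\nabla_j\nabla_k\nabla_i\nabla_\ell\phi)$, four terms rather than six); this is a bookkeeping slip in the sketch, not a flaw in the strategy, and the rest of your outline proceeds exactly as in the paper's proof.
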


\begin{proof} Recall the evolution equation
\begin{equation*}
\partial_{t}R^{\ell}_{ijk}=\frac{1}{2}g^{\ell p}
\left(\nabla_{i}\nabla_{k}h_{jp}+\nabla_{j}\nabla_{p}
h_{ik}
-\nabla_{i}\nabla_{p}h_{jk}-\nabla_{j}\nabla_{k}
h_{ip}-R^{q}_{ijk}h_{qp}-R^{q}_{ijp}h_{kq}\right)
\end{equation*}
where $\partial_{t}g_{ij}=h_{ij}$. Applying the above formula to $h_{ij}=-2R_{ij}+2\alpha_{1}\nabla_{i}\phi(t)\nabla_{j}\phi(t)
+2\alpha_{2}\nabla_{i}\nabla_{j}\phi(t)$ implies
\begin{eqnarray*}
\partial_{t}R^{\ell}_{ijk}
&=&g^{\ell p}\left(\nabla_{i}\nabla_{p}R_{jk}
+\nabla_{j}\nabla_{k}R_{ip}-\nabla_{i}\nabla_{k}
R_{jp}-\nabla_{j}\nabla_{p}R_{ik}\right)+g^{\ell p}\\
&&\bigg[\alpha_{1}\nabla_{i}
\bigg(\nabla_{k}\nabla_{j}\phi(t)\nabla_{p}\phi(t)
+\nabla_{j}\phi(t)\nabla_{k}\nabla_{p}\phi(t)\bigg)
+\alpha_{2}\nabla_{i}\nabla_{k}\nabla_{j}\nabla_{p}\phi(t)\\
&&+ \ \alpha_{1}\nabla_{j}\bigg(\nabla_{p}
\nabla_{i}\phi(t)\nabla_{k}\phi(t)+\nabla_{i}\phi(t)
\nabla_{p}\nabla_{k}\phi(t)\bigg)
+\alpha_{2}\nabla_{j}\nabla_{p}\nabla_{i}\nabla_{k}\phi(t)\\
&&- \ \alpha_{1}\nabla_{i}
\bigg(\nabla_{p}\nabla_{j}\phi(t)\nabla_{k}\phi(t)
+\nabla_{j}\phi(t)\nabla_{p}\nabla_{k}\phi(t)\bigg)
-\alpha_{2}\nabla_{i}\nabla_{p}\nabla_{j}\nabla_{k}
\phi(t)\\
&&- \ \alpha_{1}\nabla_{j}
\bigg(\nabla_{k}\nabla_{i}\phi(t)\nabla_{p}\phi(t)
+\nabla_{i}\phi(t)\nabla_{k}\nabla_{p}\phi(t)\bigg)
-\alpha_{2}\nabla_{j}\nabla_{k}\nabla_{i}\nabla_{p}
\phi(t)\bigg]\\
&&+ \ g^{\ell p}\bigg[R^{q}_{ijk}\bigg(
R_{qp}-\alpha_{1}\nabla_{q}\phi(t)\nabla_{p}\phi(t)
-\alpha_{2}\nabla_{q}\nabla_{p}\phi(t)\bigg)\\
&&+ \ R^{q}_{ijp}\bigg(R_{kq}-\alpha_{1}
\nabla_{k}\phi(t)\nabla_{q}\phi(t)-\alpha_{2}\nabla_{k}
\nabla_{q}\phi(t)\bigg)\bigg]:=I_{1}+I_{2}+I_{3},
\end{eqnarray*}
where
\begin{eqnarray*}
I_{1}&:=&g^{\ell p}\bigg(\nabla_{i}
\nabla_{p}R_{jk}+\nabla_{j}\nabla_{k}
R_{ip}-\nabla_{i}\nabla_{k}R_{jp}-\nabla_{j}\nabla_{p}
R_{ik}+R^{q}_{ijk}R_{qp}+R^{q}_{ijp}R_{kq}\bigg),\\
I_{3}&:=&g^{\ell p}
\bigg[R^{q}_{ijk}\bigg(-\alpha_{1}
\nabla_{q}\phi(t)\nabla_{p}\phi(t)-\alpha_{2}
\nabla_{q}\nabla_{p}\phi(t)\bigg)\\
&&+ \ R^{q}_{ijp}\bigg(-\alpha_{1}\nabla_{k}
\phi(t)\nabla_{q}\phi(t)
-\alpha_{2}\nabla_{k}\nabla_{q}\phi(t)\bigg)\bigg]\\
&=&-\bigg(\alpha_{1}R^{q}_{ijk}\nabla_{q}\phi(t)\nabla^{\ell}\phi(t)
+\alpha_{2}R^{q}_{ijk}\nabla_{q}\nabla^{\ell}\phi(t)\bigg)\\
&&- \ \bigg(\alpha_{1}R_{ij}{}^{\ell q}
\nabla_{k}\phi(t)\nabla_{q}\phi(t)
+\alpha_{2}R_{ij}{}^{\ell q}\nabla_{k}\nabla_{q}\phi(t)\bigg),\\
I_{2}&:=&\text{the rest terms}.
\end{eqnarray*}
According to \cite{CLN06, H82} we have
\begin{eqnarray*}
I_{1}&=&\Delta_{g(t)}R^{\ell}_{ijk}
+g^{pq}\left(R^{r}_{ijp}R^{\ell}_{rqk}
-2R^{r}_{pik}R^{\ell}_{jqr}+2R^{\ell}_{pir}R^{r}_{jqk}\right)\\
&&- \ R_{i}{}^{r}R^{\ell}_{rjk}
-R_{j}{}^{r}R^{\ell}_{irk}
-R_{k}{}^{r}R^{\ell}_{ijr}
+R_{r}{}^{\ell}R^{r}_{ijk}.
\end{eqnarray*}
It can be showed that
\begin{eqnarray*}
I_{2}&=&g^{\ell p}\bigg[\alpha_{1}\bigg(\nabla_{i}\nabla_{j}
\nabla_{k}\phi(t)\nabla_{p}\phi(t)+\nabla_{j}\nabla_{i}\nabla_{p}
\phi(t)\nabla_{k}\phi(t)\\
&&- \ \nabla_{i}\nabla_{j}\nabla_{p}
\phi(t)\nabla_{k}\phi(t)-\nabla_{j}\nabla_{i}
\nabla_{k}\phi(t)\nabla_{p}\phi(t)\bigg)
+2\alpha_{1}\bigg(\nabla_{k}
\nabla_{j}\phi(t)\nabla_{i}\nabla_{p}
\phi(t)\\
&&- \ \nabla_{k}\nabla_{i}\phi(t)\nabla_{j}
\nabla_{p}\phi(t)\bigg)\bigg]
+\alpha_{2}\bigg(\nabla_{i}\nabla_{k}
\nabla_{j}\nabla_{p}\phi(t)+\nabla_{j}
\nabla_{p}\nabla_{i}\nabla_{k}\phi(t)\\
&&- \ \nabla_{j}\nabla_{k}\nabla_{i}\nabla_{p}\phi(t)
-\nabla_{i}\nabla_{p}
\nabla_{j}\nabla_{k}\phi(t)\bigg)\\
&=&-\alpha_{1}R^{q}_{ijk}\nabla_{q}\phi(t)\nabla^{\ell}
\phi(t)+\alpha_{1}R_{ij}{}^{\ell q}
\nabla_{q}\phi(t)\nabla_{k}\phi(t)
+2\alpha_{1}\bigg(\nabla_{i}\nabla^{\ell}
\phi(t)\nabla_{k}\nabla_{j}
\phi(t)\\
&&- \ \nabla_{i}\nabla_{k}\phi(t)
\nabla_{j}\nabla^{\ell}\phi(t)\bigg)
+\alpha_{2}\bigg(\nabla_{i}\nabla_{k}
\nabla_{j}\nabla^{\ell}\phi(t)+\nabla_{j}\nabla^{\ell}
\nabla_{i}\nabla_{k}\phi(t)\\
&&- \ \nabla_{j}\nabla_{k}\nabla_{i}\nabla^{\ell}
\phi(t)-\nabla_{i}\nabla^{\ell}\nabla_{j}
\nabla_{k}\phi(t)\bigg);
\end{eqnarray*}
together with $I_{3}$, we arrive at
\begin{eqnarray*}
I_{2}+I_{3}&=&-2\alpha_{1}R^{q}_{ijk}\nabla_{q}\phi(t)
\nabla^{\ell}\phi(t)
-\alpha_{2}\left(R^{q}_{ijk}
\nabla_{q}\nabla^{\ell}\phi(t)
+R_{ij}{}^{\ell q}\nabla_{k}\nabla_{q}\phi(t)\right)\\
&&+ \ 2\alpha_{1}
\bigg(\nabla_{i}\nabla^{\ell}
\phi(t)\nabla_{k}\nabla_{j}
\phi(t)-\nabla_{i}\nabla_{k}\phi(t)
\nabla_{j}\nabla^{\ell}\phi(t)\bigg)\\
&&+ \ \alpha_{2}\bigg(\nabla_{i}\nabla_{k}
\nabla_{j}\nabla^{\ell}\phi(t)+\nabla_{j}\nabla^{\ell}
\nabla_{i}\nabla_{k}\phi(t)\\
&&- \ \nabla_{j}\nabla_{k}\nabla_{i}\nabla^{\ell}
\phi(t)-\nabla_{i}\nabla^{\ell}\nabla_{j}
\nabla_{k}\phi(t)\bigg).
\end{eqnarray*}
Using the Ricci identity, the last term on the right-hand side of $I_{2}+I_{3}$ can be simplified as
\begin{eqnarray*}
\nabla_{i}\nabla_{k}\nabla_{j}\nabla^{\ell}
\phi(t)-\nabla_{i}\nabla^{\ell}\nabla_{j}\nabla_{k}\phi(t)
&=&g^{\ell p}\bigg(\nabla_{i}\nabla_{k}\nabla_{j}\nabla_{p}
\phi(t)-\nabla_{i}\nabla_{p}\nabla_{j}\nabla_{k}\phi(t)\bigg)\\
&=&g^{\ell p}
\nabla_{i}\bigg(\nabla_{k}\nabla_{p}
\nabla_{j}\phi(t)
-\nabla_{p}\nabla_{k}\nabla_{j}\phi(t)\bigg)\\
&=&-g^{\ell p}\nabla_{i}\bigg(R^{q}_{kpj}\nabla_{q}\phi(t)\bigg)\\
&=&-\nabla_{i}R_{k}{}^{\ell}{}_{j}{}^{q}
\nabla_{q}\phi(t)-R_{k}{}^{\ell}{}_{j}{}^{q}
\nabla_{i}\nabla_{q}\phi(t),\\
\nabla_{j}\nabla^{\ell}\nabla_{i}\nabla_{k}
\phi(t)-\nabla_{j}\nabla_{k}\nabla_{i}
\nabla^{\ell}\phi(t)
&=&g^{\ell p}\nabla_{j}
\bigg(\nabla_{p}\nabla_{i}\nabla_{k}
\phi(t)-\nabla_{k}
\nabla_{i}
\nabla_{p}\phi(t)\bigg)\\
&=&g^{\ell p}\nabla_{j}
\bigg(-R^{q}_{pki}\nabla_{q}\phi(t)\bigg)\\
&=&-\nabla_{j}R^{\ell}{}_{ki}{}^{q}
\nabla_{q}\phi(t)
-R^{\ell}{}_{ki}{}^{q}
\nabla_{j}\nabla_{q}\phi(t).
\end{eqnarray*}
Consequently,
\begin{eqnarray*}
I_{2}+I_{3}&=&
-2\alpha_{1}R^{q}_{ijk}\nabla_{q}\phi(t)\nabla^{\ell}
\phi(t)\\
&&+ \ 2\alpha_{1}\bigg(\nabla_{i}\nabla^{\ell}
\phi(t)\nabla_{k}\nabla_{j}\phi(t)
-\nabla_{i}\nabla_{k}
\phi(t)\nabla_{j}\nabla^{\ell}\phi(t)\bigg)\\
&&+ \ \alpha_{2}\bigg(\nabla^{q}R_{ijk}{}^{\ell}
-R_{k}{}^{\ell}{}_{j}{}^{q}
\nabla_{i}\nabla_{q}\phi(t)
-R^{\ell}{}_{ki}{}^{q}\nabla_{j}
\nabla_{q}\phi(t)\\
&& \ -R_{ijk}{}^{q}\nabla_{q}\nabla^{\ell}\phi(t)
-R_{ij}{}^{\ell q}\nabla_{k}\nabla_{q}\phi(t)\bigg),
\end{eqnarray*}
because
\begin{equation*}
\nabla_{i}R_{k}{}^{\ell}{}_{j}{}^{q}
+\nabla_{j}R^{\ell}{}_{ki}{}^{q}
=g^{\ell s}g^{pq}\bigg(\nabla_{i}
R_{jpks}+\nabla_{j}R_{piks}\bigg)
=-g^{\ell s}g^{qp}\nabla_{p}R_{ijks}
=-\nabla^{q}R_{ijk}{}^{\ell}.
\end{equation*}
Replacing $\ell$ by $s$ in $I_{1}, I_{2}, I_{3}$, we have
\begin{equation*}
\partial_{t}R^{s}_{ijk}=\tilde{I}_{1}+\tilde{I}_{2}+\tilde{I}_{3},
\end{equation*}
where we denote by $\tilde{I}_{i}$ the corresponding terms; hence
\begin{eqnarray*}
\partial_{t}R_{ijk\ell}&=&\partial_{t}\bigg(g_{\ell s}R^{s}_{ijk}\bigg) \ \ = \ \ \partial_{t}g_{\ell s}R^{s}_{ijk}
+g_{\ell s}\partial_{t}R^{s}_{ijk}\\
&=&\bigg(-2R_{\ell s}+2\alpha_{1}
\nabla_{\ell}\phi(t)
\nabla_{s}\phi(t)+2\alpha_{2}
\nabla_{\ell}\nabla_{s}\phi(t)\bigg)R^{s}_{ijk}\\
&&+ \ g_{\ell s}\bigg(\tilde{I}_{1}+\tilde{I}_{2}+\tilde{I}_{3}
\bigg)\\
&=&\bigg(-2R_{\ell s}R^{s}_{ijk}
+g_{\ell s}\tilde{I}_{1}\bigg)\\
&&+ \ 2\alpha_{1}R^{s}_{ijk}\nabla_{\ell}\phi(t)
\nabla_{s}\phi(t)+2\alpha_{2}R^{s}_{ijk}\nabla_{\ell}
\nabla_{s}\phi(t)+g_{\ell s}\bigg(\tilde{I}_{2}+\tilde{I}_{3}\bigg).
\end{eqnarray*}
The first bracket on the right-hand side follows
from Hamilton's computation \cite{CLN06, H82}; the rest
terms can be computed from the expressions for $\tilde{I}_{2}$ and $\tilde{I}_{3}$.
\end{proof}

Next we compute evolution equations for $\phi(t)$.

\begin{lemma}\label{l2.6} Under (\ref{2.4})--(\ref{2.5}), we have
\begin{eqnarray*}
\partial_{t}|\nabla_{g(t)}\phi(t)|^{2}_{g(t)}
&=&\Delta_{g(t)}|\nabla_{g(t)}\phi(t)|^{2}_{g(t)}
+2\beta_{2}|\nabla_{g(t)}
\phi(t)|^{2}_{g(t)}\\
&&- \ 2\left|\nabla^{2}_{g(t)}
\phi(t)\right|^{2}_{g(t)}
-2\alpha_{1}|\nabla_{g(t)}\phi(t)|^{4}_{g(t)}\\
&&+ \ (4\beta_{1}-2\alpha_{2})\left\langle\nabla_{g(t)}
\phi(t)\otimes\nabla_{g(t)}\phi(t),\nabla^{2}_{g(t)}\phi(t)\right\rangle_{g(t)}.
\end{eqnarray*}
\end{lemma}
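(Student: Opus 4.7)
The plan is to compute $\partial_{t}|\nabla_{g(t)}\phi(t)|^{2}_{g(t)} = \partial_{t}\bigl(g^{ij}\nabla_{i}\phi\,\nabla_{j}\phi\bigr)$ by a direct product rule, substituting the flow equations (\ref{2.4})--(\ref{2.5}), and then using the Bochner formula to rewrite $\langle\nabla\Delta_{g(t)}\phi,\nabla\phi\rangle_{g(t)}$ in a way that produces the Laplacian of $|\nabla\phi|^{2}$ on the right-hand side.

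First I would use the standard variation formula $\partial_{t}g^{ij} = -g^{ik}g^{j\ell}h_{k\ell}$ recalled in subsection \ref{subsection2.2}, applied to $h_{ij} = -2R_{ij} + 2\alpha_{1}\nabla_{i}\phi\,\nabla_{j}\phi + 2\alpha_{2}\nabla_{i}\nabla_{j}\phi$, to get
\begin{equation*}
\partial_{t}g^{ij} \;=\; 2R^{ij} - 2\alpha_{1}\nabla^{i}\phi\,\nabla^{j}\phi - 2\alpha_{2}\nabla^{i}\nabla^{j}\phi.
\end{equation*}
Since $\phi(t)$ is a scalar, $\nabla_{i}\phi = \partial_{i}\phi$ and therefore $\partial_{t}\nabla_{i}\phi = \nabla_{i}(\partial_{t}\phi)$; contracting with $2g^{ij}\nabla_{j}\phi$ and using (\ref{2.5}) yields
\begin{equation*}
\partial_{t}|\nabla\phi|^{2} \;=\; 2R^{ij}\nabla_{i}\phi\nabla_{j}\phi - 2\alpha_{1}|\nabla\phi|^{4} - 2\alpha_{2}\langle\nabla\phi\otimes\nabla\phi,\nabla^{2}\phi\rangle + 2\langle\nabla(\Delta\phi + \beta_{1}|\nabla\phi|^{2} + \beta_{2}\phi),\nabla\phi\rangle,
\end{equation*}
where I am suppressing the subscripts $g(t)$ for brevity.

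The key step is to handle the $\nabla\Delta\phi$ term via the Bochner identity
\begin{equation*}
\Delta|\nabla\phi|^{2} \;=\; 2|\nabla^{2}\phi|^{2} + 2\langle\nabla\Delta\phi,\nabla\phi\rangle + 2\,\mathrm{Ric}(\nabla\phi,\nabla\phi),
\end{equation*}
which lets me replace $2\langle\nabla\Delta\phi,\nabla\phi\rangle$ by $\Delta|\nabla\phi|^{2} - 2|\nabla^{2}\phi|^{2} - 2R^{ij}\nabla_{i}\phi\nabla_{j}\phi$. This is what produces both the Laplacian and the $-2|\nabla^{2}\phi|^{2}$ term, while simultaneously canceling the Ricci contribution coming from $\partial_{t}g^{ij}$. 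The remaining terms are immediate: the $\beta_{2}\phi$ piece contributes $2\beta_{2}|\nabla\phi|^{2}$, while $2\beta_{1}\langle\nabla|\nabla\phi|^{2},\nabla\phi\rangle = 4\beta_{1}\langle\nabla\phi\otimes\nabla\phi,\nabla^{2}\phi\rangle$, which combines with the $-2\alpha_{2}\langle\nabla\phi\otimes\nabla\phi,\nabla^{2}\phi\rangle$ to give the claimed coefficient $4\beta_{1} - 2\alpha_{2}$.

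The computation is essentially bookkeeping; the only substantive point is the correct invocation of the Bochner formula, which must be used in exactly the form above so that the Ricci term from the time-derivative of $g^{ij}$ cancels. I do not anticipate any real obstacle beyond keeping careful track of indices and the factor $2$ coming from the symmetry $\nabla^{i}\phi\,\nabla_{i}\nabla_{j}\phi$ when differentiating $|\nabla\phi|^{2}$.
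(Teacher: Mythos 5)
Your proposal is correct and follows essentially the same route as the paper: the paper commutes $\nabla_{i}$ past $\Delta_{g(t)}$ via the Ricci identity $\nabla_{i}\Delta\phi=\Delta\nabla_{i}\phi-R_{ij}\nabla^{j}\phi$ and then applies $2\nabla^{i}\phi\,\Delta\nabla_{i}\phi=\Delta|\nabla\phi|^{2}-2|\nabla^{2}\phi|^{2}$, which together are exactly the Bochner identity you invoke in one step, and both arguments cancel the $R^{ij}\nabla_{i}\phi\nabla_{j}\phi$ term arising from $\partial_{t}g^{ij}$ against the curvature term from the commutation. The bookkeeping of the $\alpha_{1},\alpha_{2},\beta_{1},\beta_{2}$ contributions is handled identically in both.
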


\begin{proof} Using (\ref{2.5}) we have
\begin{eqnarray*}
\partial_{t}\nabla_{i}\phi(t)&=&
\nabla_{i}\partial_{t}\phi(t) \ \ = \ \
\nabla_{i}\left(\Delta_{g(t)}\phi(t)
+\beta_{1}|\nabla_{g(t)}\phi(t)|^{2}_{g(t)}
+\beta_{2}\phi(t)\right)\\
&=&\Delta_{g(t)}\nabla_{i}\phi(t)
-R_{ij}\nabla^{j}\phi(t)+2\beta_{1}\nabla^{j}\phi(t)
\nabla_{i}\nabla_{j}\phi(t)+\beta_{2}
\nabla_{i}\phi(t),
\end{eqnarray*}
where we use the identity $\nabla_{i}\Delta_{g(t)}
\phi(t)=\Delta_{g(t)}\nabla_{i}\phi(t)
-R_{ij}\nabla^{j}\phi(t)$. Using (\ref{2.4}) we then get
\begin{eqnarray*}
\partial_{t}|\nabla_{g(t)}\phi(t)|^{2}_{g(t)}
&=&\partial_{t}\bigg(g^{ij}\nabla_{i}\phi(t)\nabla_{j}\phi(t)\bigg)\\
&=&\partial_{t}g^{ij}\nabla_{i}\phi(t)
\nabla_{j}\phi(t)+2g^{ij}\nabla_{j}\phi(t)\partial_{t}\nabla_{i}\phi(t)\\
&=&\bigg(2R^{ij}-2\alpha_{1}\nabla^{i}
\phi(t)\nabla^{j}\phi(t)
-2\alpha_{2}\nabla^{i}\nabla^{j}\phi(t)\bigg)
\nabla_{i}\phi(t)\nabla_{j}\phi(t)\\
&&+ \ 2g^{ij}\nabla_{j}\phi(t)
\bigg(\Delta_{g(t)}\nabla_{i}\phi(t)
-R_{ik}\nabla^{k}\phi(t)\\
&&+ \ 2\beta_{1}\nabla^{k}\phi(t)\nabla_{i}\nabla_{k}
\phi(t)
+\beta_{2}\nabla_{i}\phi(t)\bigg)\\
&=&-2\alpha_{1}|\nabla_{g(t)}\phi(t)|^{4}_{g(t)}
-2\alpha_{2}\nabla^{i}\nabla^{j}\phi(t)
\nabla_{i}\phi(t)\nabla_{j}\phi(t)\\
&&+ \ 2\nabla^{i}\phi(t)\Delta_{g(t)}
\nabla_{i}\phi(t)+4\beta_{1}
\nabla_{i}\nabla_{k}
\phi(t)\nabla^{i}\phi(t)\nabla^{k}\phi(t)\\
&&+ \ 2\beta_{2}|\nabla_{g(t)}\phi(t)|^{2}_{g(t)}
\end{eqnarray*}
which implies the desired equation.
\end{proof}

\begin{lemma}\label{l2.7} Under (\ref{2.4})--(\ref{2.5}), we have
\begin{eqnarray*}
\partial_{t}(\nabla_{i}\nabla_{j}
\phi(t))&=&\Delta_{g(t)}(\nabla_{i}\nabla_{j}
\phi(t))+2R_{pijq}\nabla^{p}\nabla^{q}\phi(t)+\beta_{2}\nabla_{i}\nabla_{j}\phi(t)\\
&&- \ R_{ip}\nabla^{p}\nabla_{j}\phi(t)
-R_{jp}\nabla^{p}\nabla_{i}\phi(t)
-2\alpha_{1}|\nabla_{g(t)}\phi(t)|^{2}_{g(t)}
\nabla_{i}\nabla_{j}\phi(t)\\
&&+ \ (2\beta_{1}-\alpha_{2})
\nabla_{k}\phi(t)\nabla_{k}\nabla_{i}\nabla_{j}
\phi(t)+2\beta_{1}\nabla_{i}\nabla^{k}\phi(t)
\nabla_{j}\nabla_{k}\phi(t)\\
&&+ \ 2(\beta_{1}-\alpha_{2})
R_{pijq}\nabla^{p}\phi(t)\nabla^{q}\phi(t).
\end{eqnarray*}
\end{lemma}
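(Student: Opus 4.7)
The starting point is the identity
\begin{equation*}
\partial_{t}\!\left(\nabla_{i}\nabla_{j}\phi(t)\right)
=\nabla_{i}\nabla_{j}\!\left(\partial_{t}\phi(t)\right)
-\left(\partial_{t}\Gamma^{k}_{ij}\right)\nabla_{k}\phi(t),
\end{equation*}
which holds because $\nabla_{i}\nabla_{j}\phi=\partial_{i}\partial_{j}\phi-\Gamma^{k}_{ij}\partial_{k}\phi$ and the commutator $[\partial_{t},\partial_{i}]=0$. The plan is to process the two terms on the right separately and then combine.

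For the first term I would substitute the evolution equation (\ref{2.5}) to obtain
\begin{equation*}
\nabla_{i}\nabla_{j}\!\left(\Delta_{g(t)}\phi(t)\right)
+\beta_{1}\nabla_{i}\nabla_{j}\!\left|\nabla_{g(t)}\phi(t)\right|^{2}_{g(t)}
+\beta_{2}\nabla_{i}\nabla_{j}\phi(t),
\end{equation*}
and then swap the first Hessian with the Laplacian using the commutator $[\Delta_{g(t)},\nabla_{i}\nabla_{j}]\phi(t)$ that was already calculated in the proof of Lemma \ref{l2.3}; this converts $\nabla_{i}\nabla_{j}\Delta\phi$ into $\Delta(\nabla_{i}\nabla_{j}\phi)$ plus a collection of Ricci, $\nabla\mathrm{Ric}$, and $R_{ipjq}\nabla^{p}\nabla^{q}\phi$ corrections. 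For the middle piece, expand $\nabla_{i}\nabla_{j}|\nabla\phi|^{2}=2\nabla_{i}\nabla_{k}\phi\cdot\nabla_{j}\nabla^{k}\phi+2\nabla^{k}\phi\cdot\nabla_{i}\nabla_{j}\nabla_{k}\phi$, and convert $\nabla_{i}\nabla_{j}\nabla_{k}\phi$ into $\nabla_{k}\nabla_{i}\nabla_{j}\phi$ via the Ricci identity, picking up one $R\ast\nabla\phi$ term.

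For the second term I would plug in Lemma \ref{l2.2} and contract with $\nabla_{k}\phi(t)$. This produces a $-\alpha_{2}\,\nabla^{k}\nabla_{i}\nabla_{j}\phi\cdot\nabla_{k}\phi$ contribution (which will combine with the $2\beta_{1}\nabla^{k}\phi\cdot\nabla_{k}\nabla_{i}\nabla_{j}\phi$ coming from the first term to yield the coefficient $(2\beta_{1}-\alpha_{2})$ in the stated identity), together with $\nabla\mathrm{Ric}\ast\nabla\phi$ and $R\ast\nabla\phi\ast\nabla\phi$ terms.

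The final, bookkeeping step is to add everything and check that all of the $\nabla\mathrm{Ric}\cdot\nabla\phi$ pieces cancel against the analogous pieces from $[\Delta,\nabla_{i}\nabla_{j}]\phi$ (this is where the second Bianchi identity $\nabla^{\ell}R_{ijk\ell}=\nabla_{i}R_{jk}-\nabla_{j}R_{ik}$ is invoked, exactly as at the end of Lemma \ref{l2.3}), leaving only the terms on the right-hand side of the statement. The main obstacle is precisely this bookkeeping: there are two curvature-derivative contributions (one from $\partial_{t}\Gamma$ and one from $[\Delta,\nabla_{i}\nabla_{j}]$) which have to cancel, and one must carefully assemble the Ricci-times-Hessian, the curvature-times-$\nabla\phi\otimes\nabla\phi$, and the $|\nabla\phi|^{2}\,\nabla_{i}\nabla_{j}\phi$ terms with the correct combinatorial coefficients coming from both the $\alpha_{1},\alpha_{2}$ in $\partial_{t}\Gamma$ and the $\beta_{1},\beta_{2}$ in $\partial_{t}\phi$. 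No new ideas are needed beyond the Ricci identity, the Bianchi identity, and the two preceding lemmas; the work is essentially algebraic.
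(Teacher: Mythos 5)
Your proposal is correct and follows essentially the same route as the paper: start from $\partial_{t}(\nabla_{i}\nabla_{j}\phi)=\nabla_{i}\nabla_{j}(\partial_{t}\phi)-\partial_{t}\Gamma^{k}_{ij}\nabla_{k}\phi$, substitute~(\ref{2.5}) and Lemma~\ref{l2.2}, commute $\Delta$ past $\nabla_{i}\nabla_{j}$ using the Ricci and contracted Bianchi identities, and note the cancellation of the $\nabla\mathrm{Ric}\cdot\nabla\phi$ terms and the merging of the $2\beta_{1}$ and $-\alpha_{2}$ contributions into $(2\beta_{1}-\alpha_{2})\nabla^{k}\phi\,\nabla_{k}\nabla_{i}\nabla_{j}\phi$. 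The only (purely stylistic) difference is that you would cite the $[\Delta,\nabla_{i}\nabla_{j}]\phi$ identity already established in the proof of Lemma~\ref{l2.3}, whereas the paper re-derives it in place; the underlying computation is identical.
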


\begin{proof} Compute
\begin{eqnarray*}
\partial_{t}\left(\nabla_{i}\nabla_{j}
\phi(t)\right)&=&\partial_{t}\left[\partial_{i}
\partial_{j}\phi(t)-\Gamma^{k}_{ij}\partial_{k}\phi(t)\right]\\
&=&\partial_{i}\partial_{j}\left(\partial_{t}
\phi(t)\right)
-\left(\partial_{t}\Gamma^{k}_{ij}\right)\partial_{k}\phi(t)
-\Gamma^{k}_{ij}\cdot\partial_{t}\left(\partial_{k}\phi(t)\right)\\
&=&\nabla_{i}\nabla_{j}\left(\partial_{t}
\phi(t)\right)-\partial_{t}\Gamma^{k}_{ij}\cdot
\partial_{k}\phi(t).
\end{eqnarray*}
Using (\ref{2.5}) we have
\begin{eqnarray*}
\nabla_{i}\nabla_{j}
\left(\partial_{t}\phi(t)\right)
&=&\nabla_{i}\nabla_{j}
\left(\Delta_{g(t)}\phi(t)
+\beta_{1}|\nabla_{g(t)}\phi(t)|^{2}_{g(t)}
+\beta_{2}\phi(t)\right)\\
&=&\nabla_{i}
\left(\nabla_{k}\nabla_{j}\nabla^{k}\phi(t)
-R_{j\ell}\nabla^{\ell}\phi(t)\right)
+2\beta_{1}\left(\nabla_{i}\nabla^{k}\phi(t)\nabla_{j}
\nabla_{k}\phi(t)\right.\\
&& +\left.\nabla^{k}\phi(t)\nabla_{i}\nabla_{j}
\nabla_{k}\phi(t)\right)+\beta_{2}
\nabla_{i}\nabla_{j}\phi(t)\\
&=&\nabla_{i}\nabla_{k}\nabla_{j}\nabla^{k}
\phi(t)-\nabla_{i}R_{j\ell}\nabla^{\ell}\phi(t)
-R_{j\ell}\nabla_{i}\nabla^{\ell}\phi(t)+\beta_{2}\nabla_{i}\nabla_{j}\phi(t)\\
&&+ \ 2\beta_{1}
\left(\nabla_{i}\nabla^{k}\phi(t)
\nabla_{j}\nabla_{k}\phi(t)
+\nabla^{k}\phi(t)\nabla_{i}\nabla_{j}
\nabla_{k}\phi(t)\right)\\
&=&\nabla_{k}\nabla_{i}\nabla_{j}
\nabla^{k}\phi(t)-R^{\ell}_{ikj}\nabla_{\ell}
\nabla^{k}\phi(t)
+R^{k}_{ik\ell}\nabla_{j}\nabla^{\ell}\phi(t)\\
&&- \ \nabla_{i}R_{j\ell}\nabla^{\ell}\phi(t)
-R_{j\ell}\nabla_{i}\nabla^{\ell}
\phi(t)+\beta_{2}\nabla_{i}\nabla_{j}
\phi(t)\\
&&+ \ 2\beta_{1}\left(\nabla_{i}
\nabla^{k}\phi(t)\nabla_{j}
\nabla_{k}\phi(t)
+\nabla^{k}\phi(t)\nabla_{i}
\nabla_{j}\nabla_{k}\phi(t)\right).
\end{eqnarray*}
Since
\begin{eqnarray*}
\nabla_{k}\nabla_{i}
\nabla_{j}\nabla^{k}
\phi(t)
&=&\nabla^{k}\nabla_{i}\nabla_{j}\nabla_{k}
\phi(t) \ \ = \ \ \nabla^{k}\nabla_{i}
\nabla_{k}\nabla_{j}\phi(t)\\
&=&\nabla^{k}\left(\nabla_{k}
\nabla_{i}\nabla_{j}\phi(t)-R^{\ell}_{ikj}\nabla_{\ell}
\phi(t)\right)\\
&=&\Delta_{g(t)}
\left(\nabla_{i}\nabla_{j}\phi(t)\right)
-\nabla^{k}R^{\ell}_{ikj}\nabla_{\ell}
\phi(t)-R^{\ell}_{ikj}\nabla^{k}\nabla_{\ell}\phi(t),
\end{eqnarray*}
we have
\begin{eqnarray*}
\nabla_{i}\nabla_{j}(\partial_{t}\phi(t))
&=&\Delta_{g(t)}\left(\nabla_{i}\nabla_{j}
\phi(t)\right)
-R_{i\ell}\nabla^{\ell}\nabla_{j}
\phi(t)
-R_{j\ell}\nabla^{\ell}\nabla_{i}\phi(t)+\beta_{2}\nabla_{i}\nabla_{j}
\phi(t)\\
&&- \ \left(\nabla_{i}R_{j\ell}
+\nabla_{j}R_{i\ell}-\nabla_{\ell}R_{ij}\right)
\nabla^{\ell}\phi(t)-2R^{\ell}_{ikj}\nabla_{\ell}\nabla^{k}
\phi(t)\\
&&+ \ 2\beta_{1}\left(\nabla_{i}
\nabla^{k}\phi(t)\nabla_{j}\nabla_{k}
\phi(t)+\nabla^{k}\phi(t)
\nabla_{i}\nabla_{j}\nabla_{k}
\phi(t)\right).
\end{eqnarray*}
Using Lemma \ref{2.2} implies
\begin{eqnarray*}
\partial_{t}\left(\nabla_{i}
\nabla_{j}\phi(t)\right)&=&\Delta_{g(t)}\left(\nabla_{i}
\nabla_{j}\phi(t)\right)
-2R_{ikj\ell}\nabla^{k}\nabla^{\ell}\phi(t)
-R_{i\ell}\nabla^{\ell}\nabla_{j}\phi(t)\\
&&- \ R_{j\ell}\nabla^{\ell}\nabla_{i}
\phi(t)+\beta_{2}\nabla_{i}\nabla_{j}
\phi(t)+2\beta_{1}
\nabla_{i}\nabla^{k}
\phi(t)\nabla_{j}\nabla_{k}
\phi(t)\\
&&+ \ 2\beta_{1}\nabla^{k}
\phi(t)\nabla_{i}\nabla_{j}\nabla_{k}
\phi(t)-2\alpha_{1}|\nabla_{g(t)}\phi(t)|^{2}_{g(t)}
\nabla_{i}\nabla_{j}\phi(t)\\
&&- \ \alpha_{2}\nabla^{k}\phi(t)\nabla_{k}\nabla_{i}
\nabla_{j}\phi(t)+2\alpha_{2}
R_{ikj\ell}\nabla^{\ell}\phi(t)\nabla^{k}\phi(t).
\end{eqnarray*}
Now Lemma \ref{l2.7} follows from $\nabla_{i}
\nabla_{j}\nabla_{k}\phi(t)=\nabla_{k}
\nabla_{i}\nabla_{j}\phi(t)
-R^{\ell}_{ikj}\nabla_{\ell}
\phi(t)$.
\end{proof}

\begin{lemma} Under (\ref{2.4})--(\ref{2.5}), we have
\begin{eqnarray*}
\partial_{t}\left(\nabla_{i}
\phi(t)\nabla_{j}
\phi(t)\right)&=&\Delta_{g(t)}\left(\nabla_{i}\phi(t)\nabla_{j}\phi(t)\right)
-\nabla^{k}\phi(t)\left(R_{ik}\nabla_{j}
\phi(t)+R_{jk}\nabla_{i}\phi(t)\right)\\
&&- \ 2\nabla_{i}\nabla^{k}\phi(t)\nabla_{j}\nabla_{k}\phi(t)+2\beta_{2}\nabla_{i}
\phi(t)\nabla_{j}\phi(t)\\
&&+ \ 2\beta_{1}\nabla^{k}\phi(t)
\left(\nabla_{i}\phi(t)\nabla_{j}\nabla_{k}
\phi(t)+\nabla_{j}\phi(t)
\nabla_{i}\nabla_{k}\phi(t)\right).
\end{eqnarray*}
\end{lemma}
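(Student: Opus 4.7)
The plan is to carry out a direct Leibniz-rule computation, using the formula for $\partial_t \nabla_i\phi(t)$ already derived inside the proof of Lemma \ref{l2.6}. Since $\nabla_i\phi(t) = \partial_i\phi(t)$ involves no Christoffel symbols, the time derivative commutes with the first covariant derivative of the scalar $\phi(t)$, so that
\begin{equation*}
\partial_t \nabla_i \phi(t) = \Delta_{g(t)}\nabla_i\phi(t) - R_{ij}\nabla^j\phi(t) + 2\beta_1 \nabla^j\phi(t)\nabla_i\nabla_j\phi(t) + \beta_2 \nabla_i\phi(t),
\end{equation*}
as was established in the course of proving Lemma \ref{l2.6}.

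First I would apply the product rule in $t$ to $\nabla_i\phi(t)\nabla_j\phi(t)$, substitute the above expression twice, and collect symmetrically-paired terms. This immediately produces the $-\nabla^k\phi(R_{ik}\nabla_j\phi + R_{jk}\nabla_i\phi)$ term, the $2\beta_2\,\nabla_i\phi\nabla_j\phi$ term, and the $2\beta_1\nabla^k\phi(\nabla_i\phi\,\nabla_j\nabla_k\phi + \nabla_j\phi\,\nabla_i\nabla_k\phi)$ term appearing in the statement. All that remains is to rewrite the sum $(\Delta_{g(t)}\nabla_i\phi)\nabla_j\phi + \nabla_i\phi(\Delta_{g(t)}\nabla_j\phi)$ as a Laplacian of the product.

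The second step is to apply the Leibniz rule to the rough Laplacian on the $2$-tensor $\nabla_i\phi\,\nabla_j\phi$, namely
\begin{equation*}
\Delta_{g(t)}\bigl(\nabla_i\phi\,\nabla_j\phi\bigr) = (\Delta_{g(t)}\nabla_i\phi)\nabla_j\phi + \nabla_i\phi(\Delta_{g(t)}\nabla_j\phi) + 2\,\nabla^k\nabla_i\phi\,\nabla_k\nabla_j\phi,
\end{equation*}
which uses only the fact that $\phi(t)$ is a scalar (so that $\nabla_k\nabla_i\phi$ is symmetric in $k,i$, and no curvature correction appears from commuting $\nabla_k$ past $\nabla_i\phi$ beyond the one already encoded in $\Delta\nabla_i\phi$). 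Substituting this back into the previous expression turns the two Laplacian-of-gradient terms into $\Delta_{g(t)}(\nabla_i\phi\nabla_j\phi) - 2\,\nabla_i\nabla^k\phi\,\nabla_j\nabla_k\phi$, which accounts for the remaining two terms of the desired identity.

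I do not expect any genuine obstacle: neither equation (\ref{2.4}) nor the parameters $\alpha_1,\alpha_2$ enter, because $\partial_t(\nabla_i\phi\nabla_j\phi)$ is computed purely from $\partial_t\phi(t)$ and commutation of $\partial_t$ with $\partial_i,\partial_j$. The only bookkeeping care needed is to make sure the index raised by $g^{k\ell}$ inside $\Delta_{g(t)}$ and inside $\nabla^k\nabla_i\phi$ is treated consistently with $\partial_t g^{k\ell}\neq 0$; however, since we contract against the symmetric tensor $\nabla_k\nabla_j\phi$ (respectively $R_{ik}$), and the resulting expression is already being matched term by term with the statement, no extra contribution survives. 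The calculation is therefore a short symmetrization of the $\partial_t\nabla_i\phi$ formula together with one application of the Laplacian product rule.
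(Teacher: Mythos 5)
Your proof is correct and coincides with the paper's own argument: the paper likewise applies the time product rule to $\nabla_i\phi\,\nabla_j\phi$, substitutes the expression for $\partial_t\nabla_i\phi$ already derived in the proof of Lemma~\ref{l2.6}, and then (implicitly) recombines $(\Delta\nabla_i\phi)\nabla_j\phi+\nabla_i\phi(\Delta\nabla_j\phi)$ into $\Delta(\nabla_i\phi\,\nabla_j\phi)-2\nabla^k\nabla_i\phi\,\nabla_k\nabla_j\phi$ via the Laplacian Leibniz rule. Your remark that no stray $\partial_t g^{k\ell}$ terms arise is correct and worth noting: the tensor $\nabla_i\phi\,\nabla_j\phi$ carries only lower indices, so $\partial_t$ acts by the plain product rule, and the metric dependence enters only through the already-computed formula for $\partial_t\nabla_i\phi$.
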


\begin{proof} From the evolution equation for $\nabla_{i}\phi(t)$ obtained in the proof of Lemma \ref{l2.6}, we get
\begin{eqnarray*}
\partial_{t}\left(\nabla_{i}
\phi(t)\nabla_{j}\phi(t)\right)
&=&\nabla_{j}\phi(t)\bigg(\Delta_{g(t)}
\nabla_{i}\phi(t)-R_{ik}\nabla^{k}\phi(t)
+2\beta_{1}\nabla^{k}\phi(t)\nabla_{i}\nabla_{k}\phi(t)\\
&&+ \ \beta_{2}\nabla_{i}\phi(t)\bigg)
+\nabla_{i}\phi(t)\bigg(\Delta_{g(t)}
\nabla_{j}(t)-R_{jk}\nabla^{k}
\phi(t)\\
&&+ \ 2\beta_{1}\nabla^{k}\phi(t)
\nabla_{j}\nabla_{k}\phi(t)+\beta_{2}
\nabla_{j}\phi(t)\bigg)
\end{eqnarray*}
which implies the equation.
\end{proof}

\subsection{Regular flows on compact manifolds}\label{subsection2.2}

Let $(g(t),\phi(t))_{t\in[0,T)}$ be the solution of (\ref{2.4})--(\ref{2.5}) on a compact $m$-manifold $M$ with the initial value
$(\tilde{g},\tilde{\phi})$. Define
\begin{equation}
\tilde{c}:=\max_{M}|\nabla_{\tilde{g}}
\tilde{\phi}|^{2}_{\tilde{g}}, \ \ \
D:=\frac{1}{4}|2\beta_{1}-\alpha_{2}|^{2}-\alpha_{1}.
\label{2.7}
\end{equation}

\begin{proposition} Suppose $(g(t),\phi(t))_{t\in[0,T)}$ is the solution of (\ref{2.4})--(\ref{2.5}) on a compact $m$-manifold $M$ with the initial value $(\tilde{g},\tilde{\phi})$. Then we have
\begin{itemize}

\item[(1)] {\bf Case 1: $4\beta_{1}-2\alpha_{2}=0$.}

\begin{itemize}

\item[(1.1)] If $\alpha_{1}>0$ and $\beta_{2}>0$, then
\begin{equation*}
|\nabla_{g(t)}\phi(t)|^{2}_{g(t)}
\leq\frac{\tilde{c}\beta_{2}e^{2\beta_{2}t}}{\tilde{c}
\alpha_{1}e^{2\beta_{2}t}+(\beta_{2}-\tilde{c}\alpha_{1})}.
\end{equation*}

\item[(1.2)] If $\alpha_{1}>0$ and $\beta_{2}\leq0$, then
\begin{equation*}
|\nabla_{g(t)}\phi(t)|^{2}_{g(t)}
\leq\tilde{c}.
\end{equation*}

\item[(1.3)] If $\alpha_{1}=0$, then
\begin{equation*}
\left|\nabla_{g(t)}\phi(t)\right|^{2}_{g(t)}
\leq\tilde{c}e^{2\beta_{2}t}.
\end{equation*}

\item[(1.4)] If $\alpha_{1}<0$ and $\beta_{2}\leq0$, then
\begin{equation*}
\left|\nabla_{g(t)}
\phi(t)\right|^{2}_{g(t)}
\leq\frac{\tilde{c}}{1+2\alpha_{1}\tilde{c}t}.
\end{equation*}

\item[(1.5)] If $\alpha_{1}<0$ and $\beta_{2}>0$, then
\begin{equation*}
\left|\nabla_{g(t)}
\phi(t)\right|^{2}_{g(t)}
\leq\frac{\tilde{c}\beta_{2}e^{2\beta_{2}t}}{\beta_{2}
+\tilde{c}\alpha_{1}
\left(e^{2\beta_{2}t}-1\right)}.
\end{equation*}

\end{itemize}

\item[(2)] {\bf Case 2: $4\beta_{1}-2\alpha_{2}\neq0$.}

\begin{itemize}

\item[(2.1)] If $D<0$ and $\beta_{2}>0$, then
\begin{equation*}
|\nabla_{g(t)}\phi(t)|^{2}_{g(t)}
\leq\frac{\tilde{c}\beta_{2}e^{2\beta_{2}t}}{-\tilde{c}
De^{2\beta_{2}t}+(\beta_{2}+\tilde{c}D)}.
\end{equation*}

\item[(2.2)] If $D<0$ and $\beta_{2}\leq0$, then
\begin{equation*}
|\nabla_{g(t)}\phi(t)|^{2}_{g(t)}
\leq\tilde{c}.
\end{equation*}

\item[(2.3)] If $D=0$, then
\begin{equation*}
\left|\nabla_{g(t)}\phi(t)\right|^{2}_{g(t)}
\leq\tilde{c}e^{2\beta_{2}t}.
\end{equation*}

\item[(2.4)] If $D>0$ and $\beta_{2}\leq0$, then
\begin{equation*}
\left|\nabla_{g(t)}
\phi(t)\right|^{2}_{g(t)}
\leq\frac{\tilde{c}}{1-2D\tilde{c}t}.
\end{equation*}

\item[(2.5)] If $D>$ and $\beta_{2}>0$, then
\begin{equation*}
\left|\nabla_{g(t)}
\phi(t)\right|^{2}_{g(t)}
\leq\frac{\tilde{c}\beta_{2}e^{2\beta_{2}t}}{\beta_{2}
-\tilde{c}D\left(e^{2\beta_{2}t}-1\right)}.
\end{equation*}

\end{itemize}

\end{itemize}

\end{proposition}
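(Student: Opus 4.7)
The plan is to reduce the evolution equation of Lemma~\ref{l2.6} for $u(x,t) := |\nabla_{g(t)}\phi(t)|^2_{g(t)}$ to a scalar Bernoulli ODE via the parabolic maximum principle, and then integrate the ODE explicitly in each sub-case.

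The heart of the argument is a completion of squares. Writing $H := \nabla^2_{g(t)}\phi(t)$, $S := \nabla_{g(t)}\phi(t)\otimes\nabla_{g(t)}\phi(t)$, and $a := 4\beta_1 - 2\alpha_2$, one has $|S|^2_{g(t)} = u^2$ and
\[
-2|H|^2_{g(t)} + a\langle S,H\rangle_{g(t)} \;=\; -2\bigl|H - \tfrac{a}{4}S\bigr|^2_{g(t)} + \tfrac{a^2}{8}|S|^2_{g(t)} \;\leq\; \tfrac{a^2}{8}\,u^2.
\]
Adding the $-2\alpha_1 u^2$ term from Lemma~\ref{l2.6} and using $\tfrac{a^2}{8} - 2\alpha_1 = \tfrac{1}{2}|2\beta_1-\alpha_2|^2 - 2\alpha_1 = 2D$, the evolution inequality becomes
\[
\partial_t u \;\leq\; \Delta_{g(t)} u + 2\beta_2\, u + 2D\, u^2.
\]
This single inequality covers Case~1 (where $a=0$ forces $D = -\alpha_1$) and Case~2 (general $a$) in a unified way, and explains the precise form of $D$ in \eqref{2.7}. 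Since $M$ is compact, the parabolic maximum principle yields $\max_M u(\cdot,t) \leq y(t)$ whenever $y$ solves the Bernoulli ODE
\[
y'(t) \;=\; 2\beta_2\, y(t) + 2D\, y(t)^2,\qquad y(0) = \tilde c,
\]
on $[0,t]$.

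The rest is explicit integration. When $D = 0$ (Cases 1.3, 2.3), separation of variables gives $y = \tilde c\, e^{2\beta_2 t}$; when $\beta_2 = 0$, it gives $y = \tilde c/(1 - 2D\tilde c t)$. In the generic case $\beta_2, D \neq 0$, partial fractions on $\frac{dy}{y(\beta_2+Dy)} = 2\,dt$ integrate to
\[
y(t) \;=\; \frac{\tilde c\, \beta_2\, e^{2\beta_2 t}}{\beta_2 - \tilde c D\bigl(e^{2\beta_2 t}-1\bigr)},
\]
which recovers Cases 1.5 and 2.5 directly, and Cases 1.1 and 2.1 after a rewrite of the denominator. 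For the sub-cases 1.2, 1.4, 2.2, 2.4 with $\beta_2 \leq 0$ and $D$ of favorable sign, I would drop the non-positive term ($2\beta_2 u$ or $2Du^2$) from the inequality before comparing with a reduced ODE, yielding the constant bound $y = \tilde c$ or $y = \tilde c/(1-2D\tilde c t)$ respectively. The only substantive step is the completion-of-squares identification $\tfrac{a^2}{8} - 2\alpha_1 = 2D$; the remaining case bookkeeping is routine.
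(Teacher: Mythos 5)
Your proposal is correct and follows essentially the same strategy as the paper: reduce to the differential inequality $\partial_t u \leq \Delta_{g(t)} u + 2\beta_2 u + 2D u^2$ via Lemma \ref{l2.6}, then apply the compact maximum principle against the Bernoulli ODE $y' = 2\beta_2 y + 2D y^2$, $y(0)=\tilde c$, and integrate explicitly. The one place you differ is the reduction step: the paper first applies Cauchy--Schwarz to get $\langle S,H\rangle \leq u\,v$ with $v=|\nabla^2\phi|$, then Young's inequality with the specific choice $\epsilon = 2/|4\beta_1-2\alpha_2|$ to absorb the $v^2$ term; you instead complete the square on $-2|H|^2 + a\langle S,H\rangle$ directly. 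These yield the identical inequality (the optimal Young split is equivalent to completing the square), but your version is marginally cleaner because it never takes an absolute value on the mixed term and treats Cases 1 and 2 in a single formula, making the origin of $D$ transparent rather than an output of a tuned $\epsilon$. The ODE bookkeeping and case-by-case dropping of nonpositive terms (for $\beta_2 \leq 0$ or $D$ of the favorable sign) are the same as in the paper.
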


\begin{proof} For any time $t$, we have
\begin{equation*}
\left\langle\nabla_{g(t)}
\phi(t)\otimes\nabla_{g(t)}\phi(t),\nabla^{2}_{g(t)}
\phi(t)\right\rangle_{g(t)}
\leq\left|\nabla_{g(t)}\phi(t)\right|^{2}_{g(t)}
\left|\nabla^{2}_{g(t)}\phi(t)\right|_{g(t)}.
\end{equation*}
By Lemma \ref{l2.6} we have
\begin{eqnarray*}
\partial_{t}|\nabla_{g(t)}\phi(t)|^{2}_{g(t)}
&\leq&\Delta_{g(t)}|\nabla_{g(t)}\phi(t)|^{2}_{g(t)}+2\beta_{2}|\nabla_{g(t)}
\phi(t)|^{2}_{g(t)}\\
&&- \ 2\left|\nabla^{2}_{g(t)}
\phi(t)\right|^{2}_{g(t)}-2\alpha_{1}|\nabla_{g(t)}\phi(t)|^{4}_{g(t)}\\
&&+ \ |4\beta_{1}-2\alpha_{2}||\nabla_{g(t)}
\phi(t)|^{2}\left|\nabla^{2}_{g(t)}
\phi(t)\right|.
\end{eqnarray*}
For convenience, set
\begin{equation*}
u(t):=|\nabla_{g(t)}\phi(t)|^{2}_{g(t)}, \ \ \ v(t):=\left|\nabla^{2}_{g(t)}
\phi(t)\right|_{g(t)}.
\end{equation*}
Then
\begin{equation*}
\partial_{t}u(t)\leq\Delta_{g(t)}u(t)+2\beta_{2}u(t)
-2v^{2}(t)-2\alpha_{1}u^{2}(t)+|4\beta_{1}-2\alpha_{2}|u(t)v(t).
\end{equation*}

(1) Case 1: $4\beta_{1}-2\alpha_{2}=0$. In this case, the above inequality becomes
\begin{equation*}
\partial_{t}u(t)\leq\Delta_{g(t)}u(t)
+2\beta_{2}u(t)-2\alpha_{1}u^{2}(t).
\end{equation*}
If $\alpha_{1}\geq0$, then
\begin{equation*}
\partial_{t}u(t)\leq\Delta_{g(t)}u(t)+2\beta_{2}u(t)
\end{equation*}
and
\begin{equation*}
\partial_{t}\big(e^{-2\beta_{2}t}u(t)\big)
=e^{-2\beta_{2}t}\big(-2\beta_{2}u(t)+\partial_{t}u(t)\big)
\leq\Delta\big(e^{-2\beta_{2}t}u(t)\big)
\end{equation*}
from which we obtain $u(t)\leq \tilde{c}e^{2\beta_{2}t}$ by the
maximum principle.

If $\alpha_{1}<0$ and $\beta_{2}\leq0$, then
\begin{equation*}
\partial_{t}u(t)\leq\Delta_{g(t)}u(t)-2\alpha_{1}u^{2}(t)
\end{equation*}
and $u_{t}\leq\frac{u(0)}{1+2\alpha_{1}u(0)t}\leq\frac{\tilde{c}}{1+
2\alpha_{1}\tilde{c}t}$. On the other hand, if $\beta_{2}>0$, then
\begin{equation*}
u(t)\leq\frac{\tilde{c}\beta_{2}e^{2\beta_{2}t}}{\beta_{2}
+\tilde{c}\alpha_{1}(e^{2\beta_{2}t}-1)}
\end{equation*}
since the solution to the ordinary differential equation
\begin{equation*}
\frac{dU(t)}{dt}=2\beta_{2}U(t)-2\alpha_{1}U^{2}(t), \ \ \
U(0)=\tilde{c}
\end{equation*}
is of the form
\begin{equation*}
U(t)=\frac{\tilde{c}\beta_{2}e^{2\beta_{2}t}}{\beta_{2}
+\tilde{c}\alpha_{1}(e^{2\beta_{2}t}-1)}.
\end{equation*}

(2) Case 2: $4\beta_{1}-2\alpha_{2}\neq0$. Using the inequality $ab\leq\epsilon a^{2}+\frac{1}{4\epsilon}
b^{2}$ for any nonnegative numbers $a,b$ and any positive number $\epsilon$, we obtain
\begin{eqnarray*}
\partial_{t}u(t)&\leq&\Delta_{g(t)}
u(t)+2\beta_{2}u(t)-2v^{2}(t)-2\alpha_{1}u^{2}(t)\\
&&+ \ |4\beta_{1}-2\alpha_{2}|
\bigg(\epsilon v^{2}(t)+\frac{1}{4\epsilon}u^{2}\bigg)\\
&=&\Delta_{g(t)}u(t)+2\beta_{2}u(t)
-\left(\epsilon|4\beta_{1}-2\alpha_{2}|-2\right)v^{2}(t)\\
&&+ \ \left(\frac{|4\beta_{1}-2\alpha_{2}|}{4\epsilon}
-2\alpha_{1}\right)u^{2}(t).
\end{eqnarray*}
Choosing $\epsilon:=2/|4\beta_{1}-2\alpha_{2}|$ implies
\begin{equation*}
\partial_{t}u(t)\leq\Delta_{g(t)}u(t)
+2\beta_{2}u(t)+2D u^{2}(t),
\end{equation*}
where $D$ is given in (\ref{2.7}). This is just the case (1) if we replace $\alpha_{1}$ by $-D$. The following discussion can be
obtained.
\end{proof}

\begin{corollary}\label{c2.10} Suppose $(g(t),\phi(t))_{t\in[0,T)}$ is the solution of (\ref{2.4})--(\ref{2.5}) on a compact $m$-manifold $M$ with the initial value $(\tilde{g},\tilde{\phi})$. If $\alpha_{1},\alpha_{2},\beta_{1},\beta_{2}$ satisfy one of
the conditions
\begin{itemize}

\item[(i)] $\beta_{2}\leq0$ and $4\alpha_{1}\geq(2\beta_{1}-\alpha_{2})^{2}$, or

\item[(ii)] $\beta_{2}>0$ and $\frac{4}{\tilde{c}}\beta_{2}+|2\beta_{1}
-\alpha_{2}|^{2}\geq4\alpha_{1}>(2\beta_{1}-\alpha_{2})^{2}$,

\end{itemize}
then
\begin{equation}
|\nabla_{g(t)}\phi(t)|^{2}_{g(t)}\leq C\label{2.8}
\end{equation}
on $M\times[0,T)$, where $C$ is a positive constant
depending only on $\alpha_{1}, \beta_{2}$, and $|\nabla_{\tilde{g}}
\tilde{\phi}|^{2}_{\tilde{g}}$.
\end{corollary}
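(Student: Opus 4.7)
The plan is to derive Corollary \ref{c2.10} directly from the preceding proposition by a short case-by-case verification. The key observation is that the inequality $4\alpha_{1} \geq (2\beta_{1}-\alpha_{2})^{2}$ in (i) is exactly $D \leq 0$, while the chain in (ii) is equivalent to the conjunction of $D < 0$ and $\beta_{2}+\tilde{c}D \geq 0$ (which in the degenerate sub-case $4\beta_{1}-2\alpha_{2}=0$ reads $\alpha_{1} > 0$ and $\beta_{2} \geq \tilde{c}\alpha_{1}$). Thus the hypotheses of the corollary single out precisely those sub-cases in Case 1 and Case 2 of the proposition whose explicit bound is uniform in time.

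Under (i) the relevant sub-cases are (1.2)--(1.3) when $4\beta_{1}-2\alpha_{2}=0$ and (2.2)--(2.3) otherwise. Each of these yields either the constant bound $\tilde{c}$ or the exponentially decaying bound $\tilde{c}e^{2\beta_{2}t} \leq \tilde{c}$ (using $\beta_{2} \leq 0$), so the conclusion holds with $C = \tilde{c}$. Under (ii) we are placed into sub-case (1.1) or (2.1), both of which give an estimate of the form
\begin{equation*}
\left|\nabla_{g(t)}\phi(t)\right|^{2}_{g(t)} \leq \frac{\tilde{c}\beta_{2}e^{2\beta_{2}t}}{Ae^{2\beta_{2}t}+B},
\end{equation*}
with $A > 0$ and $B \geq 0$: specifically $(A,B) = (\tilde{c}\alpha_{1},\, \beta_{2}-\tilde{c}\alpha_{1})$ in Case 1 and $(A,B) = (-\tilde{c}D,\, \beta_{2}+\tilde{c}D)$ in Case 2. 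A direct computation shows this quotient is monotone increasing in $t$ and tends to $\tilde{c}\beta_{2}/A$ as $t\to\infty$, so $C := \tilde{c}\beta_{2}/A$ serves as a uniform bound.

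The one piece of bookkeeping to verify, and the only place where the strictness $4\alpha_{1} > (2\beta_{1}-\alpha_{2})^{2}$ is actually used, is that the denominator in the displayed fraction remains strictly positive on all of $[0,T)$; otherwise the formula inherited from the proposition would cease to be meaningful. This is immediate: at $t = 0$ the denominator equals $A + B = \beta_{2} > 0$, and its $t$-derivative equals $2\beta_{2}Ae^{2\beta_{2}t}$, which is positive since $\beta_{2}, A > 0$. Hence the denominator is bounded below by $\beta_{2}$ throughout $[0,T)$. Assembling the constants from the various sub-cases and taking their maximum yields the constant $C$ in (\ref{2.8}), depending only on $\alpha_{1}$, $\beta_{2}$, and $\tilde{c} = |\nabla_{\tilde{g}}\tilde{\phi}|^{2}_{\tilde{g}}$, as asserted.
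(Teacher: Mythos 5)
Your proof is correct and follows exactly the route the paper intends: Corollary~\ref{c2.10} is a direct read-off of the ten sub-cases of the preceding proposition, and you have correctly matched the hypotheses to the relevant sub-cases (condition (i) lands in (1.2)--(1.3) or (2.2)--(2.3); condition (ii) lands in (1.1) or (2.1)). The translation of the corollary's inequalities into $D\le0$, respectively $D<0$ and $\beta_2+\tilde c D\ge 0$, is also right.

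One small inaccuracy in the bookkeeping paragraph is worth flagging. You say the strict inequality $4\alpha_1>(2\beta_1-\alpha_2)^2$ is used ``only'' to keep the denominator positive, but denominator positivity is actually automatic even if $A=0$: at $t=0$ the denominator equals $A+B=\beta_2>0$, and $Ae^{2\beta_2 t}+B$ is nondecreasing in $t$ whenever $A\ge0$, so it never drops below $\beta_2$. What the strictness really buys is $A>0$, which you have already used in the earlier sentence to obtain a \emph{finite} supremum $\tilde c\beta_2/A$ of the monotone-increasing quotient; if $A=0$ one falls into sub-case (1.3) or (2.3) and the bound $\tilde c e^{2\beta_2 t}$ is unbounded on a noncompact time interval. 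So the strictness is needed for boundedness of the quotient, not for positivity of the denominator. This does not affect the validity of your argument, but the attribution of where the hypothesis does work should be corrected.

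(A separate remark, not a defect of your proof: in sub-case (2.1) the constant you obtain is $C=\tilde c\beta_2/A=-\beta_2/D$ with $D=\tfrac14|2\beta_1-\alpha_2|^2-\alpha_1$, so $C$ in fact depends on $\alpha_2$ and $\beta_1$ as well; the dependence asserted in the corollary's statement is slightly understated in the paper itself.)
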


In particular, we recover List's result \cite{L05} for $(\alpha_{1},
\alpha_{2},\beta_{1},\beta_{2})=(4,0,0,0)$. Anther example is $(\alpha_{1},\alpha_{2},\beta_{1},\beta_{2})=(\alpha_{1}, 2\beta_{1},\beta_{1},0)$, where $\alpha_{1}\geq0$ and $\beta_{1}
\in{\bf R}$.

\begin{definition}\label{d2.11} We say the flow (\ref{2.4})--(\ref{2.5}) is {\it regular}, if the constants $\alpha_{1},\alpha_{2},\beta_{1},\beta_{2}$ satisfy the conditions (i) or (ii) in Corollary \ref{c2.10}. An $(\alpha_{1},\alpha_{2},
\beta_{1},\beta_{2})$-flow is {\it $\star$-regular} if the associated $(\alpha_{1},0,
\beta_{1}-\alpha_{2},\beta_{2})$-flow (see Proposition \ref{p2.12}) is regular.
\end{definition}

Clearly that there are no relations between regular flows
and $\star$-regular in general. For example, $(1,1,0,0)$-flow is regular but
not $\star$-regular, while $(1,1,2,0)$-flow is $\star$-regular but not regular.

\subsection{Reduction to $(\alpha_{1},0,\beta_{1},
\beta_{2})$-flow}

Let $(\bar{g}(t),\bar{\phi}(t))$ be the solution of
(\ref{2.4})--(\ref{2.5}); that is,
\begin{eqnarray*}
\partial_{t}\bar{g}(t)&=&-2{\rm Ric}_{\bar{g}(t)}
+2\alpha_{1}\nabla_{\bar{g}(t)}\bar{\phi}(t)
\otimes\nabla_{\bar{g}(t)}\bar{\phi}(t)
+2\alpha_{2}\nabla^{2}_{\bar{g}(t)}
\bar{\phi}(t),\\
\partial_{t}\bar{\phi}(t)
&=&\Delta_{\bar{g}(t)}\bar{\phi}(t)
+\beta_{1}|\nabla_{\bar{g}(t)}\bar{\phi}(t)|^{2}_{
\bar{g}(t)}+\beta_{2}\bar{\phi}(t).
\end{eqnarray*}
Consider a $1$-parameter family of diffeomorphisms $\Phi(t):
M\to M$ by
\begin{equation}
\frac{d}{dt}\Phi(t)=-\alpha_{2}\nabla_{\bar{g}(t)}
\bar{\phi}(t), \ \ \ \Phi(0)={\rm Id}_{M}.\label{2.9}
\end{equation}
The above system of ODE is always solvable. Define
\begin{equation}
g(t):=[\Phi(t)]^{\ast}\bar{g}(t), \ \ \
\phi(t):=[\Phi(t)]^{\ast}\bar{\phi}(t).\label{2.10}
\end{equation}
Then
\begin{eqnarray*}
\partial_{t}g(t)&=&[\Phi(t)]^{\ast}\bigg(\partial_{t}
\bar{g}(t)\bigg)
-\alpha_{2}[\Phi(t)]^{\ast}\bigg(
\mathscr{L}_{\nabla_{\bar{g}(t)}\bar{\phi}(t)}
\bar{g}(t)\bigg)\\
&=&[\Phi(t)]^{\ast}
\bigg(-2{\rm Ric}_{\bar{g}(t)}
+2\alpha_{1}\nabla_{\bar{g}(t)}\bar{\phi}(t)
\otimes\nabla_{\bar{g}(t)}\bar{\phi}(t)
+2\alpha_{2}\nabla^{2}_{\bar{g}(t)}
\bar{\phi}(t)\bigg)\\
&&- \ 2\alpha_{2}[\Phi(t)]^{\ast}\nabla^{2}_{\bar{g}(t)}
\bar{\phi}(t)\\
&=&-2{\rm Ric}_{g(t)}+2\alpha_{1}\nabla_{g(t)}
\phi(t)\otimes\nabla_{g(t)}\phi(t),\\
\partial_{t}\phi(t)&=&\partial_{t}
\bigg([\Phi(t)]^{\ast}\bar{\phi}(t)\bigg)\\
&=&\partial_{t}\left(\bar{\phi}(t)\circ\Phi(t)\right)\\
&=&\partial_{t}\bar{\phi}(t)\circ\Phi(t)-\alpha_{2}|\nabla_{\bar{g}(t)}
\bar{\phi}(t)|^{2}_{\bar{g}(t)}\\
&=&\Delta_{g(t)}\phi(t)+(\beta_{1}-\alpha_{2})
|\nabla_{g(t)}\phi(t)|^{2}_{g(t)}+\beta_{2}\phi(t).
\end{eqnarray*}

\begin{proposition}\label{p2.12} Under a $1$-parameter family
of diffeomorphisms given by (\ref{2.9}), any solution of an $(\alpha_{1},\alpha_{2},\beta_{1},\beta_{2})$-flow is equivalent to a solution of $(\alpha_{1},0,\beta_{1}-\alpha_{2},\beta_{2})$-
flow.
\end{proposition}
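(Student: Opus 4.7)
The proof is a standard pullback/gauge-fixing argument, and indeed the calculation just before the proposition already carries it out; my plan is to organize that calculation conceptually. The core idea is that the term $2\alpha_2\nabla^2_{\bar g(t)}\bar\phi(t)$ in the evolution of $\bar g(t)$ is of Lie-derivative type, since for any smooth function $\phi$ on a Riemannian manifold $(M,g)$ one has the identity $\mathscr{L}_{\nabla_g\phi}\,g = 2\nabla^2_g\phi$. Therefore we can hope to kill this term by pulling back along the flow of the vector field $-\alpha_2\nabla_{\bar g(t)}\bar\phi(t)$.

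The plan is as follows. First, solve the ODE \eqref{2.9} on $M$ to obtain a $1$-parameter family of diffeomorphisms $\Phi(t)\colon M\to M$; existence on the time interval of the flow is not an issue because $-\alpha_2\nabla_{\bar g(t)}\bar\phi(t)$ is smooth in $(x,t)$. Next, define $g(t)$ and $\phi(t)$ via \eqref{2.10}, and invoke the basic identity for time-dependent pullbacks: if $X(t)$ is a time-dependent vector field with flow $\Phi(t)$, then for any time-dependent tensor field $T(t)$,
\begin{equation*}
\partial_t\!\left([\Phi(t)]^{\ast}T(t)\right)
= [\Phi(t)]^{\ast}\!\left(\partial_t T(t) + \mathscr{L}_{X(t)}T(t)\right).
\end{equation*}
Apply this with $T(t)=\bar g(t)$ and $X(t)=-\alpha_2\nabla_{\bar g(t)}\bar\phi(t)$. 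Substituting the $(\alpha_1,\alpha_2,\beta_1,\beta_2)$-flow equation for $\partial_t\bar g(t)$ and using $\mathscr{L}_{\nabla_{\bar g(t)}\bar\phi(t)}\bar g(t) = 2\nabla^2_{\bar g(t)}\bar\phi(t)$, the $2\alpha_2\nabla^2\bar\phi$ term cancels. Since $\Phi(t)$ is a diffeomorphism, pullback commutes with the Ricci tensor and with $\nabla\phi\otimes\nabla\phi$, which converts everything into $g(t)$-quantities and yields the $(\alpha_1,0,\cdot,\cdot)$-form of the metric equation.

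For $\phi(t)=[\Phi(t)]^{\ast}\bar\phi(t)=\bar\phi(t)\circ\Phi(t)$, differentiate by the chain rule:
\begin{equation*}
\partial_t\phi(t) = [\Phi(t)]^{\ast}\partial_t\bar\phi(t) + d\bar\phi(t)\!\left(\tfrac{d}{dt}\Phi(t)\right) = [\Phi(t)]^{\ast}\partial_t\bar\phi(t) - \alpha_2\,[\Phi(t)]^{\ast}|\nabla_{\bar g(t)}\bar\phi(t)|^{2}_{\bar g(t)}.
\end{equation*}
Substituting \eqref{2.5} for $\partial_t\bar\phi(t)$ and using the fact that pullback by a diffeomorphism commutes with the Laplacian and the squared gradient norm (applied to the pulled-back metric and function) gives exactly
\begin{equation*}
\partial_t\phi(t) = \Delta_{g(t)}\phi(t) + (\beta_1-\alpha_2)|\nabla_{g(t)}\phi(t)|^{2}_{g(t)} + \beta_2\phi(t),
\end{equation*}
which is the scalar equation of the $(\alpha_1,0,\beta_1-\alpha_2,\beta_2)$-flow. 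The only point that requires a small amount of care, and which is really the only potential obstacle, is the bookkeeping with the time-dependent pullback identity and the verification that the geometric operators $\mathrm{Ric}$, $\Delta$, $\nabla$ behave equivariantly under $\Phi(t)$; these are standard facts about Riemannian functoriality, so the computation is direct.
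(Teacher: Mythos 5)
Your proposal is correct and takes essentially the same approach as the paper: the paper proves Proposition~\ref{p2.12} precisely by carrying out, in the paragraph preceding the statement, the pullback computation you describe, using $\mathscr{L}_{\nabla_{\bar g}\bar\phi}\bar g = 2\nabla^2_{\bar g}\bar\phi$ to cancel the $\alpha_2$-term in the metric equation and the chain rule to produce the shift $\beta_1\mapsto\beta_1-\alpha_2$ in the scalar equation. The only point both you and the paper treat somewhat casually is the global-in-time solvability of the ODE \eqref{2.9} on a noncompact manifold, but the conceptual content of your argument matches the paper's exactly.
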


\subsection{De Turck's trick}\label{subsection2.5}

By Proposition \ref{p2.12}, we suffice to study $(\alpha_{1},0,
\beta_{1},\beta_{2})$-flow:
\begin{eqnarray}
\partial_{t}g(t)&=&-2{\rm Ric}_{g(t)}
+2\alpha_{1}\nabla_{g(t)}\phi(t)\otimes\nabla_{g(t)}\phi(t),
\label{2.11}\\
\partial_{t}\phi(t)&=&\Delta_{g(t)}\phi(t)
+\beta_{1}\left|\nabla_{g(t)}\phi(t)\right|^{2}_{g(t)}
+\beta_{2}\phi(t).\label{2.12}
\end{eqnarray}

Let $(M,\tilde{g})$ be an $m$-dimensional compact or complete noncompact Riemannian manifold with $\tilde{g}
=\tilde{g}_{ij}dx^{i}\otimes dx^{j}$ and $\tilde{\phi}$ a
smooth function on $M$. Let $(\hat{g}(t),\hat{\phi}(t))_{t\in[0,T]}$ be a solution of (\ref{2.11})--(\ref{2.12}) with the initial data $(\tilde{g},\tilde{\phi})$, that is,
\begin{eqnarray}
\partial_{t}\hat{g}(t)&=&-2{\rm Ric}_{\hat{g}(t)}
+2\alpha_{1}\nabla_{\hat{g}(t)}
\hat{\phi}(t)\otimes\nabla_{\hat{g}(t)}
\hat{\phi}(t),\label{2.13}\\
\partial_{t}\hat{\phi}(t)
&=&\Delta_{\hat{g}(t)}
\hat{\phi}(t)+\beta_{1}
\left|\nabla_{\hat{g}(t)}
\hat{\phi}(t)\right|^{2}_{\hat{g}(t)}
+\beta_{2}\hat{\phi}(t)\label{2.14}
\end{eqnarray}
with $(\hat{g}(0),\hat{\phi}(0))=(\tilde{g},\tilde{\phi})$.

\begin{notation}\label{n2.13} If $\hat{g}(t)$ is a time-dependent Riemannian metric, its components are written as
$\hat{g}_{ij}$ or $\hat{g}_{ij}(x,t)$ when we want to
indicate space and time. The corresponding components of ${\rm Rm}_{\hat{g}(t)}, {\rm Ric}_{\hat{g}(t)}$ and $
\nabla_{\hat{g}(t)}$ are $\hat{R}_{ijk\ell}, \hat{R}_{ij}$
and $\hat{\nabla}_{i}$, respectively. In the form of local
components, we always omit space and time variables for
convenience.
\end{notation}

Locally, the system (\ref{2.13})--(\ref{2.14}) is of the form
\begin{equation}
\partial_{t}\hat{g}_{ij}=-2\hat{R}_{ij}
+2\alpha_{1}\hat{\nabla}_{i}\hat{\phi}
\hat{\nabla}_{j}\hat{\phi}, \ \ \
\partial_{t}\hat{\phi}=\hat{\Delta}\hat{\phi}
+\beta_{1}|\hat{\nabla}\hat{\phi}|^{2}_{\hat{g}}+\beta_{2}
\hat{\phi}\label{2.15}
\end{equation}
with $(\hat{g}_{ij}(0),\hat{\phi}(0))=(\tilde{g}_{ij},
\tilde{\phi})$. The system (\ref{2.15}) is not strictly parabolic even for the
case $\alpha_{1}=\beta_{1}=\beta_{2}$. As in the Ricci flow (see \cite{S89}) we consider one-parameter family of diffeomorphisms $(\Psi_{t})_{t\in[0,T]}$ on $M$ as follows: Let
\begin{equation}
g(t):=\Psi^{\ast}_{t}\hat{g}(t)=g_{ij}(x,t)dx^{i}
\otimes dx^{j}, \ \ \ \phi(t):=\Psi^{\ast}_{t}
\hat{\phi}(t), \ \ \ t\in[0,T]\label{2.16}
\end{equation}
and $\Psi_{t}(x):=y(x,t)$ be the solution of the
quasilinear first order system
\begin{equation}
\partial_{t}y^{\alpha}=\frac{\partial}{\partial x^{k}}
y^{\alpha}\cdot g^{\beta\gamma}\left(\Gamma^{k}_{\beta\gamma}-\tilde{\Gamma}^{k}_{\beta\gamma}
\right), \ \ \ y^{\alpha}(x,0)=x^{\alpha},\label{2.17}
\end{equation}
where $\Gamma$ and $\tilde{\Gamma}$ are Christoffel symbols
of $g$ and $\hat{g}$ respectively. As in \cite{L05, S89}, we have
\begin{equation}
\partial_{t}g_{ij}=-2R_{ij}+2\alpha_{1}\nabla_{i}
\phi\nabla_{j}\phi+\nabla_{i}V_{j}+\nabla_{j}V_{i},\label{2.18}
\end{equation}
where
\begin{equation*}
V_{i}:=g_{ik}g^{\beta\gamma}\left(\Gamma^{k}_{\beta\gamma}
-\tilde{\Gamma}^{k}_{\beta\gamma}\right).\label{2.19}
\end{equation*}
Similarly, we have
\begin{eqnarray*}
\partial_{t}\phi(x,t)&=&\partial_{t}\hat{\phi}(y,t) \ \
= \ \ \partial_{t}\hat{\phi}(y,t)+\frac{\partial}{\partial y^{\alpha}}\hat{\phi}(y,t)\partial_{t}y^{\alpha}\\
&=&\bigg(\hat{g}^{\alpha\beta}(y,t)
\hat{\nabla}_{\alpha}\hat{\nabla}_{\beta}
\hat{\phi}(y,t)+\beta_{1}
\hat{g}^{\alpha\beta}(y,t)
\hat{\nabla}_{\alpha}\hat{\phi}(y,t)
\hat{\nabla}_{\beta}\hat{\phi}(y,t)
+\beta_{2}\hat{\phi}(y,t)\bigg)\\
&&+ \ \frac{\partial}{\partial y^{\alpha}}
\hat{\phi}(y,t)\frac{\partial y^{\alpha}}{\partial x^{k}} g^{ik}V_{i}\\
&=&\frac{\partial}{\partial x^{k}}\hat{\phi}(y,t)\cdot g^{ik}V_{i}
+\beta_{1}g^{ij}\frac{\partial y^{\alpha}}{\partial x^{i}}
\frac{\partial y^{\beta}}{\partial x^{j}}
\hat{\nabla}_{\alpha}\hat{\phi}(y,t)\hat{\nabla}_{\beta}
\hat{\phi}(y,t)\\
&&+ \ \beta_{2}\hat{\phi}(y,t)+g^{ij}
\frac{\partial y^{\alpha}}{\partial x^{i}}
\frac{\partial y^{\beta}}{\partial x^{j}}\hat{\nabla}_{\alpha}
\hat{\nabla}_{\beta}\hat{\phi}(y,t)\\
&=&\Delta\phi(x,t)+\beta_{1}|\nabla\phi|^{2}_{g}+\beta_{2}\phi(x,t)
+\langle V,\nabla\phi\rangle_{g}.
\end{eqnarray*}
Here, $\Delta$ and $\nabla$ are Laplacian and Levi-Civita connection of $g$ accordingly. Hence, under the one-parameter family of
diffeomorphisms $(\Psi_{t})_{t\in[0,T]}$ on $M$, (\ref{2.15}) is equivalent to
\begin{eqnarray}
\partial_{t}g_{ij}&=&-2R_{ij}+2\alpha_{1}\nabla_{i}
\phi\nabla_{j}\phi+\nabla_{i}V_{j}+\nabla_{j}V_{i},\label{2.19}\\
\partial_{t}\phi&=&\Delta\phi+\beta_{1}|\nabla\phi|^{2}_{g}
+\beta_{2}\phi+\langle V,\nabla\phi\rangle_{g}\label{2.20}
\end{eqnarray}
with $(g_{ij}(0),\phi(0))=(\tilde{g}_{ij},\tilde{\phi})$.

\begin{lemma}\label{l2.14} the system (\ref{2.19})--(\ref{2.20}) is strictly parabolic. Moreover
\begin{eqnarray}
\partial_{t}g_{ij}
&=&g^{\alpha\beta}\tilde{\nabla}_{\alpha}
\tilde{\nabla}_{\beta}g_{ij}+g^{\alpha\beta}
g_{ip}\tilde{g}^{pq}\tilde{R}_{j\alpha q\beta}
+g^{\alpha\beta}g_{jp}\tilde{g}^{pq}\tilde{R}_{i\alpha q\beta}
\nonumber\\
&&+ \ \frac{1}{2}g^{\alpha\beta}g^{pq}\bigg(
\tilde{\nabla}_{i}g_{p\alpha}\tilde{\nabla}_{j}
g_{q\beta}
+2\tilde{\nabla}_{\alpha}g_{jp}\tilde{\nabla}_{q}g_{i\beta}
-2\tilde{\nabla}_{\alpha}g_{jp}
\tilde{\nabla}_{\beta}g_{iq}\label{2.21}\\
&&- \ 2\tilde{\nabla}_{j}g_{p\alpha}
\tilde{\nabla}_{\beta}g_{iq}
-2\tilde{\nabla}_{i}g_{p\alpha}\tilde{\nabla}_{\beta}
g_{jq}\bigg)+2\alpha_{1}\tilde{\nabla}_{i}\phi\tilde{\nabla}_{j}
\phi,\nonumber\\
\partial_{t}\phi&=&g^{ij}\tilde{\nabla}_{i}\tilde{\nabla}_{j}
\phi+\beta_{1}|\tilde{\nabla}\phi|^{2}_{g}+\beta_{2}\phi.\label{2.22}
\end{eqnarray}
\end{lemma}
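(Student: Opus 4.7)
The plan is to run the classical De~Turck computation (cf.~\cite{S89, L05}) adapted to our coupled system, exploiting the fact that the difference $A^k_{ij} := \Gamma^k_{ij} - \tilde\Gamma^k_{ij}$ is a globally defined $(1,2)$-tensor even though neither Christoffel symbol individually is. Once everything is rewritten in terms of the background connection $\tilde\nabla$, the principal symbols of (\ref{2.19})--(\ref{2.20}) become manifestly positive-definite, and the explicit formulas (\ref{2.21})--(\ref{2.22}) drop out as a bookkeeping exercise.

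First I would dispose of the scalar equation (\ref{2.20}), which is the easier piece. Since $\phi$ is a function, $\tilde\nabla_i\phi = \nabla_i\phi = \partial_i\phi$, so $|\nabla\phi|^2_g = |\tilde\nabla\phi|^2_g$. From the definition of $V_i$ one reads off $V^k = g^{ki}V_i = g^{\beta\gamma}A^k_{\beta\gamma}$, and a direct expansion of the Laplacian gives
\begin{equation*}
\Delta\phi = g^{ij}\bigl(\partial_i\partial_j\phi - \Gamma^k_{ij}\partial_k\phi\bigr) = g^{ij}\tilde\nabla_i\tilde\nabla_j\phi - g^{ij}A^k_{ij}\partial_k\phi.
\end{equation*}
The drift term $\langle V,\nabla\phi\rangle_g = g^{\beta\gamma}A^k_{\beta\gamma}\partial_k\phi$ exactly cancels the extra piece, so $\Delta\phi + \langle V,\nabla\phi\rangle_g = g^{ij}\tilde\nabla_i\tilde\nabla_j\phi$. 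Plugging into (\ref{2.20}) yields (\ref{2.22}) and exhibits the strictly elliptic principal part $g^{ij}\tilde\nabla_i\tilde\nabla_j$.

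Next, for the metric equation I would combine the two standard identities
\begin{equation*}
A^k_{ij} = \tfrac{1}{2}g^{k\ell}\bigl(\tilde\nabla_i g_{j\ell} + \tilde\nabla_j g_{i\ell} - \tilde\nabla_\ell g_{ij}\bigr), \qquad R^\ell_{ijk} = \tilde R^\ell_{ijk} + \tilde\nabla_i A^\ell_{jk} - \tilde\nabla_j A^\ell_{ik} + A^m_{jk}A^\ell_{im} - A^m_{ik}A^\ell_{jm},
\end{equation*}
together with $\nabla_i V_j = \tilde\nabla_i V_j - A^k_{ij}V_k$, to rewrite $-2R_{ij} + \nabla_i V_j + \nabla_j V_i$ purely in terms of $\tilde g,\,g,\,\tilde\nabla g$ and $\tilde R$. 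The De~Turck identity asserts that the second-order contributions all collapse to $g^{\alpha\beta}\tilde\nabla_\alpha\tilde\nabla_\beta g_{ij}$, with the residual pieces being the curvature terms $g^{\alpha\beta}g_{ip}\tilde g^{pq}\tilde R_{j\alpha q\beta} + (i\leftrightarrow j)$ and the quadratic $\tilde\nabla g\ast\tilde\nabla g$ remainders displayed in (\ref{2.21}). The nonlinearity $2\alpha_1\nabla_i\phi\nabla_j\phi$ is unchanged because $\nabla\phi = \tilde\nabla\phi$. Strict parabolicity then follows immediately from the principal symbol $g^{\alpha\beta}\xi_\alpha\xi_\beta\cdot\mathrm{Id}$, which is positive definite.

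The main obstacle is purely combinatorial: there are on the order of a dozen quadratic $\tilde\nabla g\ast\tilde\nabla g$ terms arising from expanding $-2R_{ij}$, $\nabla_iV_j$, and $\nabla_jV_i$, and one must verify both that the second-derivative contributions cancel except for the Laplacian piece and that the surviving first-derivative quadratic terms assemble into exactly the five-term expression inside the bracket in (\ref{2.21}). A clean way to organize this is to fix $p\in M$ and work in $\tilde g$-normal coordinates at $p$, where $\tilde\Gamma$ and its first derivatives vanish; the second-order analysis then reduces to verifying that $g^{\alpha\beta}\partial_\alpha\partial_\beta g_{ij}$ is the only surviving highest-order term, while the commutator $[\tilde\nabla,\tilde\nabla]$ applied to $g_{ij}$ produces the two curvature terms, and the quadratic remainders are reinstated tensorially by the Leibniz rule. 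Since the coupling to $\phi$ enters only through the algebraically simple term $2\alpha_1\tilde\nabla_i\phi\tilde\nabla_j\phi$, the computation is no harder than in the pure Ricci or List flow case.
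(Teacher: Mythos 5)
Your proposal is correct and follows essentially the same route as the paper: the paper simply cites the De Turck computations in Shi and List for (\ref{2.21}), and observes $\Delta\phi+\langle V,\nabla\phi\rangle_g = g^{ij}\tilde\nabla_i\tilde\nabla_j\phi$ (again citing List) together with $\nabla\phi=\tilde\nabla\phi$ for (\ref{2.22}), whereas you have unpacked those citations by writing the key identities $A^k_{ij}=\tfrac12 g^{k\ell}(\tilde\nabla_ig_{j\ell}+\tilde\nabla_jg_{i\ell}-\tilde\nabla_\ell g_{ij})$, $V^k=g^{\beta\gamma}A^k_{\beta\gamma}$, and the $R$-versus-$\tilde R$ relation, and verifying the cancellation in the scalar equation directly. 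The content is identical; the only cosmetic remark is that the two curvature terms $g^{\alpha\beta}g_{ip}\tilde g^{pq}\tilde R_{j\alpha q\beta}+(i\leftrightarrow j)$ are more naturally tracked as the $\tilde R^\ell_{ijk}$ contributions surviving from your Gauss-type decomposition of $R_{ij}$ rather than as a $[\tilde\nabla,\tilde\nabla]g_{ij}$ commutator, but this does not affect the validity of the argument.
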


\begin{proof} The first equation (\ref{2.21}) directly follows from the computations made in \cite{L05, S89} and the only difference is the sign of the Riemann curvature tensors used in this
paper. To (\ref{2.22}), we first observe that $\Delta\phi
+\langle V,\nabla\phi\rangle_{g}=g^{ij}\tilde{\nabla}_{i}
\tilde{\nabla}_{j}\phi$ as showed in \cite{L05}; then
\begin{equation*}
\partial_{t}\phi=g^{ij}\tilde{\nabla}_{i}
\tilde{\nabla}_{j}\phi+\beta_{1}g^{ij}\tilde{\nabla}_{i}
\phi\tilde{\nabla}_{j}\phi+\beta_{2}\phi
\end{equation*}
since $\nabla\phi=d\phi=\tilde{\nabla}\phi$.
\end{proof}

\section{Complete and noncompact case}\label{section3}

In this section we study the flow (\ref{2.4})--(\ref{2.5}) on
complete and noncompact Riemannian manifolds. The main result of this paper is

\begin{theorem}\label{t3.1} Let $(M,g)$ be an $m$-dimensional complete and noncompact
Riemannian manifold with $|{\rm Rm}_{g}|^{2}_{g}\leq k_{0}$ on $M$,
where $k_{0}$ is a positive constant, and let $\phi$ be a smooth function on $M$ satisfying $|\phi|^{2}+|\nabla_{g}\phi|^{2}_{g}\leq k_{1}$ and $|\nabla^{2}_{g}
\phi|^{2}_{g}
\leq k_{2}$. Then there exists a constant $T=
T(m,k_{0},k_{1})>0$, depending only on $m$ and $k_{0}, k_{1}$, such that any $\star$-regular $(\alpha_{1},\alpha_{2},\beta_{1},\beta_{2})$-flow (\ref{2.4})--(\ref{2.5}) has a smooth solution $(g(t),\phi(t))$ on $M\times[0,T]$ and satisfies
the following curvature estimate. For any nonnegative integer $n$, there
exist constants $C_{k}>0$, depending only on $m,n,k_{0}, k_{1}, k_{2}$, such that
\begin{equation}
\left|\nabla^{k}_{g(t)}{\rm Rm}_{g(t)}\right|^{2}_{g(t)}
\leq\frac{C_{k}}{t^{k}}, \ \ \ \left|\nabla^{k}_{g(t)}\phi(t)\right|^{2}_{g(t)}
\leq\frac{C_{k}}{t^{?}}\label{3.1}
\end{equation}
on $M\times[0,T]$.
\end{theorem}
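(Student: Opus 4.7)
The plan is to combine the DeTurck trick with Shi's exhaustion method, together with Bernstein-Shi style a priori estimates. First, by Proposition \ref{p2.12}, any $\star$-regular $(\alpha_1,\alpha_2,\beta_1,\beta_2)$-flow is equivalent, via the one-parameter family of diffeomorphisms solving (\ref{2.9}), to a regular $(\alpha_1,0,\beta_1-\alpha_2,\beta_2)$-flow (\ref{2.11})--(\ref{2.12}). So I may assume $\alpha_2 = 0$ throughout. Next, apply DeTurck's trick from subsection \ref{subsection2.5} with background metric $\tilde{g}$ equal to the initial metric $g$. By Lemma \ref{l2.14}, the system becomes the strictly parabolic system (\ref{2.21})--(\ref{2.22}), so standard quasilinear parabolic theory applies locally.

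To handle noncompactness, take an exhaustion $\Omega_1 \Subset \Omega_2 \Subset \cdots$ of $M$ by precompact smooth domains with $M = \bigcup_j \Omega_j$. On each $\Omega_j$, solve the DeTurck-modified system with Dirichlet data $(g,\phi)|_{\partial \Omega_j}$ to obtain a smooth solution $(g_j(t),\phi_j(t))$ on $\Omega_j \times [0, T_j]$; undoing the DeTurck diffeomorphism produces a solution of the regular flow (\ref{2.11})--(\ref{2.12}). The goal is to show $T_j$ is bounded below by some $T = T(m,k_0,k_1) > 0$ independent of $j$, and that the family $\{(g_j,\phi_j)\}$ admits $j$-uniform local $C^\infty$ bounds, so that a diagonal subsequence converges smoothly on compact subsets to a solution on all of $M \times [0, T]$.

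The main work, and the principal obstacle, is the derivation of the curvature-type estimates (\ref{3.1}). Using the evolution formulas in Lemmas \ref{l2.2}--\ref{l2.7}, together with their commutator-iterated analogues for higher derivatives, one obtains schematic parabolic inequalities of the form
\begin{equation*}
(\partial_t - \Delta_{g(t)})\,|\nabla^n \mathrm{Rm}|^2 \lesssim \sum_{j+k=n} |\nabla^j \mathrm{Rm}|\,|\nabla^k \mathrm{Rm}|\,|\nabla^n \mathrm{Rm}| + (\text{mixed terms in }\nabla^{\leq n+2}\phi),
\end{equation*}
together with a parallel inequality for $|\nabla^{n+2}\phi|^2$. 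The $\star$-regular assumption enters here in an essential way: by (the noncompact analogue of) Corollary \ref{c2.10}, $|\nabla_{g(t)}\phi(t)|^2$ is controlled uniformly in time, which is precisely what permits the lowest-order $\phi$-nonlinearities to be absorbed. Then, following Shi's inductive scheme, construct at each stage $n$ an auxiliary quantity such as
\begin{equation*}
F_n \;:=\; \bigl(A_n + t^n(|\nabla^n \mathrm{Rm}|^2 + |\nabla^{n+2}\phi|^2)\bigr)\bigl(B_{n-1} + t^{n-1}(|\nabla^{n-1}\mathrm{Rm}|^2 + |\nabla^{n+1}\phi|^2)\bigr),
\end{equation*}
with constants $A_n, B_{n-1}$ chosen, using the bounds already established at stage $n-1$, so that $(\partial_t - \Delta_{g(t)})F_n \leq C_n$. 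Applying the maximum principle together with Shi's radial cutoff to deal with $\partial \Omega_j$ (and the noncompactness in the limit) yields the desired $C_n/t^n$ bounds uniformly in $j$.

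Finally, these uniform estimates furnish local $C^\infty$-compactness of $\{(g_j,\phi_j)\}$; a standard diagonal extraction gives a smooth limit solution on $M \times [0,T]$ satisfying (\ref{3.1}). Reversing the DeTurck diffeomorphism $\Psi_t$ and the Proposition \ref{p2.12} diffeomorphism $\Phi(t)$ then yields a solution of the original $\star$-regular $(\alpha_1,\alpha_2,\beta_1,\beta_2)$-flow with the stated estimates, since both diffeomorphism changes preserve the tensorial pointwise norms appearing in (\ref{3.1}).
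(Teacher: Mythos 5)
Your outline is structurally consistent with the paper through the reduction steps (Proposition~\ref{p2.12} plus DeTurck) and the exhaustion/diagonal extraction strategy, and your $F_n$ auxiliary quantity for $n\geq 1$ is essentially the $u=(a+|\boldsymbol{\Lambda}|^2)|\nabla\boldsymbol{\Lambda}|^2 t$ of the paper's Lemma~\ref{l3.17}. But there is a genuine gap at the base of the induction: you treat the bound $\sup_{M\times[0,T]}(|\mathrm{Rm}_{g(t)}|^2+|\nabla^2_{g(t)}\phi(t)|^2)\lesssim 1$ as though it will fall out of ``Shi's inductive scheme with the maximum principle.'' It does not. The Bernstein quantity $F_n$ already presupposes the stage-$(n-1)$ bound, so for $n=1$ you need $|\mathrm{Rm}|^2+|\nabla^2\phi|^2\lesssim 1$ \emph{a priori}; and a direct maximum principle attack on $|\mathrm{Rm}|^2$ fails, because the evolution inequality has the critical cubic term $|\mathrm{Rm}|^3$ with no good sign and the gradient term $-|\nabla\mathrm{Rm}|^2$ is not enough to absorb it. The paper's resolution is the entire integral-estimate machinery of Lemmas~\ref{l3.10}--\ref{l3.13} (Shi-style localized $L^p$ energy estimates for $u^{n-1}|\nabla\tilde{\nabla}g|^2$, $u^n$, $u^{n-1}v$, coupling $\mathrm{Rm}$, $\nabla^2\phi$, and crucially the DeTurck vector $\nabla V$), which only \emph{then} feed a maximum-principle argument to produce the pointwise bound in Theorem~\ref{t3.14}. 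Your proposal never mentions $V$ or $\nabla V$ at all, yet the evolution of $\mathrm{Rm}$ in the DeTurck gauge (equations~(\ref{3.72}), (\ref{3.118})) carries $V\ast\nabla\mathrm{Rm}$ and $\mathrm{Rm}\ast\nabla V$ terms that must be controlled simultaneously with $\mathrm{Rm}$; this is what forces the coupled quantity $|\mathrm{Rm}|^2+|\nabla^2\phi|^2+|\nabla V|^2$ in Lemmas~\ref{l3.12}--\ref{l3.13}.

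Separately, the justification that $T_j$ is bounded below \emph{independently of $j$} is not free: in the paper this comes from the $C^0$-equivalence $(1-\epsilon)\tilde g\le g(t)\le(1+\epsilon)\tilde g$ (Theorem~\ref{t3.5}), which itself requires the gradient estimate of Lemma~\ref{l3.4} for the upper bound, and the regularity ($\star$-regularity after reduction) of the flow to get $|\nabla\phi|^2$ bounded via Corollary~\ref{c2.10}. You gesture at the latter but do not connect it to the uniformity of $T_j$. In short: the reduction, DeTurck step, exhaustion, and the higher-order Bernstein induction are all correctly identified, but the hardest part of the theorem --- establishing the uniform-in-$j$ \emph{zeroth-order} curvature bound via integral estimates on $|\mathrm{Rm}|^2+|\nabla^2\phi|^2+|\nabla V|^2$ --- is missing and is not something the maximum principle alone can supply.
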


By Proposition \ref{p2.12}, we suffice to study a regular
$(\alpha_{1},0,\beta_{1},\beta_{2})$-flow:
\begin{eqnarray*}
\partial_{t}\hat{g}(t)&=&-2{\rm Ric}_{\hat{g}(t)}+2\alpha_{1}\nabla_{
\hat{g}(t)}
\hat{\phi}(t)\otimes\nabla_{\hat{g}(t)}\hat{\phi}(t),\\
\partial_{t}\hat{\phi}(t)&=&\Delta_{\hat{g}(t)}
\hat{\phi}(t)+\beta_{1}|\nabla_{\hat{g}(t)}\hat{\phi}(t)|^{2}_{\hat{g}(t)}
+\beta_{2}\hat{\phi}(t),
\end{eqnarray*}
where $(\tilde{g},\tilde{\phi})=(\hat{g}(0),\hat{\phi}(0))$ is a fixed pair
consisting of a Riemannian metric $\hat{g}$ and a smooth function
$\hat{\phi}$. According to De Turck's trick, the above system of parabolic
partial differential equations are reduced to (\ref{2.10})--(\ref{2.20}).

Suppose that $D\subset M$ is a domain with boundary $\partial D$ a compact
smooth $(m-1)$-dimensional submanifold of $M$, and the closure $
\overline{D}:=D\cup\partial D$ is a compact subset of $M$. We shall shove the
following Dirichlet boundary problem:
\begin{eqnarray}
\partial_{t}g_{ij}&=&-2R_{ij}+2\alpha_{1}\nabla_{i}\phi\nabla_{j}
\phi+\nabla_{i}V_{j}+\nabla_{j}V_{i}, \ \ \ \text{in} \ D\times[0,T],\nonumber\\
\partial_{t}\phi&=&\Delta\phi+\beta_{1}|\nabla\phi|^{2}_{g}
+\beta_{2}\phi+\langle V,\nabla\phi\rangle_{g}, \ \ \ \text{in} \
D\times[0,T],\label{3.2}\\
(g_{ij},\phi)&=&(\tilde{g}_{ij},\tilde{\phi}), \ \ \ \text{on} \
D_{T}.\nonumber
\end{eqnarray}
where
\begin{equation}
D_{T}:=(D\times\{0\})\cup(\partial D\cup[0,T])\label{3.3}
\end{equation}
stands for the parabolic boundary of the domain $D\times[0,T]$.

Consider the assumption
\begin{equation}
|{\rm Rm}_{\tilde{g}}|^{2}_{\tilde{g}}
\leq k_{0}, \ \ \ |\nabla_{\tilde{g}}\tilde{\phi}|^{2}_{\tilde{\phi}}
\leq k_{1}.\label{3.4}
\end{equation}

\subsection{Zeroth order estimates}\label{subsection3.1}

Suppose that $(g_{ij},\phi)$ is a solution of (\ref{3.2}). For each positive
integer $n$, define
\begin{equation}
u=u(x,t):=g^{\alpha_{1}\beta_{1}}\tilde{g}_{\beta_{1}\alpha_{2}}
g^{\alpha_{2}\beta_{2}}\tilde{g}_{\beta_{2}\alpha_{3}}
\cdots g^{\alpha_{n}\beta_{n}}\tilde{g}_{\beta_{n}
\alpha_{1}}\label{3.5}
\end{equation}
on $D\times[0,T]$.

\begin{lemma}\label{l3.2} If $|{\rm Rm}_{\tilde{g}}|^{2}_{\tilde{g}}
\leq k_{0}$, then the function $u=u(x,t)$ satisfies
\begin{equation}
\partial_{t}u\leq g^{\alpha\beta}\tilde{\nabla}_{\alpha}
\tilde{\nabla}_{\beta}u+2nm\sqrt{k_{0}} u^{1+\frac{1}{n}} \ \ \
\text{in} \ D\times[0,T], \ \ \ u=m \ \ \ \text{on} \ D_{T}.\label{3.6}
\end{equation}
\end{lemma}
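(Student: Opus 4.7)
The boundary values in (\ref{3.6}) are immediate: at $t=0$ and on $\partial D\times[0,T]$, the Dirichlet condition gives $g_{ij}=\tilde g_{ij}$, hence $g^{\alpha\beta}\tilde g_{\beta\gamma}=\delta^{\alpha}_{\gamma}$ and $u=\mathrm{tr}(I^{n})=m$. The real content is the parabolic differential inequality, and the strategy is to extend Shi's and List's cancellation argument (cf.~\cite{S89,L05}) to accommodate the new $\phi$-contribution in (\ref{2.21}).

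First I would compute $\partial_t u$ by applying the product rule to the $n$ factors of $g^{-1}$ and substituting $\partial_t g^{\alpha_k\beta_k}=-g^{\alpha_k p}g^{\beta_k q}\partial_t g_{pq}$, with $\partial_t g_{pq}$ expanded via (\ref{2.21}). This produces terms of four types: (a) principal parts $g^{\alpha\beta}\tilde\nabla_\alpha\tilde\nabla_\beta g_{pq}$; (b) curvature terms of the form $g^{\alpha\beta}g\cdot\tilde g^{-1}\cdot\tilde R_{\cdot\alpha\cdot\beta}$; (c) quadratic products of $\tilde\nabla g$; and (d) a $\phi$-contribution $2\alpha_1\tilde\nabla_i\phi\,\tilde\nabla_j\phi$. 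Separately, I expand $g^{\alpha\beta}\tilde\nabla_\alpha\tilde\nabla_\beta u$ by distributing $\tilde\nabla^{2}$ across the $n$-fold product; this generates its own second-derivative and quadratic-in-$\tilde\nabla g$ terms. The second-derivative contributions from (a) match those from $g^{\alpha\beta}\tilde\nabla_\alpha\tilde\nabla_\beta u$ identically and cancel when forming the difference $\partial_t u-g^{\alpha\beta}\tilde\nabla_\alpha\tilde\nabla_\beta u$.

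Next I estimate what remains. The $\phi$-term (d), when contracted into the symmetric $n$-fold $u$-structure, becomes a non-positive multiple of $\alpha_1$ times a sum of squares of contractions of $\tilde\nabla\phi$ against $g^{-1}$'s; since the flow is $\star$-regular, Corollary \ref{c2.10} forces $\alpha_1\ge 0$, so this term can be dropped from an upper bound. The quadratic $\tilde\nabla g$ terms from (c) combine with those arising from $\tilde\nabla^2 u$ to yield a non-positive quadratic form in $\tilde\nabla g$ — this is precisely the algebraic identity exploited by Shi and List. Finally, the curvature terms in (b): viewing $A^{i}{}_{j}:=g^{ik}\tilde g_{kj}$ as a symmetric positive operator with eigenvalues $\lambda_{1},\ldots,\lambda_{m}$, one has $u=\sum_{i}\lambda_{i}^{n}$, and each curvature term is a contraction of $\tilde R$ with $n+1$ factors of $A$. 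Using $|\tilde R|_{\tilde g}\le\sqrt{k_0}$ together with the interpolation $\sum\lambda_i^{n+1}\le m^{1/n}\bigl(\sum\lambda_i^n\bigr)^{(n+1)/n}=m^{1/n}u^{1+1/n}$, and summing over the $2n$ symmetric curvature pairs produced by the $n$ differentiations (each contributing two terms from the $i,j$ symmetry of $\partial_t g_{ij}$), yields the coefficient $2nm\sqrt{k_0}$ stated in (\ref{3.6}).

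\textbf{Main obstacle.} The delicate step is the algebraic cancellation of the quadratic $\tilde\nabla g$ terms in (c) against those generated by $\tilde\nabla^2 u$: it requires tracking all ways the two covariant derivatives in $\tilde\nabla_\alpha\tilde\nabla_\beta u$ can fall on the $n$ factors $g^{\alpha_k\beta_k}$ (both on the same factor or on two different factors) and matching the resulting combinatorial coefficients against the quadratic structure in (\ref{2.21}). This is essentially Shi's lemma transplanted to our setting; the presence of $\alpha_1\tilde\nabla_i\phi\,\tilde\nabla_j\phi$ in (\ref{2.21}) is a new feature, but its contribution is benign once $\star$-regularity is invoked, so the argument goes through with no additional difficulty beyond careful bookkeeping of indices.
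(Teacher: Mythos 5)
Your proposal is correct and follows essentially the same route as the paper (which cites Shi and List): pass to $\tilde g$-normal coordinates diagonalizing $g$, expand $\partial_t u$ using (\ref{2.21}), observe that after replacing the second-derivative pieces by $g^{\alpha\beta}\tilde\nabla_\alpha\tilde\nabla_\beta u$ the remaining $\tilde\nabla g$-quadratics organize into complete squares with a favorable sign, and bound the curvature contraction $-2n\tilde R_{ikik}/(\lambda_i^n\lambda_k)\leq 2n\sqrt{k_0}\,u\sum_k\lambda_k^{-1}\leq 2nm\sqrt{k_0}\,u^{1+1/n}$ via H\"older. You also make explicit a step the paper leaves silent: dropping the $\phi$-contribution $-\frac{2n\alpha_1}{\lambda_i^{n+1}}|\nabla_{\tilde g}\phi|^2_{\tilde g}$ requires $\alpha_1\geq 0$, which is indeed forced by the ($\star$-)regularity conditions of Corollary~\ref{c2.10}.
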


\begin{proof} The proof is identically similar to that of
\cite{L05, S89}; for completeness, we give a self-contained proof. Since $
\partial_{t}g^{ij}=-g^{ik}g^{j\ell}\partial_{t}g_{k\ell}$ and
$\tilde{\nabla}_{\beta}g^{ij}=-g^{ip}g^{jq}\tilde{\nabla}_{\beta}
g_{pq}$, it follows from (\ref{2.21}) that
\begin{eqnarray}
\partial_{t}g^{ij}&=&g^{\alpha\beta}\tilde{\nabla}_{\alpha}
\tilde{\nabla}_{\beta}g^{ij}+g^{\alpha\beta}
g^{j\ell}g^{ik}\tilde{\nabla}_{\alpha}g^{ik}\tilde{\nabla}_{\beta}
g_{k\ell}+g^{\alpha\beta}
g^{ik}\tilde{\nabla}_{\alpha}g^{j\ell}\tilde{\nabla}_{\beta}
g_{k\ell}\nonumber\\
&&- \ g^{\alpha\beta}g^{ik}
g^{j\ell}g_{kp}\tilde{g}^{pq}
\tilde{R}_{\ell\alpha q\beta}
-g^{\alpha\beta}g^{ik}g^{j\ell}
g_{p\ell}\tilde{g}^{pq}\tilde{R}_{k\alpha q\beta}
-2\alpha_{1}g^{ik}g^{j\ell}\tilde{\nabla}_{k}\phi\tilde{\nabla}_{\ell}\phi\nonumber\\
&&+ \ \frac{1}{2}g^{\alpha\beta}g^{pq}
g^{j\ell}\bigg(2\tilde{\nabla}_{\alpha}g_{p\ell}
\tilde{\nabla}_{\beta}g_{kq}
+2\tilde{\nabla}_{\ell}g_{p\alpha}
\tilde{\nabla}_{\beta}g_{kq}
+2\tilde{\nabla}_{k}g_{p\alpha}\tilde{\nabla}_{\beta}
g_{q\ell}\label{3.7}\\
&&- \ 2\tilde{\nabla}_{\alpha}g_{\ell p}
\tilde{\nabla}_{q}g_{k\beta}
-\tilde{\nabla}_{k}g_{p\alpha}
\tilde{\nabla}_{\ell}g_{q\beta}\bigg).\nonumber
\end{eqnarray}
Choosing a normal coordinate system such that
\begin{equation*}
\tilde{g}_{ij}=\delta_{ij}, \ \ \ g_{ij}=\lambda_{i}\delta_{ij},
\end{equation*}
we conclude from (\ref{3.7}) that
\begin{eqnarray}
\partial_{t}g^{ij}
&=&g^{\alpha\beta}\tilde{\nabla}_{\alpha}
\tilde{\nabla}_{\beta}g^{ij}
-\frac{2}{\lambda_{i}\lambda_{j}\lambda_{k}\lambda_{\ell}}
\tilde{\nabla}_{\ell}g_{ik}
\tilde{\nabla}_{\ell}g_{jk}
-\frac{1}{\lambda_{i}\lambda_{k}}\tilde{R}_{ikjk}
-\frac{1}{\lambda_{j}\lambda_{k}}\tilde{R}_{ikjk}\nonumber\\
&&+\ \frac{1}{2\lambda_{i}\lambda_{j}\lambda_{k}\lambda_{\ell}}
\bigg(2\tilde{\nabla}_{k}g_{\ell j}
\tilde{\nabla}_{k}g_{i\ell}
+2\tilde{\nabla}_{j}g_{\ell k}\tilde{\nabla}_{k}g_{i\ell}
+2\tilde{\nabla}_{i}g_{\ell k}
\tilde{\nabla}_{k}g_{\ell j}\label{3.8}\\
&&- \ 2\tilde{\nabla}_{k}g_{j\ell}
\tilde{\nabla}_{\ell}g_{ik}-\tilde{\nabla}_{i}g_{\ell k}
\tilde{\nabla}_{j}g_{\ell k}\bigg)-\frac{2\alpha_{1}}{\lambda_{i}
\lambda_{j}}\tilde{\nabla}_{i}\phi\tilde{\nabla}_{j}\phi.\nonumber
\end{eqnarray}
From $u=\sum^{n}_{i=1}(1/\lambda_{i})^{n}$, we obtain
\begin{eqnarray*}
\partial_{t}u&=&n\left(\frac{1}{\lambda_{i}}\right)^{n-1}
\partial_{t}g^{ii}\\
&=&\bigg(\frac{n}{\lambda^{n-1}_{i}}
g^{\alpha\beta}\tilde{\nabla}_{\alpha}
\tilde{\nabla}_{\beta}g^{ii}-\frac{2n}{\lambda^{n}_{i}
\lambda_{k}}\tilde{R}_{ikik}-\frac{n}{\lambda^{n+1}_{i}
\lambda_{k}\lambda_{\ell}}\tilde{\nabla}_{\ell}
g_{ik}\tilde{\nabla}_{\ell}g_{ik}\\
&&+ \ \frac{n}{2\lambda^{n+1}_{i}
\lambda_{k}\lambda_{\ell}}\left(4\tilde{\nabla}_{i}
g_{\ell k}\tilde{\nabla}_{k}g_{i\ell}
-2\tilde{\nabla}_{k}g_{i\ell}
\tilde{\nabla}_{\ell}g_{ik}-\tilde{\nabla}_{i}
g_{\ell k}\tilde{\nabla}_{i}g_{\ell k}\right)\\
&&- \ \frac{2\alpha_{1}n}{\lambda^{n+1}_{i}}|\nabla_{\tilde{g}}
\phi|^{2}_{\tilde{g}}\bigg)\\
&=&\frac{n}{\lambda^{n-1}_{i}}g^{\alpha\beta}
\tilde{\nabla}_{\alpha}\tilde{\nabla}_{\beta}
g^{ii}-\frac{2n}{\lambda^{n}_{i}\lambda_{k}}
\tilde{R}_{ikik}-\frac{2\alpha_{1}n}{\lambda^{n+1}_{i}}
|\nabla_{\tilde{g}}\phi|^{2}_{\tilde{g}}\\
&&- \ \frac{n}{2\lambda^{n+1}_{i}\lambda_{k}
\lambda_{\ell}}\left(\tilde{\nabla}_{k}g_{i\ell}
+\tilde{\nabla}_{\ell}g_{ik}-\tilde{\nabla}_{i}
g_{\ell k}\right)^{2}.
\end{eqnarray*}
On the other hand, it is not hard to see that
\begin{equation*}
g^{\alpha\beta}
\tilde{\nabla}_{\alpha}\tilde{\nabla}_{\beta}
u=\frac{n}{\lambda^{n-1}_{i}}
g^{\alpha\beta}\tilde{\nabla}_{\alpha}
\tilde{\nabla}_{\beta}g^{ii}
+ng^{\alpha\beta}\tilde{\nabla}_{\alpha}g^{ij}
\tilde{\nabla}_{\beta}g^{ij}
\sum^{n-2}_{a=0}\frac{1}{\lambda^{n-2-a}_{i}\lambda^{a}_{j}},
\end{equation*}
and then
\begin{eqnarray}
\partial_{t}u&=&g^{\alpha\beta}\tilde{\nabla}_{\alpha}
\tilde{\nabla}_{\beta}u-\frac{n}{\lambda_{k}}
\sum^{n-2}_{a=0}\frac{1}{\lambda^{n-2-a}_{i}
\lambda^{a}_{j}}\left(\tilde{\nabla}_{k}g_{ij}\right)^{2}
-\frac{2n}{\lambda^{n}_{i}\lambda_{k}}
\tilde{R}_{ikik}\nonumber\\
&&- \ \frac{2\alpha_{1}n}{\lambda^{n+1}_{i}}
|\nabla_{\tilde{g}}\phi|^{2}_{\tilde{g}}-\frac{n}{2\lambda^{n+1}_{i}\lambda_{k}\lambda_{\ell}}
\left(\tilde{\nabla}_{k}g_{i\ell}+\tilde{\nabla}_{\ell}
g_{ik}-\tilde{\nabla}_{i}g_{\ell k}\right)^{2}.\label{3.9}
\end{eqnarray}
Since $|{\rm Rm}_{\tilde{g}}|_{\tilde{g}}\leq\sqrt{k_{0}}$, it follows
from (\ref{3.9}) that
\begin{equation*}
\partial_{t}u\leq g^{\alpha\beta}\tilde{\nabla}_{\alpha}
\tilde{\nabla}_{\beta}u+2n\sqrt{k_{0}}\left(\sum^{m}_{j=1}
\frac{1}{\lambda_{j}}\right)u.
\end{equation*}
According to H\"older's inequality
\begin{equation*}
\sum^{m}_{j=1}\frac{1}{\lambda_{j}}\leq\left(\sum^{m}_{j=1}
\frac{1}{\lambda^{n}_{j}}\right)^{1/n}
\left(\sum^{m}_{j=1}1^{n'}\right)^{1/n'}
=mu^{1/n}, \ \ \ \frac{1}{n}+\frac{1}{n'}=1;
\end{equation*}
therefore $\partial_{t}u\leq g^{\alpha\beta}
\tilde{\nabla}_{\alpha}\tilde{\nabla}_{\beta}u
+2nm\sqrt{k_{0}}u^{1+\frac{1}{n}}$.
\end{proof}

As showed in \cite{L05, S89}, the lower bound of $g_{ij}$ now directly follows from Lemma \ref{l3.2}.

\begin{lemma}\label{l3.3} If $|{\rm Rm}_{\tilde{g}}|^{2}_{\tilde{g}}
\leq k_{0}$, then, for any $\delta\in(0,1)$, we have
\begin{equation}
g(t)\geq(1-\delta)\tilde{g}\label{3.10}
\end{equation}
on $D\times[0,T_{-}(\delta,m,k_{0})]$, where $T_{-}(\delta,m,
k_{0}):=\frac{1}{2\sqrt{k_{0}}}(\frac{1}{m})^{1+1/n}
[1-(\frac{1}{2})^{1/n}]$, $n$ is a positive integer, and
\begin{equation*}
\frac{\ln(2m)}{\ln(1/(1-\delta))}\leq n<
\frac{\ln(2m)}{\ln(1/(1-\delta))}+1.
\end{equation*}
\end{lemma}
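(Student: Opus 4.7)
The plan is to compare the evolving quantity $u(x,t)$ from Lemma~\ref{l3.2} with a spatially constant supersolution obtained by solving the associated ODE, then translate the resulting bound on $u$ into a lower bound on the eigenvalues of $g(t)$ with respect to $\tilde{g}$.

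First I would set up the auxiliary ODE. By Lemma~\ref{l3.2}, $u$ satisfies the parabolic differential inequality
\begin{equation*}
\partial_{t}u\leq g^{\alpha\beta}\tilde{\nabla}_{\alpha}\tilde{\nabla}_{\beta}u+2nm\sqrt{k_{0}}\!\ u^{1+\frac{1}{n}},\qquad u|_{D_{T}}=m.
\end{equation*}
Since $\overline{D}$ is compact and $u=m$ on the parabolic boundary $D_{T}$, the standard parabolic maximum principle compares $u(x,t)$ with the solution $\varphi(t)$ of the ODE
\begin{equation*}
\frac{d\varphi}{dt}=2nm\sqrt{k_{0}}\!\ \varphi^{1+\frac{1}{n}},\qquad \varphi(0)=m,
\end{equation*}
which can be integrated in closed form to give $\varphi(t)=\bigl(m^{-1/n}-2m\sqrt{k_{0}}\!\ t\bigr)^{-n}$, valid as long as the base is positive. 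Hence $u(x,t)\leq \varphi(t)$ on $D\times[0,T]$ for all $T$ smaller than the blow-up time of $\varphi$.

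Next I would convert the bound on $u$ into a lower bound on $g$. Diagonalizing $g$ relative to $\tilde{g}$ pointwise, so that $g_{ij}=\lambda_{i}\delta_{ij}$ in a $\tilde{g}$-orthonormal frame, we have $u=\sum_{i=1}^{m}\lambda_{i}^{-n}$, and therefore $\lambda_{\min}(x,t)^{-n}\leq u(x,t)\leq \varphi(t)$, i.e.\ $\lambda_{\min}(x,t)\geq \varphi(t)^{-1/n}=m^{-1/n}-2m\sqrt{k_{0}}\!\ t$. To achieve $\lambda_{\min}\geq 1-\delta$ it suffices that $m^{-1/n}-2m\sqrt{k_{0}}\!\ t\geq 1-\delta$, which determines the admissible time window.

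Finally I would pin down $n$ and $T_{-}$. The choice of $n$ with
\begin{equation*}
\frac{\ln(2m)}{\ln\!\ (1/(1-\delta))}\leq n<\frac{\ln(2m)}{\ln\!\ (1/(1-\delta))}+1
\end{equation*}
is equivalent to $(1-\delta)\leq (2m)^{-1/n}$, so the inequality $m^{-1/n}-2m\sqrt{k_{0}}\!\ t\geq 1-\delta$ is implied by the sharper condition $m^{-1/n}-2m\sqrt{k_{0}}\!\ t\geq (2m)^{-1/n}$, i.e.\
\begin{equation*}
t\leq \frac{m^{-1/n}\bigl(1-2^{-1/n}\bigr)}{2m\sqrt{k_{0}}}=\frac{1}{2\sqrt{k_{0}}}\!\ \left(\frac{1}{m}\right)^{1+\frac{1}{n}}\!\ \Bigl[1-\Bigl(\tfrac{1}{2}\Bigr)^{1/n}\Bigr]=T_{-}(\delta,m,k_{0}).
\end{equation*}
This is well below the blow-up time of $\varphi$ (indeed $\varphi(T_{-})=2m$), so the comparison is legitimate throughout $[0,T_{-}]$, and the inequality $g(t)\geq (1-\delta)\tilde{g}$ follows on $D\times[0,T_{-}]$.

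The only genuine subtlety I foresee is invoking the maximum principle for the inequality in Lemma~\ref{l3.2} despite the coefficient $g^{\alpha\beta}$ in the elliptic part depending on the unknown: this is handled by the usual argument treating $g$ as a given smooth tensor field when comparing $u$ against the spatially constant supersolution $\varphi(t)$, for which the spatial derivative term vanishes automatically. With this in hand everything else is algebraic manipulation of the explicit ODE solution.
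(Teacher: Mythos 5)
Your proof is correct and is exactly the argument the paper invokes by citing List and Shi: compare $u$ against the spatially constant supersolution $\varphi(t)=(m^{-1/n}-2m\sqrt{k_{0}}\,t)^{-n}$ of the associated ODE via the parabolic maximum principle (the variable coefficient $g^{\alpha\beta}$ in the elliptic part causes no trouble since $\tilde{\nabla}^{2}\varphi\equiv0$), then use $\lambda_{\min}^{-n}\leq u\leq\varphi$ together with the choice $1-\delta\leq(2m)^{-1/n}$ to read off the time window $T_{-}$.
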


Since we consider the regular $(\alpha_{1},0,\beta_{1},\beta_{2})$-flow, we conclude from Corollary \ref{2.10} that $|\nabla_{\hat{g}(t)}
\hat{\phi}(t)|^{2}_{\hat{g}(t)}\leq C$, where $C$ is a positive
constant depending only on $\alpha_{1}, \beta_{2}$, and $|\nabla_{\tilde{g}}
\tilde{\phi}|^{2}_{\tilde{g}}$. Following the arguments in
\cite{L05, S89}, we have an upper bound of $g_{ij}$.

\begin{lemma}\label{l3.4} If $|\widetilde{{\rm Rm}}|^{2}_{\tilde{g}}
\leq k_{0}$ and $|\tilde{\nabla}\tilde{\phi}|^{2}_{\tilde{g}}
\leq k_{1}$, then, for any $\theta>0$, we have
\begin{equation}
g(t)\leq(1+\theta)\tilde{g}\label{3.11}
\end{equation}
on $D\times[0,T_{+}(\theta,m,k_{0},k_{1},\alpha_{1},\beta_{2})]$, where
$T_{+}(\delta,m,k_{0},k_{1},\alpha_{1},\beta_{2})$ is a positive constant
depending only on $\theta,m,k_{0},k_{1},\alpha_{1}$, and $\beta_{2}$.
\end{lemma}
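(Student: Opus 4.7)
The plan is to mirror the proof of Lemma \ref{l3.3}, defining a dual quantity that captures the largest eigenvalues of $g(t)$ relative to $\tilde{g}$. I would set
\[
v(x,t) := \tilde{g}^{\alpha_{1}\beta_{1}}g_{\beta_{1}\alpha_{2}}\tilde{g}^{\alpha_{2}\beta_{2}}g_{\beta_{2}\alpha_{3}}\cdots\tilde{g}^{\alpha_{n}\beta_{n}}g_{\beta_{n}\alpha_{1}},
\]
so that in a pointwise frame where $\tilde{g}_{ij}=\delta_{ij}$ and $g_{ij}=\lambda_{i}\delta_{ij}$ one has $v=\sum_{i=1}^{m}\lambda_{i}^{n}$. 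By the boundary data, $v\equiv m$ on $D_{T}$. Repeating the computation that produced (\ref{3.9}) with $v$ in place of $u$, using (\ref{2.21}), the curvature contributions, the quadratic Christoffel-difference terms and the second-order terms combine exactly as in Lemma \ref{l3.2} to yield a parabolic term $g^{\alpha\beta}\tilde{\nabla}_{\alpha}\tilde{\nabla}_{\beta}v$ plus a reaction term of the form $2nm\sqrt{k_{0}}\,v^{1+1/n}$; the genuinely new contribution comes from the $2\alpha_{1}\tilde{\nabla}_{i}\phi\tilde{\nabla}_{j}\phi$ piece of $\partial_{t}g_{ij}$.

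In the diagonal frame this new term contributes $2\alpha_{1}n\sum_{i}\lambda_{i}^{n-1}(\partial_{i}\phi)^{2}$. To estimate it I would invoke the spectral inequality
\[
|\tilde{\nabla}\phi|^{2}_{\tilde{g}}=\sum_{i}(\partial_{i}\phi)^{2}\leq(\max_{i}\lambda_{i})\sum_{i}\lambda_{i}^{-1}(\partial_{i}\phi)^{2}=(\max_{i}\lambda_{i})|\nabla_{g}\phi|^{2}_{g},
\]
combined with $\max_{i}\lambda_{i}\leq v^{1/n}$ and the pointwise bound $|\nabla_{g(t)}\phi(t)|^{2}_{g(t)}\leq C_{1}=C_{1}(k_{1},\alpha_{1},\beta_{2})$ inherited from Corollary \ref{c2.10} via Proposition \ref{p2.12} (with a standard maximum-principle argument to cover the bounded-domain-with-Dirichlet-data setting). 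These estimates chain together to give
\[
2\alpha_{1}n\sum_{i}\lambda_{i}^{n-1}(\partial_{i}\phi)^{2}\leq 2\alpha_{1}n\,v^{(n-1)/n}|\tilde{\nabla}\phi|^{2}_{\tilde{g}}\leq 2\alpha_{1}nC_{1}\,v\leq C_{2}\,v^{1+1/n},
\]
where the last step absorbs a lower-order factor using $v\geq m\geq 1$. Adding this to the base growth, I arrive at
\[
\partial_{t}v\leq g^{\alpha\beta}\tilde{\nabla}_{\alpha}\tilde{\nabla}_{\beta}v+C_{3}\,v^{1+1/n}\quad\text{in }D\times[0,T],\qquad v=m\text{ on }D_{T},
\]
with $C_{3}=C_{3}(m,k_{0},k_{1},\alpha_{1},\beta_{2})$.

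The parabolic maximum principle then lets me compare $v$ with the ODE solution $w'=C_{3}w^{1+1/n}$, $w(0)=m$, explicitly $w(t)=[m^{-1/n}-(C_{3}/n)t]^{-n}$. On the interval where $w(t)\leq 2m$, which has length of order $(n/C_{3})m^{-1/n}(1-2^{-1/n})$, we get $v\leq 2m$, hence $\max_{i}\lambda_{i}\leq(2m)^{1/n}$ pointwise. Choosing $n$ as the smallest integer exceeding $\ln(2m)/\ln(1+\theta)$ forces $\lambda_{i}\leq 1+\theta$, that is, $g(t)\leq(1+\theta)\tilde{g}$ on the resulting interval $[0,T_{+}(\theta,m,k_{0},k_{1},\alpha_{1},\beta_{2})]$.

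The main obstacle is the apparent circularity in controlling $|\tilde{\nabla}\phi|^{2}_{\tilde{g}}$: the gradient estimate coming out of the regularity hypothesis is $|\nabla_{g}\phi|^{2}_{g}\leq C_{1}$, and converting this into an a priori bound on $|\tilde{\nabla}\phi|^{2}_{\tilde{g}}$ would ordinarily need the very upper bound $g\leq C\tilde{g}$ that we are trying to prove. The spectral identity $|\tilde{\nabla}\phi|^{2}_{\tilde{g}}\leq(\max_{i}\lambda_{i})|\nabla_{g}\phi|^{2}_{g}$ sidesteps this: it produces exactly a $v^{1/n}$ factor which, together with the $\lambda_{i}^{n-1}$ weight already present in $\partial_{t}v$, reproduces the $v^{1+1/n}$ growth handled by Lemma \ref{l3.2}, so no bootstrap on the upper bound of $g$ is required and the $\star$-regularity hypothesis is used only through the scalar bound $|\nabla_{g}\phi|^{2}_{g}\leq C_{1}$.
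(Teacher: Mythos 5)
Your choice of $v=\sum_{i}\lambda_{i}^{n}$ and the spectral trick for handling the $2\alpha_{1}\tilde{\nabla}_{i}\phi\tilde{\nabla}_{j}\phi$ piece (writing $|\tilde{\nabla}\phi|^{2}_{\tilde g}\leq(\max_{i}\lambda_{i})|\nabla_{g}\phi|^{2}_{g}\leq v^{1/n}C_{1}$ and feeding the extra $v^{1/n}$ into the $v^{1+1/n}$ reaction) is a nice observation and does handle that term cleanly. However, the central claim that ``the curvature contributions, the quadratic Christoffel-difference terms and the second-order terms combine exactly as in Lemma~\ref{l3.2} to yield $\ldots 2nm\sqrt{k_{0}}\,v^{1+1/n}$'' is false, and this is a real gap. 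In Lemma~\ref{l3.2} the curvature contribution to $\partial_{t}u$ is $-\tfrac{2n}{\lambda_{i}^{n}\lambda_{k}}\tilde R_{ikik}$ (see (\ref{3.9})), and the factor $\sum_{k}1/\lambda_{k}$ is controlled by H\"older because $u=\sum(1/\lambda_{k})^{n}$ itself bounds the negative powers: $\sum_{k}1/\lambda_{k}\leq m\,u^{1/n}$. For $v$ the curvature contribution to $\partial_{t}v$, read off from (\ref{2.21}) in a diagonal frame, is $2n\sum_{i,\alpha}\tfrac{\lambda_{i}^{n}}{\lambda_{\alpha}}\tilde R_{i\alpha i\alpha}$, and the factor $\sum_{\alpha}1/\lambda_{\alpha}$ is \emph{not} controlled by any power of $v$: taking one $\lambda_{\alpha}\to 0$ leaves $v$ bounded while $\sum 1/\lambda_{\alpha}\to\infty$. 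The same problem arises for the Christoffel-difference quadratic: in (\ref{3.9}) these organize into a perfect negative square $-\tfrac{n}{2\lambda_{i}^{n+1}\lambda_{k}\lambda_{\ell}}(\cdots)^{2}$ and can simply be dropped; for $v=\sum\lambda_{k}^{n}$ they do not, as the paper's own computation (\ref{3.35})--(\ref{3.40}) makes explicit --- there the quadratic term is bounded by $18nm^{3}(1+\epsilon)^{n-1}|\tilde\nabla g|^{2}_{\tilde g}$ and must be \emph{absorbed} by the negative $-\tfrac{n(n-1)}{2}(1-\epsilon)^{n-2}|\tilde\nabla g|^{2}_{\tilde g}$ produced by the second-order term, a step that uses both the upper and the lower bound $1-\epsilon\leq\lambda_{k}\leq 1+\epsilon$.

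The missing ingredient is therefore Lemma~\ref{l3.3}: the upper bound argument for $g(t)$ is not self-contained the way the lower bound was, and must first restrict to the interval $[0,T_{-}(\delta,m,k_{0})]$ on which $g(t)\geq(1-\delta)\tilde g$, so that $\sum_{\alpha}1/\lambda_{\alpha}\leq m/(1-\delta)$ and the curvature contribution to $\partial_{t}v$ grows at most linearly in $v$, and so that the quadratic Christoffel terms can be dominated as in (\ref{3.37})--(\ref{3.42}). With that in hand your ODE comparison and the choice of $n$ just above $\ln(2m)/\ln(1+\theta)$ go through, and the final $T_{+}$ is taken no larger than $T_{-}$. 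Two further small points: the sign of $\alpha_{1}$ should be handled (for $\alpha_{1}<0$ the new term is already nonpositive and can be dropped; for $\alpha_{1}>0$ your estimate applies, but the bound should read $2|\alpha_{1}|nC_{1}v$), and the passage ``absorbs a lower-order factor using $v\geq m\geq 1$'' is not automatic since $v$ can dip below $m$; it is cleaner to keep the linear term and compare with the ODE $w'=Aw^{1+1/n}+Bw$ rather than force everything into a single power.
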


From Lemma \ref{l3.3} and Lemma \ref{l3.4}, we have

\begin{theorem}\label{t3.5} Suppose that $|\widetilde{{\rm Rm}}
|^{2}_{\tilde{g}}\leq k_{0}$ and $|\tilde{\nabla}\tilde{\phi}|^{2}_{\tilde{g}}
\leq k_{1}$ on $M$. If $(g(t),\phi(t))$ is a solution of (\ref{3.2}), then,
for any $\epsilon\in(0,1)$, we have
\begin{equation}
(1-\epsilon)\tilde{g}\leq g(t)\leq(1+\epsilon)\tilde{g}\label{3.12}
\end{equation}
on $\overline{D}\times[0,T(\epsilon,m,k_{0},k_{1},\alpha_{1},\beta_{2})]$,
where $T(\epsilon,m,k_{0},k_{1},\alpha_{1},\beta_{2})$ is a positive
constant depending only on $\epsilon, m, k_{0}, k_{1}, \alpha_{1},
\beta_{2}$.
\end{theorem}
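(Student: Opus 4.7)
The plan is straightforward: Theorem \ref{t3.5} is a direct combination of Lemma \ref{l3.3} and Lemma \ref{l3.4}, since those two lemmas supply the lower and upper metric bounds separately on overlapping (in fact, essentially identical-form) time intervals. Given $\epsilon\in(0,1)$, I would choose $\delta:=\epsilon$ in Lemma \ref{l3.3} and $\theta:=\epsilon$ in Lemma \ref{l3.4}, obtaining the existence of two positive constants $T_{-}(\epsilon,m,k_{0})$ and $T_{+}(\epsilon,m,k_{0},k_{1},\alpha_{1},\beta_{2})$ such that
\begin{equation*}
(1-\epsilon)\tilde{g}\leq g(t)\quad\text{on }D\times[0,T_{-}],\qquad g(t)\leq(1+\epsilon)\tilde{g}\quad\text{on }D\times[0,T_{+}].
\end{equation*}
Setting $T(\epsilon,m,k_{0},k_{1},\alpha_{1},\beta_{2}):=\min\{T_{-},T_{+}\}$ immediately yields (\ref{3.12}) on $D\times[0,T]$.

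To extend the inequality from $D$ to the closure $\overline{D}=D\cup\partial D$, I would invoke the boundary conditions in (\ref{3.2}): on the parabolic boundary $D_{T}=(D\times\{0\})\cup(\partial D\times[0,T])$ we have $g(t)=\tilde{g}$, so both inequalities hold with equality on $\partial D\times[0,T]$ as well as at $t=0$. Combined with the interior estimate on $D\times[0,T]$ and the continuity of $g(t)$ up to the boundary, this gives (\ref{3.12}) on $\overline{D}\times[0,T]$.

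There is no real obstacle here, since the heavy lifting has already been done in Lemmas \ref{l3.2}--\ref{l3.4}: the lower bound came from the scalar maximum principle applied to the trace quantity $u$ defined in (\ref{3.5}) together with the ODE comparison $u'\leq 2nm\sqrt{k_{0}}u^{1+1/n}$, while the upper bound additionally exploited the uniform estimate $|\nabla_{\hat{g}(t)}\hat{\phi}(t)|^{2}_{\hat{g}(t)}\leq C$ from Corollary \ref{c2.10}, which is available precisely because we work in the regular $(\alpha_{1},0,\beta_{1},\beta_{2})$-regime guaranteed by Proposition \ref{p2.12}. The only thing to verify carefully is that the dependence of $T$ on the listed parameters is consistent in both lemmas, and indeed $T_{-}$ depends only on $(\epsilon,m,k_{0})$ while $T_{+}$ depends on $(\epsilon,m,k_{0},k_{1},\alpha_{1},\beta_{2})$, so the minimum inherits the larger parameter list as stated.
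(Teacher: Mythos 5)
Your proposal is correct and follows precisely the route the paper takes: Theorem \ref{t3.5} is stated immediately after the words "From Lemma \ref{l3.3} and Lemma \ref{l3.4}, we have," so the intended proof is exactly the combination you spell out, namely applying Lemma \ref{l3.3} with $\delta=\epsilon$ and Lemma \ref{l3.4} with $\theta=\epsilon$ and taking $T=\min\{T_-,T_+\}$. Your remark on the boundary extension to $\overline{D}$ via the Dirichlet condition $g(t)=\tilde g$ on $D_T$ and continuity is a sensible explicit filling-in of a detail the paper leaves tacit.
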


\subsection{Existence of the De Turck flow}\label{subsection3.2}

We establish the short time existence of the De Turck flow (\ref{3.2})
on the whole manifold $M$. Fix a point $x_{0}\in M$ and let
$B_{\tilde{g}}(x_{0},r)$ be the metric ball of radius $r$ centered at $x_{0}$
with respect to the metric $\tilde{g}$.

\begin{lemma}\label{l3.6} Given positive constants $r,\delta,T$. Suppose that $(g(t),
\phi(t))$ is a solution of (\ref{3.2}) on $B_{\tilde{g}}(x_{0},
r+\delta)\times[0,T]$, that is,
\begin{eqnarray*}
\partial_{t}g_{ij}&=&-2R_{ij}+2\alpha_{1}
\nabla_{i}\phi\nabla_{j}\phi+\nabla_{i}V_{j}+\nabla_{j}V_{i}, \ \ \
\text{in} \ B_{\tilde{g}}(x_{0},r+\delta)\times[0,T],\nonumber\\
\partial_{t}\phi&=&\Delta\phi+\beta_{1}|\nabla\phi|^{2}_{g}
+\beta_{2}\phi+\langle V,\nabla\phi\rangle_{g}, \ \ \
\text{in} \ B_{\tilde{g}}(x_{0},r+\delta)\times[0,T],\\
(g_{ij},\phi)&=&(\tilde{g}_{ij},\tilde{\phi}), \ \ \ \text{on} \
D_{T},\nonumber
\end{eqnarray*}
and $|\widetilde{{\rm Rm}}|^{2}_{\tilde{g}}
\leq k_{0}, |\tilde{\nabla}\tilde{\phi}|^{2}_{\tilde{g}}
\leq k_{1}$ on $M$. If
\begin{equation}
\left(1-\frac{1}{80000(1+\alpha^{2}_{1}+\beta^{2}_{1})m^{10}}\right)\tilde{g}\leq g(t)
\leq\left(1+\frac{1}{80000(1+\alpha^{2}_{1}+\beta^{2}_{1})m^{10}}
\right)\tilde{g}\label{3.13}
\end{equation}
on $B_{\tilde{g}}(x_{0},r+\delta)\times[0,T]$, then there exists a positive
constant $C=C(m,r,\delta,T, \tilde{g},k_{1})$ depending only on $m, r,
\delta, T, \tilde{g}$, and $k_{1}$, such that
\begin{equation}
|\tilde{\nabla}g|^{2}_{\tilde{g}}\leq C, \ \ \ |\tilde{\nabla}
\phi|^{2}_{\tilde{g}}\leq C\label{3.14}
\end{equation}
on $B_{\tilde{g}}(x_{0},r+\frac{\delta}{2})\times[0,T]$.
\end{lemma}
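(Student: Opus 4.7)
The plan is to adapt the Shi--List local gradient estimate. I will derive a differential inequality of the form $(\partial_t-g^{\alpha\beta}\tilde{\nabla}_{\alpha}\tilde{\nabla}_{\beta})F\lesssim 1+F^{2}$ for a single scalar quantity $F$ built from $|\tilde{\nabla}g|^{2}_{\tilde{g}}$, $|\tilde{\nabla}\phi|^{2}_{\tilde{g}}$, and a cutoff function supported in $B_{\tilde{g}}(x_{0},r+\delta)$, and then run the maximum principle on the domain.

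First I would compute $(\partial_{t}-g^{\alpha\beta}\tilde{\nabla}_{\alpha}\tilde{\nabla}_{\beta})|\tilde{\nabla}g|^{2}_{\tilde{g}}$ using (\ref{2.21}). The principal part yields the good term $-2|\tilde{\nabla}\tilde{\nabla}g|^{2}_{\tilde{g}}$ (up to a factor comparable to $1$ by (\ref{3.13})); the remaining terms are of three schematic types: cubic self-interactions $\tilde{g}^{-1}\ast\tilde{\nabla}g\ast\tilde{\nabla}g\ast\tilde{\nabla}\tilde{\nabla}g$ coming from $g^{-1}$-contractions, linear terms of the shape $\tilde{R}\ast\tilde{\nabla}g$ and $\tilde{\nabla}\tilde{R}\ast\tilde{\nabla}g$ from the fixed background curvature (bounded by $k_{0}$ and its $\tilde{g}$-derivatives on $B_{\tilde{g}}(x_{0},r+\delta)$), and the new scalar-field terms $\alpha_{1}\tilde{\nabla}\phi\ast\tilde{\nabla}\tilde{\nabla}\phi\ast\tilde{\nabla}g$ produced by differentiating the $2\alpha_{1}\tilde{\nabla}_{i}\phi\tilde{\nabla}_{j}\phi$ forcing in (\ref{2.21}). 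An analogous computation based on (\ref{2.22}) gives
\[
\left(\partial_{t}-g^{\alpha\beta}\tilde{\nabla}_{\alpha}\tilde{\nabla}_{\beta}\right)|\tilde{\nabla}\phi|^{2}_{\tilde{g}}\leq -2|\tilde{\nabla}\tilde{\nabla}\phi|^{2}_{\tilde{g}}+\tilde{\nabla}g\ast\tilde{\nabla}\phi\ast\tilde{\nabla}\tilde{\nabla}\phi+\beta_{1}|\tilde{\nabla}\phi|^{2}\ast\tilde{\nabla}\phi\ast\tilde{\nabla}\tilde{\nabla}\phi+2\beta_{2}|\tilde{\nabla}\phi|^{2}_{\tilde{g}},
\]
so the two evolutions are coupled precisely through the $\alpha_{1}$ and mixed terms.

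Next I would pick a smooth cutoff $\eta:M\to[0,1]$ with $\eta\equiv 1$ on $B_{\tilde{g}}(x_{0},r+\delta/2)$, $\operatorname{supp}(\eta)\subset B_{\tilde{g}}(x_{0},r+\delta)$, and $|\tilde{\nabla}\eta|^{2}_{\tilde{g}}+|\tilde{\nabla}^{2}\eta|_{\tilde{g}}\lesssim C(m,\delta,\tilde{g})$, and work with
\[
F:=\eta\left(|\tilde{\nabla}g|^{2}_{\tilde{g}}+A\,|\tilde{\nabla}\phi|^{2}_{\tilde{g}}\right)+B\,|\tilde{\nabla}\phi|^{2}_{\tilde{g}}
\]
for constants $A,B$ chosen large enough that the coupling terms from the $|\tilde{\nabla}g|^{2}$ equation are dominated by the good $-A|\tilde{\nabla}\tilde{\nabla}\phi|^{2}$ term. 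At a first interior maximum of $F$ on $B_{\tilde{g}}(x_{0},r+\delta)\times[0,t_{0}]$, Young's inequality $ab\leq \varepsilon a^{2}+b^{2}/(4\varepsilon)$ absorbs every cross pairing of the form $\tilde{\nabla}g\ast\tilde{\nabla}\tilde{\nabla}g$, $\tilde{\nabla}\phi\ast\tilde{\nabla}\tilde{\nabla}\phi$ and their mixed cousins into the two good terms $-|\tilde{\nabla}\tilde{\nabla}g|^{2}$ and $-|\tilde{\nabla}\tilde{\nabla}\phi|^{2}$. After absorption one is left with a scalar inequality $\partial_{t}F_{\max}\lesssim 1+F_{\max}^{2}$, which integrates in $[0,T]$ and bounds $F$ on $B_{\tilde{g}}(x_{0},r+\delta/2)$.

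The main obstacle is the bookkeeping in the absorption step: the constants generated when converting $g^{-1}$ into $\tilde{g}^{-1}$ in the quadratic pairings and when estimating $|\tilde{\nabla}\phi|^{2}_{g}$ by $|\tilde{\nabla}\phi|^{2}_{\tilde{g}}$ must remain small enough to be dominated by the good second-order terms. This is exactly what the explicit numerical factor $80000(1+\alpha_{1}^{2}+\beta_{1}^{2})m^{10}$ in the hypothesis (\ref{3.13}) is designed to achieve; once one fixes the order in which the Peter--Paul inequalities are applied (first to the mixed $\tilde{\nabla}g\ast\tilde{\nabla}\tilde{\nabla}\phi$ terms, then to the self-interactions, with $\varepsilon$'s depending on $\alpha_{1},\beta_{1},m$), the choice of $A$ and $B$ is forced, and the smallness in (\ref{3.13}) supplies exactly the margin needed. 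The upper bound $k_{1}$ on $|\phi|^{2}+|\tilde{\nabla}\phi|^{2}$ handles the zeroth-order $\beta_{2}\phi$ and $|\tilde{\nabla}\phi|^{2}$ contributions, and the curvature bound $k_{0}$ controls the $\tilde{R}$-terms on the fixed background.
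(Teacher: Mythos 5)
Your outline correctly identifies the evolution inequalities for $|\tilde{\nabla}g|^{2}_{\tilde{g}}$ and $|\tilde{\nabla}\phi|^{2}_{\tilde{g}}$ and the role of Young's inequality in the absorption step, but the final differential inequality you arrive at, $\partial_{t}F_{\max}\lesssim 1+F_{\max}^{2}$, does not close the argument: a Riccati inequality with a positive quadratic term only controls $F$ for $t$ below a finite blow-up time, whereas the lemma requires a bound on the full given interval $[0,T]$. The cutoff $\eta$ and the affine combination $F=\eta(|\tilde{\nabla}g|^{2}+A|\tilde{\nabla}\phi|^{2})+B|\tilde{\nabla}\phi|^{2}$ only produce error terms of the shape $|\tilde{\nabla}\eta|\ast\tilde{\nabla}g\ast\tilde{\nabla}^{2}g$, which can be absorbed into the good second-order dissipation but never generate a \emph{negative} quartic in $|\tilde{\nabla}g|$. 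The positive quartic $g^{\ast-3}\ast(\tilde{\nabla}g)^{\ast4}$ that appears in the evolution therefore survives with the wrong sign.

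What you are missing is Shi's weight function. The paper sets $\varphi:=a+\sum_{k}\lambda_{k}^{n}$, where $\lambda_{k}$ are the eigenvalues of $g$ relative to $\tilde{g}$ and $n$ is large, and exploits the closeness hypothesis (\ref{3.13}) to show $\partial_{t}\varphi\leq g^{\alpha\beta}\tilde{\nabla}_{\alpha}\tilde{\nabla}_{\beta}\varphi+C-\frac{n^{2}}{16}|\tilde{\nabla}g|^{2}_{\tilde{g}}$. Multiplying $|\tilde{\nabla}\boldsymbol{\Theta}|^{2}_{\tilde{g}}$ by $\varphi$ and tuning $n$ (this is exactly where the numerical factor $80000(1+\alpha_{1}^{2}+\beta_{1}^{2})m^{10}$ enters, not as a mere ``absorption margin'' for Peter--Paul but as the size of $n$ needed for the $\varphi$-dissipation to dominate $|\tilde{\nabla}\boldsymbol{\Theta}|^{4}$) turns the inequality into $\partial_{t}\psi\leq g^{\alpha\beta}\tilde{\nabla}_{\alpha}\tilde{\nabla}_{\beta}\psi-\frac{1}{10}\psi^{2}+C$ for $\psi=\varphi|\tilde{\nabla}\boldsymbol{\Theta}|^{2}_{\tilde{g}}$. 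The crucial feature is the \emph{negative} quadratic: at an interior maximum this forces $\psi^{2}\leq 10C$, so the bound holds for arbitrary $T$. Your construction never produces this sign, and without it the maximum principle cannot give the lemma's conclusion. To repair the proof you should replace the cutoff in front of $|\tilde{\nabla}g|^{2}$ by the eigenvalue weight $\varphi$ (keeping the cutoff only for the final localization to $B_{\tilde{g}}(x_{0},r+\delta/2)$), as in \cite{S89,L05}.
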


\begin{proof} Using the $\ast$-notion, we can write (\ref{2.21}) as
\begin{equation*}
\partial_{t}g_{ij}=g^{\alpha\beta}\tilde{\nabla}_{\alpha}
\tilde{\nabla}_{\beta}g_{ij}+g^{-1}\ast g\ast\widetilde{{\rm Rm}}
+g^{-1}\ast g^{-1}\ast\tilde{\nabla}g\ast\tilde{\nabla}g
+2\alpha_{1}\tilde{\nabla}_{i}\phi\tilde{\nabla}_{j}\phi.
\end{equation*}
Then
\begin{eqnarray}
\partial_{t}\tilde{\nabla}g_{ij}&=&g^{\alpha\beta}\left(\nabla
\tilde{\nabla}_{\alpha}\tilde{\nabla}_{\beta}g_{ij}\right)
+g^{-1}\ast g^{-1}\ast\tilde{\nabla}g\ast\tilde{\nabla}\tilde{\nabla}g
+2\alpha_{1}\tilde{\nabla}_{i}\phi\tilde{\nabla}\tilde{\nabla}_{j}\phi\nonumber\\
&&+ \ g^{-1}\ast g^{-1}\ast\tilde{\nabla}g\ast g\ast\widetilde{{\rm Rm}}
+g^{-1}\ast\tilde{\nabla}g\ast\widetilde{{\rm Rm}}\label{3.15}\\
&&+ \ g^{-1}\ast g\ast\tilde{\nabla}\widetilde{{\rm Rm}}
+g^{-1}\ast g^{-1}\ast g^{-1}\ast\tilde{\nabla}g
\ast\tilde{\nabla}g\ast\tilde{\nabla}g;\nonumber
\end{eqnarray}
since
\begin{eqnarray*}
\tilde{\nabla}\tilde{\nabla}_{\alpha}\tilde{\nabla}_{\beta}g_{ij}
&=&\tilde{\nabla}_{\alpha}\tilde{\nabla}\tilde{\nabla}_{\beta}
g_{ij}+\widetilde{{\rm Rm}}\ast\tilde{\nabla}g\\
&=&\tilde{\nabla}_{\alpha}\left(\tilde{\nabla}_{\beta}
\tilde{\nabla}g_{ij}+\widetilde{{\rm Rm}}\ast g\right)
+\widetilde{{\rm Rm}}\ast\tilde{\nabla}g\\
&=&\tilde{\nabla}_{\alpha}\tilde{\nabla}_{\beta}
\tilde{\nabla}g_{ij}+g\ast\tilde{\nabla}\widetilde{{\rm Rm}}
+\tilde{\nabla}g\ast\widetilde{{\rm Rm}},
\end{eqnarray*}
we conclude from (\ref{3.15}) that
\begin{eqnarray}
\partial_{t}\tilde{\nabla}g_{ij}
&=&g^{\alpha\beta}\left(\tilde{\nabla}_{\alpha}
\tilde{\nabla}_{\beta}\tilde{\nabla}g_{ij}\right)
+g^{-1}\ast g^{-1}\ast\tilde{\nabla}g\ast\tilde{\nabla}
\tilde{\nabla}g+2\alpha_{1}\tilde{\nabla}_{i}
\phi\tilde{\nabla}\tilde{\nabla}_{j}
\phi\nonumber\\
&&+ \ g^{-1}\ast g^{-1}\ast\tilde{\nabla}g
\ast g\ast\widetilde{{\rm Rm}}+g^{-1}\ast\tilde{\nabla}g\ast\widetilde{{\rm Rm}}\label{3.16}\\
&&+ \ g^{-1}\ast g\ast\tilde{\nabla}\widetilde{{\rm Rm}}
+g^{-1}\ast g^{-1}\ast g^{-1}\ast\tilde{\nabla}g
\ast\tilde{\nabla}g\ast\tilde{\nabla}g.\nonumber
\end{eqnarray}
It follows from (\ref{3.16}) that
\begin{eqnarray}
\partial_{t}|\tilde{\nabla}g|^{2}_{\tilde{g}}
&=&2\tilde{g}^{\alpha\beta}\tilde{g}^{ik}
\tilde{g}^{j\ell}\tilde{\nabla}_{\beta}g_{k\ell}\cdot\partial_{t}
\tilde{\nabla}_{\alpha}g_{ij}\nonumber\\
&=&g^{\alpha\beta}\tilde{\nabla}_{\alpha}
\tilde{\nabla}_{\beta}|\tilde{\nabla}g|^{2}_{\tilde{g}}
-2g^{\alpha\beta}\left\langle\tilde{\nabla}_{\alpha}\tilde{\nabla}g,
\tilde{\nabla}_{\beta}\tilde{\nabla}g\right\rangle_{\tilde{g}}
+\tilde{\nabla}g\ast\tilde{\nabla}\phi\ast\tilde{\nabla}
\tilde{\nabla}\phi\nonumber\\
&&+\ \widetilde{{\rm Rm}}\ast g^{-1}\ast g^{-1}
\ast g\ast\tilde{\nabla}g\ast\tilde{\nabla}g
+\widetilde{{\rm Rm}}\ast g^{-1}\ast\tilde{\nabla}g
\ast\tilde{\nabla}g\label{3.17}\\
&&+ \ g^{-1}\ast g\ast\tilde{\nabla}
\widetilde{{\rm Rm}}\ast\tilde{\nabla}g
+g^{-1}\ast g^{-1}\ast\tilde{\nabla}g\ast\tilde{\nabla}g
\ast\tilde{\nabla}\tilde{\nabla}g\nonumber\\
&&+ \ g^{-1}\ast g^{-1}\ast g^{-1}
\ast\tilde{\nabla}g\ast\tilde{\nabla}g
\ast\tilde{\nabla}g\ast\tilde{\nabla}g.\nonumber
\end{eqnarray}
Since the closure $\overline{B_{\tilde{g}}(x_{0},r+\delta)}$ is compact,
we have
\begin{equation}
|\tilde{\nabla}\widetilde{{\rm Rm}}|_{\tilde{g}}\lesssim1\label{3.18}
\end{equation}
on $\overline{B_{\tilde{g}}(x_{0},r+\delta)}$, where
$\lesssim$ depends on $r, \delta, \tilde{g}$. From (\ref{3.13}) we get
\begin{equation}
\frac{1}{2}\tilde{g}\leq g(t)\leq 2\tilde{g}\label{3.19}
\end{equation}
on $B_{\tilde{g}}(x_{0},r+\delta)\times[0,T]$. According to (\ref{3.18})
and (\ref{3.19}), we arrive at
\begin{eqnarray}
\widetilde{{\rm Rm}}\ast g^{-1}\ast g^{-1}
\ast\tilde{\nabla}g\ast\tilde{\nabla}g&\lesssim&|\tilde{\nabla}
g|^{2}_{\tilde{g}},\nonumber\\
\widetilde{{\rm Rm}}\ast g^{-1}
\ast\tilde{\nabla}g\ast\tilde{\nabla}g&\lesssim
&|\tilde{\nabla}g|^{2}_{\tilde{g}},\label{3.20}\\
\tilde{\nabla}\widetilde{{\rm Rm}}\ast g^{-1}
\ast g\ast\tilde{\nabla}g&\lesssim&|\tilde{\nabla}g|_{\tilde{g}},\nonumber
\end{eqnarray}
where $\lesssim$ depends on $m, r, \delta, \tilde{g}$. From the explicit
formulas we can see
\begin{eqnarray*}
\tilde{\nabla}g\ast\tilde{\nabla}\phi\ast\tilde{\nabla}
\tilde{\nabla}\phi&=&4\alpha_{1}
\tilde{g}^{\alpha\beta}\tilde{g}^{ik}\tilde{g}^{j\ell}
\tilde{\nabla}_{\beta}g_{k\ell}\tilde{\nabla}_{i}
\phi\tilde{\nabla}_{\alpha}\tilde{\nabla}_{j}\phi\\
&=&4\alpha_{1}\sum_{1\leq i,j,\alpha\leq m}
\tilde{\nabla}_{\alpha}g_{ij}\tilde{\nabla}_{i}
\phi\tilde{\nabla}_{\alpha}\tilde{\nabla}_{j}\phi\\
&\leq&4\alpha_{1}m^{3}|\tilde{\nabla}g|_{\tilde{g}}
|\tilde{\nabla}\phi|_{\tilde{g}}|\tilde{\nabla}\tilde{\nabla}
\phi|_{\tilde{g}}
\end{eqnarray*}
where we used a normal coordinate system of $\tilde{g}$. Similarly,
\begin{eqnarray*}
&&g^{-1}\ast g^{-1}\ast\tilde{\nabla}g
\ast\tilde{\nabla}g\ast\tilde{\nabla}\tilde{\nabla}g\\
&=&\tilde{g}^{\gamma\delta}
g^{ik}\tilde{g}^{j\ell}g^{\alpha\beta}
g^{pq}\tilde{\nabla}_{\delta}g_{k\ell}
\bigg(\tilde{\nabla}_{\gamma}\tilde{\nabla}_{i}
g_{p\alpha}\tilde{\nabla}_{j}g_{q\beta}+\tilde{\nabla}_{i}g_{p\alpha}
\tilde{\nabla}_{\gamma}\tilde{\nabla}_{j}
g_{q\beta}+2\tilde{\nabla}_{\gamma}
\tilde{\nabla}_{\alpha}g_{jp}\tilde{\nabla}_{q}g_{i\beta}\\
&&+ \ 2\tilde{\nabla}_{\alpha}g_{jp}\tilde{\nabla}_{\gamma}
\tilde{\nabla}_{q}g_{i\beta}
-2\tilde{\nabla}_{\gamma}\tilde{\nabla}_{\alpha}
g_{jp}\tilde{\nabla}_{\beta}g_{iq}-2\tilde{\nabla}_{\alpha}
g_{jp}\tilde{\nabla}_{\gamma}\tilde{\nabla}_{\beta}
g_{iq}-2\tilde{\nabla}_{\gamma}\tilde{\nabla}_{j}
g_{p\alpha}\tilde{\nabla}_{\beta}g_{iq}\\
&&- \ 2\tilde{\nabla}_{j}g_{p\alpha}
\tilde{\nabla}_{\gamma}\tilde{\nabla}_{\beta}
g_{iq}-2\tilde{\nabla}_{\gamma}\tilde{\nabla}_{i}
g_{p\alpha}\tilde{\nabla}_{\beta}
g_{jq}-2\tilde{\nabla}_{i}g_{p\alpha}
\tilde{\nabla}_{\gamma}\tilde{\nabla}_{\beta}
g_{jq}\bigg)\\
&=&\sum_{1\leq i,j,\alpha,\gamma,p\leq m}g^{\alpha\alpha}
g^{pp}\tilde{\nabla}_{\gamma}g_{ij}\bigg(\tilde{\nabla}_{\gamma}
\tilde{\nabla}_{i}g_{p\alpha}\tilde{\nabla}_{j}g_{p\alpha}
+\tilde{\nabla}_{i}g_{p\alpha}\tilde{\nabla}_{j\gamma}
g_{p\alpha}\\
&&+ \ 2\tilde{\nabla}_{\gamma}\tilde{\nabla}_{\alpha}
g_{jp}\tilde{\nabla}_{p}g_{i\alpha}
+2\tilde{\nabla}_{\alpha}g_{jp}\tilde{\nabla}_{\gamma}
\tilde{\nabla}_{p}g_{i\alpha}-2\tilde{\nabla}_{\gamma}
\tilde{\nabla}_{\alpha}g_{jp}\tilde{\nabla}_{\alpha}
g_{ip}\\
&&- \ 2\tilde{\nabla}_{\alpha}g_{jp}
\tilde{\nabla}_{\gamma}\tilde{\nabla}_{\alpha}g_{ip}-
2\tilde{\nabla}_{\gamma}\tilde{\nabla}_{j}g_{p\alpha}
\tilde{\nabla}_{\alpha}g_{ip}-2\tilde{\nabla}_{j}
g_{p\alpha}\tilde{\nabla}_{\gamma}\tilde{\nabla}_{\alpha}g_{ip}\\
&&- \ 2\tilde{\nabla}_{\gamma}\tilde{\nabla}_{i}g_{p\alpha}
\tilde{\nabla}_{\alpha}g_{jp}-2\tilde{\nabla}_{i}g_{p\alpha}
\tilde{\nabla}_{\gamma}\tilde{\nabla}_{\alpha}g_{jp}\bigg)\\
&\leq&4m^{5}(8\times 2+2)|\tilde{\nabla}g|^{2}_{\tilde{g}}
|\tilde{\nabla}\tilde{\nabla}g|_{\tilde{g}} \ \ = \ \
72 m^{5}|\tilde{\nabla}g|^{2}_{\tilde{g}}
|\tilde{\nabla}\tilde{\nabla}g|_{\tilde{g}},
\end{eqnarray*}
and
\begin{eqnarray*}
&& g^{-1}\ast g^{-1}\ast g^{-1}\ast\tilde{\nabla}g
\ast\tilde{\nabla}g\ast\tilde{\nabla}g\ast\tilde{\nabla}g\\
&=&\tilde{g}^{\gamma\delta}
\tilde{g}^{ik}\tilde{g}^{j\ell}
\tilde{\nabla}_{\delta}g_{k\ell}
\bigg(\tilde{\nabla}_{\gamma}g^{\alpha\beta}
g^{pq}+g^{\alpha\beta}\tilde{\nabla}_{\gamma}g^{pq}\bigg)
\bigg(\tilde{\nabla}_{i}g_{p\alpha}
\tilde{\nabla}_{j}g_{q\beta}
+2\tilde{\nabla}_{\alpha}g_{jp}\tilde{\nabla}_{q}g_{i\beta}\\
&&- \ 2\tilde{\nabla}_{\alpha}g_{jp}
\tilde{\nabla}_{\beta}g_{iq}-2\tilde{\nabla}_{j}g_{p\alpha}
\tilde{\nabla}_{\beta}g_{iq}-2\tilde{\nabla}_{i}g_{p\alpha}
\tilde{\nabla}_{\beta}g_{jq}\bigg)\\
&\leq&2^{3}n^{6}(4\times 2+1)\times 2|\tilde{\nabla}g|^{4}_{\tilde{g}} \ \ = \ \
144 m^{6}|\tilde{\nabla}g|^{4}_{\tilde{g}}.
\end{eqnarray*}
Thus
\begin{eqnarray}
\tilde{\nabla}g\ast\tilde{\nabla}\phi\ast\tilde{\nabla}
\tilde{\nabla}\phi&\leq&4\alpha_{1}m^{3}|\tilde{\nabla}g|_{\tilde{g}}
|\tilde{\nabla}\phi|_{\tilde{g}}|\tilde{\nabla}\tilde{\nabla}
\phi|_{\tilde{g}},\nonumber\\
g^{-1}\ast g^{-1}\ast\tilde{\nabla}g\ast\tilde{\nabla}g
\ast\tilde{\nabla}\tilde{\nabla}g&\leq&72 m^{5}|\tilde{\nabla}g|^{2}_{\tilde{g}}
|\tilde{\nabla}\tilde{\nabla}g|_{\tilde{g}},\label{3.21}\\
g^{-1}\ast g^{-1}\ast g^{-1}\ast\tilde{\nabla}g
\ast\tilde{\nabla}g\ast\tilde{\nabla}g\ast\tilde{\nabla}g&\leq&
144 m^{6}|\tilde{\nabla}g|^{4}_{\tilde{g}}.\nonumber
\end{eqnarray}
Furthermore, using(\ref{3.19}), we get
\begin{equation}
g^{\alpha\beta}\left\langle\tilde{\nabla}_{\alpha}
\tilde{\nabla}g,\tilde{\nabla}_{\beta}\tilde{\nabla}g\right\rangle_{\tilde{g}}
=g^{\alpha\beta}\tilde{g}^{ik}\tilde{g}^{j\ell}
\tilde{g}^{\gamma\delta}\tilde{\nabla}_{\alpha}
\tilde{\nabla}_{\gamma}g_{ij}
\tilde{\nabla}_{\beta}\tilde{\nabla}_{\delta}
g_{k\ell}\geq\frac{1}{2}|\tilde{\nabla}\tilde{\nabla}g|^{2}_{\tilde{g}}.\label{3.22}
\end{equation}
Substituting (\ref{3.20}), (\ref{3.21}), and (\ref{3.22}) into
(\ref{3.16}) implies
\begin{eqnarray}
\partial_{t}|\tilde{\nabla}g|^{2}_{\tilde{g}}
&\leq&g^{\alpha\beta}\tilde{\nabla}_{\alpha}
\tilde{\nabla}_{\beta}|\tilde{\nabla}g|^{2}_{\tilde{g}}
-|\tilde{\nabla}^{2}g|^{2}_{\tilde{g}}
+C_{1}|\tilde{\nabla}g|^{2}_{\tilde{g}}
+C_{1}|\tilde{\nabla}g|_{\tilde{g}}\nonumber\\
&&+ \ 72 m^{5}|\tilde{\nabla}g|^{2}_{\tilde{g}}
|\tilde{\nabla}^{2}g|_{\tilde{g}}+144 m^{6}|\tilde{\nabla}g|^{4}_{\tilde{g}}
+4\alpha_{1} m^{3}|\tilde{\nabla}g|_{\tilde{g}}|\tilde{\nabla}\phi|_{\tilde{g}}
|\tilde{\nabla}^{2}\phi|_{\tilde{g}}\label{3.23}
\end{eqnarray}
for some positive constant $C_{1}$ depending only on $m,r,\delta,\tilde{g}$.

Using (\ref{2.22}), we have
\begin{eqnarray*}
\partial_{t}\tilde{\nabla}_{k}\phi&=&\tilde{\nabla}_{k}
\bigg(g^{ij}\tilde{\nabla}_{i}\tilde{\nabla}_{j}\phi+\beta_{1}
|\tilde{\nabla}\phi|^{2}_{g}+\beta_{2}\phi\bigg)\\
&=&\tilde{\nabla}_{k}g^{ij}\tilde{\nabla}_{i}
\tilde{\nabla}_{j}\phi+g^{ij}\tilde{\nabla}_{k}\tilde{\nabla}_{i}
\tilde{\nabla}_{j}\phi+\beta_{1}\tilde{\nabla}_{k}
\left(g^{ij}\tilde{\nabla}_{i}\phi\tilde{\nabla}_{j}\phi\right)
+\beta_{2}\tilde{\nabla}_{k}\phi;
\end{eqnarray*}
from the Ricci identity,
\begin{equation*}
\tilde{\nabla}_{k}\tilde{\nabla}_{i}\tilde{\nabla}_{j}
\phi=\tilde{\nabla}_{i}\tilde{\nabla}_{k}\tilde{\nabla}_{j}\phi
-\tilde{R}_{kijp}\tilde{\nabla}^{\ell}\phi
=\tilde{\nabla}_{i}\tilde{\nabla}_{j}\tilde{\nabla}_{k}
\phi-\tilde{R}_{kijp}\tilde{\nabla}^{p}\phi,
\end{equation*}
we obtain
\begin{eqnarray}
\partial_{t}\tilde{\nabla}_{k}\phi&=&g^{ij}\tilde{\nabla}_{i}
\tilde{\nabla}_{j}(\tilde{\nabla}_{k}\phi)
-g^{ij}\tilde{R}_{kij p}\tilde{\nabla}^{p}\phi+
\tilde{\nabla}_{k}g^{ij}\tilde{\nabla}_{i}\tilde{\nabla}_{j}
\phi\nonumber\\
&&+ \ \beta_{1}\tilde{\nabla}_{k}g^{ij}\tilde{\nabla}_{i}
\phi\tilde{\nabla}_{j}\phi+2\beta_{1}g^{ij}\tilde{\nabla}_{j}\phi
\tilde{\nabla}_{i}\tilde{\nabla}_{k}\phi+\beta_{2}\tilde{\nabla}_{k}
\phi.\label{3.24}
\end{eqnarray}
The evolution equation (\ref{3.24}) gives us the following equation
\begin{eqnarray}
\partial_{t}|\tilde{\nabla}\phi|^{2}_{\tilde{g}}
&=&\partial_{t}\left(\tilde{g}^{k\ell}\tilde{\nabla}_{k}\phi\tilde{\nabla}_{\ell}
\phi\right) \ \ = \ \ 2\tilde{g}^{k\ell}\tilde{\nabla}_{\ell}\phi
\left(\partial_{t}\tilde{\nabla}_{k}\phi\right)\nonumber\\
&=&g^{ij}\left(\tilde{\nabla}_{i}\tilde{\nabla}_{j}
(\tilde{\nabla}_{k}\phi)\cdot 2\tilde{g}^{k\ell}\tilde{\nabla}_{\ell}
\phi\right)-2g^{ij}\tilde{g}^{k\ell}
\tilde{R}_{kijp}\tilde{\nabla}_{\ell}\phi\tilde{\nabla}^{p}\phi\nonumber\\
&&+ \ 2\tilde{g}^{k\ell}\tilde{\nabla}_{k}g^{ij}\tilde{\nabla}_{\ell}
\phi\tilde{\nabla}_{i}\tilde{\nabla}_{j}\phi
+2\beta_{1}\tilde{g}^{k\ell}\tilde{\nabla}_{k}g^{ij}\tilde{\nabla}_{i}
\phi\tilde{\nabla}_{j}\phi\tilde{\nabla}_{\ell}\phi\label{3.25}\\
&&+ \ 4\beta_{1}\tilde{g}^{k\ell}g^{ij}
\tilde{\nabla}_{j}\phi\tilde{\nabla}_{\ell}\phi
\tilde{\nabla}_{i}\tilde{\nabla}_{k}\phi
+2\beta_{2}\tilde{g}^{k\ell}\tilde{\nabla}_{k}
\phi\tilde{\nabla}_{\ell}\phi.\nonumber
\end{eqnarray}
The identity
\begin{equation*}
g^{ij}\tilde{\nabla}_{i}\tilde{\nabla}_{j}
|\tilde{\nabla}\phi|^{2}_{\tilde{g}}
=g^{ij}\left(\tilde{\nabla}^{i}\tilde{\nabla}_{j}(\tilde{\nabla}_{k}
\phi)\cdot 2\tilde{g}^{k\ell}\tilde{\nabla}_{\ell}
\phi\right)+2g^{ij}\tilde{g}^{k\ell}\tilde{\nabla}_{i}\tilde{\nabla}_{k}
\phi\tilde{\nabla}_{j}\tilde{\nabla}_{\ell}\phi
\end{equation*}
together with (\ref{3.18}), implies that
\begin{equation}
g^{ij}\left(\tilde{\nabla}_{i}\tilde{\nabla}_{j}(
\tilde{\nabla}_{k}\phi)\cdot 2\tilde{g}^{k\ell}\tilde{\nabla}_{\ell}
\phi\right)\leq g^{ij}\tilde{\nabla}_{i}\tilde{\nabla}_{j}
|\tilde{\nabla}\phi|^{2}_{\tilde{g}}
-|\tilde{\nabla}^{2}\phi|^{2}_{\tilde{g}}.\label{3.26}
\end{equation}
Using (\ref{3.18}) again, we find that
\begin{eqnarray*}
-2g^{ij}\tilde{g}^{k\ell}
\tilde{R}_{kijp}\tilde{\nabla}_{\ell}\phi
\tilde{\nabla}^{p}\phi&=&
-2g^{ij}\tilde{g}^{k\ell}\tilde{g}^{pq}
\tilde{R}_{kijp}\tilde{\nabla}_{\ell}\phi\tilde{\nabla}_{q}\phi\\
&\lesssim&|\tilde{\nabla}\phi|^{2}_{\tilde{g}},\\
2\tilde{g}^{k\ell}\tilde{\nabla}_{k}g^{ij}
\tilde{\nabla}_{\ell}\phi\tilde{\nabla}_{i}\tilde{\nabla}_{j}
\phi&=&-2\tilde{g}^{k\ell}g^{ip}g^{qj}
\tilde{\nabla}_{k}g_{pq}\tilde{\nabla}_{\ell}\phi\tilde{\nabla}_{i}
\tilde{\nabla}_{j}\phi\\
&\leq&8 m^{3}|\tilde{\nabla}g|_{\tilde{g}}|\tilde{\nabla}
\phi|_{\tilde{g}}|\tilde{\nabla}^{2}\phi|_{\tilde{g}},\\
\tilde{g}^{k\ell}\tilde{\nabla}_{k}
g^{ij}\tilde{\nabla}_{i}\phi\tilde{\nabla}_{j}\phi
\tilde{\nabla}_{\ell}\phi&=&-\tilde{g}^{k\ell}
g^{ip}g^{jq}\tilde{\nabla}_{k}g_{pq}
\tilde{\nabla}_{i}\phi\tilde{\nabla}_{j}\phi\tilde{\nabla}_{\ell}\phi\\
&\leq& 4m^{3}|\tilde{\nabla}g|_{\tilde{g}}
|\tilde{\nabla}\phi|^{3}_{\tilde{g}},\\
\tilde{g}^{k\ell}
g^{ij}\tilde{\nabla}_{j}\phi\tilde{\nabla}_{\ell}
\phi\tilde{\nabla}_{i}\tilde{\nabla}_{k}\phi&\leq&4 m^{2}
|\tilde{\nabla}\phi|^{2}_{\tilde{g}}|\tilde{\nabla}^{2}\phi|_{\tilde{g}},
\end{eqnarray*}
where $\lesssim$ depends on $m, r, \delta, \tilde{g}$. Substituting
those estimates and (\ref{3.26}) into (\ref{3.25}), we arrive at
\begin{eqnarray}
\partial_{t}|\tilde{\nabla}\phi|^{2}_{\tilde{g}}
&\leq&g^{ij}\tilde{\nabla}_{i}\tilde{\nabla}_{j}
|\tilde{\nabla}\phi|^{2}_{\tilde{g}}
-|\tilde{\nabla}^{2}\phi|^{2}_{\tilde{g}}
+C_{2}|\tilde{\nabla}\phi|^{2}_{\tilde{g}}
+8m^{3}|\tilde{\nabla}g|_{\tilde{g}}
|\tilde{\nabla}\phi|_{\tilde{g}}|\tilde{\nabla}^{2}\phi|_{\tilde{g}}\nonumber\\
&&+ \ 8\beta_{1}m^{3}|\tilde{\nabla}g|_{\tilde{g}}
|\tilde{\nabla}\phi|^{3}_{\tilde{g}}
+16\beta_{1}m^{2}|\tilde{\nabla}\phi|^{2}_{\tilde{g}}
|\tilde{\nabla}^{2}\phi|_{\tilde{g}}
+2\beta_{2}|\tilde{\nabla}\phi|^{2}_{\tilde{g}}.\label{3.27}
\end{eqnarray}
As in \cite{L05}, we consider the vector-valued tensor field
\begin{equation}
\boldsymbol{\Theta}(t):=(g(t),\phi(t))\label{3.28}
\end{equation}
and define
\begin{equation*}
\tilde{\nabla}^{k}\boldsymbol{\Theta}(t):=\left(\tilde{\nabla}^{k}g(t),
\tilde{\nabla}^{k}\phi(t)\right), \ \ \
|\tilde{\nabla}^{k}\boldsymbol{\Theta}(t)|^{2}_{\tilde{g}}
:=|\tilde{\nabla}^{k}g(t)|^{2}_{\tilde{g}}
+|\tilde{\nabla}^{k}\phi(t)|^{2}_{\tilde{g}}.\label{3.29}
\end{equation*}
From (\ref{3.23}) and (\ref{3.27}), we obtain
\begin{eqnarray}
\partial_{t}|\tilde{\nabla}\boldsymbol{\Theta}|^{2}_{\tilde{g}}
&\leq&g^{\alpha\beta}\tilde{\nabla}_{\alpha}\tilde{\nabla}_{\beta}
|\tilde{\nabla}\boldsymbol{\Theta}|^{2}_{\tilde{g}}-|\tilde{\nabla}^{2}
\boldsymbol{\Theta}|^{2}_{\tilde{g}}+
C_{3}|\tilde{\nabla}\boldsymbol{\Theta}|^{2}_{\tilde{g}}
+C_{3}|\tilde{\nabla}\boldsymbol{\Theta}|_{\tilde{g}}\nonumber\\
&&+ \ (80+4\alpha_{1}+16\beta_{1})m^{5}|\tilde{\nabla}\boldsymbol{\Theta}
|^{2}_{\tilde{g}}|\tilde{\nabla}^{2}\boldsymbol{\Theta}|_{\tilde{g}}
+(144+8\beta_{1})m^{6}|\tilde{\nabla}\boldsymbol{\Theta}|^{4}_{\tilde{g}},
\label{3.29}
\end{eqnarray}
where $C_{3}$ is positive constant depending on $m, r, \delta, \tilde{g}$
and $\beta_{2}$. The inequality (\ref{3.29}) is similar to the equation (11) in
page 247 of \cite{S89} and the equation (3.28) in page 36 of \cite{L05}, so that
the proof is essentially without change anything. However, for our flow, we want to find the
positive constant $\epsilon>0$ with $(1-\epsilon)\tilde{g}\leq g\leq(1+\epsilon)
\tilde{g}$ on $B_{\tilde{g}}(x_{0},r+\delta)\times[0,T]$ such that both functions $|\tilde{\nabla}
g|^{2}_{\tilde{g}}$ and $|\tilde{\nabla}\phi|^{2}_{\tilde{g}}$ are bounded
from above. The mentioned positive constant $\epsilon$ depends on $m, \alpha_{1},
\beta_{1}$, and $\beta_{2}$; the aim of the following computations is to
find an explicit formula for $\epsilon$. We shall follow Shi's idea but do
more slight work on calculus, in particular on the positive uniform constants we are going to
obtain.

By the inequality
\begin{equation}
ab\leq\epsilon a^{2}+\frac{1}{4\epsilon}b^{2}, \ \ \ a,b\in{\bf R}, \ \ \
\epsilon>0,\label{3.30}
\end{equation}
we have
\begin{equation*}
(80+4\alpha_{1}+16\beta_{1})m^{5}
|\tilde{\nabla}\boldsymbol{\Theta}|^{2}_{\tilde{g}}
|\tilde{\nabla}^{2}\boldsymbol{\Theta}|_{\tilde{g}}
\leq\frac{1}{2}|\tilde{\nabla}^{2}\boldsymbol{\Theta}|^{2}_{\tilde{g}}
+2(40+2\alpha_{1}+8\beta_{1})^{2}m^{10}|\tilde{\nabla}
\boldsymbol{\Theta}|^{4}_{\tilde{g}}
\end{equation*}
and
\begin{equation*}
C_{3}|\tilde{\nabla}\boldsymbol{\Theta}|_{\tilde{g}}
\leq C_{3}\frac{1+|\tilde{\nabla}\boldsymbol{\Theta}|^{2}_{\tilde{g}}}{2}
=\frac{C_{3}}{2}+\frac{C_{3}}{2}|\tilde{\nabla}\boldsymbol{\Theta}|^{2}_{\tilde{g}}.
\end{equation*}
As a consequence of (\ref{3.29}), we conclude that
\begin{eqnarray}
\partial_{t}|\tilde{\nabla}\boldsymbol{\Theta}|^{2}_{\tilde{g}}
&\leq&g^{\alpha\beta}\tilde{\nabla}_{\alpha}
\tilde{\nabla}_{\beta}|\tilde{\nabla}\boldsymbol{\Theta}|^{2}_{\tilde{g}}
-\frac{1}{2}|\tilde{\nabla}^{2}\boldsymbol{\Theta}|^{2}_{\tilde{g}}
+C_{4}|\tilde{\nabla}\boldsymbol{\Theta}|^{2}_{\tilde{g}}
+C_{4}+\left(3344+8\alpha^{2}_{1}
\right.\nonumber\\
&& \ +\left.320\alpha_{1}+64\alpha_{1}\beta_{1}+128\beta^{2}_{1}+1280\beta_{1}+8\beta_{1}\right)
m^{10}|\tilde{\nabla}\boldsymbol{\Theta}|^{4}_{\tilde{g}}\label{3.31}
\end{eqnarray}
for some positive constant $C_{4}$ depending only on $m, r, \delta,\tilde{g}$ and $\beta_{2}$.

Given $\epsilon:=\frac{1}{Am^{10}}\leq\frac{1}{2}$, where $A$ is a
positive constant depending on $\alpha_{1}, \beta_{1}$ and chosen later. If we choose a normal
coordinate system so that
\begin{equation*}
\tilde{g}_{ij}=\delta_{ij}, \ \ \ g_{ij}=\lambda_{i}\delta_{ij},
\end{equation*}
then we have
\begin{equation}
1-\epsilon\leq\lambda_{k}\leq 1+\epsilon, \ \ \
\frac{1}{2}\leq\lambda_{k}\leq 2, \ \ \ k=1,\cdots,m.\label{3.32}
\end{equation}
Define
\begin{equation}
n:=\frac{1}{\epsilon}, \ \ \ a:=\frac{n}{4},\label{3.33}
\end{equation}
and
\begin{equation}
\varphi=\varphi(x,t):=a+\sum_{1\leq k\leq m}
\lambda^{n}_{k}, \ \ \ (x,t)\in B_{\tilde{g}}(x_{0},r+\delta)\times[0,T].\label{3.34}
\end{equation}
By the formula (16) in page 248 of \cite{S89}, we can compute
\begin{eqnarray}
\partial_{t}\varphi&=&n\sum_{1\leq k,\alpha,\beta\leq m}
\lambda^{n-1}_{k}g^{\alpha\beta}\tilde{\nabla}_{\alpha}
\tilde{\nabla}_{\beta}g_{kk}+2n\sum_{1\leq k,\alpha\leq m}
\lambda^{n-1}_{k}\frac{\lambda_{k}}{\lambda_{\alpha}}
\tilde{R}_{k\alpha k\alpha}\nonumber\\
&&+ \ \frac{n}{2}\sum_{1\leq k,\alpha,p\leq m}
\frac{\lambda^{n-1}_{k}}{\lambda_{\alpha}\lambda_{p}}
\bigg(\tilde{\nabla}_{k}g_{p\alpha}
\tilde{\nabla}_{k}g_{p\alpha}+2\tilde{\nabla}_{\alpha}g_{kp}
\tilde{\nabla}_{p}g_{k\alpha}-2\tilde{\nabla}_{\alpha}
g_{kp}\tilde{\nabla}_{\alpha}g_{kp}\label{3.35}\\
&&- \ 2\tilde{\nabla}_{k}g_{p\alpha}
\tilde{\nabla}_{\alpha}g_{kp}-2\tilde{\nabla}_{k}g_{p\alpha}
\tilde{\nabla}_{\alpha}g_{kp}\bigg).\nonumber
\end{eqnarray}
Since the second and the third on the right-hand side of (\ref{3.35}) is bounded
from above by $4n(1+\epsilon)^{n}m^{2}\sqrt{k_{0}}$ and
\begin{equation*}
\frac{n}{2}m^{3}(1+\epsilon)^{n-1}
4(4\times 2+1)|\tilde{\nabla}g|^{2}_{\tilde{g}}
=18 nm^{3}(1+\epsilon)^{n-1}|\tilde{\nabla}g|^{2}_{\tilde{g}},
\end{equation*}
respectively, it follows that
\begin{equation}
\partial_{t}\varphi\leq n\sum_{1\leq k,\alpha,\beta\leq m}
\lambda^{n-1}_{k}g^{\alpha\beta}\tilde{\nabla}_{\alpha}
\tilde{\nabla}_{\beta}g_{kk}+C_{5}+18 nm^{3}(1+\epsilon)^{n-1}
|\tilde{\nabla}g|^{2}_{\tilde{g}}\label{3.36}
\end{equation}
where $C_{5}$ is a positive constant depending only on $m, A$, and $k_{0}$.
On the other hand, from (\ref{3.36}) and the equation (19) in page 249 of \cite{S89}, we have
\begin{equation}
\partial_{t}\varphi
\leq g^{\alpha\beta}\tilde{\nabla}_{\alpha}
\tilde{\nabla}_{\beta}\varphi+C_{5}+\left[18 m^{3}n
(1+\epsilon)^{n-1}-\frac{n(n-1)}{2}(1-\epsilon)^{n-2}\right]
|\tilde{\nabla}g|^{2}_{\tilde{g}}.\label{3.37}
\end{equation}
Instead of the inequalities (20), (21), and (22) in page 249
of \cite{S89}, we will prove uniform inequalities as follows (recall that
$\epsilon:=1/n$):
\begin{itemize}

\item[(a)] For any $n\geq2$ and any $m$, we have
\begin{equation}
18 m^{3}n(1+\epsilon)^{n-1}\leq\frac{54 m^{3}}{n+1}n^{2}.\label{3.38}
\end{equation}

\item[(b)] For any $n\geq2$, we have
\begin{equation}
(1-\epsilon)^{n-2}\geq\frac{1}{4}.\label{3.39}
\end{equation}

\item[(c)] For any $n\geq2$, we have
\begin{equation}
\frac{n(n-1)}{2}(1-\epsilon)^{n-2}\geq\frac{n^{2}}{8}.\label{3.40}
\end{equation}

\end{itemize}
To prove (\ref{3.38}), we write
\begin{equation*}
18 m^{3}n(1+\epsilon)^{n-1}=18 nm^{3}\frac{n}{n+1}\left(1+\frac{1}{n}
\right)^{n}=\frac{18 m^{3}n^{2}}{n+1}\left(1+\frac{1}{n}\right)^{n};
\end{equation*}
since the function $(1+1/x)^{x}$, $x>0$, is in increasing in $x$, it follows
that
\begin{equation*}
18 m^{3}n(1+\epsilon)^{n-1}\leq\frac{18 m^{3}n^{2}}{n+1}
e\leq\frac{54 m^{3}}{n+1}n^{2}.
\end{equation*}
Since the function $(1-1/x)^{x}$, $x\geq 2$, is increasing in $x$, we obtain
\begin{equation*}
(1-\epsilon)^{n-2}=\left(1-\frac{1}{n}\right)^{n-2}
=\frac{n^{2}}{(n-1)^{2}}\left(1-\frac{1}{n}\right)^{n}
\geq\left(1-\frac{1}{n}\right)^{n}\geq\left(1-\frac{1}{2}\right)^{2}
=\frac{1}{4}.
\end{equation*}
From the proof of (\ref{3.39}), together with (\ref{3.38}), we arrive at
\begin{equation*}
\frac{n(n-1)}{2}(1-\epsilon)^{n-2}
\geq\frac{n(n-1)}{2}\frac{n^{2}}{(n-1)^{2}}\frac{1}{4}
=\frac{n^{2}}{8}\frac{n}{n-1}\geq\frac{n^{2}}{8}.
\end{equation*}
Substituting (\ref{3.38}) and (\ref{3.40}) into (\ref{3.37}) implies
\begin{equation*}
\partial_{t}\varphi\leq g^{\alpha\beta}\tilde{\nabla}_{\alpha}
\tilde{\nabla}_{\beta}\varphi+C_{5}-\left(\frac{1}{8}-\frac{54m^{3}}{n+1}
\right)n^{2}|\tilde{\nabla}g|^{2}_{\tilde{g}};
\end{equation*}
choosing
\begin{equation}
n\geq 864 m^{10}>864 m^{3},\label{3.41}
\end{equation}
we find that
\begin{equation}
\partial_{t}\varphi\leq g^{\alpha\beta}\tilde{\nabla}_{\alpha}
\tilde{\nabla}_{\beta}\varphi+C_{5}-\frac{n^{2}}{16}|\tilde{\nabla}
g|^{2}_{\tilde{g}}.\label{3.42}
\end{equation}
As the equation (24) in page 249 of \cite{S89}, we have
\begin{eqnarray}
\partial_{t}\left(\varphi|\tilde{\nabla}\boldsymbol{\Theta}|^{2}_{\tilde{g}}
\right)&\leq&g^{\alpha\beta}\tilde{\nabla}_{\alpha}
\tilde{\nabla}_{\beta}\left(\varphi|\tilde{\nabla}\boldsymbol{\Theta}|^{2}_{\tilde{g}}\right)
-2g^{\alpha\beta}\tilde{\nabla}_{\alpha}\varphi\tilde{\nabla}_{\beta}
|\tilde{\nabla}\boldsymbol{\Theta}|^{2}_{\tilde{g}}
-\frac{1}{2}\varphi|\tilde{\nabla}^{2}\boldsymbol{\Theta}|^{2}_{\tilde{g}}
\nonumber\\
&&+ \ \left(3344+8\alpha^{2}_{1}+320\alpha_{1}+64\alpha_{1}\beta_{1}
+128\beta^{2}_{1}+1280\beta_{1}+8\beta_{1}\right)\times\label{3.43}\\
&&m^{10}\varphi
|\tilde{\nabla}\boldsymbol{\Theta}|^{4}_{\tilde{g}}+C_{4}\varphi|\tilde{\nabla}\boldsymbol{\Theta}|^{2}_{\tilde{g}}
+C_{4}\varphi+C_{5}|\tilde{\nabla}\boldsymbol{\Theta}|^{2}_{\tilde{g}}
-\frac{n^{2}}{16}|\tilde{\nabla}\boldsymbol{\Theta}|^{2}_{\tilde{g}}
|\tilde{\nabla}g|^{2}_{\tilde{g}}.\nonumber
\end{eqnarray}
According to Corollary \ref{c2.10}, we get $|\nabla\phi|^{2}_{g}\lesssim1$ and hence
\begin{equation*}
|\tilde{\nabla}\phi|^{2}_{\tilde{g}}
=\tilde{g}^{\alpha\beta}\tilde{\nabla}_{\alpha}
\tilde{\nabla}_{\beta}\phi\leq 2|\nabla\phi|^{2}_{g}\lesssim1
\end{equation*}
where $\lesssim$ depends on $\alpha_{1}, \beta_{2}$ and $k_{1}$. Consequently,
(\ref{3.43}) can be written as
\begin{eqnarray}
\partial_{t}\left(\varphi|\tilde{\nabla}\boldsymbol{\Theta}|^{2}_{\tilde{g}}
\right)&\leq&g^{\alpha\beta}\tilde{\nabla}_{\alpha}
\tilde{\nabla}_{\beta}\left(\varphi|\tilde{\nabla}\boldsymbol{\Theta}|^{2}_{\tilde{g}}\right)
-2g^{\alpha\beta}\tilde{\nabla}_{\alpha}\varphi\tilde{\nabla}_{\beta}
|\tilde{\nabla}\boldsymbol{\Theta}|^{2}_{\tilde{g}}
-\frac{1}{2}\varphi|\tilde{\nabla}^{2}\boldsymbol{\Theta}|^{2}_{\tilde{g}}
\nonumber\\
&&+ \ \left(3344+8\alpha^{2}_{1}+320\alpha_{1}+64\alpha_{1}\beta_{1}
+128\beta^{2}_{1}+1280\beta_{1}+8\beta_{1}\right)\times\label{3.44}\\
&&m^{10}\varphi
|\tilde{\nabla}\boldsymbol{\Theta}|^{4}_{\tilde{g}}+C_{4}\varphi|\tilde{\nabla}\boldsymbol{\Theta}|^{2}_{\tilde{g}}
+C_{4}\varphi+C_{6}|\tilde{\nabla}\boldsymbol{\Theta}|^{2}_{\tilde{g}}
-\frac{n^{2}}{16}|\tilde{\nabla}\boldsymbol{\Theta}|^{4}_{\tilde{g}},\nonumber
\end{eqnarray}
where $C_{6}$ is a positive constant depending only on $m, A, k_{0}, n, \alpha_{1},
\beta_{2}$, and $k_{1}$. From (\ref{3.32}) and (\ref{3.34}),
\begin{equation}
a+m(1-\epsilon)^{n}\leq\varphi\leq a+m(1+\epsilon)^{n}\label{3.45}
\end{equation}
on $B_{\tilde{g}}(x_{0},r+\delta)\times[0,T]$, we arrive at (recall from
(\ref{3.41}) that $n=A m^{10}$ with $A\geq 864$)
\begin{equation*}
Cm^{10}\varphi \leq C m^{10}\left[\frac{n}{4}+m\left(1+
\frac{1}{n}\right)^{n}\right]\leq Cm^{10}\left(\frac{n}{4}+3m\right)
\leq C m^{10}\frac{n}{2}=\frac{n^{2}}{2A/C},
\end{equation*}
where
\begin{equation}
C:=3344+8\alpha^{2}_{1}+320\alpha_{1}+64\alpha_{1}\beta_{1}
+128\beta^{2}_{1}+1280\beta_{1}+8\beta_{1}.\label{3.46}
\end{equation}
If we choose
\begin{equation}
A\geq 16 C,\label{3.47}
\end{equation}
then
\begin{equation}
Cm^{10}\varphi|\tilde{\nabla}\boldsymbol{\Theta}|^{4}_{\tilde{g}}
-\frac{n^{2}}{16}|\tilde{\nabla}\boldsymbol{\Theta}|^{4}_{\tilde{g}}
\leq-\frac{n^{2}}{32}|\tilde{\nabla}\boldsymbol{\Theta}|^{4}_{\tilde{g}}.
\label{3.48}
\end{equation}
On the other hand, by the argument in the proof of (28) in page 250 of \cite{S89}, we have
\begin{equation}
-2g^{\alpha\beta}\tilde{\nabla}_{\alpha}\varphi
\tilde{\nabla}_{\beta}|\tilde{\nabla}\boldsymbol{\Theta}|^{2}_{\tilde{g}}
\leq\frac{\varphi}{2}|\tilde{\nabla}^{2}\boldsymbol{\Theta}|^{2}_{\tilde{g}}
+\frac{288 n^{2} m^{10}}{\varphi}|\tilde{\nabla}
\boldsymbol{\Theta}|^{4}_{\tilde{g}}.\label{3.49}
\end{equation}
Plugging (\ref{3.48}) and (\ref{3.49}) into (\ref{3.44}) implies
\begin{eqnarray}
\partial_{t}\left(\varphi|\tilde{\nabla}\boldsymbol{\Theta}|^{2}_{\tilde{g}}
\right)&\leq&g^{\alpha\beta}\tilde{\nabla}_{\alpha}\tilde{\nabla}_{\beta}
\left(\varphi|\tilde{\nabla}\boldsymbol{\Theta}|^{2}_{\tilde{g}}
\right)+\frac{288 n^{2}m^{10}}{\varphi}|\tilde{\nabla}
\boldsymbol{\Theta}|^{4}_{\tilde{g}}\nonumber\\
&&- \ \frac{n^{2}}{32}|\tilde{\nabla}\boldsymbol{\Theta}|^{4}_{\tilde{g}}
+C_{4}\varphi+(C_{4}+C_{6})\varphi|\tilde{\nabla}
\boldsymbol{\Theta}|^{2}_{\tilde{g}},\label{3.50}
\end{eqnarray}
because $\varphi\geq a=\frac{n}{4}\geq1$. According to $\varphi\geq a=
\frac{n}{4}=\frac{A}{4}m^{10}$, we conclude that
\begin{equation*}
\frac{28 n^{2}m^{10}}{\varphi}
\leq 1152 nm^{10}=\frac{n^{2}}{A/1152}\leq\frac{n^{2}}{64},
\end{equation*}
where we choose
\begin{equation}
A\geq 1152\times 64=73728.\label{3.51}
\end{equation}
Consequently,
\begin{equation}
\partial_{t}\left(\varphi|\tilde{\nabla}
\boldsymbol{\Theta}|^{2}_{\tilde{g}}\right)
\leq g^{\alpha\beta}\tilde{\nabla}_{\alpha}
\tilde{\nabla}_{\beta}\left(\varphi|\tilde{\nabla}\boldsymbol{\Theta}
|^{2}_{\tilde{g}}\right)-\frac{n^{2}}{64}|\tilde{\nabla}
\boldsymbol{\Theta}|^{4}_{\tilde{g}}+C_{4}\varphi+
(C_{4}+C_{6})\varphi|\tilde{\nabla}\boldsymbol{\Theta}|^{2}_{\tilde{g}}.
\label{3.52}
\end{equation}
Using the following inequality
\begin{eqnarray*}
\varphi&\leq& a+m(1+\epsilon)^{n} \ \ = \ \
\frac{n}{4}+m\left(1+\frac{1}{n}\right)^{n} \ \
\leq \ \ \frac{n}{4}+3m\\
&\leq&\frac{n}{4}+\frac{3n}{A} \ \ = \ \
\left(\frac{1}{4}+\frac{3}{A}\right)n \ \
\leq \ \ \frac{18435}{73728}n \ \ \leq \ \ 0.26 n,
\end{eqnarray*}
by (\ref{3.51}), we get
\begin{equation*}
\frac{n^{2}}{64}|\tilde{\nabla}
\boldsymbol{\Theta}|^{4}_{\tilde{g}}
=\frac{n^{2}}{64\varphi^{2}}\varphi^{2}
|\tilde{\nabla}\boldsymbol{\Theta}|^{2}_{\tilde{g}}
\geq\frac{n^{2}}{4.3264 n^{2}}\varphi^{2}
|\tilde{\nabla}\boldsymbol{\Theta}|^{4}_{\tilde{g}}
\geq\frac{1}{5}\varphi^{2}|\tilde{\nabla}g|^{4}_{\tilde{g}}.
\end{equation*}
Defining
\begin{equation}
\psi:=\varphi|\tilde{\nabla}\boldsymbol{\Theta}|^{2}_{\tilde{g}},\label{3.53}
\end{equation}
we obtain from the above the inequality and (\ref{3.52}) that
\begin{equation}
\partial_{t}\psi\leq g^{\alpha\beta}
\tilde{\nabla}_{\alpha}\tilde{\nabla}_{\beta}
-\frac{1}{5}\psi^{2}+(C_{4}+C_{6})\psi+C_{4}n
\leq g^{\alpha\beta}\tilde{\nabla}_{\alpha}\tilde{\nabla}_{\beta}
\psi-\frac{1}{10}\psi^{2}+C_{7}\label{3.54}
\end{equation}
on $B_{\tilde{g}}(x_{0},r+\delta)\times[0,T]$, for some positive constant
$C_{7}$ depending only $m, A, k_{0},n,
\alpha_{1}, \beta_{2}$, and $k_{1}$.

Using the cutoff function and going through the argument in \cite{S89}, we can prove that
\begin{equation*}
|\tilde{\nabla}\boldsymbol{\Theta}|^{2}_{\tilde{g}}\lesssim1
\end{equation*}
on $B_{\tilde{g}}(x_{0},r+\frac{\delta}{2})
\times[0,T]$, where $\lesssim$ depends on $m, r, \delta, T, \tilde{g},
k_{1}$. Note that that $(1-\frac{1}{A m^{10}})\tilde{g}
\leq g\leq(1+\frac{1}{A m^{10}}\tilde{g}$, where
\begin{equation*}
A\geq\max(73728, 16C).
\end{equation*}
From the definition (\ref{3.46}), we can estimate
\begin{eqnarray*}
C&\leq&3344+8\alpha^{2}_{1}+160+160\alpha^{2}_{1}
+32(\alpha^{2}_{1}+\beta^{2}_{1})\\
&&+ \ 64+64\beta^{2}_{1}+640
+640\beta^{2}_{1}+4+4\beta^{2}_{1}\\
&\leq&4212+200\alpha^{2}_{1}+740\beta^{2}_{1}.
\end{eqnarray*}
Then we may choose $A=80000(1+\alpha^{2}_{1}+\beta^{2}_{1})$.
\end{proof}

By the same method we can prove the higher order derivatives estimates for $g$.

\begin{lemma}\label{l3.7} Under the assumption in Lemma \ref{l3.6} where we
furthermore assume $|\phi|^{2}\leq k_{1}$, for
any nonnegative integer $n$, there exist positive constants $C_{n}=C(m,n,r,\delta,T,
\tilde{g}, k_{1})$ depending only on $m, n, r, \delta, T, \tilde{g}$, and $k_{1}$, such that
\begin{equation}
|\tilde{\nabla}^{n}g|^{2}_{\tilde{g}}\leq C_{n}, \ \ \
|\tilde{\nabla}^{n}\phi|^{2}_{\tilde{g}}\leq C_{n}\label{3.55}
\end{equation}
on $B_{\tilde{g}}(x_{0}, r+\frac{\delta}{n+1})\times[0,T]$.
\end{lemma}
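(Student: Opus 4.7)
The plan is to prove Lemma \ref{l3.7} by induction on $n$, following the Bernstein-type technique that Shi \cite{S89} and List \cite{L05} used and that already produced Lemma \ref{l3.6}. The base cases $n=0$ (from Theorem \ref{t3.5}, Corollary \ref{c2.10}, and the hypothesis $|\phi|^{2}\leq k_{1}$) and $n=1$ (from Lemma \ref{l3.6}) are already in hand. Assume the estimate holds for all orders $\leq n$ on $B_{\tilde g}(x_0,r+\delta/(n+1))\times[0,T]$; the aim is to propagate it to order $n+1$ on the slightly smaller ball $B_{\tilde g}(x_0,r+\delta/(n+2))\times[0,T]$.

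The first step is to derive the evolution equation for $\tilde{\nabla}^{n+1}\boldsymbol{\Theta}=(\tilde{\nabla}^{n+1}g,\tilde{\nabla}^{n+1}\phi)$ by differentiating (\ref{2.21})--(\ref{2.22}) $n+1$ times and commuting the $\tilde{\nabla}$'s past one another and past $\partial_t$. Using the $\ast$-notation exactly as in (\ref{3.15})--(\ref{3.17}), the highest order derivatives enter linearly and one obtains schematically
\begin{equation*}
\partial_{t}\tilde{\nabla}^{n+1}\boldsymbol{\Theta}
= g^{\alpha\beta}\tilde{\nabla}_{\alpha}\tilde{\nabla}_{\beta}\tilde{\nabla}^{n+1}\boldsymbol{\Theta}
+\sum_{\substack{i+j+k=n+1\\ k\leq n}}\tilde{\nabla}^{i}\boldsymbol{\Theta}\ast\tilde{\nabla}^{j}\boldsymbol{\Theta}\ast\tilde{\nabla}^{k+2}\boldsymbol{\Theta}+\mathcal{R}_{n+1},
\end{equation*}
where $\mathcal{R}_{n+1}$ collects terms of the form $\tilde{\nabla}^{a}\widetilde{\rm Rm}\ast\tilde{\nabla}^{b}\boldsymbol{\Theta}\ast\cdots$ with $a+b+\cdots\leq n+1$, all of which are controlled on $\overline{B_{\tilde g}(x_0,r+\delta)}$ by constants depending on $\tilde g$ and on the lower-order bounds $C_0,\dots,C_n$. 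Taking the inner product with $\tilde{\nabla}^{n+1}\boldsymbol{\Theta}$ and using (\ref{3.19}) produces
\begin{equation*}
\partial_{t}|\tilde{\nabla}^{n+1}\boldsymbol{\Theta}|^{2}_{\tilde g}
\leq g^{\alpha\beta}\tilde{\nabla}_{\alpha}\tilde{\nabla}_{\beta}|\tilde{\nabla}^{n+1}\boldsymbol{\Theta}|^{2}_{\tilde g}
-|\tilde{\nabla}^{n+2}\boldsymbol{\Theta}|^{2}_{\tilde g}
+A_{n}\bigl(1+|\tilde{\nabla}^{n+1}\boldsymbol{\Theta}|^{2}_{\tilde g}\bigr),
\end{equation*}
after absorbing the unique top-order cross term $\tilde{\nabla}^{n+1}\boldsymbol{\Theta}\ast\tilde{\nabla}^{n+2}\boldsymbol{\Theta}\ast(\text{bounded})$ by Cauchy--Schwarz against the good negative term, with $A_{n}$ depending only on $m,n,r,\delta,T,\tilde g,k_{1}$.

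The second step is a localized maximum principle on the two concentric balls $B_{\tilde g}(x_0,r+\delta/(n+1))\supset B_{\tilde g}(x_0,r+\delta/(n+2))$. Pick a smooth radial cutoff $\eta\geq0$ supported in the larger ball, with $\eta\equiv1$ on the smaller ball and with $|\tilde{\nabla}\eta|^{2}/\eta$ and $|\tilde{\nabla}^{2}\eta|$ bounded in terms of $m,n,\delta$. As in the passage from (\ref{3.42}) to (\ref{3.54}), form the auxiliary function
\begin{equation*}
\Psi_{n+1}:=\eta\bigl(\varphi_{n}+|\tilde{\nabla}^{n+1}\boldsymbol{\Theta}|^{2}_{\tilde g}\bigr)|\tilde{\nabla}^{n+1}\boldsymbol{\Theta}|^{2}_{\tilde g},
\end{equation*}
where $\varphi_{n}$ is a large constant depending on the previously obtained bounds $C_{0},\dots,C_{n}$ and on the cutoff. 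A direct computation, using the evolution inequality above, the induction hypothesis to bound every term of order $\leq n$, and the Cauchy--Schwarz trick $ab\leq \varepsilon a^{2}+b^{2}/(4\varepsilon)$ to absorb the only bad term $|\tilde{\nabla}^{n+2}\boldsymbol{\Theta}|\cdot|\tilde{\nabla}^{n+1}\boldsymbol{\Theta}|$ and the gradient term $\tilde{\nabla}\eta\cdot\tilde{\nabla}(\cdots)$, yields a differential inequality of the form
\begin{equation*}
\partial_{t}\Psi_{n+1}\leq g^{\alpha\beta}\tilde{\nabla}_{\alpha}\tilde{\nabla}_{\beta}\Psi_{n+1}-c_{n}\Psi_{n+1}^{2}+B_{n},
\end{equation*}
with $c_{n}>0$ and $B_{n}$ depending only on the admissible data. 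Applying the maximum principle to $\Psi_{n+1}$, whose spatial boundary values are zero (thanks to $\eta$) and whose initial value at $t=0$ is bounded by $\tilde\nabla^{n+1}\tilde g$ and $\tilde\nabla^{n+1}\tilde\phi$ (which are finite on the compact closure), gives a uniform upper bound for $\Psi_{n+1}$ and hence for $|\tilde{\nabla}^{n+1}\boldsymbol{\Theta}|^{2}_{\tilde g}$ on the smaller ball.

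The principal obstacle is bookkeeping: the commutator $[\tilde{\nabla}^{n+1},\partial_{t}]$ and the iterated commutators $[\tilde{\nabla},\tilde{\nabla}]$ generate a forest of curvature terms $\tilde{\nabla}^{a}\widetilde{\rm Rm}\ast\tilde{\nabla}^{b}\boldsymbol{\Theta}$ and $\ast$-products that must be shown to be either harmless (of total order $\leq n$, controlled by induction) or linear in $\tilde{\nabla}^{n+2}\boldsymbol{\Theta}$ with a small coefficient (absorbable into the negative term $-|\tilde{\nabla}^{n+2}\boldsymbol{\Theta}|^{2}_{\tilde g}$). Once this accounting is organized, as is customary in the Bernstein/Shi scheme, the remainder of the argument is essentially the same cutoff-plus-maximum-principle routine as in Lemma \ref{l3.6} and in \cite{L05, S89}, with the only new ingredient being the contribution of $\phi$, which is handled in parallel with $g$ because both satisfy strictly parabolic equations with the same principal symbol $g^{\alpha\beta}\tilde{\nabla}_{\alpha}\tilde{\nabla}_{\beta}$.
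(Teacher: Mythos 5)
Your overall plan—induction on $n$, deriving the evolution equation for $\tilde{\nabla}^{n+1}\boldsymbol{\Theta}$ from (\ref{2.21})--(\ref{2.22}), and closing with a cutoff plus maximum principle—matches what the paper does (which itself just cites Shi's equation (69) and List's (3.41) and says ``follow the same argument''). Your schematic evolution equation and the reduced inequality
$\partial_{t}X \leq g^{\alpha\beta}\tilde{\nabla}_{\alpha}\tilde{\nabla}_{\beta}X - Y + A_{n}(1+X)$,
where $X := |\tilde{\nabla}^{n+1}\boldsymbol{\Theta}|^{2}_{\tilde g}$ and $Y := |\tilde{\nabla}^{n+2}\boldsymbol{\Theta}|^{2}_{\tilde g}$, are also fine.

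The gap is in the choice of auxiliary function. You set $\Psi_{n+1} := \eta\bigl(\varphi_{n}+X\bigr)X$ with $\varphi_{n}$ a \emph{constant}, and assert the Riccati inequality $\partial_{t}\Psi_{n+1}\leq \Delta\Psi_{n+1}-c_{n}\Psi_{n+1}^{2}+B_{n}$. This does not follow. Computing directly, $\partial_{t}\bigl(\varphi_{n}X+X^{2}\bigr)=(\varphi_{n}+2X)\partial_{t}X$ and $\Delta\bigl(\varphi_{n}X+X^{2}\bigr)=(\varphi_{n}+2X)\Delta X+2|\tilde{\nabla}X|^{2}$, so the evolution of the undamped bracket is
\begin{equation*}
\partial_{t}\bigl(\varphi_{n}X+X^{2}\bigr)-\Delta\bigl(\varphi_{n}X+X^{2}\bigr)
\leq -2|\tilde{\nabla}X|^{2}-(\varphi_{n}+2X)Y+(\varphi_{n}+2X)A_{n}(1+X).
\end{equation*}
The last term contributes $2A_{n}X^{2}$, a \emph{positive} quadratic term, while the available negative terms $-(\varphi_{n}+2X)Y$ and $-2|\tilde{\nabla}X|^{2}$ control $Y$ and $|\tilde{\nabla}X|^{2}$ respectively but bear no a priori relation to $X^{2}$. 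No $-cX^{2}$ appears, so the Riccati you want cannot be extracted, and the cutoff/maximum-principle step does not close (at an interior maximum of $\eta\cdot(\cdot)$ one is left with a trivially true inequality instead of an algebraic bound). The Shi--Bernstein mechanism requires the first factor to be \emph{one order lower}: take instead $\psi_{n+1}:=\bigl(a+|\tilde{\nabla}^{n}\boldsymbol{\Theta}|^{2}_{\tilde g}\bigr)X$, where $a$ is a large constant. By the inductive step the analogue of your evolution inequality at order $n$ gives $\partial_{t}|\tilde{\nabla}^{n}\boldsymbol{\Theta}|^{2}\leq \Delta|\tilde{\nabla}^{n}\boldsymbol{\Theta}|^{2}-X+C$; when multiplied by $X$ this supplies the essential $-X^{2}$, and since $a+|\tilde{\nabla}^{n}\boldsymbol{\Theta}|^{2}$ is uniformly bounded by induction, $-X^{2}\lesssim-\psi_{n+1}^{2}$. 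After absorbing the cross term $\tilde{\nabla}|\tilde{\nabla}^{n}\boldsymbol{\Theta}|^{2}\cdot\tilde{\nabla}X$ into the good $(a+|\tilde{\nabla}^{n}\boldsymbol{\Theta}|^{2})Y$ term by choosing $a$ large, one obtains the genuine Riccati, and only then does the cutoff-and-maximum-principle argument go through as you describe.
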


\begin{proof} We prove this lemma by induction on $n$. If $n=0$, using (\ref{3.19}) we have
\begin{equation*}
|g|^{2}_{\tilde{g}}=\tilde{g}^{ik}\tilde{g}^{j\ell}
g_{ij}g_{k\ell}\leq 4m
\end{equation*}
on $B_{\tilde{g}}(x_{0},r+\delta)\times[0,T]$. Since $|\tilde{\phi}|^{2}_{\tilde{g}}
\leq k_{1}$, it follows from (\ref{3.14}) that $|\phi|^{2}_{\tilde{g}}
\lesssim 1$ on $B_{\tilde{g}}(x_{0},r+\frac{\delta}{2})\times[0,T]$,
where $\lesssim$ depends only on $m, r, \delta, T, \tilde{g}, k_{1}$ and is
independent on $x_{0}$. Now we consider
the annulus $B_{\tilde{g}}(x_{0},r+\delta)\setminus B_{\tilde{g}}
(x_{0},r+\frac{\delta}{2})$. For any $x$ in this annulus, we can find a
small ball $B_{\tilde{g}}(x, \delta')\subset B_{\tilde{g}}
(x_{0},r+\delta)\setminus B_{\tilde{g}}(x_{0},r+\frac{\delta}{2})$. Using
(\ref{3.14}) again, we have $|\phi|^{2}_{\tilde{g}}
\lesssim 1$ on $B_{\tilde{g}}(x,\frac{\delta'}{2})\times[0,T]$, where $\lesssim$
depends only on $m, r, \delta, T, \tilde{g}, k_{1}$ and is independent on $x$. In
particular, $|\phi|^{2}_{\tilde{g}}(x)\lesssim 1$ on $[0,T]$. Hence $|\phi|^{2}_{\tilde{g}}
\lesssim 1$ on $B_{\tilde{g}}(x_{0},r+\delta)\times[0,T]$, where
$\lesssim$ depends only on $m, r, \delta, T, \tilde{g}, k_{1}$ and is independent
on $x_{0}$.

If $n=1$, the estimate (\ref{3.55}) was proved in Lemma \ref{l3.6}. We now suppose
that $n\geq2$ and
\begin{equation}
|\tilde{\nabla}^{k}g|^{2}_{\tilde{g}}\leq C_{k}, \ \ \
|\tilde{\nabla}^{k}\phi|^{2}_{\tilde{g}}\leq C_{k}, \ \ \ k=0,1,\cdots,n-1,
\end{equation}
on $B_{\tilde{g}}(x_{0},r+\frac{\delta}{k+1})\times[0,T]$. According to (3.39) in \cite{L05}, we have
\begin{eqnarray}
\partial_{t}\tilde{\nabla}^{n}g&=&g^{\alpha\beta}\tilde{\nabla}_{\alpha}
\tilde{\nabla}_{\beta}\tilde{\nabla}^{n}g\nonumber\\
&&+ \ \sum_{\sum^{n+2}_{s=1}k_{s}\leq n+2, \ 0\leq k_{s}\leq n+1}
\tilde{\nabla}^{k_{1}}g\ast\cdots\ast\tilde{\nabla}^{k_{n+2}}g
\ast P_{k_{1}\cdots k_{n+2}}\label{3.57}\\
&&+ \ \sum_{\ell_{1}+\ell_{2}=n+2, \ 1\leq \ell_{s}\leq n+1}
\tilde{\nabla}^{\ell_{1}}\phi\ast\tilde{\nabla}^{\ell_{2}}\phi,\nonumber
\end{eqnarray}
where $P_{k_{1}\cdots k_{n+2}}$ is a polynomial in $g, g^{-1},
\widetilde{{\rm Rm}},\cdots,\tilde{\nabla}^{n}\widetilde{{\rm Rm}}$.
Similarly, from (\ref{2.22}) we can show that
\begin{eqnarray}
\partial_{t}\tilde{\nabla}^{n}\phi
&=&g^{\alpha\beta}\tilde{\nabla}_{\alpha}\tilde{\nabla}_{\beta}
\tilde{\nabla}^{n}\phi\nonumber\\
&&+ \ \sum_{\sum^{n}_{s=1}k_{s}+\ell_{1}+\ell_{2}
\leq n+2, \ 0\leq k_{s}\leq n, \ 0\leq \ell_{1},\ell_{2}
\leq n+1}\tilde{\nabla}^{k_{1}}g\ast\cdots\ast\tilde{\nabla}^{k_{n}}
g\label{3.58}\\
&&\ast \ \tilde{\nabla}^{\ell_{1}}\phi\ast\tilde{\nabla}^{\ell_{2}}
\phi\ast P_{k_{1}\cdots k_{n}\ell_{1}\ell_{2}}.\nonumber
\end{eqnarray}
Using the notion $\boldsymbol{\Theta}$ defined in (\ref{3.28}), we conclude from (\ref{3.57}) and (\ref{3.58}) that
\begin{eqnarray}
\partial_{t}\tilde{\nabla}^{n}
\boldsymbol{\Theta}&=&g^{\alpha\beta}\tilde{\nabla}_{\alpha}
\tilde{\nabla}_{\beta}\tilde{\nabla}^{n}\boldsymbol{\Theta}\nonumber\\
&&+ \ \sum_{\sum^{n+2}_{s=1}k_{s}\leq n+2, \
0\leq k_{s}\leq n+1}\tilde{\nabla}^{k_{1}}\boldsymbol{\Theta}\ast\cdots\ast
\tilde{\nabla}^{k_{n+2}}\boldsymbol{\Theta}\ast P_{k_{1}\cdots k_{s}}.\label{3.59}
\end{eqnarray}
The above equation is exact the equation (3.41) in \cite{L05} or
the equation (69) in page 254 of \cite{S89}, and, following
the same argument, we obtain $|\tilde{\nabla}^{n}\boldsymbol{\Theta}|^{2}_{\tilde{g}}
\lesssim1$ on $B_{\tilde{g}}(x_{0},r+\frac{\delta}{n+1})\times[0,T]$,
where $\lesssim$ depends only on $n, m, r, \delta, T, \tilde{g}, k_{1}$.
\end{proof}

Fix a point $x_{0}\in M$ and choose a family of domains $(D_{k})_{k\in{\bf N}}$ on $M$ such that for each $k$, $\partial D_{k}$ is a compact smooth $(m-1)$-dimensional submanifold of $M$ and
\begin{equation*}
\bar{D}_{k}=D_{k}\cup\partial D_{k} \ \text{is a compact subset of} \ M, \ \ \
B_{\tilde{g}}(x_{0},k)\subset D_{k}.
\end{equation*}
By the same argument used in \cite{S89}, together with Theorem \ref{t3.5} and
Lemma \ref{l3.7}, we have

\begin{theorem}\label{t3.8} Suppose that $(M,\tilde{g})$ is a smooth complete
Riemannian manifold of dimension $m$ with $|\widetilde{{\rm Rm}}|^{2}_{\tilde{g}}
\leq k_{0}$ and $\tilde{\phi}$ be a smooth function satisfying $|\phi|^{2}
+|\tilde{\nabla}\phi|^{2}_{\tilde{g}}\leq k_{1}$ on $M$. There exists a positive constant
$T=T(m,k_{0},k_{1},\alpha_{1},\beta_{1},\beta_{2})$ such that the flow
\begin{eqnarray*}
\partial_{t}g_{ij}&=&-2R_{ij}+2\alpha_{1}\nabla_{i}\phi
\nabla_{j}\phi+\nabla_{i}V_{j}+\nabla_{j}V_{i}, \\
\partial_{t}\phi&=&\Delta\phi+\beta_{1}|\nabla\phi|^{2}_{g}
+\beta_{2}\phi+\langle V,\nabla\phi\rangle_{g},\\
(g(0),\phi(0))&=&(\tilde{g},\tilde{\phi}),
\end{eqnarray*}
has a smooth solution $(g(t),\phi(t))$ on $M\times[0,T]$ that satisfies the estimate
\begin{equation*}
\left(1-\frac{1}{80000(1+\alpha^{2}_{1}
+\beta^{2}_{1})m^{10}}\right)
\tilde{g}\leq g(t)\leq\left(1+\frac{1}{80000(1+\alpha^{2}_{1}
+\beta^{2}_{1})m^{10}}\right)\tilde{g}
\end{equation*}
on $M\times[0,T]$. Moreover $|\phi(t)|^{2}\lesssim1$ where $\lesssim$ depends
on $m, k_{0}, \alpha_{1}, \beta_{1}, \beta_{2}$.
\end{theorem}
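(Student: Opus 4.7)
The plan is to execute Shi's exhaustion argument, applied to the De Turck-modified system (\ref{2.19})--(\ref{2.20}) whose strict parabolicity was established in Lemma \ref{l2.14}, and then to assemble the global solution on $M$ out of the local Dirichlet solutions on bounded domains, using the uniform estimates already in hand from Theorem \ref{t3.5}, Lemma \ref{l3.6}, and Lemma \ref{l3.7}.

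First I would fix a point $x_0\in M$ and choose an exhaustion $(D_k)_{k\in\mathbf{N}}$ of $M$ by open subsets with $\overline{D_k}$ compact, $\partial D_k$ a smooth $(m-1)$-dimensional submanifold, and $B_{\tilde{g}}(x_0,k)\subset D_k$. On each $D_k$ the Dirichlet problem (\ref{3.2}) is a strictly parabolic quasilinear system with smooth initial and boundary data, so classical linear parabolic theory produces a smooth short-time solution $(g_k(t),\phi_k(t))$ on some $D_k\times[0,T_k]$.

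Next I would apply Theorem \ref{t3.5} with the specific choice
\[
\epsilon \;=\; \frac{1}{80000(1+\alpha_1^2+\beta_1^2)m^{10}}
\]
to obtain a common $T=T(m,k_0,k_1,\alpha_1,\beta_2)>0$, independent of $k$, on which the two-sided pinching $(1-\epsilon)\tilde{g}\le g_k(t)\le(1+\epsilon)\tilde{g}$ holds throughout $D_k\times[0,T]$. A continuity argument in $t$ then shows that $T_k$ can be taken $\ge T$ for every $k$. With (\ref{3.13}) exactly satisfied, Lemma \ref{l3.6} supplies uniform $C^1$ estimates $|\tilde{\nabla}g_k|_{\tilde{g}}^2+|\tilde{\nabla}\phi_k|_{\tilde{g}}^2\lesssim 1$ on any fixed metric ball $B_{\tilde{g}}(x_0,r)$, and iterating via Lemma \ref{l3.7} yields uniform $C^n$ bounds on $B_{\tilde{g}}(x_0,r)\times[0,T]$ for every $n\in\mathbf{N}$, with constants depending on $n,m,r,T,\tilde{g},k_1$ but not on $k$.

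A standard diagonal Arzel\`a--Ascoli extraction then produces a subsequence $(g_{k_j}(t),\phi_{k_j}(t))$ converging in $C^\infty_{\mathrm{loc}}(M\times[0,T])$ to a smooth pair $(g(t),\phi(t))$ which solves (\ref{2.19})--(\ref{2.20}) with initial data $(\tilde{g},\tilde{\phi})$, and the pinching carries over verbatim to the limit. The terminal bound $|\phi(t)|^2\lesssim 1$ I would obtain by applying the maximum principle directly to (\ref{2.20}): Corollary \ref{c2.10}, whose regularity hypothesis is ensured by the definition of $\star$-regularity, delivers $|\nabla\phi|_g^2\lesssim 1$, which turns (\ref{2.20}) into a linear inequality $\partial_t\phi\le \Delta\phi+\langle V,\nabla\phi\rangle_g+\beta_2\phi+C$ with bounded forcing, and a Gr\"onwall comparison using $|\tilde{\phi}|^2\le k_1$ gives the claimed pointwise estimate. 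The principal obstacle is securing the $k$-independent time of existence: this is precisely what the peculiar dependence on $m^{10}$ and $1+\alpha_1^2+\beta_1^2$ in Theorem \ref{t3.5} and Lemma \ref{l3.6} was engineered to provide, so once these are invoked in tandem the remaining steps are faithful analogues of Shi's original argument for Ricci flow.
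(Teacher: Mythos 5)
Your proposal is correct and follows essentially the same route as the paper: exhaustion by compact domains $D_k$ with smooth boundary, short-time Dirichlet solvability, uniform pinching from Theorem~\ref{t3.5} with the threshold $\epsilon=1/\bigl(80000(1+\alpha_1^2+\beta_1^2)m^{10}\bigr)$, interior $C^n$ bounds from Lemmas~\ref{l3.6} and~\ref{l3.7} that are independent of $k$, and a diagonal Arzel\`a--Ascoli extraction to produce the global solution on $M\times[0,T]$---exactly the Shi scheme the paper invokes by the phrase ``by the same argument used in \cite{S89}.''

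The one place you differ from (and improve upon) the written proof concerns the bound $|\phi(t)|^2\lesssim 1$. The paper deduces $|\nabla\phi|_g^2\lesssim 1$ from Corollary~\ref{c2.10} and then asserts ``in particular, $|\phi(t)|^2\lesssim 1$,'' leaving implicit how a gradient bound controls the function itself on a noncompact manifold. Your step---once $|\nabla\phi|_g^2\lesssim 1$ is in hand, treat (\ref{2.20}) as the linear parabolic inequality $\partial_t\phi\le\Delta\phi+\langle V,\nabla\phi\rangle_g+\beta_2\phi+C$ with bounded forcing, with parabolic boundary data $\tilde\phi$ satisfying $|\tilde\phi|^2\le k_1$, and close by the maximum principle / Gr\"onwall on $D_k\times[0,T]$ uniformly in $k$---is the missing link, and it is the clean way to justify the paper's assertion. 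Two minor points worth noting: (i) the application of Corollary~\ref{c2.10} to the Dirichlet problem on $D_k$ glosses over the fact that the corollary is stated for a closed manifold, where no boundary control of $|\nabla\phi|_g^2$ is needed; the paper inherits this same imprecision, so you are in good company, but a fully rigorous version would run the $|\nabla\phi|_g^2$ maximum principle directly on $D_k$ after arguing boundary control; (ii) the time $T$ you obtain depends on $\beta_1$ through the choice of $\epsilon$, so the correct dependence is $T=T(m,k_0,k_1,\alpha_1,\beta_1,\beta_2)$ as the theorem states, not $T(m,k_0,k_1,\alpha_1,\beta_2)$ as written in your second paragraph.
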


\begin{proof} By the regularity of the flow and applying Corollary to $D_{k}$,
we have $|\nabla\phi|^{2}_{g}\lesssim1$ on $D_{k}$, where $\lesssim$ depends only on $k_{1}, \alpha_{1},
\beta_{2}$; then $|\tilde{\nabla}\phi|^{2}_{\tilde{g}}\lesssim1$ on $D_{k}$,
where $\lesssim$ depends only on $m, k_{1}, \alpha_{1}, \beta_{1},\beta_{2}$. Letting $k\to\infty$, we see that $|\tilde{\nabla}
\phi|^{2}_{\tilde{g}}\lesssim1$ on $M$, where $\lesssim$ depends only on $m,
k_{1}, \alpha_{1},\beta_{1},\beta_{2}$. In particular, $|\phi(t)|^{2}\lesssim1$ on
$M$, where $\lesssim$ depends only on $m, k_{0},\alpha_{1},\beta_{1},\beta_{2}$.
\end{proof}

\subsection{First order derivative estimates}\label{subsection3.3}

Let $\tilde{\phi}$ be a smooth function on a smooth complete Riemannian
manifold $(M,\tilde{g})$ of dimension $m$. Assume
\begin{equation}
|\widetilde{{\rm Rm}}|^{2}_{\tilde{g}}\leq k_{0}, \ \ \
|\tilde{\phi}|^{2}+|\tilde{\nabla}\tilde{\phi}|^{2}_{\tilde{g}}\leq k_{1}, \ \ \
|\tilde{\nabla}^{2}\tilde{\phi}|^{2}_{\tilde{g}}\leq k_{2}\label{3.60}
\end{equation}
on $M$. Let $(g(t), \phi(t)), T$ be obtained in Theorem \ref{t3.8} and
\begin{equation}
\delta:=\frac{1}{80000(1+\alpha^{2}_{1}+\beta^{2}_{1})m^{10}}.\label{3.61}
\end{equation}
Then
\begin{equation}
(1-\delta)\tilde{g}\leq g(t)\leq(1+\delta)\tilde{g}\label{3.62}
\end{equation}
on $M\times[0,T]$. As in \cite{S89}, define
\begin{equation}
h_{ij}:=g_{ij}-\tilde{g}, \ \ \ H_{ij}:=\frac{1}{\delta}h_{ij}.\label{3.63}
\end{equation}
Then
\begin{equation*}
\partial_{t}h_{ij}=
g^{\alpha\beta}\tilde{\nabla}_{\alpha}\tilde{\nabla}_{\beta}
h_{ij}+A_{ij},
\end{equation*}
where
\begin{eqnarray}
A_{ij}&=&g^{\alpha\beta}g_{ip}\tilde{g}^{pq}
\tilde{R}_{j\alpha q\beta}+g^{\alpha\beta}g_{jp}
\tilde{g}^{pq}\tilde{R}_{i\alpha q\beta}
+2\alpha_{1}\tilde{\nabla}_{i}\phi\tilde{\nabla}_{j}\phi\nonumber\\
&&+ \ \frac{1}{2}g^{\alpha\beta}g^{pq}
\bigg(\tilde{\nabla}_{i}h_{p\alpha}+2\tilde{\nabla}_{\alpha}h_{jp}
\tilde{\nabla}_{q}h_{i\beta}-2\tilde{\nabla}_{\alpha}
h_{jp}\tilde{\nabla}_{\beta}h_{iq}\label{3.64}\\
&&+ \ -2\tilde{\nabla}_{j}h_{p\alpha}\tilde{\nabla}_{\beta}
h_{iq}-2\tilde{\nabla}_{i}h_{p\alpha}\tilde{\nabla}_{\beta}
h_{jq}\bigg).\nonumber
\end{eqnarray}
From $\delta<1/2$ and (\ref{3.60}) we have from (\ref{3.64}) that
\begin{equation}
-\bigg(8m\sqrt{k_{0}}+20|\tilde{\nabla}h|^{2}_{\tilde{g}}
\bigg)\tilde{g}\leq A_{ij}\leq\bigg(8m\sqrt{k_{0}}+20|\tilde{\nabla}h
|^{2}_{\tilde{g}}\bigg)\tilde{g}\label{3.65}
\end{equation}
on $M\times[0,T]$. Therefore
\begin{equation}
\partial_{t}H_{ij}=g^{\alpha\beta}\tilde{\nabla}_{\alpha}
\tilde{\nabla}_{\beta}H_{ij}+B_{ij}, \ \ \ H(0)=0, \label{3.66}
\end{equation}
where $B_{ij}:=A_{ij}/\delta$ satisfying
\begin{equation}
-\bigg(\frac{8m\sqrt{k_{0}}}{\delta}
+20\delta|\tilde{\nabla}H|^{2}_{\tilde{g}}
\bigg)\tilde{g}\leq B_{ij}\leq\bigg(\frac{8m\sqrt{k_{0}}}{\delta}
+20\delta|\tilde{\nabla}H|^{2}_{\tilde{g}}\bigg)\tilde{g}\label{3.67}
\end{equation}
on $M\times[0,T]$. As in \cite{L05}, define
\begin{equation}
\psi:=\phi-\tilde{\phi}, \ \ \ \Psi:=\delta\psi.\label{3.68}
\end{equation}
Then
\begin{equation*}
\partial_{t}\psi=g^{\alpha\beta}\tilde{\nabla}_{\alpha}
\tilde{\nabla}_{\beta}\psi+C,
\end{equation*}
where
\begin{equation}
C:=g^{\alpha\beta}\tilde{\nabla}_{\alpha}\tilde{\nabla}_{\beta}
\tilde{\phi}+\beta_{1}|\tilde{\nabla}\tilde{\phi}|^{2}_{g}
+\beta_{2}\tilde{\phi}
+\beta_{1}|\tilde{\nabla}\psi|^{2}_{g}
+\beta_{2}\psi+2\beta_{1}\langle\tilde{\nabla}\psi,\tilde{\nabla}
\tilde{\phi}\rangle_{g}.\label{3.69}
\end{equation}
From $\delta<1/2$, (\ref{3.60}) and the proof of Theorem \ref{t3.8}, we have from (\ref{3.69}) that
\begin{equation*}
|C|\leq 2m\sqrt{k_{0}}+2\beta_{1}k_{1}
+\beta_{2}\sqrt{k_{1}}+\beta_{2}|\psi|+2\beta_{1}|\tilde{\nabla}
\psi|^{2}_{\tilde{g}}\lesssim1
\end{equation*}
where $\lesssim$ depends on $m,k_{0}, k_{1}, \alpha_{1}, \beta_{1},\beta_{2}$. Consequently,
\begin{eqnarray*}
\partial_{t}H_{ij}&=&g^{\alpha\beta}\tilde{\nabla}_{\alpha}
\tilde{\nabla}_{\beta}H_{ij}+B_{ij}, \ \ \ H(0)=0,\\
\partial_{t}\Psi&=&g^{\alpha\beta}\tilde{\nabla}_{\alpha}
\tilde{\nabla}_{\beta}\Psi+D, \ \ \ \Psi(0)=0.
\end{eqnarray*}
Since
\begin{eqnarray*}
-\tilde{g}&\leq& H(t) \ \ \leq \ \ \tilde{g},\\
\frac{1}{1+\delta}\tilde{g}^{-1}&\leq&g^{-1} \ \
\leq \ \ \frac{1}{1-\delta}\tilde{g}^{-1},\\
|\tilde{\nabla}g^{-1}(t)|^{2}_{\tilde{g}}&\leq&\frac{\delta^{2}}{(1-\delta)^{4}}
|\tilde{\nabla}H(t)|^{2}_{\tilde{g}},\\
|\Psi(t)|^{2}&\leq&1
\end{eqnarray*}
By the argument in \cite{S89}, we arrive at
\begin{equation*}
\sup_{M\times[0,T]}
\bigg(|\tilde{\nabla}H(t)|^{2}_{\tilde{g}}
+|\tilde{\nabla}\Psi(t)|^{2}_{\tilde{g}}\bigg)\leq1
\end{equation*}
where $\lesssim$ depends only on $m, k_{0}, k_{1},k_{1}, \alpha_{1},\beta_{1},
\beta_{2}$.

\begin{theorem}\label{t3.9} There exists a positive constant $T=T(m,k_{0},k_{1},\alpha_{1},
\beta_{1},\beta_{2})$ depending only on $m, k_{0}, k_{1}, \alpha_{1},\beta_{1},
\beta_{2}$ such that
\begin{equation*}
\sup_{M\times[0,T]}|\tilde{\nabla}g(t)|^{2}_{\tilde{g}}
\leq C, \ \ \ \sup_{M\times[0,T]}|\tilde{\nabla}
\phi|^{2}_{\tilde{g}}\leq C
\end{equation*}
where $C$ is a positive constant depending only on $
m, k_{0}, k_{1}, k_{2}, \alpha_{1}, \beta_{1},\beta_{2}$.
\end{theorem}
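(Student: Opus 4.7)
The plan is to read off Theorem \ref{t3.9} directly from the bound
\[
\sup_{M\times[0,T]} \bigl(|\tilde{\nabla}H(t)|^{2}_{\tilde{g}} + |\tilde{\nabla}\Psi(t)|^{2}_{\tilde{g}}\bigr) \leq 1
\]
just established for the rescaled quantities $H_{ij} = (g_{ij}-\tilde{g}_{ij})/\delta$ and $\Psi = \delta(\phi-\tilde{\phi})$ via the Shi-type cutoff/maximum principle argument. Since $\delta$ in (\ref{3.61}) depends only on $m,\alpha_{1},\beta_{1}$, the theorem should follow by undoing the rescaling, with all $k_{2}$-dependence inherited from the upstream estimate through the forcing terms $B,D$.

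First, because the Levi-Civita connection $\tilde{\nabla}$ of $\tilde{g}$ annihilates $\tilde{g}$, we have $\tilde{\nabla}H = \delta^{-1}\tilde{\nabla}g$, so
\[
|\tilde{\nabla}g(t)|^{2}_{\tilde{g}} \,=\, \delta^{2}\,|\tilde{\nabla}H(t)|^{2}_{\tilde{g}} \,\leq\, \delta^{2}
\]
on $M\times[0,T]$, which is already a universal constant in the advertised data. Second, $\tilde{\nabla}\Psi = \delta(\tilde{\nabla}\phi - \tilde{\nabla}\tilde{\phi})$ gives
\[
|\tilde{\nabla}\phi(t)|^{2}_{\tilde{g}} \,\leq\, \frac{2}{\delta^{2}}\,|\tilde{\nabla}\Psi(t)|^{2}_{\tilde{g}} + 2\,|\tilde{\nabla}\tilde{\phi}|^{2}_{\tilde{g}} \,\leq\, \frac{2}{\delta^{2}} + 2k_{1},
\]
where the second summand uses hypothesis (\ref{3.60}). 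Both right-hand sides are constants depending only on $m,k_{0},k_{1},k_{2},\alpha_{1},\beta_{1},\beta_{2}$, which is exactly the claim.

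The genuine work lies upstream, in the inequality $|\tilde{\nabla}H|^{2}+|\tilde{\nabla}\Psi|^{2}\leq 1$. To produce it from scratch one would differentiate the parabolic systems $\partial_{t}H = g^{\alpha\beta}\tilde{\nabla}_{\alpha}\tilde{\nabla}_{\beta}H + B$ and $\partial_{t}\Psi = g^{\alpha\beta}\tilde{\nabla}_{\alpha}\tilde{\nabla}_{\beta}\Psi + D$ (with zero initial data) by $\tilde{\nabla}$, form the scalar $|\tilde{\nabla}H|^{2}_{\tilde{g}}+|\tilde{\nabla}\Psi|^{2}_{\tilde{g}}$, and absorb the reaction terms quadratic in $\tilde{\nabla}H,\tilde{\nabla}\Psi$ into the good Bochner-type term $g^{\alpha\beta}\langle\tilde{\nabla}_{\alpha}\tilde{\nabla}H,\tilde{\nabla}_{\beta}\tilde{\nabla}H\rangle_{\tilde{g}}$ produced by the Laplacian. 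Here the choice of $\delta$ in (\ref{3.61}), with the combinatorial factor $80000(1+\alpha_{1}^{2}+\beta_{1}^{2})m^{10}$, is exactly what makes the coefficients of the dangerous terms small enough for absorption, while the pointwise control of $B,D$ uses (\ref{3.67}) together with the Hessian bound $|\tilde{\nabla}^{2}\tilde{\phi}|^{2}_{\tilde{g}}\leq k_{2}$ that is buried in (\ref{3.69}). A Shi-style spatial cutoff and the parabolic maximum principle then promote the localized estimate to one valid on all of $M\times[0,T]$. This absorption/cutoff step is the main obstacle; once it is in hand the conversion to $g$ and $\phi$ above is immediate.
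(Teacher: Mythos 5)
Your proposal is correct and follows the paper's own route: the paper establishes $\sup_{M\times[0,T]}(|\tilde{\nabla}H|^{2}_{\tilde{g}}+|\tilde{\nabla}\Psi|^{2}_{\tilde{g}})\leq 1$ by the Shi-type argument and then leaves the unrescaling implicit, which you have simply made explicit using $\tilde{\nabla}H=\delta^{-1}\tilde{\nabla}g$, $\tilde{\nabla}\Psi=\delta(\tilde{\nabla}\phi-\tilde{\nabla}\tilde{\phi})$, and the hypothesis $|\tilde{\nabla}\tilde{\phi}|^{2}_{\tilde{g}}\leq k_{1}$. Your sketch of where the real work lies (the parabolic maximum principle with cutoff, the absorption afforded by the specific choice of $\delta$ in (\ref{3.61}), and the role of (\ref{3.67}) and (\ref{3.69})) also matches the paper.
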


\subsection{Second order derivative estimates}\label{subsection3.4}

In this subsection we derive the second order derivative estimates. Let
$(M,\tilde{g})$ be a smooth complete Riemannian manifold of dimension $m$ and
$\tilde{\phi}$ a smooth function on $M$, satisfying
\begin{equation}
|\widetilde{{\rm Rm}}|_{\tilde{g}}\leq k_{0}, \ \ \ |\tilde{\phi}|^{2}
+|\tilde{\nabla}\tilde{\phi}|^{2}_{\tilde{g}}\leq k_{1}, \ \ \
|\tilde{\nabla}^{2}\tilde{\phi}|^{2}_{\tilde{g}}\leq k_{2}.\label{3.70}
\end{equation}
From Theorem \ref{t3.8} and Theorem \ref{t3.9}, there exists a positive
constant $T$ depending on $m, k_{0}, k_{1}, \alpha_{1}, \beta_{1}, \beta_{2}$ such that the
flow
\begin{eqnarray*}
\partial_{t}g_{ij}&=&-2R_{ij}+2\alpha_{1}\nabla_{i}\phi\nabla_{j}\phi
+\nabla_{i}V_{j}+\nabla_{j}V_{i},\\
\partial_{t}\phi&=&\Delta\phi+\beta_{1}|\nabla\phi|^{2}_{g}
+\beta_{2}\phi+\langle V,\nabla\phi\rangle_{g},\\
(g(0),\phi(0))&=&(\tilde{g},\tilde{\phi}),
\end{eqnarray*}
has a smooth solution $(g(t),\phi(t))$ on $M\times[0,T]$ that satisfies
the estimates
\begin{equation}
\frac{1}{2}\tilde{g}\leq g(t)\leq2\tilde{g}, \ \ \
|\tilde{\phi}|^{2}\lesssim1, \ \ \ |\tilde{\nabla}g|^{2}_{\tilde{g}}
\lesssim1, \ \ \
|\tilde{\nabla}\phi|^{2}_{\tilde{g}}\lesssim 1,\label{3.71}
\end{equation}
where $\lesssim$ (used throughout this subsection) depend only on $m, k_{0}, k_{1}, k_{2}, \alpha_{1},
\beta_{1}, \beta_{2}$.

According to Lemma \ref{l2.5} and the proof of equation (34) in page 266 of
\cite{S89}, we have
\begin{equation}
\partial_{t}{\rm Rm}=\Delta{\rm Rm}+g^{\ast-2}\ast{\rm Rm}^{\ast 2}
+g^{-1}\ast V\ast\nabla{\rm Rm}+g^{-1}\ast{\rm Rm}\ast\nabla V
+\nabla^{2}\phi\ast\nabla^{2}\phi.\label{3.72}
\end{equation}
Recall the formula (28) in page 266 of \cite{S89},
\begin{equation*}
\partial_{t}V=\Delta V+g^{\ast-2}\ast{\rm Rm}\ast V
+g^{-1}\ast V+\ast\partial_{t}g+g^{-1}\ast\tilde{\nabla}g
\ast\partial_{t}g^{-1}\ast g.
\end{equation*}
Since
\begin{equation*}
\partial_{t}g=g^{-1}\ast{\rm Rm}+\nabla V+\nabla\phi\ast\nabla\phi, \ \ \
\partial_{t}g^{-1}=g^{-1}\ast g^{-1}\ast\partial_{t}g, \ \ \
V=g^{-1}\ast\tilde{\nabla}g,
\end{equation*}
it follows that
\begin{eqnarray}
\partial_{t}V&=&\Delta V+g^{\ast-3}\ast\tilde{\nabla}g\ast{\rm Rm}
+g^{\ast-2}\ast\tilde{\nabla}g\ast\nabla V
+g^{\ast-4}\ast g\ast\tilde{\nabla}g\ast{\rm Rm}\nonumber\\
&&+ \ g^{\ast-3}\ast g\ast\tilde{\nabla}g\ast\nabla V
+g^{\ast-2}\ast\tilde{\nabla}g\ast\nabla\phi\ast\nabla\phi
+g^{\ast-3}\ast g\ast\tilde{\nabla}g\ast\nabla\phi\ast\nabla\phi.\label{3.73}
\end{eqnarray}
Since
\begin{equation*}
\partial_{t}\nabla V=\nabla(\partial_{t}V)+V\ast\partial_{t}\Gamma, \ \ \
\partial_{t}\Gamma=g^{-1}\ast\nabla(\partial_{t}g),
\end{equation*}
it follows that (as the proof of the equation (99) in page 278 of \cite{S89})
\begin{eqnarray}
\partial_{t}\nabla V&=&\Delta\nabla V+g^{\ast-3}\ast\nabla\tilde{\nabla}g\ast{\rm Rm}
+g^{\ast-3}\ast\tilde{\nabla}g\ast\nabla{\rm Rm}
+g^{\ast-2}\ast\nabla\tilde{\nabla}g\ast\nabla V\nonumber\\
&&+ \ g^{\ast-2}\ast\tilde{\nabla}g\ast\nabla^{2}V
+g^{\ast-4}\ast g\ast\nabla\tilde{\nabla}g\ast{\rm Rm}
+g^{\ast-4}\ast g\ast\tilde{\nabla}g\ast\nabla{\rm Rm}\nonumber\\
&&+ \ g^{\ast-3}\ast g\ast\nabla\tilde{\nabla}g\ast\nabla V
+g^{\ast-3}\ast g\ast\tilde{\nabla}g\ast\nabla^{2}V\label{3.74}\\
&&+ \ g^{\ast-2}\ast\nabla\tilde{\nabla}g\ast\nabla\phi\ast\nabla\phi
+g^{\ast-2}\ast\tilde{\nabla}g\ast\nabla\phi\ast\nabla^{2}\phi\nonumber\\
&&+ \ g^{\ast-3}\ast g\ast\nabla\tilde{\nabla}g
\ast\nabla\phi\ast\nabla\phi+g^{\ast-3}
\ast g\ast\tilde{\nabla}g\ast\nabla\phi\ast\nabla^{2}\phi.\nonumber
\end{eqnarray}

By the diffeomorphisms $(\Psi_{t})_{t\in[0,T]}$ defined by (\ref{2.17}), we have
\begin{eqnarray*}
\partial_{t}\hat{g}_{ij}(x,t)&=&-2\hat{R}_{ij}(x,t)+2\alpha_{1}
\hat{\nabla}_{i}\hat{\phi}(x,t)\hat{\nabla}_{j}\hat{\phi}(x,t),\\
\partial_{t}\hat{\phi}(x,t)&=&\hat{\Delta}\hat{\phi}(x,t)
+\beta_{1}|\hat{\nabla}\hat{\phi}|^{2}_{\hat{g}}(x,t)+\beta_{2}
\hat{\phi}(x,t),
\end{eqnarray*}
where $\hat{g}(x,t)$ and $\hat{\phi}(x,t)$ are defined by (\ref{2.16}). Then
\begin{equation*}
\nabla_{i}\nabla_{j}\phi(x,t)=y^{\alpha}{}_{,i}y^{\beta}{}_{,j}
\hat{\nabla}_{\alpha}\hat{\nabla}_{\beta}\hat{\phi}(y,t), \ \ \ y^{\alpha}{}_{,i}:=
\frac{\partial}{\partial x^{i}}y^{\alpha},
\end{equation*}
and hence
\begin{eqnarray*}
\partial_{t}\nabla_{i}\nabla_{j}\phi(x,t)&=&y^{\alpha}{}_{,i}y^{\beta}{}_{,j}
\partial_{t}\hat{\nabla}_{\alpha}\hat{\nabla}_{\beta}\hat{\phi}(y,t)
+y^{\alpha}{}_{,i}y^{\beta}{}_{,j}\partial_{t}y^{\gamma}\frac{\partial}{\partial y^{\gamma}}
\hat{\nabla}_{\alpha}\hat{\nabla}_{\beta}\hat{\phi}(y,t)\\
&&+ \ \partial_{t}\left(y^{\alpha}{}_{,i}y^{\beta}{}_{,j}\right)
\hat{\nabla}_{\alpha}\hat{\nabla}_{\beta}\hat{\phi}(y,t).
\end{eqnarray*}
By Lemma \ref{l2.7}, we have
\begin{eqnarray*}
y^{\alpha}{}_{,i}y^{\beta}{}_{,j}
\partial_{t}\hat{\nabla}_{\alpha}\hat{\nabla}_{\beta}\hat{\phi}(y,t)
&=&\Delta\nabla_{i}\nabla_{j}\phi+g^{\ast-2}\ast{\rm Rm}\ast\nabla^{2}\phi
+\beta_{2}
\nabla^{2}\phi\\
&&+ \ g^{-1}\ast\nabla\phi\ast\nabla\phi\ast\nabla^{2}\phi
+g^{-1}\ast\nabla\phi\ast\nabla^{3}\phi\\
&&+ \ g^{-1}\ast\nabla^{2}\phi
\ast\nabla^{2}\phi+g^{\ast-2}\ast{\rm Rm}\ast\nabla\phi\ast\nabla\phi.
\end{eqnarray*}
Using (17) and (18) in page 263 of \cite{S89}, we can conclude that
\begin{eqnarray*}
I&=&y^{\alpha}{}_{,i}y^{\beta}{}_{,j}\partial_{t}y^{\gamma}
\frac{\partial}{\partial y^{\gamma}}\hat{\nabla}_{\alpha}
\hat{\nabla}_{\beta}\hat{\phi}(y,t) \ \ = \ \ y^{\alpha}{}_{,i}y^{\beta}{}_{,j}\partial_{t}y^{\gamma}\frac{\partial
x^{p}}{\partial y^{\gamma}}\frac{\partial}{\partial x^{p}}
\hat{\nabla}_{\alpha}\hat{\nabla}_{\beta}\hat{\phi}(y,t)\\
&=&\partial_{t}y^{\gamma}\frac{\partial x^{p}}{\partial y^{\gamma}}
\frac{\partial}{\partial x^{p}}\left(y^{\alpha}{}_{,i}y^{\beta}{}_{,j}
\hat{\nabla}_{\alpha}\hat{\nabla}_{\beta}\hat{\phi}\right)-\partial_{t}y^{\gamma}\frac{\partial x^{p}}{\partial y^{\gamma}}
(y^{\alpha}{}_{,i}y^{\beta}{}_{,j})_{,p}\hat{\nabla}_{\alpha}
\hat{\nabla}_{\beta}\hat{\phi}\\
&=&g^{pq}V_{q}\frac{\partial}{\partial x^{p}}\nabla_{i}\nabla_{j}\phi
-g^{pq}V_{q}\left(y^{\alpha}{}_{,ip}
\frac{\partial x^{s}}{\partial y^{\alpha}}\nabla_{s}\nabla_{j}\phi
+y^{\beta}{}_{,jp}\frac{\partial x^{s}}{\partial y^{\beta}}\nabla_{i}\nabla_{s}
\phi\right),\\
J&=&\partial_{t}(y^{\alpha}{}_{,i}y^{\beta}{}_{,j})\hat{\nabla}_{\alpha}
\hat{\nabla}_{\beta}\hat{\phi}(y,t) \ \ = \ \ (\partial_{t}y^{\alpha})_{,i}
\frac{\partial x^{s}}{\partial y^{\alpha}}\nabla_{s}\nabla_{j}\phi
+(\partial_{t}y^{\beta})_{,j}\frac{\partial x^{s}}{\partial y^{\beta}}
\nabla_{i}\nabla_{s}\phi\\
&=&g^{pq}V_{q}\left(y^{\alpha}{}_{,pi}\frac{\partial x^{s}}{\partial y^{\beta}}
\nabla_{s}\nabla_{j}\phi+y^{\beta}{}_{,pj}\frac{\partial x^{s}}{\partial y^{\alpha}}
\nabla_{i}\nabla_{s}\phi\right)\\
&&+ \ (g^{pq}V_{q})_{,i}\nabla_{p}\nabla_{j}\phi
+(g^{pq}V_{q})_{,j}\nabla_{i}\nabla_{p}\phi.
\end{eqnarray*}
Consequently,
\begin{eqnarray*}
I+J&=&g^{pq}V_{q}\nabla_{p}\nabla_{i}\nabla_{j}\phi
+g^{pq}\left(\nabla_{i}V_{q}\nabla_{p}\nabla_{j}\phi
+\nabla_{j}V_{q}\nabla_{i}\nabla_{q}\phi\right)\\
&=&g^{-1}\ast V\ast\nabla^{3}\phi+g^{-1}\ast\nabla V\ast\nabla^{2}\phi.
\end{eqnarray*}
Combining those identities yields
\begin{eqnarray}
\partial_{t}\nabla^{2}\phi&=&\Delta\nabla^{2}\phi
+g^{\ast-2}\ast{\rm Rm}\ast\nabla^{2}\phi+\beta_{2}\nabla^{2}\phi
+g^{-1}\ast\nabla\phi\ast\nabla\phi\ast\nabla^{2}\phi\nonumber\\
&&+ \ g^{-1}\ast\nabla
\phi\ast\nabla^{3}\phi+g^{-1}\ast\nabla^{2}\phi\ast\nabla^{2}\phi+g^{\ast-2}
\ast{\rm Rm}\ast\nabla\phi\ast\nabla\phi\label{3.75}\\
&&+ \ g^{-1}\ast V\ast\nabla^{3}\phi+g^{-1}\ast\nabla V
\ast\nabla^{2}\phi.\nonumber
\end{eqnarray}

The volume form
\begin{equation*}
dV:=dV_{g(t)}=\sqrt{\det(g_{ij})}dx^{1}\wedge\cdots\wedge dx^{m}
\end{equation*}
evolves by
\begin{equation}
\partial_{t}dV=\frac{1}{2}g^{ij}\partial_{t}g_{ij}\!\ dV
=\left(-R+\alpha_{1}|\nabla\phi|^{2}_{g}+{\rm div}_{g}V\right)dV.\label{3.76}
\end{equation}
In particular, $dV=d\tilde{V}$. For any point $x_{0}\in M$ and any $r>0$ we denote by
$B_{\tilde{g}}(x_{0},r)$ the metric ball with respect to $\tilde{g}$. Recall the
definition
\begin{equation*}
\boldsymbol{\Theta}=(g,\phi).
\end{equation*}

\begin{lemma}\label{l3.10} We have
\begin{equation*}
\int^{T}_{0}\left(\int_{B_{\tilde{g}}(x_{0},r)}
|\tilde{\nabla}^{2}\boldsymbol{\Theta}|^{2}_{\tilde{g}}d\tilde{V}\right)dt\lesssim1
\end{equation*}
where $\lesssim$ depends on $m, r, k_{0}, k_{1}, \alpha_{1}, \beta_{1}, \beta_{2}$.
\end{lemma}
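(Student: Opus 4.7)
The plan is to use the differential inequality (3.31) from the proof of Lemma \ref{l3.6}, which is valid globally on $M$ because Theorem \ref{t3.8} gives $(1-\delta)\tilde{g}\leq g(t)\leq(1+\delta)\tilde{g}$ on all of $M\times[0,T]$ with $\delta$ as in (3.61). Namely, writing $\boldsymbol{\Theta}=(g,\phi)$, we have on $M\times[0,T]$
\begin{equation*}
\partial_{t}|\tilde{\nabla}\boldsymbol{\Theta}|^{2}_{\tilde{g}}
\leq g^{\alpha\beta}\tilde{\nabla}_{\alpha}\tilde{\nabla}_{\beta}|\tilde{\nabla}\boldsymbol{\Theta}|^{2}_{\tilde{g}}
-\tfrac{1}{2}|\tilde{\nabla}^{2}\boldsymbol{\Theta}|^{2}_{\tilde{g}}+C_{4}|\tilde{\nabla}\boldsymbol{\Theta}|^{2}_{\tilde{g}}+C_{4}+C\,m^{10}|\tilde{\nabla}\boldsymbol{\Theta}|^{4}_{\tilde{g}}.
\end{equation*}
Theorem \ref{t3.9} delivers the pointwise bound $|\tilde{\nabla}\boldsymbol{\Theta}|^{2}_{\tilde{g}}\lesssim 1$, so the last three terms on the right-hand side are controlled by a uniform constant depending only on $m,k_{0},k_{1},k_{2},\alpha_{1},\beta_{1},\beta_{2}$.

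First I would pick a smooth cutoff $\eta\colon M\to[0,1]$ with $\eta\equiv 1$ on $B_{\tilde{g}}(x_{0},r)$, $\mathrm{supp}(\eta)\subset B_{\tilde{g}}(x_{0},2r)$ and $|\tilde{\nabla}\eta|_{\tilde{g}}\lesssim 1/r$, multiply the differential inequality by $\eta^{2}$, and integrate over $M\times[0,T]$ against $d\tilde{V}$. The time-derivative term telescopes into
\begin{equation*}
\int_{M}\eta^{2}|\tilde{\nabla}\boldsymbol{\Theta}(T)|^{2}_{\tilde{g}}\,d\tilde{V}-\int_{M}\eta^{2}|\tilde{\nabla}\boldsymbol{\Theta}(0)|^{2}_{\tilde{g}}\,d\tilde{V};
\end{equation*}
the first integral is nonnegative and is discarded, while the second is bounded by $k_{1}\,\mathrm{Vol}_{\tilde{g}}(B_{\tilde{g}}(x_{0},2r))$, which is finite because $|\widetilde{\mathrm{Rm}}|_{\tilde{g}}\leq k_{0}$ gives Bishop-Gromov volume comparison on $(M,\tilde{g})$. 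For the elliptic term I integrate by parts with respect to the Levi-Civita connection of $\tilde{g}$ (which is compatible with $d\tilde{V}$), obtaining
\begin{equation*}
\int_{M}\eta^{2}g^{\alpha\beta}\tilde{\nabla}_{\alpha}\tilde{\nabla}_{\beta}|\tilde{\nabla}\boldsymbol{\Theta}|^{2}_{\tilde{g}}\,d\tilde{V}=-2\!\int_{M}\!\eta\,g^{\alpha\beta}\tilde{\nabla}_{\alpha}\eta\cdot\tilde{\nabla}_{\beta}|\tilde{\nabla}\boldsymbol{\Theta}|^{2}_{\tilde{g}}\,d\tilde{V}-\!\int_{M}\!\eta^{2}(\tilde{\nabla}_{\alpha}g^{\alpha\beta})\tilde{\nabla}_{\beta}|\tilde{\nabla}\boldsymbol{\Theta}|^{2}_{\tilde{g}}\,d\tilde{V}.
\end{equation*}
Each resulting integrand contains a single factor of $|\tilde{\nabla}^{2}\boldsymbol{\Theta}|_{\tilde{g}}$, the remaining factors being bounded by Theorem \ref{t3.9} (namely $|\tilde{\nabla}g|_{\tilde{g}},|\tilde{\nabla}\boldsymbol{\Theta}|_{\tilde{g}}\lesssim 1$) and the choice of cutoff. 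By Young's inequality $ab\leq\epsilon a^{2}+\tfrac{1}{4\epsilon}b^{2}$ with $\epsilon$ chosen sufficiently small (say $\epsilon=\tfrac{1}{8}$), one absorbs a fraction of $\int_{0}^{T}\!\int_{M}\eta^{2}|\tilde{\nabla}^{2}\boldsymbol{\Theta}|^{2}_{\tilde{g}}\,d\tilde{V}\,dt$ into the favorable $-\tfrac{1}{2}\eta^{2}|\tilde{\nabla}^{2}\boldsymbol{\Theta}|^{2}_{\tilde{g}}$ contribution, leaving only terms bounded by a uniform constant times $\mathrm{Vol}_{\tilde{g}}(B_{\tilde{g}}(x_{0},2r))\cdot T$.

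Rearranging and restricting the domain of integration on the left-hand side from $M$ to $B_{\tilde{g}}(x_{0},r)$ (where $\eta\equiv 1$) yields the claim. The main obstacle will be the clean bookkeeping of the cross-terms from the integration by parts: since the Laplacian uses $g^{\alpha\beta}$ but the volume form and the connection are those of $\tilde{g}$, one must carefully track factors of $\tilde{\nabla}g^{-1}=-g^{-1}\ast g^{-1}\ast\tilde{\nabla}g$, and verify that every such factor is absorbed either into the $L^{\infty}$ estimates of Theorem \ref{t3.9} or into a small multiple of $\int\eta^{2}|\tilde{\nabla}^{2}\boldsymbol{\Theta}|^{2}_{\tilde{g}}$ via Young's inequality.
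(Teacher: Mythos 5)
Your overall strategy — cutoff, integrate in space-time, integrate by parts, absorb with Young's inequality — is the same skeleton the paper uses, and the treatment of the elliptic term $g^{\alpha\beta}\tilde\nabla_\alpha\tilde\nabla_\beta$ (moving a derivative onto $\eta^2$ or onto $g^{\alpha\beta}$, then controlling $\tilde\nabla g^{-1}$ via Theorem~\ref{t3.9} and absorbing a small multiple of $\int\eta^2|\tilde\nabla^2\boldsymbol{\Theta}|^2$) is essentially correct. There is, however, a genuine gap at the very first step.

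Inequality~(\ref{3.31}) is \emph{not} valid globally on $M$ with a constant depending only on $m,k_0,k_1,\alpha_1,\beta_1,\beta_2$. Its constant $C_4$ is inherited from $C_1$ in~(\ref{3.23}), which in turn comes from~(\ref{3.20}) and hence from the bound~(\ref{3.18}), $|\tilde\nabla\widetilde{\rm Rm}|_{\tilde g}\lesssim1$; that bound holds because the closure of $B_{\tilde g}(x_0,r+\delta)$ is compact, and it depends on the particular ball (i.e.\ on $\tilde g$), not merely on $k_0$. The hypotheses of Lemma~\ref{l3.10} assume only $|\widetilde{\rm Rm}|^2_{\tilde g}\leq k_0$, so $|\tilde\nabla\widetilde{\rm Rm}|_{\tilde g}$ may be unbounded on $M$. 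Theorem~\ref{t3.8} gives the metric equivalence globally but does not upgrade~(\ref{3.18}) to a global statement, so your justification that~(\ref{3.31}) ``is valid globally on $M$'' does not hold, and if you push through as written the final constant depends on $\sup_{B_{\tilde g}(x_0,2r)}|\tilde\nabla\widetilde{\rm Rm}|_{\tilde g}$ — a quantity not covered by the lemma's allowed dependencies.

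The paper circumvents this by \emph{not} using a pointwise bound on $\tilde\nabla\widetilde{\rm Rm}$. It works directly with the evolution equation~(\ref{3.16}) for $\tilde\nabla g$, isolating the problematic piece as $I_2=\int g^{-1}\ast g\ast\tilde\nabla g\ast\tilde\nabla\widetilde{\rm Rm}\,\xi^2\,d\tilde V$, and then integrates by parts \emph{again} to move the derivative off $\widetilde{\rm Rm}$ (this is Shi's computation, estimate~(58) on p.~270 of \cite{S89}, quoted as~(\ref{3.81})). After that integration by parts only $|\widetilde{\rm Rm}|\leq\sqrt{k_0}$ is needed, and the resulting constant is uniform. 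So the fix for your proof is: do not invoke~(\ref{3.31}) as a black box; instead, go back to~(\ref{3.16}) and~(\ref{3.24}) for the evolution of $\tilde\nabla\boldsymbol{\Theta}$, and when you reach the term in $\tilde\nabla\widetilde{\rm Rm}$, integrate by parts a second time before estimating.

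Two smaller remarks. First, the paper's cutoff has $\tilde\nabla\xi$ bounded by an absolute constant ($|\tilde\nabla\xi|_{\tilde g}\leq8$) on the unit annulus $B_{\tilde g}(x_0,r+1)\setminus B_{\tilde g}(x_0,r)$, whereas your $|\tilde\nabla\eta|\lesssim1/r$ on an $r$-scale annulus; this is cosmetic, but it is cleaner (and matches the stated $r$-dependence) to cut off between radius $r$ and $r+1$. Second, Theorem~\ref{t3.9}'s constant also involves $k_2$, which is fine but worth noting since you quote it; the paper's Lemma~\ref{l3.10} statement omits $k_2$, but the proof as written via Theorems~\ref{t3.8}--\ref{t3.9} does introduce it.
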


\begin{proof} As in the proof of Lemma 6.2 in \cite{S89}, we chose a cutoff
function $\xi(x)\in C^{\infty}_{0}(M)$ such that $|\tilde{\nabla}\xi|_{\tilde{g}}
\leq 8$ in $M$ and
\begin{eqnarray}
\xi&=&1 \ \ \ \text{in} \ B_{\tilde{g}}(x_{0},r),\nonumber\\
\xi&=&0 \ \ \ \text{in} \ M\setminus B_{\tilde{g}}\left(x_{0},
r+\frac{1}{2}\right),\label{3.77}\\
0&\leq&\xi \ \ \leq \ \ 1 \ \ \ \text{in} \ M.\nonumber
\end{eqnarray}
Since $m=g^{ij}g_{ij}$, it follows that the constant $1$ can
be replaced by $g^{-1}\ast g$. From (\ref{3.16}) we have
\begin{eqnarray*}
\partial_{t}\tilde{\nabla}g&=&g^{\alpha\beta}\tilde{\nabla}_{\alpha}
\tilde{\nabla}_{\beta}\tilde{\nabla}g+g^{-1}\ast g\ast\tilde{\nabla}
\widetilde{{\rm Rm}}+g^{\ast-2}\ast g\ast\tilde{\nabla}g\ast
\widetilde{{\rm Rm}}\\
&&+ \ g^{\ast-2}\ast\tilde{\nabla}g\ast\tilde{\nabla}^{2}g
+g^{\ast-3}\ast(\tilde{\nabla}g)^{\ast3}+\tilde{\nabla}\phi
\ast\tilde{\nabla}^{2}\phi.
\end{eqnarray*}
Therefore integrating $|\tilde{\nabla}g|^{2}_{\tilde{g}}\xi^{2}$ over the
ball $B_{\tilde{g}}(x_{0},r+1)$ implies
\begin{eqnarray*}
I&:=&\frac{d}{dt}\int_{B_{\tilde{g}}(x_{0},r+1)}
|\tilde{\nabla}g|^{2}_{\tilde{g}}\xi^{2}\!\ d\tilde{V} \ \
= \ \ 2\int_{B_{\tilde{g}}(x_{0},r+1)}
\langle\tilde{\nabla} g,\partial_{t}\tilde{\nabla}g\rangle_{\tilde{g}}
\xi^{2}\!\ d\tilde{V}\\
&=&2\int_{B_{\tilde{g}}(x_{0},r+1)}
\langle\tilde{\nabla}g,g^{\alpha\beta}\tilde{\nabla}_{\alpha}
\tilde{\nabla}_{\beta}\tilde{\nabla}g\rangle_{\tilde{g}}\xi^{2}\!\ d\tilde{V}\\
&&+ \ \int_{B_{\tilde{g}}(x_{0},r+1)}
g^{-1}\ast g\ast\tilde{\nabla}g\ast\tilde{\nabla}\widetilde{{\rm Rm}}\!\ \xi^{2}\!\
d\tilde{V}+\int_{B_{\tilde{g}}(x_{0},r+1)}
\bigg[g^{\ast-2}\ast g\ast\tilde{\nabla}g
\ast\widetilde{{\rm Rm}}\\
&&+ \ g^{\ast-2}\ast\tilde{\nabla}g
\ast\tilde{\nabla}^{2}g+g^{\ast-3}\ast(\tilde{\nabla}g)^{\ast3}
+\tilde{\nabla}\phi\ast\tilde{\nabla}^{2}\phi\bigg]\ast
\tilde{\nabla}g\!\ \xi^{2}\!\ d\tilde{V}\\
&:=&I_{1}+I_{2}+I_{3}.
\end{eqnarray*}
Now the following computations are similar to that given in \cite{L05}. For convenience, we
give a self-contained proof. Using (\ref{3.70}) and (\ref{3.71}) we get
\begin{eqnarray*}
&&\tilde{\nabla}g
\ast\bigg[g^{\ast-2}\ast g\ast\tilde{\nabla}g
\ast\widetilde{{\rm Rm}}+g^{\ast-2}
\ast\tilde{\nabla}g\ast\tilde{\nabla}^{2}g+g^{\ast-3}
\ast(\tilde{\nabla}g)^{\ast3}+\tilde{\nabla}\phi\ast\tilde{\nabla}^{2}
\phi\bigg]\\
&\lesssim&|\tilde{\nabla}g|^{2}_{\tilde{g}}
+|\tilde{\nabla}g|^{2}_{\tilde{g}}|\tilde{\nabla}^{2}g|_{\tilde{g}}
+|\tilde{\nabla}g|^{4}_{\tilde{g}}+|\tilde{\nabla}g|_{\tilde{g}}
|\tilde{\nabla}\phi|_{\tilde{g}}|\tilde{\nabla}^{2}\phi|_{\tilde{g}}\\
&\lesssim&1+|\tilde{\nabla}^{2}g|_{\tilde{g}}
+|\tilde{\nabla}^{2}\phi|_{\tilde{g}},
\end{eqnarray*}
and then
\begin{equation*}
I_{3}\lesssim\int_{B_{\tilde{g}}(x_{0},r+1)}
\left(1+|\tilde{\nabla}^{2}g|_{\tilde{g}}
+|\tilde{\nabla}^{2}\phi|_{\tilde{g}}\right)\xi^{2}\!\ d\tilde{V}.
\end{equation*}
By the Bishop-Gromov volume comparison (see \cite{CLN06}), we have
\begin{equation}
\int_{B_{\tilde{g}}(x_{0},r+1)}d\tilde{V}\lesssim1.\label{3.78}
\end{equation}
By the estimate (\ref{3.78}), the $I_{3}$-term can be rewritten as
\begin{equation}
I_{3}\lesssim1+\int_{B_{\tilde{g}}(x_{0},r+1)}
\left(|\tilde{\nabla}^{2}g|_{\tilde{g}}
+|\tilde{\nabla}^{2}\phi|_{\tilde{g}}\right)\xi^{2}\!\ d\tilde{V}.\label{3.79}
\end{equation}
The $I_{1}$-term and $I_{2}$-term were computed in \cite{S89} (see (54) and (58) in
page 270)
\begin{eqnarray}
I_{1}&\leq&-\frac{1}{2}\int_{B_{\tilde{g}}(x_{0},r+1)}
|\tilde{\nabla}^{2}g|^{2}_{\tilde{g}}\xi^{2}\!\ d\tilde{V}+C_{1},\label{3.80}\\
I_{2}&\lesssim&1+\int_{B_{\tilde{g}}(x_{0},r+1)}
|\tilde{\nabla}^{2}g|_{\tilde{g}}\xi\!\ d\tilde{V}.\label{3.81}
\end{eqnarray}
From (\ref{3.79}), (\ref{3.80}), and (\ref{3.81}), we arrive at
\begin{equation}
I\leq-\frac{1}{4}\int_{B_{\tilde{g}}(x_{0},r+1)}
|\tilde{\nabla}^{2}g|^{2}_{\tilde{g}}\xi^{2}\!\ d\tilde{V}
+C_{2}\int_{B_{\tilde{g}}(x_{0},r+1)}
|\tilde{\nabla}^{2}\phi|_{\tilde{g}}\xi\!\ d\tilde{V}+C_{2}.\label{3.82}
\end{equation}

From (\ref{3.24}) we have
\begin{eqnarray*}
\partial_{t}\tilde{\nabla}\phi&=&g^{\alpha\beta}
\tilde{\nabla}_{\alpha}\tilde{\nabla}_{\beta}\tilde{\nabla}\phi
+g^{\ast-2}\ast\widetilde{{\rm Rm}}\ast\tilde{\nabla}
\phi+g^{\ast-2}\ast\tilde{\nabla}g\ast\tilde{\nabla}^{2}\phi\\
&&+ \ g^{\ast-2}\ast\tilde{\nabla}g\ast(\tilde{\nabla}\phi)^{\ast2}
+g^{-1}\ast\tilde{\nabla}\phi\ast\tilde{\nabla}^{2}\phi
+\beta_{2}\tilde{\nabla}\phi.
\end{eqnarray*}
Therefore integrating $|\tilde{\nabla}\phi|^{2}_{\tilde{g}}\xi^{2}$ over the
ball $B_{\tilde{g}}(x_{0},r+1)$ implies
\begin{eqnarray*}
J&:=&\frac{d}{dt}\int_{B_{\tilde{g}}(x_{0},r+1)}
|\tilde{\nabla}\phi|^{2}_{\tilde{g}}\xi^{2}\!\ d\tilde{V} \ \
= \ \ 2\int_{B_{\tilde{g}}(x_{0},r+1)}
\langle\tilde{\nabla}\phi,\partial_{t}\tilde{\nabla}\phi\rangle_{\tilde{g}}
\xi^{2}\!\ d\tilde{V}\\
&=&2\int_{B_{\tilde{g}}(x_{0},r+1)}
\langle\tilde{\nabla}\phi,g^{\alpha\beta}\tilde{\nabla}_{\alpha}
\tilde{\nabla}_{\beta}\tilde{\nabla}\phi\rangle_{\tilde{g}}\xi^{2}\!\
d\tilde{V}+\int_{B_{\tilde{g}}(x_{0},r+1)}
\xi^{2}\bigg[g^{\ast-2}\ast
\widetilde{{\rm Rm}}\ast\tilde{\nabla}\phi\\
&&+ \ g^{\ast-2}\ast\tilde{\nabla}g
\ast\tilde{\nabla}^{2}\phi+g^{\ast-2}\ast\tilde{\nabla}g
\ast(\tilde{\nabla}\phi)^{\ast2}+g^{-1}\ast\tilde{\nabla}
\phi\ast\tilde{\nabla}^{2}\phi+\tilde{\nabla}\phi\bigg]\tilde{\nabla}
\phi\!\ d\tilde{V}\\
&:=&J_{1}+J_{2}.
\end{eqnarray*}
Using (\ref{3.70}) and (\ref{3.71}), we get
\begin{eqnarray*}
&&\bigg[g^{\ast-2}\ast
\widetilde{{\rm Rm}}\ast\tilde{\nabla}\phi+g^{\ast-2}\ast\tilde{\nabla}g
\ast\tilde{\nabla}^{2}\phi+g^{\ast-2}\ast\tilde{\nabla}g
\ast(\tilde{\nabla}\phi)^{\ast2}\\
&&+ \ g^{-1}\ast\tilde{\nabla}
\phi\ast\tilde{\nabla}^{2}\phi+\tilde{\nabla}\phi\bigg]\ast\tilde{\nabla}
\phi\\
&\lesssim&|\tilde{\nabla}\phi|^{2}_{\tilde{g}}
+|\tilde{\nabla}g|_{\tilde{g}}|\tilde{\nabla}\phi|_{\tilde{g}}
|\tilde{\nabla}^{2}\phi|_{\tilde{g}}+|\tilde{\nabla}g|_{\tilde{g}}
|\tilde{\nabla}\phi|^{3}_{\tilde{g}}+|\tilde{\nabla}\phi|^{2}_{\tilde{g}}
|\tilde{\nabla}^{2}\phi|_{\tilde{g}}+|\tilde{\nabla}\phi|^{2}_{\tilde{g}}\\
&\lesssim&1+|\tilde{\nabla}^{2}\phi|_{\tilde{g}}.
\end{eqnarray*}
By the estimate (\ref{3.78}), the $J_{2}$-term can be bounded by
\begin{equation}
J_{2}\lesssim 1+\int_{B_{\tilde{g}}(x_{0},r+1)}
|\tilde{\nabla}^{2}\phi|_{\tilde{g}}\xi\!\ d\tilde{V}.\label{3.83}
\end{equation}
From the integration by parts, we obtain
\begin{eqnarray*}
J_{1}&=&-2\int_{B_{\tilde{g}}(x_{0},r+1)}
g^{\alpha\beta}\langle\tilde{\nabla}_{\beta}\tilde{\nabla}\phi,
\tilde{\nabla}_{\alpha}\tilde{\nabla}\phi\rangle_{\tilde{g}}\xi^{2}\!\ d\tilde{V}\\
&&+ \ \int_{B_{\tilde{g}}(x_{0},r+1)}
\tilde{\nabla}^{2}\phi\ast g^{\ast-2}\ast\tilde{\nabla}g
\ast\tilde{\nabla}\phi\!\ \xi^{2}\!\ d\tilde{V}\\
&&+ \ \int_{B_{\tilde{g}}(x_{0},r+1)}
g^{-1}\ast\tilde{\nabla}^{2}\phi\ast\tilde{\nabla}\phi
\ast\xi\ast\tilde{\nabla}\xi\!\ d\tilde{V}.
\end{eqnarray*}
Since
\begin{equation*}
g^{\alpha\beta}\langle\tilde{\nabla}_{\beta}\tilde{\nabla}
\phi,\tilde{\nabla}_{\alpha}\tilde{\nabla}\phi\rangle_{\tilde{g}}
\geq\frac{1}{2}|\tilde{\nabla}^{2}\phi|^{2}_{\tilde{g}}
\end{equation*}
by (\ref{3.71}), and
\begin{eqnarray*}
\int_{B_{\tilde{g}}(x_{0},r+1)}
\tilde{\nabla}^{2}\phi\ast g^{\ast-2}\ast\tilde{\nabla}g
\ast\tilde{\nabla}\phi\!\ \xi^{2}\!\ d\tilde{V}&\lesssim&
\int_{B_{\tilde{g}}(x_{0},r+1)}|\tilde{\nabla}^{2}\phi|_{\tilde{g}}
\xi\!\ d\tilde{V},\\
\int_{B_{\tilde{g}}(x_{0},r+1)}
g^{-1}\ast\tilde{\nabla}^{2}\phi\ast\tilde{\nabla}\phi
\ast\xi\ast\tilde{\nabla}\xi\!\ d\tilde{V}&\lesssim&
\int_{B_{\tilde{g}}(x_{0},r+1)}|\tilde{\nabla}^{2}\phi|_{\tilde{g}}
\xi\!\ d\tilde{V},
\end{eqnarray*}
it follows that
\begin{equation}
J_{1}\leq-\int_{B_{\tilde{g}}(x_{0},r+1)}
|\tilde{\nabla}^{2}\phi|^{2}_{\tilde{g}}\xi^{2}\!\ d\tilde{V}
+C_{3}\int_{B_{\tilde{g}}(x_{0},r+1)}|\tilde{\nabla}^{2}\phi|_{\tilde{g}}
\xi\!\ d\tilde{V}.\label{3.84}
\end{equation}
From (\ref{3.83}) and (\ref{3.84}),
\begin{equation}
J\leq-\frac{1}{2}\int_{B_{\tilde{g}}(x_{0},r+1)}|\tilde{\nabla}^{2}\phi|^{2}_{\tilde{g}}
\xi^{2}\!\ d\tilde{V}+C_{4}.\label{3.85}
\end{equation}
Together with (\ref{3.82}), we arrive at
\begin{equation*}
I+J\leq-\frac{1}{4}\int_{B_{\tilde{g}}(x_{0},r+1)}
\left(|\tilde{\nabla}^{2}g|^{2}_{\tilde{g}}
+|\tilde{\nabla}^{2}\phi|^{2}_{\tilde{g}}\right)\xi^{2}\!\ d\tilde{V}+C_{5}.
\end{equation*}
Thus
\begin{equation}
\frac{d}{dt}\int_{B_{\tilde{g}}(x_{0},r+1)}
|\tilde{\nabla}\boldsymbol{\Theta}|^{2}_{\tilde{g}}
\xi^{2}\!\ d\tilde{V}\leq-\frac{1}{4}\int_{B_{\tilde{g}}(x_{0},
r+1)}|\tilde{\nabla}^{2}\boldsymbol{\Theta}|^{2}_{\tilde{g}}
\xi^{2}\!\ d\tilde{V}+C_{5}.\label{3.86}
\end{equation}
Integrating (\ref{3.86}) over $[0,T]$ implies
\begin{equation*}
\int^{T}_{0}\left(\int_{B_{\tilde{g}}(x_{0},r+1)}
|\tilde{\nabla}^{2}\boldsymbol{\Theta}|^{2}_{\tilde{g}}
\xi^{2}\!\ d\tilde{V}\right)dt\lesssim1
\end{equation*}
Since $\xi=1$ on $B_{\tilde{g}}(x_{0},r)$, the above estimate yields the
desired inequality.
\end{proof}

Using (\ref{3.71}) we have
\begin{equation*}
|\tilde{\nabla}^{2}g|^{2}_{g}\leq 16|\tilde{\nabla}^{2}g|^{2}_{\tilde{g}}, \ \ \
|\tilde{\nabla}^{2}\phi|^{2}_{g}\leq 4|\tilde{\nabla}^{2}\phi|^{2}_{\tilde{g}}, \ \ \
dV\leq 2^{m/2}d\tilde{V},
\end{equation*}
on $M\times[0,T]$ and then
\begin{equation*}
\int^{T}_{0}\left(\int_{B_{\tilde{g}}(x_{0},r)}
|\tilde{\nabla}^{2}\boldsymbol{\Theta}|^{2}_{g}
dV\right)dt\lesssim\int^{T}_{0}\left(\int_{B_{\tilde{g}}(x_{0},r)}
|\tilde{\nabla}^{2}\boldsymbol{\Theta}|^{2}_{\tilde{g}}d\tilde{V}
\right)dt.
\end{equation*}
By (66) in page 272 of \cite{S89}, we have
\begin{equation*}
|\nabla\tilde{\nabla}g|^{2}_{g}\leq2|\tilde{\nabla}^{2}g|^{2}_{g}
+C_{6};
\end{equation*}
on the other hand, by $\nabla\tilde{\nabla}\phi
=\tilde{\nabla}^{2}\phi+g^{-1}\ast\tilde{\nabla}g\ast\tilde{\nabla}
\phi$, we get
\begin{equation*}
|\nabla\tilde{\nabla}\phi|^{2}_{g}\leq 2|\tilde{\nabla}^{2}
\phi|^{2}_{g}+C_{7}.
\end{equation*}
Thus
\begin{equation*}
\int^{T}_{0}\left(\int_{B_{\tilde{g}}(x_{0},r)}
|\nabla\tilde{\nabla}\boldsymbol{\Theta}|^{2}_{g}
dV\right)dt\lesssim1+\int^{T}_{0}\left(\int_{B_{\tilde{g}}(x_{0},r)}
|\tilde{\nabla}^{2}\boldsymbol{\Theta}|^{2}_{g}dV\right)dt.
\end{equation*}

Therefore,

\begin{lemma}\label{l3.11} We have
\begin{equation*}
\int^{T}_{0}\left(\int_{B_{\tilde{g}}(x_{0},r)}
\left(|\tilde{\nabla}^{2}\boldsymbol{\Theta}|^{2}_{g}+|
\nabla\tilde{\nabla}\boldsymbol{\Theta}|^{2}_{g}\right)dV
\right)dt\lesssim1.
\end{equation*}
where $\lesssim$ depends on $m, r, k_{0}, k_{1}, \alpha_{1}, \beta_{1}, \beta_{2}$.
\end{lemma}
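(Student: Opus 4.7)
The plan is to deduce Lemma \ref{l3.11} directly from Lemma \ref{l3.10} together with the pointwise comparisons between $g(t)$ and $\tilde g$ (and between $\nabla$ and $\tilde\nabla$) that have already been established in Theorem \ref{t3.8} and Theorem \ref{t3.9}. Since the two integrands involve the $g$-norm (not the $\tilde g$-norm) and the $g$-volume form, the essence of the argument is a routine comparison at the integrand level, followed by integration in space and time.

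First I would handle the passage from $|\tilde{\nabla}^{2}\boldsymbol{\Theta}|^{2}_{\tilde{g}}\!\ d\tilde{V}$ to $|\tilde{\nabla}^{2}\boldsymbol{\Theta}|^{2}_{g}\!\ dV$. Because $\frac12\tilde g\le g(t)\le 2\tilde g$ on $M\times[0,T]$, contracting a $(0,4)$-tensor such as $\tilde{\nabla}^{2}g$ with $g^{-1}$ versus $\tilde g^{-1}$ costs only a dimensional constant; an analogous statement for the Hessian $\tilde{\nabla}^2\phi$ gives the factor $4$. Likewise $dV\le 2^{m/2}d\tilde V$. These three inequalities are already displayed immediately before the lemma statement, so they immediately convert the conclusion of Lemma \ref{l3.10} into
\[
\int^{T}_{0}\!\int_{B_{\tilde{g}}(x_{0},r)}|\tilde{\nabla}^{2}\boldsymbol{\Theta}|^{2}_{g}\!\ dV\!\ dt\lesssim 1.
\]

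Next I would convert $\tilde\nabla^2$ into $\nabla\tilde\nabla$. The difference of the two Levi–Civita connections is a tensor of type $g^{-1}\ast\tilde\nabla g$, so Schematically
\[
\nabla\tilde\nabla g=\tilde\nabla^{2}g+g^{-1}\ast\tilde\nabla g\ast\tilde\nabla g,\qquad \nabla\tilde\nabla\phi=\tilde\nabla^{2}\phi+g^{-1}\ast\tilde\nabla g\ast\tilde\nabla\phi,
\]
and Theorem \ref{t3.9} provides uniform bounds on $|\tilde\nabla g|^{2}_{\tilde g}$ and $|\tilde\nabla\phi|^{2}_{\tilde g}$. Passing to $g$-norms via $\frac12\tilde g\le g\le 2\tilde g$, one obtains pointwise
\[
|\nabla\tilde\nabla g|^{2}_{g}\le 2|\tilde{\nabla}^{2}g|^{2}_{g}+C_{6},\qquad |\nabla\tilde\nabla \phi|^{2}_{g}\le 2|\tilde{\nabla}^{2}\phi|^{2}_{g}+C_{7},
\]
which are precisely the estimates stated in the text just before the lemma (the first one being the Shi reference (66), the second being an easy analogue for $\phi$).

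Finally I would integrate these pointwise inequalities over $B_{\tilde g}(x_0,r)\times[0,T]$. The two $\tilde\nabla^2$ integrals are controlled by the first step, and the constant contributions are controlled by
\[
(C_{6}+C_{7})\!\int^{T}_{0}\!\int_{B_{\tilde g}(x_0,r)}\!dV\,dt\lesssim T\cdot \mathrm{Vol}_{\tilde g}(B_{\tilde g}(x_0,r))\lesssim 1,
\]
using $dV\le 2^{m/2}d\tilde V$ and the Bishop–Gromov volume comparison (\ref{3.78}) valid under the uniform bound on $|\widetilde{\mathrm{Rm}}|_{\tilde g}$. Adding this to the converted estimate for $|\tilde{\nabla}^{2}\boldsymbol{\Theta}|^{2}_{g}$ and noting that $|\nabla\tilde\nabla\boldsymbol{\Theta}|^{2}_{g}=|\nabla\tilde\nabla g|^{2}_{g}+|\nabla\tilde\nabla\phi|^{2}_{g}$ gives the claim. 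There is no substantive obstacle; the whole content of the lemma is bookkeeping between the two metrics and connections, and the only place one must be careful is to absorb the $\tilde\nabla g\ast\tilde\nabla g$ and $\tilde\nabla g\ast\tilde\nabla\phi$ terms using the already-established uniform first-order estimates of Theorem \ref{t3.9}.
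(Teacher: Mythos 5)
Your proof is correct and follows essentially the same route the paper takes: convert the $\tilde g$-norms and $d\tilde V$ in Lemma \ref{l3.10} to $g$-norms and $dV$ using the uniform comparison of metrics, then replace $\tilde\nabla^2$ by $\nabla\tilde\nabla$ via the difference of Christoffel symbols together with the uniform first-order bounds, absorbing the constant error into the bounded space-time volume. The only thing you add beyond the paper's exposition is to make explicit the Bishop--Gromov/bounded-$T$ step that controls the constant contribution, which the paper leaves implicit.
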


We now prove the integral estimates for ${\rm Rm}$, $\nabla^{2}\phi$,
and $\nabla V$. The similar results were proved by Shi \cite{S89} for the
Ricci flow and List \cite{L05} for the Ricci flow coupled with the heat
flow.

\begin{lemma}\label{l3.12} We have
\begin{equation*}
\int_{B_{\tilde{g}}(x_{0},r)}
\left(|{\rm Rm}|^{2}_{g}+|\nabla^{2}\phi|^{2}_{g}
+|\nabla V|^{2}_{g}\right)dV\lesssim1
\end{equation*}
where $\lesssim$ depends on $m,r, k_{0}, k_{1}, k_{2}, \alpha_{1}, \beta_{1},
\beta_{2}$.
\end{lemma}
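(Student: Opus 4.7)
Choose a cutoff $\xi \in C^\infty_0(M)$ with $\xi \equiv 1$ on $B_{\tilde g}(x_0,r)$, $\mathrm{supp}(\xi) \subset B_{\tilde g}(x_0, r+1)$, and $|\tilde{\nabla}\xi|_{\tilde g} \le 8$, and define the weighted energy
$$F(t) := \int_{M}\!\Bigl(|{\rm Rm}|^{2}_{g} + |\nabla^{2}\phi|^{2}_{g} + |\nabla V|^{2}_{g}\Bigr)\xi^{2}\,dV_{g(t)}.$$
At $t=0$ we have $g=\tilde g$, $\phi=\tilde\phi$, and $V(0) = g^{-1}(0)\ast\tilde\nabla g(0) = 0$ (all Christoffel differences vanish), hence $F(0) \lesssim k_0 + k_2$ by (3.70). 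Since $\xi\equiv 1$ on $B_{\tilde g}(x_0,r)$, it suffices to prove $\sup_{t\in[0,T]} F(t) \lesssim 1$.

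\textbf{Differential inequality.} Differentiate $F$ in $t$ using the evolution equations (3.72) for ${\rm Rm}$, (3.74) for $\nabla V$, and (3.75) for $\nabla^{2}\phi$, combined with (3.76) for $\partial_t dV$. Integration by parts against $\xi^{2}\,dV$ on the three parabolic principal parts produces the good negative term
$$-2\int_{M}\!\Bigl(|\nabla{\rm Rm}|^{2}_{g} + |\nabla^{3}\phi|^{2}_{g} + |\nabla^{2}V|^{2}_{g}\Bigr)\xi^{2}\,dV_{g(t)},$$
plus boundary-type contributions linear in $\xi\,\tilde{\nabla}\xi$ that are absorbed by Cauchy--Schwarz. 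Every other term produced by (3.72), (3.74), (3.75) is polynomial in the $\ast$-factors $g, g^{-1}, \tilde{\nabla}g, \tilde{\nabla}\phi, V, \widetilde{{\rm Rm}}, \tilde{\nabla}\widetilde{{\rm Rm}}$ (all uniformly pointwise-bounded by (3.60) and (3.71), using $V = g^{-1}\ast\tilde{\nabla}g$) multiplied by factors from $\{{\rm Rm}, \nabla V, \nabla^{2}\phi, \nabla{\rm Rm}, \nabla^{2}V, \nabla^{3}\phi\}$, together with ``mixed'' pieces like $\nabla^{2}\phi\ast\nabla^{2}\phi$ and ${\rm Rm}\ast\nabla V$ that pair two of the quantities appearing in $F$.

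\textbf{Grönwall.} Systematic application of Young's inequality (absorbing a small multiple of the good gradient term wherever a first-derivative factor appears) reduces everything to a bound of the form
$$\dot F(t) \le C\,F(t) + C\,G(t) + C,$$
where $G(t) := \int_{B_{\tilde g}(x_0,r+1)}\bigl(|\tilde{\nabla}^{2}\boldsymbol\Theta|^{2}_{g} + |\nabla\tilde{\nabla}\boldsymbol\Theta|^{2}_{g}\bigr)dV$ is precisely the integrand of Lemma~\ref{l3.11} (the occurrence of $G(t)$ comes from expressing $|{\rm Rm}|$, $|\nabla V|$, $|\nabla^{2}\phi|$ in terms of $\tilde{\nabla}^{2}g$, $\tilde{\nabla}^{2}\phi$ modulo bounded lower-order contributions). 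By Lemma~\ref{l3.11}, $\int_{0}^{T} G(t)\,dt \lesssim 1$, so the integrated Grönwall inequality yields
$$F(t) \le \Bigl(F(0) + C\!\int_{0}^{T}\! G(s)\,ds + CT\Bigr)e^{CT} \lesssim 1 \qquad (t\in[0,T]).$$

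\textbf{Main obstacle.} The delicate point is the cubic $g^{\ast-2}\ast {\rm Rm}^{\ast 2}$ term in (3.72), which when paired with ${\rm Rm}$ produces $|{\rm Rm}|^{3}$-type contributions under the integral, and the analogous cubic pieces ${\rm Rm}\ast\nabla V\ast{\rm Rm}$ and $|\nabla^{2}\phi|^{2}\ast{\rm Rm}$. Absent an a priori $L^{\infty}$ bound on ${\rm Rm}$, these are treated by writing one of the three factors schematically as $|\tilde{\nabla}^{2}g|_{g} + O(1)$; a Peter--Paul split then trades part of that factor against a small multiple of $|\nabla {\rm Rm}|^{2}$ absorbed into the good term, leaving a remainder of the form $C\,|\tilde{\nabla}^{2}\boldsymbol\Theta|^{2}_{g}\cdot F(t)$ that is handled by the $L^{1}([0,T])$ control of $G(t)$ from Lemma~\ref{l3.11}. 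This bookkeeping, following the pattern of \cite{L05, S89}, is the only nontrivial step; all other terms reduce to the standard application of pointwise bounds and Cauchy--Schwarz.
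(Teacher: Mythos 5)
Your proposal follows essentially the same route as the paper's proof: the same cutoff function, the same evolution equations (\ref{3.72}), (\ref{3.74}), (\ref{3.75}) for ${\rm Rm}$, $\nabla V$, $\nabla^{2}\phi$, integration by parts on the principal parts to generate the good negative terms $-\int(|\nabla{\rm Rm}|^{2}_{g}+|\nabla^{3}\phi|^{2}_{g}+|\nabla^{2}V|^{2}_{g})\xi^{2}\,dV$, and the reduction of all remainders to quantities controlled in $L^{1}([0,T])$ by Lemma \ref{l3.11}. (The paper integrates the resulting inequality in time directly rather than invoking Gr\"onwall, but that difference is cosmetic; your observation that $V(0)=0$ and $F(0)\lesssim k_{0}+k_{2}$ also matches.)

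The one step you should repair is the mechanism you describe for the cubic terms. Writing one factor of $\int g^{\ast-6}\ast{\rm Rm}^{\ast3}\,\xi^{2}dV$ as $|\tilde{\nabla}^{2}g|_{g}+O(1)$ and then applying Peter--Paul does not produce a multiple of $|\nabla{\rm Rm}|^{2}$: Young's inequality on $|{\rm Rm}|^{2}\cdot|\tilde{\nabla}^{2}g|_{g}$ yields either a quartic term $|{\rm Rm}|^{4}$ or the weighted remainder $\int|{\rm Rm}|^{2}_{g}|\tilde{\nabla}^{2}\boldsymbol{\Theta}|^{2}_{g}\xi^{2}dV$, and neither is controlled at this stage; in particular the latter is \emph{not} of the form $C|\tilde{\nabla}^{2}\boldsymbol{\Theta}|^{2}_{g}\cdot F(t)$ or $CG(t)F(t)$, and it cannot be handled by the $L^{1}_{t}$ bound on $G$ without an a priori $L^{\infty}$ bound on ${\rm Rm}$ --- which is exactly what is not yet available. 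The device that actually works, and is what the paper (following \cite{S89}) uses in (\ref{3.96}) and in the displays around (\ref{3.98}), is an integration by parts on the factor $\nabla\tilde{\nabla}g$: bound ${\rm Rm}^{\ast3}\lesssim{\rm Rm}^{\ast2}\ast(1+\nabla\tilde{\nabla}g)$, move the derivative off $\tilde{\nabla}g$ onto ${\rm Rm}^{\ast2}\xi^{2}$ (using $|\tilde{\nabla}g|_{\tilde{g}}\lesssim1$ from Theorem \ref{t3.9}), and only then apply Cauchy--Schwarz; this gives $\epsilon\int|\nabla{\rm Rm}|^{2}_{g}\xi^{2}dV$ plus $C\int(1+|\nabla\tilde{\nabla}\boldsymbol{\Theta}|^{2}_{g})dV=C+CG(t)$, which is additive in $G$ and hence integrable in time. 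The same integration by parts is needed for the term ${\rm Rm}\ast(\nabla^{2}\phi)^{\ast2}$ (peel one derivative off a factor $\nabla^{2}\phi=\nabla(\nabla\phi)$, which costs an $\epsilon\int|\nabla^{3}\phi|^{2}_{g}\xi^{2}dV$ that must be tracked against the good term coming from the $\nabla^{2}\phi$ energy --- the paper's choice $\epsilon=11/40$ in (\ref{3.107}) is precisely this bookkeeping). With that substitution your inequality $\dot F\le CF+CG+C$ does close and the Gr\"onwall argument is complete.
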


\begin{proof} Keep to use the same cutoff function $\xi(x)$ introduced in the
proof of Lemma \ref{l3.10}. From $|{\rm Rm}|^{2}_{g}
=g^{i\alpha}g^{j\beta}g^{k\gamma}g^{\ell\delta}
R_{ijk\ell}R_{\alpha\beta\gamma\delta}$, we get
\begin{equation*}
\partial_{t}|{\rm Rm}|^{2}_{g}=2\langle{\rm Rm},\partial_{t}{\rm Rm}\rangle_{g}
+{\rm Rm}^{\ast2}\ast g^{\ast-3}\ast\partial_{t}g^{-1}
\end{equation*}
and
\begin{eqnarray}
&&\int_{B_{\tilde{g}}(x_{0},r+1)}
|{\rm Rm}|^{2}_{g}\xi^{2}\!\ dV\nonumber\\
&=&\int_{B_{\tilde{g}}(x_{0},r+1)}
|\widetilde{{\rm Rm}}|^{2}_{\tilde{g}}\xi^{2}\!\ d\tilde{V}+\int^{t}_{0}\bigg(\frac{d}{dt}\int_{B_{\tilde{g}}(x_{0},r+1)}
|{\rm Rm}|^{2}_{g}\xi^{2}\!\ dV\bigg)dt\nonumber\\
&=&\int_{B_{\tilde{g}}(x_{0},r+1)}
|\widetilde{{\rm Rm}}|^{2}_{\tilde{g}}\xi^{2}\!\ d\tilde{V}
+\int^{t}_{0}\bigg(\int_{B_{\tilde{g}}(x_{0},r+1)}
|{\rm Rm}|^{2}_{g}\xi^{2}\partial_{t}dV\bigg)dt\label{3.87}\\
&&+ \ \int^{t}_{0}\bigg[\int_{B_{\tilde{g}}(x_{0},r+1)}
\bigg(2\langle{\rm Rm},\partial_{t}{\rm Rm}\rangle_{g}
+g^{\ast-3}\ast{\rm Rm}^{\ast2}\ast\partial_{t}g^{-1}\bigg)\xi^{2}\!\ dV\bigg]
dt.\nonumber
\end{eqnarray}
By the estimate (\ref{3.78}) we have
\begin{equation}
\int_{B_{\tilde{g}}(x_{0},r+1)}|\widetilde{{\rm Rm}}|^{2}_{\tilde{g}}\xi^{2}\!\ d\tilde{V}
\lesssim1.\label{3.88}
\end{equation}
Using (\ref{3.76}) implies
\begin{eqnarray}
\int^{t}_{0}\int_{B_{\tilde{g}}(x_{0},r+1)}
|{\rm Rm}|^{2}_{g}\xi^{2}\partial_{t}dVdt&=&\int^{t}_{0}\int_{B_{\tilde{g}}(x_{0},r+1)}g^{\ast-4}
\ast{\rm Rm}^{\ast2}\ast\bigg(g^{\ast-2}
\ast{\rm Rm}\nonumber\\
&&+ \ g^{-1}\ast(\nabla\phi)^{\ast2}+g^{-1}\ast\nabla V\bigg)
\xi^{2}\!\ dV dt\nonumber\\
&=&\int^{t}_{0}\int_{B_{\tilde{g}}(x_{0},r+1)}
\bigg(g^{\ast-5}\ast{\rm Rm}^{\ast2}
\ast(\nabla\phi)^{\ast2}\label{3.89}\\
&&+ \ g^{\ast-6}\ast{\rm Rm}^{\ast3}+g^{\ast-5}\ast{\rm Rm}^{\ast2}\ast\nabla V\bigg)
\xi^{2}\!\ dV dt.\nonumber
\end{eqnarray}
By the evolution
\begin{equation*}
\partial_{t}g^{-1}=g^{\ast-3}\ast{\rm Rm}+g^{\ast-2}
\ast\nabla V+g^{\ast-2}\ast(\nabla\phi)^{\ast2},
\end{equation*}
above (\ref{3.73}), we get
\begin{eqnarray}
&&\int^{t}_{0}\int_{B_{\tilde{g}}(x_{0},r+1)}
g^{\ast-3}\ast{\rm Rm}^{\ast2}\ast\partial_{t}g^{-1}\xi^{2}\!\ dVdt\label{3.90}\\
&=&\int^{t}_{0}\int_{B_{\tilde{g}}(x_{0},r+1)}
g^{\ast-5}\ast{\rm Rm}^{\ast2}\ast\left(g^{-1}
\ast{\rm Rm}+\nabla V+(\nabla\phi)^{\ast2}\right)
\xi^{2}\!\ dV dt.\nonumber
\end{eqnarray}
According to (\ref{3.72}),
\begin{eqnarray}
&&2\int^{t}_{0}\int_{B_{\tilde{g}}(x_{0},r+1)}
\langle{\rm Rm},\partial_{t}{\rm Rm}\rangle_{g}\xi^{2}\!\ dV_{t}dt\label{3.91}\\
&=&2\int^{t}_{0}\int_{B_{\tilde{g}}(x_{0},r+1)}
\langle{\rm Rm},\Delta{\rm Rm}\rangle_{g}\xi^{2}\!\ dVdt
+\int^{t}_{0}\int_{B_{\tilde{g}}(x_{0},r+1)}
\bigg(g^{\ast-4}\ast{\rm Rm}
\ast(\nabla^{2}\phi)^{\ast2}\nonumber\\
&&+ \ g^{\ast-5}\ast{\rm Rm}^{\ast2}\ast\nabla V
+g^{\ast-5}\ast V\ast{\rm Rm}\ast\nabla {\rm Rm}
+g^{\ast-6}\ast{\rm Rm}^{\ast3}\bigg)\xi^{2}\!\ dVdt.\nonumber
\end{eqnarray}
Substituting (\ref{3.88}), (\ref{3.89}), (\ref{3.90}), and (\ref{3.91}), into (\ref{3.87}), we arrive at
\begin{eqnarray}
&&\int_{B_{\tilde{g}}(x_{0},r+1)}
|{\rm Rm}|^{2}_{g}\xi^{2}\!\ dV\nonumber\\
&\leq&C_{1}+2\int^{t}_{0}\int_{B_{\tilde{g}}(x_{0},r+1)}
\langle{\rm Rm},\Delta{\rm Rm}\rangle_{g}\xi^{2}\!\ dVdt
+\int^{t}_{0}\int_{B_{\tilde{g}}(x_{0},r+1)}
\bigg(g^{\ast-6}\ast{\rm Rm}^{\ast3}\nonumber\\
&&+ \ g^{\ast-5}\ast{\rm Rm}^{\ast2}\ast\nabla V
+g^{\ast-5}\ast V\ast{\rm Rm}\ast\nabla{\rm Rm}\label{3.92}\\
&&+ \ g^{\ast-5}\ast{\rm Rm}^{\ast2}\ast(\nabla\phi)^{\ast2}
+g^{\ast-4}\ast{\rm Rm}\ast(\nabla^{2}\phi)^{\ast2}\bigg)\xi^{2}\!\ dVdt,\nonumber
\end{eqnarray}
where $C_{1}$ is a uniform positive
constant independent of $t$ and $x_{0}$. The second term on the right-hand side
of (\ref{3.92}) was estimated in \cite{S89}(equation (78) in page 274):
\begin{eqnarray}
&&2\int^{t}_{0}\int_{B_{\tilde{g}}(x_{0},r+1)}
\langle{\rm Rm},\Delta{\rm Rm}\rangle_{g}\xi^{2}\!\ dVdt\nonumber\\
&\leq&-\frac{3}{2}\int^{t}_{0}\int_{B_{\tilde{g}}(x_{0},r+1)}
|\nabla{\rm Rm}|^{2}_{g}\xi^{2}\!\ dV_{t}dt
+C_{2}\int^{t}_{0}\int_{B_{\tilde{g}}(x_{0},r+1)}|{\rm Rm}|^{2}_{g}dVdt.
\label{3.93}\end{eqnarray}
Using $V=g^{-1}\ast\tilde{\nabla}g$ and (\ref{3.71}), as showed in \cite{S89}
(equations (80), (81), (88), pages 275--277), we obtain
\begin{eqnarray}
&&\int^{t}_{0}\int_{B_{\tilde{g}}(x_{0},r+1)}
g^{\ast-5}\ast V\ast{\rm Rm}\ast\nabla{\rm Rm}\!\ \xi^{2}\!\ dVdt\nonumber\\
&\leq&\frac{1}{4}\int^{t}_{0}\int_{B_{\tilde{g}}(x_{0},r+1)}
|\nabla{\rm Rm}|^{2}_{g}\xi^{2}\!\ dV_{t}dt
+C_{3}\int^{k}_{0}\int_{B_{\tilde{g}}(x_{0},r+1)}
|{\rm Rm}|^{2}_{g}\!\ dVdt,\label{3.94}\\
&&\int^{t}_{0}\int_{B_{\tilde{g}}(x_{0},r+1)}
g^{\ast-5}\ast{\rm Rm}^{\ast2}\ast\nabla V\!\ \xi^{2}\!\ dVdt\nonumber\\
&\leq&\frac{1}{8}\int^{t}_{0}
\int_{B_{\tilde{g}}(x_{0},r+1)}|\nabla{\rm Rm}|^{2}_{g}\xi^{2}\!\ dVdt
+C_{4}\int^{t}_{0}\int_{B_{\tilde{g}}(x_{0},r+1)}|{\rm Rm}|^{2}_{g}dVdt,
\label{3.95}\\
&&\int^{t}_{0}\int_{B_{\tilde{g}}(x_{0},r+1)}
g^{\ast-6}\ast{\rm Rm}^{\ast3}\!\ \xi^{2}\!\ dVdt\nonumber\\
&\leq&\frac{1}{8}\int^{t}_{0}\int_{B_{\tilde{g}}(x_{0},r+1)}
|\nabla{\rm Rm}|^{2}_{g}\xi^{2}\!\ dVdt+C_{5}\int^{t}_{0}
\int_{B_{\tilde{g}}(x_{0},r+1)}|{\rm Rm}|^{2}_{g} dVdt.\label{3.96}
\end{eqnarray}
Plugging (\ref{3.93}), (\ref{3.94}), (\ref{3.95}), (\ref{3.96}) into
(\ref{3.92}), yields
\begin{eqnarray}
&&\int_{B_{\tilde{g}}(x_{0},r+1)}|{\rm Rm}|^{2}_{g}\xi^{2}\!\ dV\nonumber\\
&\leq&-\int^{t}_{0}\int_{B_{\tilde{g}}(x_{0},r+1)}
|\nabla{\rm Rm}|^{2}_{g}\xi^{2}\!\ dVdt+C_{6}\int^{t}_{0}\int_{B_{\tilde{g}}(x_{0},r+1)}
|{\rm Rm}|^{2}_{g}dVdt+C_{7}\nonumber\\
&&+ \ \int^{t}_{0}\int_{B_{\tilde{g}}(x_{0},r+1)}
\bigg(g^{\ast-5}\ast{\rm Rm}^{\ast2}\ast(\nabla\phi)^{\ast2}
+g^{\ast-4}\ast{\rm Rm}\ast(\nabla^{2}\phi)^{\ast2}\bigg)\xi^{2}\!\ dVdt.
\label{3.97}
\end{eqnarray}
Using (\ref{3.71}) and the Cauchy-Schwarz inequality, we can conclude that
\begin{equation*}
g^{\ast-5}\ast{\rm Rm}^{\ast2}\ast(\nabla\phi)^{\ast2}
\lesssim|{\rm Rm}|^{2}_{g}|\nabla\phi|^{2}_{g};
\end{equation*}
but $|\nabla\phi|^{2}_{g}=g^{ij}\tilde{\nabla}_{i}\phi
\tilde{\nabla}_{j}\phi\leq 2|\tilde{\nabla}\phi|^{2}_{\tilde{g}}\lesssim1$, the
above quantity $g^{\ast-5}\ast{\rm Rm}^{\ast2}\ast(\nabla\phi)^{\ast2}$
is bounded from above by $|{\rm Rm}|^{2}_{g}$. From the equation (90) in page 277 of \cite{S89},
\begin{equation*}
g^{\ast-5}\ast{\rm Rm}^{\ast2}\ast(\nabla\phi)^{\ast2}
\lesssim1+|\nabla\tilde{\nabla}g|^{2}_{\tilde{g}}\lesssim
1+|\nabla\tilde{\nabla}g|^{2}_{g}\lesssim1+|\nabla\tilde{\nabla}
\boldsymbol{\Theta}|^{2}_{g};
\end{equation*}
this estimate together with Lemma \ref{l3.11} gives us
\begin{equation}
\int^{t}_{0}\int_{B_{\tilde{g}}(x_{0},r+1)}
g^{\ast-5}\ast{\rm Rm}^{\ast2}\ast(\nabla\phi)^{\ast2}\!\ \xi^{2}\!\ dVdt
\lesssim1.\label{3.98}
\end{equation}
To deal with the last term on the right-hand side of (\ref{3.97}), we perform
the integration by parts to obtain
\begin{eqnarray*}
&&\int_{B_{\tilde{g}}(x_{0},r+1)}
g^{\ast-4}\ast{\rm Rm}\ast(\nabla^{2}\phi)^{\ast2}\!\ \xi^{2}\!\ dV\\
&=&\int_{B_{\tilde{g}}(x_{0},r+1)}
g^{\ast-4}\ast\nabla\phi\ast
\bigg(\nabla{\rm Rm}\ast\nabla^{2}\phi
+{\rm Rm}\ast\nabla^{3}\phi+{\rm Rm}
\ast\nabla^{2}\phi\ast\frac{\nabla\xi}{\xi}\bigg)\xi^{2}\!\ dV\\
&\leq&\frac{1}{2}\int_{B_{\tilde{g}}(x_{0},r+1)}|\nabla{\rm Rm}|^{2}_{g}
\xi^{2}\!\ dV+C_{8}\int_{B_{\tilde{g}}(x_{0},r+1)}|\nabla^{2}\phi|^{2}_{g}dV
\\
&&+ \ \epsilon\int_{B_{\tilde{g}}(x_{0},r+1)}|\nabla^{3}\phi|^{2}_{g}
\xi^{2}dV+C_{\epsilon}\int_{B_{\tilde{g}}(x_{0},r+1)}|{\rm Rm}|^{2}_{g}dV\\
&&+ \ C_{9}\int_{B_{\tilde{g}}(x_{0},r+1)}|{\rm Rm}|^{2}_{g}
dV+C_{10}\int_{B_{\tilde{g}}(x_{0},r+1)}|\nabla^{2}\phi|^{2}_{g}dV\\
&\leq&\frac{1}{2}\int_{B_{\tilde{g}}(x_{0},r+1)}|\nabla{\rm Rm}|^{2}_{g}
\xi^{2}\!\ dV+\epsilon\int_{B_{\tilde{g}}(x_{0},r+1)}|\nabla^{3}\phi|^{2}_{g}
\xi^{2}\!\ dV\\
&&+ \ C_{11}\int_{B_{\tilde{g}}(x_{0},r+1)}|\nabla\tilde{\nabla}
\boldsymbol{\Theta}|^{2}_{g}dV
\end{eqnarray*}
using (\ref{3.71}), $\nabla^{2}\phi=\nabla\tilde{\nabla}\phi$, and
$|{\rm Rm}|^{2}_{g}\lesssim1+|\nabla\tilde{\nabla}g|^{2}_{g}$. Note also that
the second term on the right-hand side of (\ref{3.97}) is uniformly bounded
by the same estimate for $|{\rm Rm}|^{2}_{g}$ and Lemma \ref{l3.11}. Consequently,
\begin{eqnarray}
\int_{B_{\tilde{g}}(x_{0},r+1)}
|{\rm Rm}|^{2}_{g}\xi^{2}\!\ dV&\leq&-\frac{1}{2}\int^{t}_{0}
\int_{B_{\tilde{g}}(x_{0},r+1)}|\nabla{\rm Rm}|^{2}_{g}\xi^{2}\!\ dV dt\nonumber\\
&&+ \ \epsilon\int^{t}_{0}\int_{B_{\tilde{g}}(x_{0},r+1)}
|\nabla^{3}\phi|^{2}_{g}\xi^{2}\!\ dVdt+C_{12}.\label{3.99}
\end{eqnarray}

We next establish the similar inequality for $|\nabla^{2}\phi|^{2}_{g}=g^{ik}g^{j\ell}\nabla_{i}\nabla_{j}\phi
\nabla_{k}\nabla_{\ell}\phi$. Calculate $\partial_{t}|\nabla^{2}\phi|^{2}_{g}=2\langle\nabla^{2}\phi,
\partial_{t}\nabla^{2}\phi\rangle_{g}+(\nabla^{2}\phi)^{\ast2}\ast g^{-1}
\ast\partial_{t}g^{-1}$ and
\begin{eqnarray}
&&\int_{B_{\tilde{g}}(x_{0},r+1)}|\nabla^{2}\phi|^{2}_{g}\xi^{2}\!\ dV
\label{3.100}\\
&=&\int_{B_{\tilde{g}}(x_{0},r+1)}|\tilde{\nabla}^{2}\tilde{\phi}|^{2}_{\tilde{g}}
\xi^{2} d\tilde{V}+\int^{t}_{0}\bigg(\frac{d}{dt}
\int_{B_{\tilde{g}}(x_{0},r+1)}|\nabla^{2}\phi|^{2}_{g}\xi^{2}\!\ dV
\bigg)dt\nonumber\\
&=&\int_{B_{\tilde{g}}(x_{0},r+1)}
|\tilde{\nabla}^{2}\tilde{\phi}|^{2}_{\tilde{g}}\xi^{2}\!\ d\tilde{V}
+\int^{t}_{0}\int_{B_{\tilde{g}}(x_{0},r+1)}|\nabla^{2}\phi|^{2}_{g}
\xi^{2}\partial_{t}dV dt\nonumber\\
&&+ \ \int^{t}_{0}\int_{B_{\tilde{g}}(x_{0},r+1)}
\bigg(2\langle\nabla^{2}\phi,\partial_{t}\nabla^{2}\phi\rangle_{g}
+(\nabla^{2}\phi)^{\ast2}\ast g^{-1}\ast\partial_{t}g^{-1}\bigg)
\xi^{2}\!\ dVdt.\nonumber
\end{eqnarray}
Since $|\tilde{\nabla}^{2}\tilde{\phi}|^{2}_{\tilde{g}}\leq k_{2}$ by the
assumption (\ref{3.70}), we have
\begin{equation}
\int_{B_{\tilde{g}}(x_{0},r+1)}|\tilde{\nabla}^{2}\tilde{\phi}|^{2}_{\tilde{g}}
\xi^{2}\!\ d\tilde{V}\lesssim1.\label{3.101}
\end{equation}
Using (\ref{3.76}) implies
\begin{eqnarray}
\int^{t}_{0}\int_{B_{\tilde{g}}(x_{0},r+1)}|\nabla^{2}\phi|^{2}_{g}
\xi^{2}\partial_{t}dV dt&=&
\int^{t}_{0}\int_{B_{\tilde{g}}(x_{0},r+1)}g^{\ast-2}\ast(\nabla^{2}\phi)^{\ast2}
\ast\bigg(g^{\ast-2}\ast{\rm Rm}\nonumber\\
&&+ \ g^{-1}\ast(\nabla\phi)^{\ast2}
+g^{-1}\ast\nabla V\bigg)\xi^{2}\!\ dVdt\nonumber\\
&=&\int^{t}_{0}\int_{B_{\tilde{g}}(x_{0},r+1)}
\bigg(g^{\ast-4}\ast{\rm Rm}\ast(\nabla^{2}\phi)^{\ast2}\label{3.102}\\
&&+ \ g^{\ast-3}\ast(\nabla\phi)^{\ast2}\ast
(\nabla^{2}\phi)^{\ast2}\nonumber\\
&&+ \ g^{\ast-3}\ast\nabla V\ast(\nabla^{2}
\phi)^{\ast2}\bigg)\xi^{2}\!\ dVdt.\nonumber
\end{eqnarray}
By the evolution equation of $\partial_{t}g^{-1}$ above (\ref{3.90}), we get
\begin{eqnarray}
&&\int^{t}_{0}\int_{B_{\tilde{g}}(x_{0},r+1)}
g^{-1}\ast(\nabla^{2}\phi)^{\ast2}\ast\partial_{t}g^{-1}\!\ \xi^{2}\!\ dVdt\label{3.103}\\
&=&\int^{t}_{0}\int_{B_{\tilde{g}}(x_{0},r+1)}
g^{\ast-3}\ast(\nabla^{2}\phi)^{\ast2}\ast\bigg(g^{-1}\ast{\rm Rm}+\nabla V
+(\nabla\phi)^{\ast2}\bigg)\xi^{2}\!\ dVdt.\nonumber
\end{eqnarray}
By (\ref{3.75}) the third term on the right-hand side of (\ref{3.100}) can be written as
\begin{eqnarray}
&&\int^{t}_{0}\int_{B_{\tilde{g}}(x_{0},r+1)}2\langle\nabla^{2}\phi,
\partial_{t}\nabla^{2}\phi\rangle_{g}\xi^{2}\!\ dVdt\nonumber\\
&=&2\int^{t}_{0}\int_{B_{\tilde{g}}(x_{0},r+1)}
\langle\nabla^{2}\phi,\Delta\nabla^{2}\phi\rangle_{g}\xi^{2}\!\ dVdt
+\int^{t}_{0}\int_{B_{\tilde{g}}(x_{0},r+1)}
\bigg(g^{\ast-2}\ast(\nabla^{2}\phi)^{\ast2}
\nonumber\\
&&+ \ g^{\ast-4}\ast{\rm Rm}\ast(\nabla^{\ast2}\phi)^{2}+g^{\ast-3}\ast(\nabla\phi)^{\ast2}\ast(\nabla^{2}\phi)^{\ast2}
+g^{\ast-3}\ast(\nabla^{2}\phi)^{\ast3}
\label{3.104}\\
&&+ \ g^{\ast-3}\ast\nabla\phi\ast\nabla^{\ast2}\phi\ast\nabla^{3}\phi+g^{\ast-4}\ast{\rm Rm}\ast(\nabla\phi)^{\ast2}\ast\nabla^{2}\phi\nonumber\\
&&+ \ g^{\ast-3}\ast V\ast\nabla^{2}\phi\ast\nabla^{3}\phi+g^{\ast-3}\ast\nabla V\ast(\nabla^{2}\phi)^{\ast2}\bigg)\xi^{2}\!\ dVdt.
\nonumber
\end{eqnarray}
Substituting (\ref{3.101}), (\ref{3.102}), (\ref{3.103}), and (\ref{3.104}) into (\ref{3.100}), we arrive at
\begin{eqnarray}
&&\int_{B_{\tilde{g}}(x_{0},r+1)}|\nabla^{2}\phi|^{2}_{g}\xi^{2}\!\ dV\nonumber\\
&\leq&C_{13}+2\int^{t}_{0}\int_{B_{\tilde{g}}(x_{0},r+1)}
\langle\nabla^{2}\phi,\Delta\nabla^{2}\phi\rangle_{g}\xi^{2}\!\ dVdt\nonumber\\
&&+ \ \int^{t}_{0}\int_{B_{\tilde{g}}(x_{0},r+1)}
\bigg(g^{\ast-4}\ast{\rm Rm}\ast(\nabla^{2}\phi)^{\ast2}
+g^{\ast-3}\ast(\nabla\phi)^{\ast2}\ast(\nabla^{2}\phi)^{\ast2}\label{3.105}\\
&&+ \ g^{\ast-3}\ast(\nabla^{2}\phi)^{\ast3}+g^{\ast-3}
\ast\nabla\phi\ast\nabla^{2}\phi\ast\nabla^{3}\phi
+g^{\ast-4}\ast{\rm Rm}\ast(\nabla\phi)^{\ast2}\ast\nabla^{2}\phi\nonumber\\
&&+ \ g^{\ast-3}\ast V\ast\nabla^{2}\phi\ast\nabla^{3}\phi
+g^{\ast-3}\ast\nabla V\ast(\nabla^{2}\phi)^{\ast2}+
g^{\ast-2}\ast(\nabla^{2}\phi)^{\ast2}\bigg)\xi^{2}\!\ dVdt.\nonumber
\end{eqnarray}
By integration by parts, we find that the first term on the right-hand side of
(\ref{3.105}) equals
\begin{eqnarray*}
&&2\int^{t}_{0}B_{\tilde{g}(x_{0},r+1)}
\langle\xi^{2}\nabla^{2}\phi,g^{\alpha\beta}\nabla_{\alpha}\nabla_{\beta}
\nabla^{2}\phi\rangle_{g}dVdt\\
&=&-2\int^{t}_{0}\int_{B_{\tilde{g}}(x_{0},r+1)}
g^{\alpha\beta}\langle\nabla_{\alpha}(\xi^{2}\nabla^{2}\phi),
\nabla_{\beta}\nabla^{2}\phi\rangle_{g}dVdt\\
&=&-2\int^{t}_{0}\int_{B_{\tilde{g}}(x_{0},r+1)}
g^{\alpha\beta}\langle\nabla_{\alpha}\nabla^{2}\phi,\nabla_{\beta}\nabla^{2}
\phi\rangle_{g}\xi^{2}\!\ dVdt\\
&&+ \ \int^{t}_{0}\int_{B_{\tilde{g}}(x_{0},r+1)}
g^{\ast-4}\ast\nabla^{2}\phi\ast\nabla^{3}\phi\ast\nabla\xi\!\ \xi\!\ dVdt\\
&\leq&-\frac{3}{2}\int^{t}_{0}\int_{B_{\tilde{g}}(x_{0},r+1)}
|\nabla^{3}\phi|^{2}_{g}\xi^{2}\!\ dVdt+C_{14}\int^{t}_{0}
\int_{B_{\tilde{g}}(x_{0},r+1)}|\nabla^{2}\phi|^{2}_{g}dVdt,
\end{eqnarray*}
by (\ref{3.71}) and the fact that $|\nabla\xi|_{g}=|\tilde{\nabla}\xi|_{g}
\leq\sqrt{2}|\tilde{\nabla}\xi|_{\tilde{g}}\leq8\sqrt{2}$. We now estimate the
rest terms on the right-hand side of (\ref{3.105}). By the estimate below
(\ref{3.98}), we have
\begin{eqnarray*}
&&\int_{B_{\tilde{g}}(x_{0},r+1)}
g^{\ast-4}\ast{\rm Rm}\ast(\nabla^{2}\phi)^{\ast2}\xi^{2}dV\nonumber\\
&\leq&\frac{1}{4}\int_{B_{\tilde{g}}(x_{0},r+1)}
|\nabla{\rm Rm}|^{2}_{g}\xi^{2}\!\ dV+\epsilon\int_{B_{\tilde{g}}(x_{0},r+1)}
|\nabla^{3}\phi|^{2}_{g}\xi^{2}\!\ dV\nonumber\\
&&+ \ C_{15}\int_{B_{\tilde{g}}(x_{0},r+1)}
|\nabla\tilde{\nabla}\boldsymbol{\Theta}|^{2}_{g}dV,\nonumber
\end{eqnarray*}
where we replaced the coefficients $1/2$ by $1/4$ (however the proof is the same). Using (\ref{3.71}), $
|{\rm Rm}|^{2}_{g}\lesssim 1+|\nabla\tilde{\nabla}g|^{2}_{g}$ (see the
equation (90) in page 277 of \cite{S89}), and Lemma \ref{l3.11}, the integral of
$g^{\ast-3}\ast(\nabla\phi)^{\ast2}\ast(\nabla^{2}\phi)^{\ast2}
+g^{\ast-4}\ast{\rm Rm}\ast(\nabla\phi)^{\ast2}
\ast\nabla^{2}\phi+g^{\ast-2}\ast(\nabla^{2}\phi)^{\ast2}$ is bounded by
\begin{equation*}
\int^{t}_{0}\int_{B_{\tilde{g}}(x_{0},r+1)}
\left[|\nabla^{2}\phi|^{2}_{g}+|{\rm Rm}|^{2}_{g}\right]dVdt
\lesssim\int^{t}_{0}\int_{B_{\tilde{g}}(x_{0},r+1)}
\left[1+|\nabla\tilde{\nabla}\boldsymbol{\Theta}|^{2}_{g}\right]dVdt
\lesssim1
\end{equation*}
where we used the fact that $T$ depends on the given constants and the volume
estimate (\ref{3.78}), from the second step to the third step. By (\ref{3.71}), we get
\begin{eqnarray*}
&&\int^{t}_{0}\int_{B_{\tilde{g}}(x_{0},r+1)}
g^{\ast-3}\ast(\nabla^{2}\phi)^{\ast3}\!\ \xi^{2}\!\ dVdt\\
&=&\int^{t}_{0}\int_{B_{\tilde{g}}(x_{0},r+1)}\nabla\phi\ast\nabla\left(\xi^{2}g^{\ast-3}
\ast(\nabla^{2}\phi)^{\ast2}\right)dVdt\\
&=&\int^{t}_{0}\int_{B_{\tilde{g}}(x_{0},r+1)}
\nabla\phi\ast\bigg(\xi\nabla\xi\ast g^{\ast-3}
\ast(\nabla^{2}\phi)^{\ast2}+\xi^{2}g^{\ast-4}\ast\nabla g\ast(\nabla^{2}\phi)^{\ast2}
\\
&&+ \ \xi^{2}g^{\ast-3}\ast\nabla^{2}\phi\ast\nabla^{3}\phi\bigg)dVdt\\
&\leq&\epsilon\int^{t}_{0}\int_{B_{\tilde{g}}(x_{0},r+1)}
|\nabla^{3}\phi|^{2}_{g}\xi^{2}\!\ dVdt+C_{16}\int^{t}_{0}
\int_{B_{\tilde{g}}(x_{0},r+1)}|\nabla^{2}\phi|^{2}_{g}dVdt;
\end{eqnarray*}
similarly, according to the definition $V=g^{-1}\ast\tilde{\nabla}g$,
\begin{eqnarray*}
&&\int^{t}_{0}\int_{B_{\tilde{g}}(x_{0},r+1)}
g^{\ast-3}\ast\nabla\phi\ast\nabla^{2}\phi\ast\nabla^{3}\phi\!\ \xi^{2}\!\
dVdt\\
&\leq&\epsilon\int^{t}_{0}\int_{B_{\tilde{g}}(x_{0},r+1)}
|\nabla^{3}g|^{2}_{g}\xi^{2}\!\ dVdt+C_{17}\int^{t}_{0}
\int_{B_{\tilde{g}}(x_{0},r+1)}|\nabla^{2}g|^{2}_{g}dVdt,\\
&&\int^{t}_{0}\int_{B_{\tilde{g}}(x_{0},r+1)}
g^{\ast-3}\ast V\ast\nabla^{2}\phi\ast\nabla^{3}\phi\!\ \xi^{2}dVdt\\
&\leq&\epsilon\int^{t}_{0}\int_{B_{\tilde{g}}(x_{0},r+1)}
|\nabla^{3}\phi|^{2}_{g}\xi^{2}\!\ dVdt+C_{18}\int^{t}_{0}
\int_{B_{\tilde{g}}(x_{0},r+1)}|\nabla^{2}\phi|^{2}_{g}dVdt.
\end{eqnarray*}
Taking the integration by parts on $\nabla V$ yields
\begin{eqnarray*}
&&\int^{t}_{0}\int_{B_{\tilde{g}}(x_{0},r+1)}
g^{\ast-3}\ast\nabla V\ast(\nabla^{2}\phi)^{\ast2}\xi^{2}\!\ dVdt\\
&=&\int^{t}_{0}\int_{B_{\tilde{g}}(x_{0},r+1)}
V\ast\nabla\left(\xi^{2}g^{\ast-3}\ast(\nabla^{2}\phi)^{\ast2}\right)dVdt\\
&=&\int^{t}_{0}\int_{B_{\tilde{g}}(x_{0},r+1)}V\ast\bigg(\xi^{2}
g^{\ast-4}\ast\nabla g\ast(\nabla^{2}\phi)^{\ast2}
+\xi^{2}g^{\ast-3}\ast\nabla^{2}\phi\ast\nabla^{3}\phi\\
&&+ \ g^{\ast-3}\ast(\nabla^{2}\phi)^{\ast2}\ast\nabla\xi\!\ \xi\bigg)
dVdt\\
&\leq&\epsilon\int^{t}_{0}\int_{B_{\tilde{g}}(x_{0},r+1)}
|\nabla^{3}\phi|^{2}_{g}\xi^{2}\!\ dVdt
+C_{19}\int^{t}_{0}\int_{B_{\tilde{g}}(x_{0},r+1)}
|\nabla^{2}\phi|^{2}_{g}dVdt.
\end{eqnarray*}
Substituting the above estimates into (\ref{3.105}) and using Lemma \ref{l3.11}, we have
\begin{eqnarray}
\int_{B_{\tilde{g}}(x_{0},r+1)}
|\nabla^{2}\phi|^{2}_{g}\xi^{2}\!\ dV
&\leq&C_{20}+\frac{1}{4}\int^{t}_{0}\int_{\Omega}|\nabla{\rm Rm}|^{2}_{g}\xi^{2}\!\
dVdt\nonumber\\
&&- \ \left(\frac{3}{2}-5\epsilon\right)\int^{t}_{0}
\int_{B_{\tilde{g}}(x_{0},r+1)}|\nabla^{3}\phi|^{2}_{g}\xi^{2}\!\ dVdt,\label{3.106}
\end{eqnarray}
where $\epsilon$ is a sufficiently small positive number that shall be
determined later. Combining (\ref{3.100}) with (\ref{3.106}), we arrive at
\begin{eqnarray}
\int_{B_{\tilde{g}}(x_{0},r+1)}\left[|{\rm Rm}|^{2}_{g}+|\nabla^{2}
\phi|^{2}_{g}\right]\xi^{2}dV&\leq&C_{21}
-\frac{1}{4}\int^{t}_{0}\int_{B_{\tilde{g}}(x_{0},r+1)}
|\nabla{\rm Rm}|^{2}_{g}\xi^{2}dVdt\nonumber\\
&&-\left(\frac{3}{2}-5\epsilon\right)
\int^{t}_{0}\int_{B_{\tilde{g}}(x_{0},r+1)}|\nabla^{3}\phi|^{2}_{g}\xi^{2}dVdt.\label{3.107}
\end{eqnarray}
As a consequence of the above estimate (\ref{3.107}), we can conclude that the integral of
$|{\rm Rm}|^{2}_{g}+|\nabla^{2}\phi|^{2}_{g}$ over the metric ball $B_{\tilde{g}}
(x_{0},r)$ is uniformly bounded from above. We here keep the minus terms on the
right-hand side of (\ref{3.107}) to deal with the integral of
$|\nabla V|^{2}_{g}\xi^{2}$
over $B_{\tilde{g}}(x_{0},r+1)$, and therefore, we prove Lemma \ref{l3.12}.

Since the metric $g$ is equivalent to $\tilde{g}$, we may write $g^{\ast-k}
\ast g^{\ell}=g^{\ast(\ell-k)}$. Under this convenience, the equation
(\ref{3.74}) can be written as
\begin{eqnarray}
\partial_{t}\nabla V&=&\Delta\nabla V+g^{\ast-3}\ast\nabla\tilde{\nabla}g
\ast{\rm Rm}+g^{\ast-3}\ast\tilde{\nabla}g\ast\nabla{\rm Rm}
+g^{\ast-2}\ast\nabla\tilde{\nabla}g\ast\nabla V\nonumber\\
&&+ \ g^{\ast-2}\ast\tilde{\nabla}g\ast\nabla^{2}V
+g^{\ast-2}\ast\nabla\tilde{\nabla}g\ast(\nabla\phi)^{\ast2}
+g^{\ast-2}\ast\tilde{\nabla}g\ast\nabla\phi\ast\nabla^{2}\phi.\label{3.108}
\end{eqnarray}
From $|\nabla V|^{2}_{g}=g^{ik}g^{j\ell}\nabla_{i}V_{j}
\nabla_{k}V_{\ell}$ we obtain
\begin{equation*}
\partial_{t}|\nabla V|^{2}_{g}=2\langle\nabla V,\partial_{t}\nabla V\rangle_{g}
+g^{\ast-4}\ast{\rm Rm}\ast(\nabla V)^{\ast2}+g^{\ast-3}\ast(\nabla V)^{\ast3};
\end{equation*}
by the evolution equation of $\partial_{t}g^{-1}$ above (\ref{3.73}), we arrive
at
\begin{eqnarray}
\partial_{t}|\nabla V|^{2}_{g}&=&2\langle\nabla V,\partial_{t}\nabla V\rangle_{g}
+g^{\ast-4}\ast{\rm Rm}\ast(\nabla V)^{\ast2}+g^{\ast-3}\ast(\nabla V)^{\ast3}\nonumber\\
&&+ \ g^{\ast-3}\ast(\nabla V)^{\ast2}\ast(\nabla\phi)^{\ast2}.\label{3.109}
\end{eqnarray}
Calculate
\begin{eqnarray}
&&\int_{B_{\tilde{g}}(x_{0},r+1)}|\nabla V|^{2}_{g}\xi^{2}\!\ dV \ \ = \ \ \int^{t}_{0}\bigg(\frac{d}{dt}\int_{B_{\tilde{g}}(x_{0},r+1)}
|\nabla V|^{2}_{g}\xi^{2}\!\ dV\bigg)dt\label{3.110}\\
&=&\int^{t}_{0}\int_{B_{\tilde{g}}(x_{0},r+1)}
|\nabla V|^{2}\xi^{2}\partial_{t}dV dt+\int^{t}_{0}\int_{B_{\tilde{g}}(x_{0},r+1)}
\partial_{t}|\nabla V|^{2}_{g}\!\ \xi^{2}\!\ dVdt.\nonumber
\end{eqnarray}
since $V=0$ at $t=0$. Plugging (\ref{3.108}), (\ref{3.109}) into (\ref{3.100}),
we get
\begin{eqnarray}
&&\int_{B_{\tilde{g}}(x_{0},r+1)}|\nabla V|^{2}_{g}\xi^{2}\!\ dV\nonumber\\
&=&2\int^{t}_{0}\int_{B_{\tilde{g}}(x_{0},r+1)}\langle\nabla V,\Delta\nabla V
\rangle_{g}\xi^{2}\!\ dVdt\nonumber\\
&&+ \ \int^{t}_{0}\int_{B_{\tilde{g}}(x_{0},r+1)}\bigg[g^{\ast-5}\ast\nabla\tilde{\nabla}g
\ast{\rm Rm}\ast\nabla V+g^{\ast-5}\ast\tilde{\nabla}g\ast\nabla{\rm Rm}
\ast\nabla V\nonumber\\
&&+ \ g^{\ast-4}\ast\nabla\tilde{\nabla}g\ast(\nabla V)^{\ast2}
+g^{\ast-4}\ast\tilde{\nabla}g\ast\nabla V\ast\nabla^{2}V
+g^{\ast-4}\ast{\rm Rm}\ast(\nabla V)^{\ast2}\label{3.111}\\
&&+ \ g^{\ast-3}\ast(\nabla V)^{\ast3}
+g^{\ast-4}\ast\nabla\tilde{\nabla}g\ast\nabla V\ast(\nabla\phi)^{\ast2}
\nonumber\\
&&+ \ g^{\ast-4}\ast\tilde{\nabla}g\ast\nabla V\ast\nabla\phi
\ast\nabla^{2}\phi+g^{\ast-3}\ast(\nabla V)^{\ast2}\ast(\nabla\phi)^{\ast2}
\bigg]\xi^{2}\!\ dVdt.\nonumber
\end{eqnarray}
The first term on the right-hand side of (\ref{3.111})was computed in \cite{S89} (
see the equation (104) in page 280):
\begin{eqnarray}
2\int^{t}_{0}\int_{B_{\tilde{g}}(x_{0},r+1)}
\langle\nabla V,\Delta\nabla V\rangle_{g}\xi^{2}dVdt
&\leq&-\frac{15}{8}\int^{t}_{0}\int_{B_{\tilde{g}}(x_{0},r+1)}
|\nabla^{2}V|^{2}_{g}\xi^{2}\!\ dVdt\nonumber\\
&&+ \ C_{22}\int^{t}_{0}\int_{B_{\tilde{g}}(x_{0},r+1)}
|\nabla V|^{2}_{g}dVdt.\label{3.112}
\end{eqnarray}
Define
\begin{eqnarray*}
I_{1}&:=&g^{\ast-5}\ast\tilde{\nabla}g\ast\nabla{\rm Rm}\ast\nabla V
+g^{\ast-4}\ast\tilde{\nabla}g\ast\nabla V\ast\nabla^{2}V,\\
I_{2}&:=&g^{\ast-5}\ast\nabla\tilde{\nabla}g\ast{\rm Rm}
\ast\nabla V,\\
I_{3}&:=&g^{\ast-4}\ast\nabla\tilde{\nabla}g\ast(\nabla V)^{\ast2}
+g^{\ast-4}\ast{\rm Rm}\ast(\nabla V)^{\ast2}+g^{\ast-3}
\ast(\nabla V)^{\ast3},\\
I_{4}&:=&g^{\ast-4}\ast\nabla\tilde{\nabla}g\ast\nabla V\ast(\nabla\phi)^{\ast2}
+g^{\ast-4}\ast\tilde{\nabla}g\ast\nabla V\ast\nabla\phi\ast\nabla^{2}\phi\\
&&+ \ g^{\ast-3}\ast(\nabla V)^{\ast2}\ast(\nabla\phi)^{\ast2}.
\end{eqnarray*}
According to (106) in page 280 of \cite{S89}, we have
\begin{eqnarray}
\int^{t}_{0}\int_{B_{\tilde{g}}(x_{0},r+1)}
I_{1}\xi^{2}\!\ dVdt&\leq&\frac{1}{16}\int^{t}_{0}\int_{B_{\tilde{g}}(x_{0},r+1)}
\left[|\nabla{\rm Rm}|^{2}_{g}+|\nabla^{2}V|^{2}_{g}\right]\xi^{2}\!\ dVdt\nonumber\\
&&+ \ C_{23}\int^{t}_{0}\int_{B_{\tilde{g}}(x_{0},r+1)}
|\nabla V|^{2}_{g}dVdt;\label{3.113}
\end{eqnarray}
according to (107) in page 280 and (112) in page 281 of \cite{S89}, we have
\begin{eqnarray}
\int^{t}_{0}\int_{B_{\tilde{g}}(x_{0},r+1)}
I_{2}\xi^{2}\!\ dVdt&\leq&\frac{1}{16}\int^{t}_{0}\int_{B_{\tilde{g}}(x_{0},
r+1)}\left[|\nabla{\rm Rm}|^{2}_{g}+|\nabla^{2}V|^{2}_{g}\right]\xi^{2}\!\ dVdt
\nonumber\\
&&+ \ C_{24}\int^{t}_{0}\int_{B_{\tilde{g}}(x_{0},r+1)}
\left[|{\rm Rm}|^{2}_{g}+|\nabla V|^{2}_{g}\right]dVdt,\label{3.114}\\
\int^{t}_{0}\int_{B_{\tilde{g}}(x_{0},r+1)}I_{3}\xi^{2}\!\ dVdt&\leq&
\frac{1}{8}\int^{t}_{0}\int_{B_{\tilde{g}}(x_{0},r+1)}
|\nabla^{2}V|^{2}_{g}\xi^{2}\!\ dVdt\nonumber\\
&&+ \ C_{25}\int^{t}_{0}\int_{B_{\tilde{g}}(x_{0},r+1)}
|\nabla V|^{2}_{g}\xi^{2}\!\ dVdt.\label{3.115}
\end{eqnarray}
Using (\ref{3.71}) implies
\begin{eqnarray}
&&\int^{t}_{0}\int_{B_{\tilde{g}}(x_{0},r+1)}
I_{4}\xi^{2}\!\ dVdt\nonumber\\
&\lesssim&\int^{t}_{0}\int_{B_{\tilde{g}}(x_{0},r+1)}
\bigg(|\nabla\tilde{\nabla}g|^{2}_{g}+|\nabla V|^{2}_{g}
+|\nabla^{2}\phi|^{2}_{g}\bigg)\xi^{2}\!\ dVdt.\label{3.116}
\end{eqnarray}
Substituting (\ref{3.112}), (\ref{3.113}), (\ref{3.114}), (\ref{3.115}),
(\ref{3.116}) into (\ref{3.111}), using
the fact that $\nabla V=g^{-1}\ast\nabla\tilde{\nabla}g$, and using Lemma \ref{l3.11}, we obtain
\begin{eqnarray}
\int_{B_{\tilde{g}}(x_{0},r+1)}|\nabla V|^{2}_{g}\xi^{2}\!\ dV_{t}
&\leq&-\frac{13}{8}\int^{t}_{0}\int_{B_{\tilde{g}}(x_{0},r+1)}
|\nabla^{2}V|^{2}_{g}\xi^{2}\!\ dVdt\nonumber\\
&&+ \ \frac{1}{8}\int^{t}_{0}\int_{B_{\tilde{g}}(x_{0},r+1)}
|\nabla{\rm Rm}|^{2}_{g}\xi^{2}\!\ dVdt+C_{26}.\label{3.117}
\end{eqnarray}
Choosing $\epsilon=11/40$ in (\ref{3.107}) and combining with (\ref{3.117}), we
arrive at
\begin{eqnarray*}
&&\int_{B_{\tilde{g}}(x_{0},r+1)}\left[|{\rm Rm}|^{2}_{g}
+|\nabla^{2}\phi|^{2}_{g}+|\nabla V|^{2}_{g}\right]\xi^{2}\!\ dV\\
&&+ \ \frac{1}{8}\int^{t}_{0}\int_{B_{\tilde{g}}(x_{0},
r+1)}\left[|\nabla{\rm Rm}|^{2}_{g}+|\nabla^{3}\phi|^{2}_{g}
+|\nabla^{2} V|^{2}_{g}\right]\xi^{2}\!\ dVdt \ \ \lesssim \ \ \ 1.
\end{eqnarray*}
In particular,
\begin{eqnarray*}
\max_{t\in[0,T]}\int_{B_{\tilde{g}}(x_{0},r)}\left[|{\rm Rm}|^{2}_{g}
+|\nabla^{2}\phi|^{2}_{g}+|\nabla V|^{2}_{g}\right]dV&\lesssim&1,\\
\int^{T}_{0}\int_{B_{\tilde{g}}(x_{0},r)}
\left[|\nabla{\rm Rm}|^{2}_{g}+|\nabla^{3}\phi|^{2}_{g}
+|\nabla^{2} V|^{2}_{g}\right]dVdt&\lesssim&1
\end{eqnarray*}
since $\xi\equiv1$ on $B_{\tilde{g}}(x_{0},r)$.
\end{proof}

Recall that
\begin{equation*}
\partial_{t}{\rm Rm}=\Delta{\rm Rm}
+g^{\ast-2}\ast{\rm Rm}^{\ast2}+g^{-1}\ast V\ast\nabla{\rm Rm}
+g^{-1}\ast{\rm Rm}\ast\nabla V+(\nabla^{2}\phi)^{\ast2}.
\end{equation*}
Since $\nabla(g^{-1}\ast V\ast{\rm Rm})=g^{-1}\ast V\ast\nabla{\rm Rm}
+g^{-1}\ast{\rm Rm}\ast\nabla V$, it follows that
\begin{equation}
\partial_{t}{\rm Rm}=\Delta{\rm Rm}+\nabla P_{1}+Q_{1},\label{3.118}
\end{equation}
where
\begin{equation*}
P_{1}:=g^{-1}\ast V\ast{\rm Rm}, \ \ \ Q_{1}:=g^{-1}\ast{\rm Rm}
\ast\nabla V+g^{\ast-2}\ast{\rm Rm}^{\ast2}+(\nabla^{2}\phi)^{\ast2}.
\end{equation*}
Recall the equation
\begin{equation*}
\partial_{t}\nabla V=\nabla(\partial_{t}V)+V\ast\partial_{t}\Gamma, \ \ \
\partial_{t}\Gamma=g^{-1}\ast\nabla(\partial_{t}g)
\end{equation*}
after (\ref{3.73}) and the equation $\partial_{t}g=g^{-1}\ast{\rm Rm}
+\nabla V+(\nabla\phi)^{\ast2}$. Hence
\begin{eqnarray*}
\partial_{t}\nabla V&=&\nabla
\left(\Delta V+g^{\ast-3}\ast\tilde{\nabla}g\ast{\rm Rm}
+g^{\ast-2}\ast\tilde{\nabla}g\ast\nabla V+g^{\ast-2}\ast\tilde{\nabla}g
\ast(\nabla\phi)^{\ast2}\right)\\
&&+ \ g^{\ast-2}\ast V\ast\nabla{\rm Rm}+g^{-1}\ast V\ast\nabla^{2}V
+g^{-1}\ast V\ast\nabla\phi\ast\nabla^{2}\phi.
\end{eqnarray*}
From the Ricci identity $\nabla\Delta V=\Delta\nabla V
+g^{\ast-2}\ast{\rm Rm}\ast\nabla V+g^{\ast-2}\ast V\ast\nabla{\rm Rm}$, it
follows that
\begin{eqnarray*}
\partial_{t}\nabla V&=&\Delta\nabla V+\nabla\left(g^{\ast-3}
\ast\tilde{\nabla}g\ast{\rm Rm}+g^{\ast-2}\ast\tilde{\nabla}g\ast\nabla V
+g^{\ast-2}\ast\tilde{\nabla}g\ast(\nabla\phi)^{\ast2}\right)\\
&&+ \ g^{\ast-2}\ast V\ast\nabla{\rm Rm}+g^{-1}\ast V\ast\nabla^{2}V
+g^{\ast-2}\ast{\rm Rm}\ast\nabla V\\
&&+ \ g^{-1}\ast V\ast\nabla\phi\ast\nabla^{2}\phi.
\end{eqnarray*}
Since
\begin{eqnarray*}
\nabla(g^{\ast-2}\ast V\ast{\rm Rm})
&=&g^{\ast-2}\ast V\ast\nabla{\rm Rm}
+g^{\ast-2}\ast{\rm Rm}\ast\nabla V,\\
\nabla(g^{-1}\ast V\ast\nabla V)
&=&g^{-1}\ast V\ast\nabla^{2} V
+g^{-1}\ast(\nabla V)^{\ast2}
\end{eqnarray*}
and $V=g^{-1}\ast\tilde{\nabla}g$, we obtain
\begin{eqnarray*}
g^{\ast-2}\ast V\ast\nabla{\rm Rm}&=&\nabla(g^{\ast-3}\ast\tilde{\nabla}g
\ast{\rm Rm})+g^{\ast-2}\ast{\rm Rm}\ast\nabla V,\\
g^{-1}\ast V\ast\nabla^{2}V&=&
\nabla(g^{\ast-2}\ast\tilde{\nabla}g\ast\nabla V)
+g^{-1}\ast(\nabla V)^{\ast2}
\end{eqnarray*}
and hence
\begin{equation}
\partial_{t}\nabla V=\Delta\nabla V+\nabla P_{2}+Q_{2},\label{3.119}
\end{equation}
where
\begin{eqnarray*}
P_{2}&=&g^{\ast-3}\ast\tilde{\nabla}g\ast{\rm Rm}
+g^{\ast-2}\ast\tilde{\nabla}g\ast\nabla V+g^{\ast-2}\ast\tilde{\nabla}g
\ast(\nabla\phi)^{\ast2},\\
Q_{2}&=&g^{\ast-2}\ast{\rm Rm}\ast\nabla V+g^{-1}\ast(\nabla V)^{\ast2}
+g^{-1}\ast V\ast\nabla\phi\ast\nabla^{2}\phi.
\end{eqnarray*}
Finally, according to (\ref{3.75}), we have
\begin{equation}
\partial_{t}\nabla^{2}\phi=\Delta\nabla^{2}\phi+\nabla P_{3}+Q_{3},\label{3.120}
\end{equation}
where
\begin{eqnarray*}
P_{3}&=&g^{-1}\ast\nabla\phi\ast\nabla^{2}\phi+g^{-1}\ast V\ast\nabla^{2}\phi,\\
Q_{3}&=&g^{\ast-2}\ast{\rm Rm}\ast\nabla^{2}\phi
+g^{-1}\ast(\nabla\phi)^{\ast2}\ast\nabla^{2}\phi
+g^{\ast-2}\ast{\rm Rm}\ast(\nabla\phi)^{\ast2}
+\beta_{2}\nabla^{2}\phi.
\end{eqnarray*}

\begin{lemma}\label{l3.13} For any integer $n\geq1$, we have
\begin{eqnarray*}
\int^{T}_{0}\int_{B_{\tilde{g}}(x_{0},r)}
u^{n-1}|\nabla\tilde{\nabla}g|^{2}_{g}dVdt&\lesssim&1,\\
\max_{t\in[0,T]}\int_{B_{\tilde{g}}(x_{0},r)}
u^{n}\!\ dV&\lesssim&1,\\
\int^{T}_{0}\int_{B_{\tilde{g}}(x_{0},r)}
u^{n-1}v\!\ dVdt&\lesssim&1.
\end{eqnarray*}
where
\begin{equation*}
u:=|{\rm Rm}|^{2}_{g}+|\nabla^{2}\phi|^{2}_{g}
+|\nabla V|^{2}_{g}, \ \ \ v:=|\nabla{\rm Rm}|^{2}_{g}
+|\nabla^{3}\phi|^{2}_{g}+|\nabla^{2}V|^{2}_{g},
\end{equation*}
and $\lesssim$ depends on $m, n, r, k_{0}, k_{1}, k_{2}, \alpha_{1},
\beta_{1}, \beta_{2}$.
\end{lemma}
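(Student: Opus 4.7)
The plan is to argue by induction on $n$, combining the three parabolic equations (\ref{3.118})--(\ref{3.120}) into a single ``Shi-type'' evolution for the quantity
\[
\mathbf{W}:=({\rm Rm},\,\nabla V,\,\nabla^{2}\phi),\qquad u=|\mathbf{W}|^{2}_{g},\qquad v\asymp|\nabla\mathbf{W}|^{2}_{g},
\]
so that
\[
\partial_{t}\mathbf{W}=\Delta\mathbf{W}+\nabla P+Q,
\]
where $P$ collects the terms $P_{1},P_{2},P_{3}$ and $Q$ collects the $Q_{i}$. The base case $n=1$ is already contained in Lemma \ref{l3.11} together with the ``kept'' estimates in the proof of Lemma \ref{l3.12} (cf.\ the final display there, which yields $\int_{0}^{T}\!\int_{B_{\tilde g}(x_{0},r)} v\,dV\,dt\lesssim 1$ and $\max_{t}\int_{B_{\tilde g}(x_{0},r)} u\,dV\lesssim 1$).

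For the inductive step, fix the same cutoff $\xi$ as in the proof of Lemma \ref{l3.10}, differentiate $u^{n}\xi^{2}dV$ in $t$, and use
\[
\partial_{t}u=2\langle\mathbf{W},\Delta\mathbf{W}\rangle_{g}+2\langle\mathbf{W},\nabla P\rangle_{g}+2\langle\mathbf{W},Q\rangle_{g}+g^{\ast-1}\ast\mathbf{W}^{\ast 2}\ast\partial_{t}g^{-1},
\]
together with the volume-form variation (\ref{3.76}). Integration by parts on the Laplacian term gives
\[
\int u^{n-1}\langle\mathbf{W},\Delta\mathbf{W}\rangle_{g}\,\xi^{2}\,dV\le -\tfrac{1}{2}\int u^{n-1}|\nabla\mathbf{W}|^{2}_{g}\xi^{2}\,dV-(n-1)\int u^{n-2}\langle\nabla u,\mathbf{W}\ast\nabla\mathbf{W}\rangle_{g}\xi^{2}\,dV+\mathcal{R},
\]
where the boundary remainder $\mathcal{R}$ is controlled by $|\nabla\xi|_{g}\lesssim 1$ together with a Cauchy--Schwarz. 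Using $|\nabla u|\le 2u^{1/2}|\nabla\mathbf{W}|_{g}$ the cross term is absorbed into $\tfrac{1}{4}\int u^{n-1}|\nabla\mathbf{W}|^{2}_{g}\xi^{2}\,dV$ at the cost of a harmless $\int u^{n-1}|\nabla\mathbf{W}|^{2}_{g}\xi^{2}\,dV$ multiplier, which in turn is bounded by the good term.

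For the terms originating from $\nabla P$, integration by parts and the structure of $P_{1},P_{2},P_{3}$ (each of which is of the form $g^{\ast-k}\ast\tilde{\nabla}g\ast\mathbf{W}$ or $g^{\ast-k}\ast\tilde{\nabla}g\ast(\nabla\phi)^{\ast 2}$) produce expressions bounded by
\[
u^{n-1}\!\ast\mathbf{W}\!\ast\tilde{\nabla}g\!\ast\nabla\mathbf{W},\quad u^{n-1}\ast|\tilde{\nabla}g|_{g}\ast\mathbf{W}\ast\mathbf{W},\quad u^{n-1}\ast|\nabla\tilde{\nabla}g|_{g}\ast\mathbf{W}\ast\mathbf{W},
\]
etc.; using $|\tilde{\nabla}g|_{g}\lesssim 1$ from Theorem \ref{t3.9} and Young's inequality, these are absorbed either into $\tfrac{1}{4}\int u^{n-1}v\,\xi^{2}\,dV$ or reduced to terms of the form $\int u^{n-1}|\nabla\tilde{\nabla}g|^{2}_{g}\xi^{2}\,dV$, which is precisely the first quantity we aim to bound. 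The $Q$-terms are schematically quadratic in $\mathbf{W}$ (giving a term $u^{n+1}$) together with lower-order pieces involving $\nabla\phi$ and $V$, all bounded by $C(1+u)$ except for the genuinely cubic piece. The cubic piece is handled by the standard Shi trick: write $\int u^{n+1}\xi^{2}\,dV\le \varepsilon\int u^{n-1}v\,\xi^{2}\,dV+C_{\varepsilon}\int u^{n-1}|\nabla\tilde{\nabla}g|^{2}_{g}\,\xi^{2}\,dV+C_{\varepsilon}$, the last two controlled by the inductive hypothesis applied at step $n-1$.

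Putting these estimates together yields a differential inequality of the form
\[
\frac{d}{dt}\!\int_{B_{\tilde g}(x_{0},r+1)}\!u^{n}\xi^{2}dV+c\!\int_{B_{\tilde g}(x_{0},r+1)}\!u^{n-1}v\,\xi^{2}dV\le C\!\int_{B_{\tilde g}(x_{0},r+1)}\!u^{n-1}|\nabla\tilde{\nabla}g|^{2}_{g}\xi^{2}dV+C,
\]
with all constants depending only on $m,n,r,k_{0},k_{1},k_{2},\alpha_{1},\beta_{1},\beta_{2}$. Integrating in $t\in[0,T]$ and invoking the inductive estimate $\int_{0}^{T}\!\int u^{n-2}v\,dV\,dt\lesssim 1$ (plus $\int u^{n-1}|\nabla\tilde{\nabla}g|^{2}_{g}\lesssim \int u^{n-1}v$ since $\nabla\tilde{\nabla}g$ is one of the components controlled by $v$) closes the loop and produces all three stated bounds simultaneously. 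The main obstacle is bookkeeping: identifying the precise combinations of $P$- and $Q$-terms that do not respect the naive weight $u^{n-1}$ but which are reduced to the acceptable form above only after an integration by parts together with the inductive control of $\int u^{n-2}v\,dV\,dt$; in particular the cubic term in $Q_{1}$ and the term $g^{-1}\ast V\ast\nabla\phi\ast\nabla^{2}\phi$ in $Q_{2}$ must be treated with care to avoid losing a factor of $u^{1/2}$.
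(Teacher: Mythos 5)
Your overall plan---induction on $n$, combining the systems (\ref{3.118})--(\ref{3.120}) into one parabolic equation for the vector-valued tensor $\mathbf{W}=({\rm Rm},\nabla V,\nabla^{2}\phi)$, with the base case supplied by Lemmas \ref{l3.11}--\ref{l3.12}---is the right conceptual framework, and it is indeed what the paper does. The gap is in the choice of quantity to differentiate in the inductive step, and it is not repairable by a minor fix. You propose to differentiate $\int u^{n}\xi^{2}\,dV$ directly. But $u^{n}$ is \emph{exactly} what the lemma aims to control at step $n$, and the inductive hypothesis only gives $\max_{t}\int u^{n-1}dV\lesssim1$. The cubic-in-$\mathbf{W}$ pieces of $Q$ and the $\partial_{t}g^{-1}$ contribution then produce a term of order $\int u^{n+1/2}\xi^{2}\,dV$ (roughly $\int u^{n+1}$), and your proposed absorption
\[
\int u^{n+1}\xi^{2}\,dV\le\varepsilon\int u^{n-1}v\,\xi^{2}\,dV+C_{\varepsilon}\int u^{n-1}|\nabla\tilde{\nabla}g|^{2}_{g}\xi^{2}\,dV+C_{\varepsilon}
\]
is not a valid inequality: the left side is homogeneous of degree $n+1$ in the curvature-type quantity $u$, while both terms on the right are only of degree $n$. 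Your second claim, that $\int u^{n-1}|\nabla\tilde{\nabla}g|^{2}_{g}\lesssim\int u^{n-1}v$ ``since $\nabla\tilde{\nabla}g$ is one of the components controlled by $v$'', is also false by a derivative count. Because the Levi--Civita connection $\nabla$ of $g$ annihilates $g$ and $g^{-1}$, one has $\nabla V=g^{-1}\ast\nabla\tilde{\nabla}g$, so $|\nabla\tilde{\nabla}g|_{g}\approx|\nabla V|_{g}\le u^{1/2}$; the quantity controlled by $v^{1/2}$ is $\nabla^{2}V\approx g^{-1}\ast\nabla^{2}\tilde{\nabla}g$, one order higher. Thus $\int u^{n-1}|\nabla\tilde{\nabla}g|^{2}_{g}\approx\int u^{n}$, and your final differential inequality, even if it held, would be the circular estimate $\frac{d}{dt}\int u^{n}\xi^{2}\,dV\le C\int u^{n}\xi^{2}\,dV+\ldots$ with the problematic supercritical term unabsorbed.

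The paper's argument works precisely because it never differentiates a quantity of order $u^{n}$. It takes
\[
\frac{d}{dt}\int_{B_{\tilde{g}}(x_{0},r+1)}w^{n-1}\bigl(|\tilde{\nabla}g|^{2}_{\tilde{g}}+|\tilde{\nabla}\phi|^{2}_{\tilde{g}}\bigr)\xi^{2}\,d\tilde{V},
\]
where $w\approx u$ is measured in $\tilde{g}$. The weight $|\tilde{\nabla}g|^{2}_{\tilde{g}}+|\tilde{\nabla}\phi|^{2}_{\tilde{g}}$ is uniformly bounded by Theorem \ref{t3.9}, so the integrand is of order $u^{n-1}$ and controlled at each time by the inductive hypothesis, and it is bounded at $t=0$. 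Differentiating produces the good term $-\tfrac{1}{4}\int w^{n-1}\xi^{2}(|\tilde{\nabla}^{2}g|^{2}_{\tilde{g}}+|\tilde{\nabla}^{2}\phi|^{2}_{\tilde{g}})\,d\tilde{V}$, and since $u\lesssim1+|\nabla\tilde{\nabla}g|^{2}_{g}$ and $|\nabla\tilde{\nabla}g|^{2}_{g}\approx|\tilde{\nabla}^{2}g|^{2}_{\tilde{g}}$ up to controlled corrections, that good term is of order $u^{n}$. The bad terms are all of order $u^{n-2}v$, $u^{n-1}$, or constants, controlled by the inductive hypothesis for $s\le n-1$. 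The extra derivative of the metric carried by the weight is what promotes $u^{n-1}$ to $u^{n}$ without ever generating a supercritical $u^{n+1}$; this is the device that your sketch is missing, and without it the induction does not close.
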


\begin{proof} The case $n=1$ follows from Lemma \ref{l3.11} and Lemma \ref{l3.12}.
We now prove by induction on $n$. Suppose that for $s=1,\cdots,n-1$, we have
\begin{equation*}
\int^{T}_{0}\int_{B_{\tilde{g}}(x_{0},r)}
u^{s-1}|\nabla\tilde{\nabla}g|^{2}_{g}dVdt, \
\max_{t\in[0,T]}\int_{B_{\tilde{g}}(x_{0},r)}u^{s}\!\ dV, \ \int^{T}_{0}\int_{B_{\tilde{g}}(x_{0},r)}
u^{s-1}v\!\ dVdt\lesssim1.
\end{equation*}
For convenience, define
\begin{equation*}
w:=|{\rm Rm}|^{2}_{\tilde{g}}+|\nabla^{2}\phi|^{2}_{\tilde{g}}
+|\nabla V|^{2}_{\tilde{g}}.
\end{equation*}
By (66) in page 272 of \cite{S89} and (\ref{3.71}), we have $|\nabla\tilde{\nabla}g|^{2}_{g}\leq 2|\tilde{\nabla}^{2}g|^{2}_{g}
+C_{1}\leq32|\tilde{\nabla}^{2}g|^{2}_{\tilde{g}}+C_{1}$ and hence
\begin{equation*}
\int^{T}_{0}\int_{B_{\tilde{g}}(x_{0},r+1)}
u^{n-1}|\nabla\tilde{\nabla}g|^{2}_{g}dVdt
\leq32\int^{T}_{0}\int_{B_{\tilde{g}}(x_{0},r+1)}
u^{n-1}|\tilde{\nabla}^{2}g|^{2}_{\tilde{g}}dVdt+C_{2}
\end{equation*}
by (\ref{3.78}) and (\ref{3.71}). Since $\frac{1}{16}w\leq u\leq 16 w$, to
estimate
\begin{equation*}
\int^{T}_{0}\int_{B_{\tilde{g}}(x_{0},r+1)}
u^{n-1}|\nabla\tilde{\nabla}g|^{2}_{g}dVdt
\end{equation*}
we suffice to estimate
\begin{equation*}
\int^{T}_{0}\int_{B_{\tilde{g}}(x_{0},r+1)}
w^{n-1}|\tilde{\nabla}^{2}g|^{2}_{\tilde{g}}d\tilde{V}dt
\end{equation*}
since $dV\leq 2^{m/2}d\tilde{V}$. Consider the same cutoff function $\xi(x)$
used in the proof of Lemma \ref{l3.10}. Calculate
\begin{eqnarray*}
K&:=&\frac{d}{dt}\int_{B_{\tilde{g}}(x_{0},r+1)}w^{n-1}
|\tilde{\nabla}g|^{2}_{\tilde{g}}\xi^{2}\!\ d\tilde{V}\\
&=&\int_{B_{\tilde{g}}(x_{0},r+1)}
w^{n-1}2\langle\tilde{\nabla}g,\partial_{t}\tilde{\nabla}g\rangle_{\tilde{g}}
\xi^{2}\!\ d\tilde{V}\\
&&+ \ \int_{B_{\tilde{g}}(x_{0},r+1)}
|\tilde{\nabla}g|^{2}_{\tilde{g}}(n-1)w^{n-2}\left[
\partial_{t}|{\rm Rm}|^{2}_{\tilde{g}}+\partial_{t}|\nabla^{2}\phi|^{2}_{\tilde{g}}
+\partial_{t}|\nabla V|^{2}_{\tilde{g}}\right]\xi^{2}\!\ d\tilde{V}\\
&:=&I+J.
\end{eqnarray*}
Using the evolution equation of $\tilde{\nabla}g$ after (\ref{3.77}) yields
\begin{eqnarray*}
I&=&2\int_{B_{\tilde{g}}(x_{0},r+1)}
\xi^{2}w^{n-1}\bigg\langle\tilde{\nabla}g, g^{\alpha\beta}\tilde{\nabla}_{\alpha}
\tilde{\nabla}_{\beta}\tilde{\nabla}g+g^{-1}\ast g\ast\tilde{\nabla}
\widetilde{{\rm Rm}}+\tilde{\nabla}\phi\ast\tilde{\nabla}^{2}\phi\\
&&+ \ g^{\ast-2}\ast g\ast\tilde{\nabla}g
\ast\widetilde{{\rm Rm}}+g^{\ast-2}\ast\tilde{\nabla}g
\ast\tilde{\nabla}^{2}g+g^{\ast-3}\ast(\tilde{\nabla}g)^{\ast3}
\bigg\rangle_{\tilde{g}}
d\tilde{V}\\
&\leq&C_{3}\int_{B_{\tilde{g}}(x_{0},r+1)}
\left(1+|\tilde{\nabla}^{2}g|_{\tilde{g}}\right)w^{n-1}\xi^{2}\!\ d\tilde{V}
+I_{1}+I_{2}+I_{3}
\end{eqnarray*}
by (\ref{3.71}), where
\begin{eqnarray*}
I_{1}&=&2\int_{B_{\tilde{g}}(x_{0},r+1)}
\xi^{2}w^{n-1}\langle\tilde{\nabla}g,g^{\alpha\beta}\tilde{\nabla}_{\alpha}
\tilde{\nabla}_{\beta}\tilde{\nabla}g\rangle_{\tilde{g}}d\tilde{V},\\
I_{2}&=&\int_{B_{\tilde{g}}(x_{0},r+1)}
\xi^{2}w^{n-1}\ast g^{-1}\ast g\ast\tilde{\nabla}g\ast\tilde{\nabla}
\widetilde{{\rm Rm}}\!\ d\tilde{V},\\
I_{3}&=&\int_{B_{\tilde{g}}(x_{0},r+1)}
\xi^{2}w^{n-1}\ast\tilde{\nabla}g\ast\tilde{\nabla}
\phi\ast\tilde{\nabla}^{2}\phi\!\ d\tilde{V}.
\end{eqnarray*}
By integration by parts, the term $I_{1}$ can be estimated by
\begin{eqnarray*}
I_{1}&=&-2\int_{B_{\tilde{g}}(x_{0},r+1)}
\langle\tilde{\nabla}_{\beta}\tilde{\nabla}g,\tilde{\nabla}_{\alpha}
(g^{\alpha\beta}\xi^{2}w^{n-1}\tilde{\nabla}g)\rangle_{\tilde{g}}d\tilde{V}\\
&=&-2\int_{B_{\tilde{g}}(x_{0},r+1)}\bigg\langle\tilde{\nabla}_{\beta}\tilde{\nabla}g,
g^{\alpha\beta}\xi^{2}w^{n-1}\tilde{\nabla}_{\alpha}\tilde{\nabla}g\\
&&+ \ g^{\ast-2}\ast(\tilde{\nabla}g)^{\ast2}\ast\xi^{2}\ast w^{n-1}+
g^{-1}\ast\xi\ast\tilde{\nabla}\xi\ast\tilde{\nabla}g\ast w^{n-1}\\
&&+ \ g^{-1}\ast\tilde{\nabla}g\ast \xi^{2}w^{n-2}
\ast\left({\rm Rm}\ast\tilde{\nabla}{\rm Rm}+\nabla V\ast\tilde{\nabla}
\nabla V+\nabla^{2}\phi\ast\tilde{\nabla}\nabla^{2}\phi\right)\bigg\rangle_{\tilde{g}}d\tilde{V}\\
&\leq&\int_{B_{\tilde{g}}(x_{0},r+1)}
\bigg[-|\tilde{\nabla}^{2}g|^{2}_{\tilde{g}}\xi^{2}w^{n-1}+C_{4}|\tilde{\nabla}^{2}g|_{\tilde{g}}
\xi w^{n-1}+\xi^{2}w^{n-2}\ast g^{-1}\ast\tilde{\nabla}g\\
&& \ \ast\tilde{\nabla}^{2}g\ast
\left({\rm Rm}\ast\tilde{\nabla}{\rm Rm}+\nabla V\ast\tilde{\nabla}
\nabla V+\nabla^{2}\phi\ast\tilde{\nabla}\nabla^{2}\phi\right)\bigg]d\tilde{V}
\end{eqnarray*}
by (\ref{3.71}). The Cauchy-Schwarz inequality implies
\begin{eqnarray*}
&&C_{4}\int_{B_{\tilde{g}}(x_{0},r+1)}|\tilde{\nabla}^{2}g|_{\tilde{g}}
\xi w^{n-1}\!\ d\tilde{V} \ \ = \ \ C_{4}\int_{B_{\tilde{g}}(x_{0},r+1)}
\left(|\tilde{\nabla}^{2}g|_{\tilde{g}}w^{\frac{n-1}{2}}\xi\right)
w^{\frac{n-1}{2}}\!\ d\tilde{V}\\
&\leq&\frac{1}{8}\int_{B_{\tilde{g}}(x_{0},r+1)}
|\tilde{\nabla}^{2}g|^{2}_{\tilde{g}}\xi^{2}w^{n-1}\!\ d\tilde{V}
+2C^{2}_{4}\int_{B_{\tilde{g}}(x_{0},r+1)}w^{n-1}\!\ d\tilde{V}\\
&\leq&\frac{1}{8}\int_{B_{\tilde{g}}(x_{0},r+1)}
|\tilde{\nabla}^{2}g|^{2}_{\tilde{g}}\xi^{2}w^{n-1}\!\ d\tilde{V}+C_{5}
\end{eqnarray*}
by the inductive hypothesis and $\frac{1}{16}w\leq u\leq 16 w$. Similarly,
\begin{eqnarray*}
&&\int_{B_{\tilde{g}}(x_{0},r+1)}
\xi^{2}w^{n-2}\ast g^{-1}\ast\tilde{\nabla}g\ast\tilde{\nabla}^{2}g
\ast\bigg({\rm Rm}\ast\tilde{\nabla}{\rm Rm}+\nabla V
\ast\tilde{\nabla}\nabla V\\
&&+ \ \nabla^{2}\phi\ast\tilde{\nabla}
\nabla^{2}\phi\bigg)\!\ d\tilde{V}\\
&\leq&C_{6}\int_{B_{\tilde{g}}(x_{0},r+1)}
\left(w^{\frac{n-2}{2}}\xi|\tilde{\nabla}^{2}g|_{\tilde{g}}
|{\rm Rm}|_{\tilde{g}}\right)\left(w^{\frac{n-2}{2}}\xi|\tilde{\nabla}{\rm Rm}
|_{\tilde{g}}\right)d\tilde{V}\\
&&+ \ C_{6}\int_{B_{\tilde{g}}(x_{0},r+1)}
\left(w^{\frac{n-2}{2}}\xi|\tilde{\nabla}^{2}g|_{\tilde{g}}
|\nabla V|_{\tilde{g}}\right)\left(w^{\frac{n-2}{2}}\xi|\tilde{\nabla}
\nabla V|_{\tilde{g}}\right)d\tilde{V}\\
&&+ \ C_{6}\int_{B_{\tilde{g}}(x_{0},r+1)}
\left(w^{\frac{n-2}{2}}\xi|\tilde{\nabla}^{2}g|_{\tilde{g}}
|\nabla^{2}\phi|_{\tilde{g}}\right)
\left(w^{\frac{n-2}{2}}\xi|\tilde{\nabla}\nabla^{2}\phi|_{\tilde{g}}\right)
d\tilde{V}\\
&\leq&\frac{1}{8}\int_{B_{\tilde{g}}(x_{0},r+1)}
|\tilde{\nabla}^{2}g|^{2}_{\tilde{g}}w^{n-1}\xi^{2}\!\ d\tilde{V}\\
&&+ \ C_{7}\int_{B_{\tilde{g}}(x_{0},r+1)}
w^{n-2}\xi^{2}\left(|\tilde{\nabla}{\rm Rm}|^{2}_{\tilde{g}}
+|\tilde{\nabla}\nabla V|^{2}_{\tilde{g}}+|\tilde{\nabla}\nabla^{2}
\phi|^{2}_{\tilde{g}}\right)d\tilde{V}.
\end{eqnarray*}
According to $\Gamma-\tilde{\Gamma}=g^{-1}\ast\tilde{\nabla}g$, we have
\begin{eqnarray*}
\tilde{\nabla}{\rm Rm}&=&\nabla{\rm Rm}+g^{-1}\ast\tilde{\nabla}g
\ast{\rm Rm}, \ \ \ \tilde{\nabla}\nabla V \ \ = \ \ \nabla^{2}V+g^{-1}\ast\tilde{\nabla}g
\ast\nabla V,\\
\tilde{\nabla}\nabla^{2}\phi&=&\nabla^{3}\phi+g^{-1}\ast\tilde{\nabla}g\ast
\nabla^{2}\phi,
\end{eqnarray*}
from which we can conclude that $|\tilde{\nabla}{\rm Rm}|^{2}_{\tilde{g}}
+|\tilde{\nabla}\nabla V|^{2}_{\tilde{g}}+|\tilde{\nabla}\nabla^{2}\phi|^{2}_{\tilde{g}}\lesssim u+v$ and then
\begin{eqnarray*}
&&\int_{B_{\tilde{g}}(x_{0},r+1)}\xi^{2}w^{n-2}
\ast g^{-1}\ast\tilde{\nabla}g\ast\tilde{\nabla}^{2}g\ast
\bigg({\rm Rm}\ast\tilde{\nabla}{\rm Rm}+\nabla V\ast\tilde{\nabla}
\nabla V\\
&&+ \ |\tilde{\nabla}\nabla^{2}\phi|^{2}_{\tilde{g}}\bigg)d\tilde{V}\\
&\leq&\frac{1}{8}\int_{B_{\tilde{g}}(x_{0},r+1)}
|\tilde{\nabla}^{2}g|^{2}_{\tilde{g}}w^{n-1}\xi^{2}\!\ d\tilde{V}
+C_{8}\int_{B_{\tilde{g}}(z_{0},r+1)}w^{n-2}\xi^{2}
(u+v)\!\ d\tilde{V}\\
&\leq&\frac{1}{8}\int_{B_{\tilde{g}}(x_{0},r+1)}
|\tilde{\nabla}g|^{2}_{\tilde{g}}w^{n-1}\xi^{2}\!\ d\tilde{V}
+C_{9}\int_{B_{\tilde{g}}(x_{0},r+1)}u^{n-2}\xi^{2}(u+v)\!\ d\tilde{V}\\
&\leq&\frac{1}{8}\int_{B_{\tilde{g}}(x_{0},r+1)}
|\tilde{\nabla}g|^{2}_{\tilde{g}}w^{n-1}\xi^{2}\!\ d\tilde{V}
+C_{9}\int_{B_{\tilde{g}}(x_{0},r+1)}
u^{n-2}v\xi^{2}\!\ d\tilde{V}+C_{10}
\end{eqnarray*}
by the inductive hypothesis. Consequently,
\begin{equation}
I_{1}\leq-\frac{3}{4}\int_{B_{\tilde{g}}(x_{0},r+1)}
|\tilde{\nabla}g|^{2}_{\tilde{g}}w^{n-1}
\xi^{2}\!\ d\tilde{V}+C_{11}\int_{B_{\tilde{g}}(x_{0},r+1)}
u^{n-2}v\!\ d\tilde{V}+C_{12}.\label{3.121}
\end{equation}
If we directly use the inequalities (\ref{3.71}), we can get the uniform
upper bound for $I_{2}$ by the inductive hypothesis. However, in this case
the bound shall depend on an upper bound of $|\tilde{\nabla}\widetilde{{\rm
Rm}}|_{\tilde{g}}$. To fund the dependence of $k_{0}$, we will argue as
follows. Again using the integration by parts, we get
\begin{eqnarray}
I_{2}&=&-\int_{B_{\tilde{g}}(x_{0},r+1)}
\left\langle\widetilde{{\rm Rm}},\tilde{\nabla}
\left(g^{-1}\ast g\ast\tilde{\nabla}g\ast\xi^{2}\ast w^{n-1}\right)\right\rangle_{\tilde{g}}
d\tilde{V}\nonumber\\
&=&\int_{B_{\tilde{g}}(x_{0},r+1)}
\bigg\langle\widetilde{{\rm Rm}},g^{\ast-2}\ast(\tilde{\nabla}g)^{\ast2}
\ast g\ast\xi^{2}\ast w^{n-1}+\left((\tilde{\nabla}g)^{\ast2}
+g\ast\tilde{\nabla}^{2}g\right)\nonumber\\
&& \ \ast g^{-1}\ast\xi^{2}\ast w^{n-1}+g^{-1}\ast g\ast\tilde{\nabla}g
\ast\xi\ast\tilde{\nabla}\xi\ast w^{n-1}
+g^{-1}\ast g\ast\tilde{\nabla}g\ast\xi^{2}\nonumber\\
&& \ \ast w^{n-2}
\ast\left({\rm Rm}\ast\tilde{\nabla}{\rm Rm}+\nabla V\ast\tilde{\nabla}\nabla
V+\nabla^{2}\phi\ast\tilde{\nabla}\nabla^{2}\phi\right)
\bigg\rangle_{\tilde{g}}d\tilde{V}\label{3.122}\\
&\leq&C_{13}+C_{13}\int_{B_{\tilde{g}}(x_{0},r+1)}
w^{n-1}|\tilde{\nabla}^{2}g|_{\tilde{g}}\xi^{2}\!\ d\tilde{V}
+C_{13}\int_{B_{\tilde{g}}(x_{0},r+1)}
w^{n-2}\bigg(|{\rm Rm}|^{2}_{\tilde{g}}\nonumber\\
&&+ \ |\nabla V|^{2}_{\tilde{g}}+|\tilde{\nabla}{\rm Rm}|^{2}_{\tilde{g}}+|\tilde{\nabla}\nabla V|^{2}_{\tilde{g}}
+|\nabla^{2}\phi|^{2}_{\tilde{g}}+|\tilde{\nabla}\nabla^{2}
\phi|^{2}_{\tilde{g}}\bigg)d\tilde{V}\nonumber\\
&\leq&\frac{1}{8}\int_{B_{\tilde{g}}(x_{0},r+1)}
w^{n-1}|\tilde{\nabla}^{2}g|^{2}_{\tilde{g}}\xi^{2}\!\ d\tilde{V}
+C_{14}\int_{B_{\tilde{g}}(x_{0},r+1)}
w^{n-2}(u+v)\!\ d\tilde{V}+C_{14}\nonumber\\
&\leq&\frac{1}{8}\int_{B_{\tilde{g}}(x_{0},r+1)}
w^{n-1}|\tilde{\nabla}^{2}g|^{2}_{\tilde{g}}\xi^{2}\!\ d\tilde{V}
+C_{15}\int_{B_{\tilde{g}}(x_{0},r+1)}
u^{n-2}v\!\ d\tilde{V}+C_{15}\nonumber
\end{eqnarray}
by the inductive hypothesis, (\ref{3.71}) and the previous estimates. From
(\ref{3.71}), we also have
\begin{equation}
I_{3}\lesssim\int_{B_{\tilde{g}}(x_{0},r+1)}
w^{n-1}|\tilde{\nabla}^{2}\phi|_{\tilde{g}}\xi\!\ d\tilde{V}.\label{3.123}
\end{equation}
Since $C_{3}|\tilde{\nabla}^{2}g|_{\tilde{g}}\leq\frac{1}{8}|\tilde{\nabla}^{2}
g|^{2}_{\tilde{g}}+2C^{2}_{3}$, we infer from (\ref{3.121}), (\ref{3.122}),
and (\ref{3.123}) that
\begin{eqnarray}
I&\leq&-\frac{1}{2}\int_{B_{\tilde{g}}(x_{0},r+1)}
|\tilde{\nabla}^{2}g|^{2}_{\tilde{g}}w^{n-1}\xi^{2}\!\ d\tilde{V}
+C_{16}\int_{B_{\tilde{g}}(x_{0},r+1)}u^{n-2}v\!\ d\tilde{V}\nonumber\\
&&+ \ C_{16}\int_{B_{\tilde{g}}(x_{0},r+1)}
w^{n-1}|\tilde{\nabla}^{2}\phi|_{\tilde{g}}\xi\!\ d\tilde{V}+C_{16}\label{3.124}
\end{eqnarray}
by the inductive hypothesis. Note that the estimate (\ref{3.82}) is a special
case of (\ref{3.124}). According to (\ref{3.71}),
\begin{eqnarray*}
J&\leq&C_{17}\int_{B_{\tilde{g}}(x_{0},r+1)}
w^{n-2}\left[2\langle{\rm Rm},\partial_{t}{\rm Rm}\rangle_{\tilde{g}}
+2\langle\nabla^{2}\phi,\partial_{t}\nabla^{2}\phi\rangle_{\tilde{g}}\right.\\
&& \ +\left.2\langle\nabla V,\partial_{t}\nabla V\rangle_{\tilde{g}}\right]
\xi^{2}\!\ d\tilde{V} \ \ := \ \ C_{17}(J_{1}+J_{2}+J_{3}),
\end{eqnarray*}
where
\begin{eqnarray*}
J_{1}&:=&\int_{B_{\tilde{g}}(x_{0},r+1)}
w^{n-2}2\langle{\rm Rm},\partial_{t}{\rm Rm}\rangle_{\tilde{g}}\xi^{2}\!\
d\tilde{V}, \\
J_{2}&:=&\int_{B_{\tilde{g}}(x_{0},r+1)}
w^{n-2}2\langle\nabla V,\partial_{t}\nabla V\rangle_{\tilde{g}}\xi^{2}\!\ d\tilde{V},  \\
J_{3}&:=&\int_{B_{\tilde{g}}(x_{0},r+1)}
w^{n-2}2\langle\nabla^{2}\phi,\partial_{t}\nabla^{2}\phi\rangle_{\tilde{g}}
\xi^{2}\!\ d\tilde{V}.
\end{eqnarray*}
Substituting (\ref{3.72}) into $J_{1}$ we find that
\begin{eqnarray}
J_{1}&=&\int_{B_{\tilde{g}}(x_{0},r+1)}
w^{n-2}\xi^{2}2\bigg\langle{\rm Rm},\Delta{\rm Rm}+g^{\ast-2}
\ast{\rm Rm}^{\ast2}\nonumber\\
&&+ \ g^{-1}\ast V\ast\nabla{\rm Rm}+g^{-1}\ast{\rm Rm}\ast\nabla V
+(\nabla^{2}\phi)^{\ast2}\bigg\rangle_{\tilde{g}}d\tilde{V}.\label{3.125}
\end{eqnarray}
We now estimate each term in (\ref{3.125}). Since $\Delta{\rm Rm}=g^{\alpha\beta}
\nabla_{\alpha}\nabla_{\beta}{\rm Rm}$, the first term on the right-hand side of (\ref{3.125}) is bounded by
\begin{eqnarray*}
&&-2\int_{B_{\tilde{g}}(x_{0},r+1)}
\langle\nabla_{\beta}{\rm Rm},\nabla_{\alpha}
(\xi^{2}w^{n-2}g^{\alpha\beta}{\rm Rm})\rangle_{\tilde{g}}d\tilde{V}\\
&=&-2\int_{B_{\tilde{g}}(x_{0},r+1)}
\langle\nabla_{\beta}{\rm Rm},\xi^{2}w^{n-2}g^{\alpha\beta}\nabla_{\alpha}
{\rm Rm}\rangle_{\tilde{g}}d\tilde{V}\\
&&+ \ \int_{B_{\tilde{g}}(x_{0},r+1)}
\nabla{\rm Rm}\ast\left(g^{\ast-2}\ast\nabla g
\ast{\rm Rm}\ast\xi^{2}+g^{-1}\ast{\rm Rm}\ast\xi\ast\nabla\xi\right)
w^{n-2}\!\ d\tilde{V}\\
&&+ \ \int_{B_{\tilde{g}}(x_{0},r+1)}
\nabla{\rm Rm}\ast g^{-1}\ast{\rm Rm}
\ast\xi^{2}\ast w^{n-3}\ast\bigg({\rm Rm}\ast\tilde{\nabla}{\rm Rm}
+\nabla V\ast\nabla^{2}V\\
&&+ \ \nabla^{2}\phi\ast\nabla^{3}\phi\bigg)d\tilde{V}\\
&\leq&-\int_{B_{\tilde{g}}(x_{0},r+1)}
|\nabla{\rm Rm}|^{2}_{\tilde{g}}\xi^{2}w^{n-2}\!\ d\tilde{V}
+C_{18}\int_{B_{\tilde{g}}(x_{0},r+1)}|\nabla{\rm Rm}|_{\tilde{g}}
|{\rm Rm}|_{\tilde{g}}\xi w^{n-2}\!\ d\tilde{V}\\
&&+ \ C_{18}\int_{B_{\tilde{g}}(x_{0},r+1)}
|\nabla{\rm Rm}|_{\tilde{g}}|{\rm Rm}|_{\tilde{g}}
\xi^{2}w^{n-3}\bigg(|{\rm Rm}|_{\tilde{g}}
|\nabla{\rm Rm}|_{\tilde{g}}
+|\nabla V|_{\tilde{g}}|\nabla^{2}V|_{\tilde{g}}\\
&&+ \ |\nabla^{2}\phi|_{\tilde{g}}|\nabla^{3}\phi|_{\tilde{g}}\bigg)d\tilde{V}
\end{eqnarray*}
since $\nabla g=\tilde{\nabla}g+g^{-1}\ast\tilde{\nabla}g\ast g\lesssim1$. By
the inductive hypothesis, we have
\begin{eqnarray*}
&&\int_{B_{\tilde{g}}(x_{0},r+1)}
|\nabla{\rm Rm}|_{\tilde{g}}|{\rm Rm}|_{\tilde{g}}\xi w^{n-2}\!\ d\tilde{V}\\
&\lesssim&\int_{B_{\tilde{g}}(x_{0},r+1)}
w^{n-2}|\nabla{\rm Rm}|^{2}_{\tilde{g}}d\tilde{V}
+\int_{B_{\tilde{g}}(x_{0},r+1)}
w^{n-2}|{\rm Rm}|^{2}_{\tilde{g}}d\tilde{V}\\
&\lesssim&\int_{B_{\tilde{g}}(x_{0},r+1)}
u^{n-2}v\!\ d\tilde{V}+\int_{B_{\tilde{g}}(x_{0},r+1)}
u^{n-1}\!\ d\tilde{V}\\
&\lesssim&1+\int_{B_{\tilde{g}}(x_{0},r+1)}
u^{n-2}v\!\ dV,
\end{eqnarray*}
and
\begin{eqnarray*}
&&\int_{B_{\tilde{g}}(x_{0},r+1)}
w^{n-3}|\nabla{\rm Rm}|_{\tilde{g}}|{\rm Rm}|_{\tilde{g}}
\bigg(|{\rm Rm}|_{\tilde{g}}|\nabla{\rm Rm}|_{\tilde{g}}
+|\nabla V|_{\tilde{g}}|\nabla^{2}V|_{\tilde{g}}\\
&&+ \ |\nabla^{2}\phi|_{\tilde{g}}|\nabla^{3}\phi|_{\tilde{g}}\bigg)d\tilde{V}\\
&\lesssim&\int_{B_{\tilde{g}}(x_{0},r+1)}
w^{n-3}uv\!\ d\tilde{V}+\int_{B_{\tilde{g}}(x_{0},r+1)}w^{n-3}u^{1/2}v^{1/2}
\left(u^{1/2}v^{1/2}+u^{1/2}v^{1/2}\right)d\tilde{V}\\
&\lesssim&\int_{B_{\tilde{g}}(x_{0},r+1)}
u^{n-2}v\!\ dV;
\end{eqnarray*}
hence the first term on the right-hand side of (\ref{3.125}) is bounded from
above by
\begin{equation*}
1+\int_{B_{\tilde{g}}(x_{0},r+1)}u^{n-2}v\!\ dV
\end{equation*}
up to a uniform positive multiple. Since $|{\rm Rm}|^{2}_{g}\lesssim1+|\nabla
\tilde{\nabla}g|^{2}_{\tilde{g}}$ by the equation (90) in page 277 of
\cite{S89}, it follows that the sum of the third and forth terms of the right-hand side of (\ref{3.125})
is bounded from above by
\begin{eqnarray*}
&&\int_{B_{\tilde{g}}(x_{0},r+1)}
w^{n-2}\xi^{2}\ast2{\rm Rm}\ast\left(g^{-1}
\ast V\ast\nabla{\rm Rm}+g^{-1}\ast{\rm Rm}\ast\nabla V\right)d\tilde{V}\\
&\leq&C_{19}\int_{B_{\tilde{g}}(x_{0},r+1)}
w^{n-2}\xi^{2}\left(|\nabla{\rm Rm}|_{\tilde{g}}|{\rm Rm}|_{\tilde{g}}
+|{\rm Rm}|^{2}_{\tilde{g}}|\nabla V|_{\tilde{g}}\right)d\tilde{V}\\
&\leq&\frac{1}{2}C_{19}\int_{B_{\tilde{g}}(x_{0},r+1)}
w^{n-2}\xi^{2}\left(|\nabla{\rm Rm}|^{2}_{\tilde{g}}+|{\rm Rm}|^{2}_{\tilde{g}}
\right)d\tilde{V}\\
&&+ \ \frac{1}{2}C_{19}
\int_{B_{\tilde{g}}(x_{0},r+1)}w^{n-1}\xi^{2}|{\rm Rm}|_{\tilde{g}}d\tilde{V}\\
&\leq&\frac{1}{2}C_{19}\int_{B_{\tilde{g}}(x_{0},r+1)}
w^{n-2}v\!\ d\tilde{V}+\frac{1}{2}C_{19}\int_{B_{\tilde{g}}(x_{0},r+1)}
w^{n-1}\!\ d\tilde{V}\\
&&+ \ C_{20}\int_{B_{\tilde{g}}(x_{0},r+1)}
w^{n-1}\xi^{2}\left(1+|\nabla\tilde{\nabla}g|_{\tilde{g}}\right)\xi^{2}\!\ d\tilde{V}\\
&\leq&\frac{1}{8C_{17}}\int_{B_{\tilde{g}}(x_{0},r+1)}
w^{n-1}\xi^{2}|\tilde{\nabla}^{2}g|^{2}_{\tilde{g}}\!\ d\tilde{V}
+C_{21}\int_{B_{\tilde{g}}(x_{0},r+1)}u^{n-2}v\!\ dV
+C_{21},
\end{eqnarray*}
because of the inductive hypothesis and
$V=g^{-1}\ast\tilde{\nabla}g\lesssim1$. Similarly, the second term on
the right-hand side of (\ref{3.125}) is bounded from above by (up to a
uniform positive multiple)
\begin{eqnarray*}
&&\int_{B_{\tilde{g}}(x_{0},r+1)}w^{n-2}\xi^{2}
\ast g^{\ast-2}\ast{\rm Rm}^{\ast3}\!\ d\tilde{V}\\
&\lesssim&\int_{B_{\tilde{g}}(x_{0},r+1)}
w^{n-2}\xi^{2}\ast g^{\ast-2}\ast{\rm Rm}^{\ast2}\ast(1+\nabla\tilde{\nabla}
g)\!\ d\tilde{V}\\
&\lesssim&\int_{B_{\tilde{g}}(x_{0},r+1)}
w^{n-1}\!\ d\tilde{V}+
\int_{B_{\tilde{g}}(x_{0},r+1)}w^{n-2}\xi^{2}\ast{\rm Rm}^{\ast2}
\ast g^{\ast-2}\ast\nabla\tilde{\nabla}g\!\ d\tilde{V}\\
&\lesssim&1+
\int_{B_{\tilde{g}}(x_{0},r+1)}
\tilde{\nabla}g\ast\left(\nabla\xi\ast\xi\ast{\rm Rm}^{\ast2}
+\xi^{2}\ast\nabla{\rm Rm}\ast{\rm Rm}\right)\ast g^{\ast-2}\ast w^{n-2}\!\ d
\tilde{V}\\
&&+ \ \int_{B_{\tilde{g}}(x_{0},r+1)}{\rm Rm}^{\ast2}\ast w^{n-3}\ast\left(
{\rm Rm}\ast\nabla{\rm Rm}+\nabla V\ast\nabla^{2}V
+\nabla^{2}\phi\ast\nabla^{3}\phi\right)d\tilde{V}\\
&\lesssim&1+\int_{B_{\tilde{g}}(x_{0},r+1)}
(u+v)u^{n-2}\!\ dV+\int_{B_{\tilde{g}}(x_{0},r+1)}
uu^{n-3}(u+v)\!\ dV\\
&\lesssim&1+\int_{B_{\tilde{g}}(x_{0},r+1)}
u^{n-2}v\!\ dV,
\end{eqnarray*}
by the equation (84) in page 276 of \cite{S89}, $g$ is equivalent to $\tilde{g}$,
and the inductive hypothesis. The last term on the right-hand side of (\ref{3.125})
is bounded from above by (up to a uniform positive multiple)
\begin{eqnarray*}
&&\int_{B_{\tilde{g}}(x_{0},r+1)}w^{n-2}\xi^{2}\ast{\rm Rm}\ast\nabla^{2}\phi
\ast\nabla^{2}\phi\!\ d\tilde{V}\\
&\lesssim&\int_{B_{\tilde{g}}(x_{0},r+1)}\nabla\phi\ast\nabla\left(
w^{n-2}\xi^{2}\ast{\rm Rm}\ast\nabla^{2}\phi\right)d\tilde{V}\\
&=&\int_{B_{\tilde{g}}(x_{0},r+1)}\nabla\phi\ast\bigg(
\xi\ast\nabla\xi\ast{\rm Rm}\ast\nabla^{2}\phi
+\xi^{2}\ast\nabla{\rm Rm}\ast\nabla^{2}\phi\\
&&+ \ \xi^{2}\ast{\rm Rm}
\ast\nabla^{3}\phi\bigg)w^{n-2}\!\ d\tilde{V}+\int_{B_{\tilde{g}}(x_{0},r+1)}
\nabla\phi\ast\xi^{2}\ast{\rm Rm}\ast\nabla^{2}\phi
\ast w^{n-3}\\
&& \ \ast\left({\rm Rm}\ast\nabla{\rm Rm}
+\nabla V\ast\nabla^{2}V+\nabla^{2}\phi\ast\nabla^{3}\phi\right)d\tilde{V}\\
&\lesssim&\int_{B_{\tilde{g}}(x_{0},r+1)}
w^{n-2}\left(w+v^{1/2}w^{1/2}\right)d\tilde{V}
+\int_{B_{\tilde{g}}(x_{0},r+1)}ww^{n-3}(w+v)d\tilde{V}\\
&\lesssim&\int_{B_{\tilde{g}}(x_{0},r+1)}
u^{n-1}\!\ dV+\int_{B_{\tilde{g}}(x_{0},r+1)}u^{n-2}v\!\ dV \ \ \lesssim \ \ 1+\int_{B_{\tilde{g}}(x_{0},r+1)}
u^{n-2}v\!\ dV.
\end{eqnarray*}
Note that when we do the integration by parts, we may replace $\tilde{g}$ by
$g$ since $g$ is equivalent $\tilde{g}$, so that we have no extra terms $\nabla\tilde{g}$ and $\nabla\tilde{g}^{-1}$. Therefore, substituting those estimates into (\ref{3.125}) implies
\begin{equation}
C_{17}J_{1}\leq\frac{1}{8}\int_{B_{\tilde{g}}(x_{0},r+1)}
w^{n-1}\xi^{2}|\tilde{\nabla}^{2}g|^{2}_{\tilde{g}}
d\tilde{V}+C_{22}\int_{B_{\tilde{g}}(x_{0},r+1)}u^{n-2}v\!\ dV+C_{22}.\label{3.126}
\end{equation}
By (\ref{3.125}), we have
\begin{eqnarray}
J_{2}&=&\int_{B_{\tilde{g}}(x_{0},r+1)}w^{n-2}\xi^{2}2\bigg\langle
\nabla V,\Delta\nabla V+g^{\ast-3}\ast\nabla\tilde{\nabla}g\ast{\rm Rm}
+g^{\ast-3}\ast\tilde{\nabla}g\ast\nabla{\rm Rm}\nonumber\\
&&+ \ g^{\ast-2}\ast\nabla\tilde{\nabla}g\ast\nabla V
+g^{\ast-2}\ast\tilde{\nabla}g\ast\nabla^{2}V\label{3.127}\\
&&+ \ g^{\ast-2}\ast\nabla\tilde{\nabla}g\ast(\nabla\phi)^{\ast2}
+g^{\ast-2}\ast\tilde{\nabla}g\ast\nabla\phi\ast\nabla^{2}\phi\bigg\rangle_{\tilde{g}}
d\tilde{V}.\nonumber
\end{eqnarray}
As before, the first term on the right-hand side of (\ref{3.127}) is bounded from
above by (up to a uniform positive multiple)
\begin{eqnarray*}
&&2\int_{B_{\tilde{g}}(x_{0},r+1)}
w^{n-2}\xi^{2}\langle\nabla V,\Delta\nabla V\rangle_{\tilde{g}}d\tilde{V}\\
&=&-2\int_{B_{\tilde{g}}(x_{0},r+1)}
\langle\nabla_{\beta}\nabla V,\nabla_{\alpha}
(g^{\alpha\beta}\xi^{2}w^{n-2}\nabla V)\rangle_{\tilde{g}}d\tilde{V}\\
&\leq&-\int_{B_{\tilde{g}}(x_{0},r+1)}|\nabla^{2}V|^{2}_{\tilde{g}}
\xi^{2}w^{n-2}\!\ d\tilde{V}\\
&&+ \ \int_{B_{\tilde{g}}(x_{0},r+1)}
\nabla^{2}V\ast\nabla V\ast g^{-1}\bigg[\xi\ast\nabla\xi\ast w^{n-2}
+\xi\ast w^{n-3}\ast({\rm Rm}\ast\nabla{\rm Rm}\\
&&+ \ \nabla V\ast\nabla^{2}V+\nabla^{2}\phi\ast\nabla^{3}\phi)\bigg]d\tilde{V}\\
&\lesssim&\int_{B_{\tilde{g}}(x_{0},r+1)}
w^{n-2}v^{1/2}w^{1/2}\!\ d\tilde{V}+\int_{B_{\tilde{g}}(x_{0},r+1)}
v^{1/2}w^{1/2}w^{n-3}v^{1/2}w^{1/2}\!\ d\tilde{V}\\
&\lesssim&\int_{B_{\tilde{g}}(x_{0},r+1)}u^{n-1}\!\ dV
+\int_{B_{\tilde{g}}(x_{0},r+1)}
u^{n-2}v\!\ dV \ \ \lesssim \ \ 1+\int_{B_{\tilde{g}}(x_{0},r+1)}u^{n-2}v\!\ dV
\end{eqnarray*}
by the inductive hypothesis. By the Cauchy-Schwarz inequality, the sum of the
second, third, forth, and fifth terms on the right-hand side of (\ref{3.127})
is bounded from above by
\begin{eqnarray*}
&&C_{23}\int_{B_{\tilde{g}}(x_{0},r+1)}
w^{n-2}|\nabla V|_{\tilde{g}}\left[w^{1/2}|\tilde{\nabla}^{2}g|_{\tilde{g}}
+|\nabla{\rm Rm}|_{\tilde{g}}+|\nabla^{2}V|_{\tilde{g}}\right]\xi^{2}\!\ d\tilde{V}\\
&\leq&\frac{1}{8C_{17}}\int_{B_{\tilde{g}}(x_{0},r+1)}
w^{n-1}\xi^{2}|\tilde{\nabla}^{2}g|^{2}_{\tilde{g}}\!\ d\tilde{V}
+C_{24}\int_{B_{\tilde{g}}(x_{0},r+1)}
w^{n-2}v\!\ d\tilde{V}+C_{24}\\
&\leq&\frac{1}{8C_{17}}\int_{B_{\tilde{g}}(x_{0},r+1)}
w^{n-1}\xi^{2}|\tilde{\nabla}^{2}g|^{2}_{\tilde{g}}\!\ d\tilde{V}
+C_{25}\int_{B_{\tilde{g}}(x_{0},r+1)}u^{n-2}v\!\ dV+C_{25}.
\end{eqnarray*}
The rest terms on the right-hand side of (\ref{3.127}) is bounded from
above by (up to a uniform positive multiple)
\begin{eqnarray*}
&&\int_{B_{\tilde{g}}(x_{0},r+1)}
w^{n-2}\xi^{2}\ast\nabla V\ast g^{\ast-2}\ast\left(\nabla\tilde{\nabla}g
\ast(\nabla\phi)^{\ast2}+\tilde{\nabla}g\ast\nabla\phi\ast\nabla^{2}\phi\right)
d\tilde{V}\\
&\lesssim&\int_{B_{\tilde{g}}(x_{0},r+1)}
w^{n-2}w^{1/2}|\nabla\tilde{\nabla}g|_{\tilde{g}}\!\ d\tilde{V}
+\int_{B_{\tilde{g}}(x_{0},r+1)}w^{n-2}w^{1/2}w^{1/2}\!\ d\tilde{V}\\
&\lesssim&\int_{B_{\tilde{g}}(x_{0},r+1)}
u^{n-1}\!\ dV+\int_{B_{\tilde{g}}(x_{0},r+1)}
u^{n-2}|\nabla\tilde{\nabla}g|^{2}_{\tilde{g}}\!\ d\tilde{V} \ \ \lesssim \ \ 1
\end{eqnarray*}
by the inductive hypothesis. Hence
\begin{equation}
C_{17}J_{2}\leq\frac{1}{8}\int_{B_{\tilde{g}}(x_{0},r+1)}
w^{n-1}\xi^{2}|\tilde{\nabla}^{2}g|^{2}_{\tilde{g}}\!\ d\tilde{V}
+C_{26}\int_{B_{\tilde{g}}(x_{0},r+1)}
u^{n-2}v\!\ dV+C_{26}.\label{3.128}
\end{equation}
According to (\ref{3.75}),
\begin{eqnarray}
J_{3}&=&\int_{B_{\tilde{g}}(x_{0},r+1)}
w^{n-2}\xi^{2}2\bigg\langle\nabla^{2}\phi,\Delta\nabla^{2}\phi
+g^{\ast-2}\ast{\rm Rm}\ast\nabla^{2}\phi+\beta_{2}\nabla^{2}\phi\nonumber\\
&&+ \ g^{-1}\ast(\nabla\phi)^{\ast2}\ast\nabla^{2}\phi
+g^{-1}\ast\nabla\phi\ast\nabla^{2}\phi+g^{-1}\ast(\nabla^{2}\phi)^{\ast2}\label{3.129}\\
&&+ \ g^{\ast-2}\ast{\rm Rm}\ast(\nabla\phi)^{\ast2}
+g^{-1}\ast V\ast\nabla^{3}\phi+g^{-1}\ast\nabla V\ast\nabla^{2}\phi\bigg\rangle_{\tilde{g}}
d\tilde{V}.\nonumber
\end{eqnarray}
By the integration by parts, the first term on the right-hand side
of (\ref{3.129}) equals
\begin{eqnarray*}
&&-2\int_{B_{\tilde{g}}(x_{0},r+1)}
\langle\nabla_{\beta}\nabla^{2}\phi,\nabla_{\alpha}(\xi^{2} w^{n-2}g^{\alpha\beta}\nabla^{2}
\phi )\rangle_{\tilde{g}}d\tilde{V}\\
&=&-2\int_{B_{\tilde{g}}(x_{0},r+1)}
\langle\nabla_{\beta}\nabla^{2}\phi,\xi^{2} g^{\alpha\beta}w^{n-2}
\nabla_{\alpha}\nabla^{2}\phi\rangle_{\tilde{g}}d\tilde{V}+
\int_{B_{\tilde{g}}(x_{0},r+1)}
\xi^{2}\nabla^{3}\phi\ast
\nabla^{2}\phi\\
&& \ \ast g^{-1}\ast w^{n-3}\ast\left({\rm Rm}\ast\nabla{\rm Rm}
+\nabla V\ast\nabla^{2}V+\nabla^{2}\phi\ast\nabla^{3}\phi\right)d\tilde{V}\\
&&+ \ \int_{B_{\tilde{g}}(x_{0},r+1)}
\nabla^{3}\phi\ast\nabla^{2}\phi\ast g^{-1}\ast w^{n-2}\ast\nabla\xi\ast\xi \!\
d\tilde{V}\\
&\lesssim&-\int_{B_{\tilde{g}}(x_{0},r+1)}
\xi^{2}w^{n-2}|\nabla^{3}\phi|^{2}_{\tilde{g}}\!\ d\tilde{V}
+\int_{B_{\tilde{g}}(x_{0},r+1)}
v^{1/2}w^{1/2}w^{n-3}w^{1/2}v^{1/2}\!\ d\tilde{V}\\
&&+ \ \int_{B_{\tilde{g}}(x_{0},r+1)}
v^{1/2}w^{1/2}w^{n-2}\!\ d\tilde{V} \ \
\lesssim \ \ 1+\int_{B_{\tilde{g}}(x_{0},r+1)}
w^{n-2}v\!\ d\tilde{V}.
\end{eqnarray*}
The rest terms on the right-hand side of (\ref{3.129}) are bounded from above
by (up to a uniform positive multiple)
\begin{eqnarray*}
\int_{B_{\tilde{g}}(x_{0},r+1)}
w^{n-2}(w+w^{1/2}+v^{1/2})\!\ d\tilde{V}
&\lesssim&\int_{B_{\tilde{g}}(x_{0},r+1)}
w^{n-2}(1+w+v)\!\ d\tilde{V}\\
&\lesssim&1+\int_{B_{\tilde{g}}(x_{0},r+1)}
u^{n-2}v\!\ dV
\end{eqnarray*}
by the inductive hypothesis. Hence
\begin{equation}
J_{3}\lesssim1+\int_{B_{\tilde{g}}(x_{0},r+1)}
u^{n-2}v\!\ dV.\label{3.130}
\end{equation}
Combining (\ref{3.126}), (\ref{3.128}), and (\ref{3.130}), we find that
\begin{equation}
J\leq\frac{1}{4}\int_{B_{\tilde{g}}(x_{0},r+1)}
w^{n-1}\xi^{2}|\tilde{\nabla}^{2}g|^{2}_{\tilde{g}}\!\ d\tilde{V}+C_{27}\int_{B_{\tilde{g}}(x_{0},r+1)}
u^{n-2}v\!\ dV+C_{27}.\label{3.131}
\end{equation}
Substituting (\ref{3.124}) and (\ref{3.131}) into the definition of $K$ yields
\begin{eqnarray}
\frac{d}{dt}\int_{B_{\tilde{g}}(x_{0},r+1)}
w^{n-1}|\tilde{\nabla}g|^{2}_{\tilde{g}}\xi^{2}\!\ d\tilde{V}
&\leq&-\frac{1}{4}\int_{B_{\tilde{g}}(x_{0},r+1)}
w^{n-1}\xi^{2}|\tilde{\nabla}^{2}g|^{2}_{\tilde{g}}\!\ d\tilde{V}+C_{28}\nonumber\\
&&+ \ C_{28}\int_{B_{\tilde{g}}(x_{0},r+1)}
u^{n-2}v\!\ dV\label{3.132}\\
&&+ \ C_{28}\int_{B_{\tilde{g}}(x_{0},r+1)}
w^{n-1}|\tilde{\nabla}^{2}\phi|_{\tilde{g}}\xi\!\ d\tilde{V}.\nonumber
\end{eqnarray}

Define
\begin{eqnarray*}
L&:=&\frac{d}{dt}\int_{B_{\tilde{g}}(x_{0},r+1)}w^{n-1}|\tilde{\nabla}
\phi|^{2}_{\tilde{g}}\xi^{2}\!\ d\tilde{V} \ \ = \ \ 2\int_{B_{\tilde{g}}(x_{0},
r+1)}w^{n-1}\xi^{2}\langle\tilde{\nabla}\phi,\partial_{t}\tilde{\nabla}
\phi\rangle_{\tilde{g}}d\tilde{V}\\
&=&2\int_{B_{\tilde{g}}(x_{0},r+1)}
w^{n-1}\xi^{2}\langle\tilde{\nabla}\phi,g^{\alpha\beta}\tilde{\nabla}_{\alpha}
\tilde{\nabla}_{\beta}\tilde{\nabla}\phi\rangle_{\tilde{g}}d\tilde{V}\\
&&+ \ \int_{B_{\tilde{g}}(x_{0},r+1)}
w^{n-1}\xi^{2}\tilde{\nabla}\phi\ast\bigg(g^{\ast-2}
\ast\widetilde{{\rm Rm}}\ast\tilde{\nabla}\phi+g^{\ast-2}
\ast\tilde{\nabla}g\ast\tilde{\nabla}^{2}\phi\\
&&+ \ g^{\ast-2}\ast\tilde{\nabla}g\ast(\tilde{\nabla}\phi)^{\ast2}
+g^{-1}\ast\tilde{\nabla}\phi\ast\tilde{\nabla}^{2}\phi
+\tilde{\nabla}\phi\bigg)d\tilde{V} \ \ := \ \ L_{1}+L_{2}.
\end{eqnarray*}
For $L_{2}$, we have
\begin{equation*}
L_{2}\lesssim\int_{B_{\tilde{g}}(x_{0},r+1)}
w^{n-1}\xi(1+|\tilde{\nabla}^{2}\phi|_{\tilde{g}})d\tilde{V}
\lesssim1+\int_{B_{\tilde{g}}(x_{0},r+1)}
w^{n-1}\xi|\tilde{\nabla}^{2}\phi|_{\tilde{g}}d\tilde{V}
\end{equation*}
by the inductive hypothesis. Taking the integration by parts on $L_{1}$ implies
\begin{eqnarray*}
L_{1}&=&-2\int_{B_{\tilde{g}}(x_{0},r+1)}
\langle\tilde{\nabla}_{\beta}\tilde{\nabla}\phi,\tilde{\nabla}_{\alpha}(
g^{\alpha\beta}w^{n-1}\xi^{2}\tilde{\nabla}\phi)\rangle_{\tilde{g}}d\tilde{V}\\
&=&-2\int_{B_{\tilde{g}}(x_{0},r+1)}
\langle\tilde{\nabla}_{\beta}\tilde{\nabla}\phi,g^{\alpha\beta}
w^{n-1}\xi^{2}\tilde{\nabla}_{\alpha}\tilde{\nabla}\phi\rangle_{\tilde{g}}d\tilde{V}\\
&&+ \ \int_{B_{\tilde{g}}(x_{0},r+1)}
\tilde{\nabla}^{2}\phi\ast\tilde{\nabla}\phi\ast\bigg(g^{\ast-2}
\ast\tilde{\nabla}g
\ast w^{n-1}\ast\xi^{2}+g^{-1}\ast\xi\ast\tilde{\nabla}\xi\ast w^{n-1}\\
&&+ \ g^{-1}\ast\xi^{2}\ast w^{n-2}\ast({\rm Rm}\ast\tilde{\nabla}{\rm Rm}
+\nabla V\ast\tilde{\nabla}\nabla V+\nabla^{2}\phi\ast\tilde{\nabla}
\nabla^{2}\phi)\bigg)d\tilde{V}\\
&\leq&-\int_{B_{\tilde{g}}(x_{0},r+1)}
w^{n-1}\xi^{2}|\tilde{\nabla}^{2}\phi|^{2}_{\tilde{g}}d\tilde{V}
+C_{29}\int_{B_{\tilde{g}}(x_{0},r+1)}w^{n-1}\xi|\tilde{\nabla}^{2}
\phi|_{\tilde{g}}\!\ d\tilde{V}\\
&&+ \ C_{29}\int_{B_{\tilde{g}}(x_{0},r+1)}
w^{n-2}\xi^{2}|\tilde{\nabla}^{2}\phi|_{\tilde{g}}w^{1/2}\left(w^{1/2}
+v^{1/2}\right)d\tilde{V}\\
&=&-\int_{B_{\tilde{g}}(x_{0},r+1)}
w^{n-1}\xi^{2}|\tilde{\nabla}^{2}\phi|^{2}_{\tilde{g}}\!\ d\tilde{V}
+2C_{29}\int_{B_{\tilde{g}}(x_{0},r+1)}
\xi w^{n-1}|\tilde{\nabla}^{2}\phi|_{\tilde{g}}\!\ d\tilde{V}\\
&&+ \ C_{29}\int_{B_{\tilde{g}}(x_{0},r+1)}
\left(\xi w^{\frac{n-1}{2}}|\tilde{\nabla}^{2}\phi|_{\tilde{g}}\right)
\left(\xi w^{\frac{n-2}{2}}v^{1/2}\right)\!\ d\tilde{V}\\
&\leq&-\frac{3}{4}\int_{B_{\tilde{g}}(x_{0},r+1)}
w^{n-1}\xi^{2}|\tilde{\nabla}^{2}\phi|^{2}_{\tilde{g}}\!\ d\tilde{V}
+C_{30}\int_{B_{\tilde{g}}(x_{0},r+1)}
w^{n-1}\xi|\tilde{\nabla}^{2}\phi|_{\tilde{g}}\!\ d\tilde{V}\\
&&+ \ C_{30}\int_{B_{\tilde{g}}(x_{0},r+1)}
u^{n-2}v\!\ dV,
\end{eqnarray*}
since
\begin{eqnarray*}
\tilde{\nabla}{\rm Rm}&=&\nabla{\rm Rm}+g^{-1}\ast\tilde{\nabla}g
\ast{\rm Rm} \ \ \lesssim \ \ v^{1/2}+w^{1/2},\\
\tilde{\nabla}\nabla V&=&\nabla^{2}V+g^{-1}\ast\tilde{\nabla}g\ast\nabla
V \ \ \lesssim \ \ v^{1/2}+w^{1/2},\\
\tilde{\nabla}\nabla^{2}\phi&=&\nabla^{3}\phi+g^{-1}\ast\tilde{\nabla}g\ast
\nabla^{2}\phi \ \ \lesssim \ \ v^{1/2}+w^{1/2}.
\end{eqnarray*}
Therefore
\begin{equation}
L\leq-\frac{1}{2}\int_{B_{\tilde{g}}(x_{0},r+1)}
w^{n-1}\xi^{2}|\tilde{\nabla}^{2}\phi|^{2}_{\tilde{g}}\!\ d\tilde{V}+C_{31}\int_{B_{\tilde{g}}(x_{0},r+1)}
u^{n-2}v\!\ dV+C_{31}.\label{3.133}
\end{equation}
From (\ref{3.132}) and (\ref{3.133}), we arrive at
\begin{eqnarray*}
&&\frac{d}{dt}\bigg(\int_{B_{\tilde{g}}(x_{0},r+1)}
w^{n-1}\xi^{2}\left(|\tilde{\nabla}g|^{2}_{\tilde{g}}
+|\tilde{\nabla}\phi|^{2}_{\tilde{g}}\right)d\tilde{V}\bigg)\\
&\leq&-\frac{1}{4}\int_{B_{\tilde{g}}(x_{0},r+1)}
w^{n-1}\xi^{2}\left(|\tilde{\nabla}^{2}g|^{2}_{\tilde{g}}
+|\tilde{\nabla}^{2}\phi|^{2}_{\tilde{g}}\right)d\tilde{V}\\
&&+ \ C_{32}\int_{B_{\tilde{g}}(x_{0},r+1)}u^{n-2}v\!\ dV+C_{32}.
\end{eqnarray*}
Consequently,
\begin{equation*}
\int^{T}_{0}\int_{B_{\tilde{g}}(x_{0},r+1)}
w^{n-1}\xi^{2}|\tilde{\nabla}g|^{2}_{\tilde{g}}\!\ d\tilde{V}dt\lesssim1;
\end{equation*}
in particular,
\begin{equation}
\int^{T}_{0}\int_{B_{\tilde{g}}(x_{0},r+1)}
u^{n-1}|\nabla\tilde{\nabla}g|^{2}_{g}\xi^{2}\!\ dVdt\lesssim1.\label{3.134}
\end{equation}
By the equations (90) in page 277 and (108) in page 281 of \cite{S89},
together with $\nabla^{2}\phi=\tilde{\nabla}^{2}+g^{-1}\ast\tilde{\nabla}g
\ast\tilde{\nabla}\phi$, we have
\begin{equation*}
|{\rm Rm}|^{2}_{g}+|\nabla V|^{2}_{g}+|\nabla^{2}\phi|^{2}_{g}
\lesssim1+|\nabla\tilde{\nabla}g|^{2}_{g};
\end{equation*}
using the estimate (\ref{3.134}), we obtain
\begin{equation*}
\int^{T}_{0}\int_{B_{\tilde{g}}(x_{0},r+1)}
u^{n}\!\ dVdt\lesssim1.
\end{equation*}
As in the proof of Lemma \ref{l3.12}, we can show that
\begin{equation*}
\int_{B_{\tilde{g}}(x_{0},r+1)}
u^{n}\xi^{2}\!\ dV+\frac{1}{8}\int^{t}_{0}
\int_{B_{\tilde{g}}(x_{0},r+1)}
u^{n-1}v\xi^{2}\!\ dVdt\lesssim1
\end{equation*}
by the previous estimates and inductive hypothesis. Thus the lemma is also
true for $s=n$.
\end{proof}

We now can prove the following theorem, as in \cite{S89} where use the equations
(\ref{3.118}), (\ref{3.119}), (\ref{3.120}), and Lemma \ref{l3.13}.

\begin{theorem}\label{t3.14} We have
\begin{equation}
\sup_{M\times[0,T]}|{\rm Rm}|^{2}_{g}\lesssim1, \ \ \sup_{M\times[0,T]}
|\nabla V|^{2}_{g}\lesssim1, \ \ \sup_{M\times[0,T]}|\nabla^{2}\phi|^{2}_{g}
\lesssim1,\label{3.135}
\end{equation}
where $\lesssim$ depend on $n, k_{0}, k_{1}, k_{2}, \alpha_{1},\beta_{1},
\beta_{2}$.
\end{theorem}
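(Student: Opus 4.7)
The plan is to upgrade the full family of $L^{p}$ integral bounds supplied by Lemma \ref{l3.13} to an $L^{\infty}$ bound via parabolic Moser iteration applied to the composite quantity
\begin{equation*}
u := |{\rm Rm}|^{2}_{g}+|\nabla^{2}\phi|^{2}_{g}+|\nabla V|^{2}_{g},
\end{equation*}
which is already the natural energy in Lemma \ref{l3.13}. The approach is the one Shi used for the Ricci flow and List used for the Ricci--harmonic flow, adapted to track the cross terms arising from the extra fields $\phi$, $V$.

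First, starting from the three evolution equations (\ref{3.118}), (\ref{3.119}), (\ref{3.120}), each of the form $\partial_{t}X=\Delta X+\nabla P+Q$, I would combine $|X|^{2}$-computations with the volume evolution (\ref{3.76}) to derive a distributional inequality
\begin{equation*}
\partial_{t}u\leq\Delta u-c_{0}v+C_{0}(1+u)^{2},\qquad v:=|\nabla{\rm Rm}|^{2}_{g}+|\nabla^{3}\phi|^{2}_{g}+|\nabla^{2}V|^{2}_{g},
\end{equation*}
with $c_{0}>0$ and $C_{0}>0$ uniform constants depending only on $m,k_{0},k_{1},k_{2},\alpha_{1},\beta_{1},\beta_{2}$. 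The inner product $\langle X,\nabla P\rangle_{g}$ is treated by integration by parts after multiplying by the test weight $u^{p-1}\xi^{2}$ below; Theorem \ref{t3.9} (giving $|V|^{2}_{g}$, $|\nabla\phi|^{2}_{g}\lesssim 1$) ensures that the remaining pointwise contribution $\langle X,Q\rangle_{g}$ grows at most quadratically in the three fields $\{{\rm Rm},\nabla V,\nabla^{2}\phi\}$.

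Next, for each integer $p\geq 1$ and the same cutoff $\xi$ employed throughout Section \ref{section3}, I would multiply the differential inequality by $u^{p-1}\xi^{2}$, integrate over $B_{\tilde{g}}(x_{0},r+1)\times[0,t]$, and integrate by parts on the Laplacian term. The good term then controls $\int u^{p-2}|\nabla u|^{2}\xi^{2}$ plus $c_{0}\int u^{p-1}v\,\xi^{2}$, while the divergence pieces $\nabla P_{i}$ yield, after one integration by parts, products that are absorbed either into the good gradient term or into $\int u^{p-1}|\nabla\tilde{\nabla}g|^{2}_{g}$ -- which is exactly the quantity bounded in Lemma \ref{l3.13}. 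Combined with the parabolic Sobolev inequality for $(M,\tilde{g})$ (whose Sobolev constant is uniform on balls by the bounded geometry (\ref{3.60}) and the Bishop--Gromov bound (\ref{3.78})), this produces the Moser reverse-H\"older step
\begin{equation*}
\|u\|_{L^{\kappa p}(B_{\tilde{g}}(x_{0},r_{k})\times[0,T])}\leq(C_{1}p)^{1/p}\,\|u\|_{L^{p}(B_{\tilde{g}}(x_{0},r_{k-1})\times[0,T])}+C_{1}
\end{equation*}
for a fixed Sobolev exponent $\kappa>1$ and radii $r_{k}\searrow r$. Iterating $p\mapsto\kappa p\mapsto\kappa^{2}p\mapsto\cdots$ starting from the base case $p=1$ supplied uniformly by Lemma \ref{l3.13} and sending $k\to\infty$ gives $\sup_{B_{\tilde{g}}(x_{0},r)\times[0,T]}u\lesssim 1$; since $x_{0}\in M$ is arbitrary and the constants are independent of $x_{0}$, the desired global estimate (\ref{3.135}) follows.

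The main obstacle will be rigorously justifying the clean differential inequality for $u$: one must check that every cubic field-term arising from the $Q_{i}$'s (for instance ${\rm Rm}^{\ast 3}$ in $Q_{1}$ and $(\nabla V)^{\ast 2}\ast V$ in $Q_{2}$), together with the divergence remainders carrying an extra factor of $\tilde{\nabla}g$, can, after pairing with $u^{p-1}\xi^{2}$, be distributed between the good gradient term, the $L^{p}$ and mixed-norm estimates of Lemma \ref{l3.13}, and a final $C(1+u)^{p}$ reservoir. Once this bookkeeping is settled -- and it is the same bookkeeping already performed in the proof of Lemma \ref{l3.13} -- the Moser iteration step is routine and yields the three simultaneous sup-bounds (\ref{3.135}).
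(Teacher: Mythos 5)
Your proposal is in substance the same argument the paper intends. The paper gives no explicit proof of Theorem~\ref{t3.14} beyond the pointer ``as in \cite{S89}, using (\ref{3.118})--(\ref{3.120}) and Lemma~\ref{l3.13}''; what Shi does there is exactly a parabolic Moser-type local maximum estimate for the composite quantity $u=|{\rm Rm}|^{2}_{g}+|\nabla^{2}\phi|^{2}_{g}+|\nabla V|^{2}_{g}$, derived from the divergence-form evolution equations and fed by the $L^{p}$ estimates of Lemma~\ref{l3.13}. You identify the correct quantity $u$, the correct sources (\ref{3.118}), (\ref{3.119}), (\ref{3.120}), the correct role of Theorem~\ref{t3.9} in bounding the low-order coefficients $|V|_{g}$ and $|\nabla\phi|_{g}$, and the correct role of Lemma~\ref{l3.13} both as the ``base case'' and as the supply of $L^{p}$ bounds needed to handle the super-linear terms in $Q_{i}$. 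So the plan is essentially the paper's.

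One small caution on your differential inequality $\partial_{t}u\le\Delta u-c_{0}v+C_{0}(1+u)^{2}$: the pointwise bound one actually gets from pairing ${\rm Rm}$ with $Q_{1}$ (and analogously for the other two fields) has degree at most $u^{3/2}$, not $u^{2}$; your $(1+u)^{2}$ is a valid over-estimate but you should be careful that the Moser reverse-H\"older step as you wrote it, with the growth factor $(C_{1}p)^{1/p}$, is the form appropriate to a linear zeroth-order coefficient. When the inhomogeneity is $u^{3/2}$ (or anything super-linear), the iteration only closes because Lemma~\ref{l3.13} already supplies an a priori $L^{q}$ bound on the ``coefficient'' $u^{1/2}$; this is not a defect of your plan—you explicitly say this $L^{q}$ bound is to be fed in—but it is worth flagging that the clean geometric exponent step $p\mapsto\kappa p$ must be combined with a H\"older interpolation against the Lemma~\ref{l3.13} estimates to absorb the extra half power. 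Shi handles this by first proving a stand-alone parabolic local maximum lemma (with coefficients in $L^{q}$), then verifying that its hypotheses are met using the integral estimates; your presentation folds the two steps into a single iteration, which is fine as long as that absorption is carried out.
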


By the same argument used in \cite{L05, S89}, we have

\begin{theorem}\label{t3.15} Let $(M,\tilde{g})$ be a complete noncompact
Riemannian $m$-manifold with bounded Riemann curvature $|{\rm Rm}_{\tilde{g}}|^{2}_{\tilde{g}}
\leq k_{0}$, and $\tilde{\phi}$ a smooth function on $M$ satisfying
\begin{equation*}
|\tilde{\phi}|^{2}+|\nabla_{\tilde{g}}\tilde{\phi}|^{2}_{\tilde{g}}
\leq k_{1}, \ \ \ |\nabla^{2}_{\tilde{g}}\tilde{\phi}|^{2}_{\tilde{g}}
\leq k_{2}.
\end{equation*}
Then there exists a positive constant $T=T(m,k_{0},k_{1},\alpha_{1},\beta_{1},
\beta_{2})>0$ such that the regular-$(\alpha_{1},0,\beta_{1},\beta_{2})$-flow
\begin{eqnarray*}
\partial_{t}\hat{g}(t)&=&-2{\rm Ric}_{\hat{g}(t)}+2\alpha_{1}\nabla_{\hat{g}(t)}
\hat{\phi}(t)\otimes\nabla_{\hat{g}(t)}\hat{\phi}(t),\\
\partial_{t}\hat{\phi}(t)&=&\Delta_{\hat{g}(t)}
\hat{\phi}(t)+\beta_{1}|\nabla_{\hat{g}(t)}\hat{\phi}(t)|^{2}_{\hat{g}(t)}
+\beta_{2}\hat{\phi}(t),\\
(\hat{g}(0),\hat{\phi}(0))&=&(\tilde{g},\tilde{\phi})
\end{eqnarray*}
has a smooth solution $(\hat{g}(t),\hat{\phi}(t))$
on $M\times[0,T]$ and satisfies the following estimate
\begin{equation*}
\frac{1}{C_{1}}\tilde{g}\leq\hat{g}(t)\leq C_{1}\tilde{g}, \ \ \
|{\rm Rm}_{\hat{g}(t)}|^{2}_{\hat{g}(t)}+|\hat{\phi}(t)|^{2}
+|\nabla_{\hat{g}(t)}
\hat{\phi}(t)|^{2}_{\hat{g}(t)}+|\nabla^{2}_{\hat{g}(t)}\hat{\phi}(t)
|^{2}_{\hat{g}(t)}
\leq C_{2}
\end{equation*}
on $M\times[0,T]$, where $C_{1}, C_{2}$ are uniform positive constants
depending only on $m, k_{0}, k_{1}, k_{2}, \alpha_{1},\beta_{1},\beta_{2}$.
\end{theorem}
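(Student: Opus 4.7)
The plan is to assemble all the a priori estimates obtained in the preceding subsections, run the standard compactness argument on an exhaustion by compact domains, pass to a subsequential limit, and finally undo the De Turck diffeomorphism to recover a solution of the original $(\alpha_{1},0,\beta_{1},\beta_{2})$-flow. By Proposition \ref{p2.12}, it suffices to establish Theorem \ref{t3.15} for the $(\alpha_{1},0,\beta_{1},\beta_{2})$-flow itself, since the $\star$-regularity assumption reduces the general case to this one via a time-dependent diffeomorphism.

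First I would exhaust $M$ by compact domains $D_{k}$ with smooth boundaries such that $B_{\tilde{g}}(x_{0},k)\subset D_{k}$, and solve the Dirichlet problem (\ref{3.2}) on $D_{k}\times[0,T_{k}]$ with $(g_{k}(0),\phi_{k}(0))=(\tilde{g},\tilde{\phi})$ and boundary condition $(\tilde{g},\tilde{\phi})$ on $\partial D_{k}\times[0,T_{k}]$. Since (\ref{2.19})--(\ref{2.20}) is strictly parabolic by Lemma \ref{l2.14}, classical parabolic theory gives short time existence on each $D_{k}$. The crucial point is that all the estimates collected so far --- Theorem \ref{t3.5} (metric equivalence), Theorem \ref{t3.8} (short time existence on the whole manifold with the near-identity bound), Theorem \ref{t3.9} (first order derivative bounds), and Theorem \ref{t3.14} (bounds on $|{\rm Rm}|_{g}$, $|\nabla V|_{g}$, $|\nabla^{2}\phi|_{g}$) --- depend only on the fixed constants $m,k_{0},k_{1},k_{2},\alpha_{1},\beta_{1},\beta_{2}$ and not on $D_{k}$ or $k$. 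In particular there is a uniform $T=T(m,k_{0},k_{1},\alpha_{1},\beta_{1},\beta_{2})>0$ such that each solution $(g_{k},\phi_{k})$ exists on $D_{k}\times[0,T]$ and obeys these bounds on interior subdomains.

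Second, bootstrapping as in Lemma \ref{l3.7} and the standard parabolic Schauder estimates applied to the strictly parabolic system (\ref{2.21})--(\ref{2.22}) upgrades the uniform $C^{2}$ bounds to uniform $C^{\infty}_{{\rm loc}}$ bounds on interior subdomains. By a diagonal Arzel\`a--Ascoli argument over a countable exhaustion, a subsequence of $(g_{k},\phi_{k})$ converges in $C^{\infty}_{{\rm loc}}$ to a smooth solution $(g(t),\phi(t))$ of (\ref{2.19})--(\ref{2.20}) on $M\times[0,T]$ satisfying the same uniform estimates, namely
\begin{equation*}
\tfrac{1}{C_{1}}\tilde{g}\leq g(t)\leq C_{1}\tilde{g},\qquad |{\rm Rm}_{g(t)}|^{2}_{g(t)}+|\phi(t)|^{2}+|\nabla_{g(t)}\phi(t)|^{2}_{g(t)}+|\nabla^{2}_{g(t)}\phi(t)|^{2}_{g(t)}\leq C_{2}.
\end{equation*}

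Finally I would invert the De Turck diffeomorphism. Because the vector field $V_{i}=g_{ik}g^{\beta\gamma}(\Gamma^{k}_{\beta\gamma}-\tilde{\Gamma}^{k}_{\beta\gamma})$ is uniformly bounded in $g$-norm (as $V=g^{-1}\ast\tilde{\nabla}g$ and $|\tilde{\nabla}g|_{\tilde{g}}\lesssim1$ from Theorem \ref{t3.9}) and $g$ is uniformly equivalent to the complete metric $\tilde{g}$, the ODE system (\ref{2.17}) generates a smooth one-parameter family of global diffeomorphisms $\Psi_{t}:M\to M$ on $[0,T]$ with smooth inverses $\Psi_{t}^{-1}$. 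Setting $\hat{g}(t):=(\Psi_{t}^{-1})^{\ast}g(t)$ and $\hat{\phi}(t):=(\Psi_{t}^{-1})^{\ast}\phi(t)$ then produces a smooth solution of the original $(\alpha_{1},0,\beta_{1},\beta_{2})$-flow, and since $\hat{g}(t)$ differs from $g(t)$ by a diffeomorphism, all the pointwise tensor-norm bounds transfer directly to $\hat{g}(t)$ and $\hat{\phi}(t)$, yielding the stated conclusion.

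The main obstacle is verifying that the diffeomorphisms $\Psi_{t}$ stay globally defined and invertible on the full time interval $[0,T]$: this is where the uniform metric equivalence $\tfrac{1}{C_{1}}\tilde{g}\leq g(t)\leq C_{1}\tilde{g}$ from Theorem \ref{t3.8}, combined with the uniform bound on $V$ (and its first derivative via Theorem \ref{t3.14}), is essential, since completeness of $\tilde{g}$ then forces completeness of $g(t)$ and prevents the flow of $V$ from running off $M$ in finite time.
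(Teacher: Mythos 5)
Your proposal is correct and takes essentially the same route as the paper, which for this step simply cites \cite{L05, S89}: solve the De Turck system on the whole manifold by exhaustion (this is Theorem \ref{t3.8}), apply the uniform a priori estimates of Theorems \ref{t3.9} and \ref{t3.14}, and undo the De Turck diffeomorphism, whose global existence and smooth invertibility on $[0,T]$ follow from the uniform $C^{1}$ bound on $V$ and the completeness of $\tilde{g}$. The only small misstep is the opening appeal to Proposition \ref{p2.12}, which is superfluous here: Theorem \ref{t3.15} is already stated for the $(\alpha_{1},0,\beta_{1},\beta_{2})$-flow, and Proposition \ref{p2.12} is what the paper uses afterward to deduce Theorem \ref{t3.16} from Theorem \ref{t3.15}, not the other way around.
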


Suppose that $(\hat{g}(t),\hat{\phi}(t))$ is a smooth solution to the
regular-$(\alpha_{1},0,\beta_{1}-\alpha_{2},\beta_{2})$-flow
\begin{eqnarray*}
\partial_{t}\hat{g}(t)&=&-2{\rm Ric}_{\hat{g}(t)}+2\alpha_{1}\nabla_{\hat{g}(t)}
\hat{\phi}(t)\otimes\nabla_{\hat{g}(t)}\hat{\phi}(t),\\
\partial_{t}\hat{\phi}(t)&=&\Delta_{\hat{g}(t)}
\hat{\phi}(t)+(\beta_{1}-\alpha_{2})|\nabla_{\hat{g}(t)}\hat{\phi}(t)|^{2}_{\hat{g}(t)}
+\beta_{2}\hat{\phi}(t),\\
(\hat{g}(0),\hat{\phi}(0))&=&(\tilde{g},\tilde{\phi}).
\end{eqnarray*}
Consider a $1$-parameter family of diffeomorphisms $\Phi(t): M\to M$ by
\begin{equation}
\frac{d}{dt}\Phi(t)=\alpha_{2}\nabla_{\hat{g}(t)}\hat{\phi}(t), \ \ \ \Phi(0)
={\rm Id}_{M}.\label{3.136}
\end{equation}
If we define
\begin{equation}
g(t):=[\Phi(t)]^{\ast}\hat{g}(t), \ \ \
\phi(t):=[\Phi(t)]^{\ast}\hat{\phi}(t),\label{3.137}
\end{equation}
then
\begin{eqnarray*}
\partial_{t}g(t)&=&[\Phi(t)]^{\ast}\bigg(\partial_{t}\hat{g}(t)\bigg)
+\alpha_{1}[\Phi(t)]^{\ast}\bigg(\mathscr{L}_{\nabla_{\hat{g}(t)}
\hat{\phi}(t)}\hat{g}(t)\bigg)\\
&=&[\Phi(t)]^{\ast}\bigg(-2{\rm Rm}_{\hat{g}(t)}
+2\alpha_{1}\nabla_{\hat{g}(t)}\hat{\phi}(t)\otimes\nabla_{\hat{g}(t)}
\hat{\phi}(t)\bigg)+2\alpha_{2}[\Phi(t)]^{\ast}\nabla^{2}_{\hat{g}(t)}
\hat{\phi}(t)\\
&=&-2{\rm Rm}_{g(t)}+2\alpha_{1}\nabla_{g(t)}
\phi(t)\otimes\nabla_{g(t)}\phi(t)+2\alpha_{2}
\nabla^{2}_{g(t)}\phi(t),\\
\partial_{t}\phi(t)&=&\partial_{t}\bigg(\hat{\phi}(t)\cdot\Phi(t)\bigg)\\
&=&\partial_{t}\hat{\phi}(t)\circ\Phi(t)+\alpha_{2}|\nabla_{\hat{g}(t)}
\hat{\phi}(t)|^{2}_{\hat{g}(t)}\\
&=&\Delta_{g(t)}\phi(t)+\beta_{1}|\nabla_{g(t)}\phi(t)|^{2}_{g(t)}
+\beta_{2}\phi(t).
\end{eqnarray*}
If we furthermore have $|\hat{\phi}(t)|^{2}\lesssim1$ and $|\nabla_{\hat{g}(t)}
\hat{\phi}(t)|^{2}_{\hat{\phi}}\lesssim1$ on $M\times[0,T]$, using the standard
theory of ordinary differential equations we have that the system (\ref{3.136})
has a unique smooth solution $\Phi(t)$ on $M\times[0,T]$. Therefore $(g(t),
\phi(t))$ defined in (\ref{3.137}) are also smooth on $M\times[0,T]$ and satisfies
the above system of equations.

\begin{theorem}\label{t3.16} Let $(M,\tilde{g})$ be a complete noncompact
Riemannian $m$-manifold with bounded Riemann curvature $|{\rm Rm}_{\tilde{g}}|^{2}_{\tilde{g}}
\leq k_{0}$, and $\tilde{\phi}$ a smooth function on $M$ satisfying
\begin{equation*}
|\tilde{\phi}|^{2}+|\nabla_{\tilde{g}}\tilde{\phi}|^{2}_{\tilde{g}}
\leq k_{1}, \ \ \ |\nabla^{2}_{\tilde{g}}\tilde{\phi}|^{2}_{\tilde{g}}
\leq k_{2}.
\end{equation*}
Then there exists a positive constant $T=T(m,k_{0},k_{1},\alpha_{1}, \alpha_{2}, \beta_{1},
\beta_{2})>0$ such that the regular-$(\alpha_{1},\alpha_{2},\beta_{1},\beta_{2})$-flow
\begin{eqnarray*}
\partial_{t}g(t)&=&-2{\rm Ric}_{g(t)}+2\alpha_{1}\nabla_{g(t)}
\phi(t)\otimes\nabla_{g(t)}\phi(t),\\
\partial_{t}\phi(t)&=&\Delta_{g(t)}
\phi(t)+\beta_{1}|\nabla_{g(t)}\phi(t)|^{2}_{g(t)}
+\beta_{2}\phi(t),\\
(g(0),\phi(0))&=&(\tilde{g},\phi)
\end{eqnarray*}
has a smooth solution $(g(t),\phi(t))$
on $M\times[0,T]$ and satisfies the following estimate
\begin{equation*}
\frac{1}{C_{1}}\tilde{g}\leq g(t)\leq C_{1}\tilde{g}, \ \ \
|{\rm Rm}_{g(t)}|^{2}_{g(t)}+|\phi(t)|^{2}
+|\nabla_{g(t)}
\phi(t)|^{2}_{g(t)}+|\nabla^{2}_{g(t)}\phi(t)
|^{2}_{g(t)}
\leq C_{2}
\end{equation*}
on $M\times[0,T]$, where $C_{1}, C_{2}$ are uniform positive constants
depending only on $m, k_{0}, k_{1}, k_{2}, \alpha_{1}, \alpha_{2}, \beta_{1},\beta_{2}$.
\end{theorem}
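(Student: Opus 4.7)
The plan is to deduce Theorem \ref{t3.16} from Theorem \ref{t3.15} via the diffeomorphism reduction of Proposition \ref{p2.12}, using precisely the construction sketched around equations (\ref{3.136})--(\ref{3.137}). First, I would apply Theorem \ref{t3.15} to the regular $(\alpha_{1},0,\beta_{1}-\alpha_{2},\beta_{2})$-flow with initial data $(\tilde{g},\tilde{\phi})$. Since $(\alpha_{1},\alpha_{2},\beta_{1},\beta_{2})$ is $\star$-regular by hypothesis, Definition \ref{d2.11} guarantees that the associated $(\alpha_{1},0,\beta_{1}-\alpha_{2},\beta_{2})$-flow is regular, so Theorem \ref{t3.15} applies and yields a uniform $T_{0}=T_{0}(m,k_{0},k_{1},\alpha_{1},\beta_{1}-\alpha_{2},\beta_{2})$ together with a smooth solution $(\hat{g}(t),\hat{\phi}(t))$ on $M\times[0,T_{0}]$ satisfying
\begin{equation*}
\tfrac{1}{C'_{1}}\tilde{g}\leq\hat{g}(t)\leq C'_{1}\tilde{g},\qquad
|{\rm Rm}_{\hat{g}(t)}|^{2}_{\hat{g}(t)}+|\hat{\phi}(t)|^{2}+|\nabla_{\hat{g}(t)}\hat{\phi}(t)|^{2}_{\hat{g}(t)}+|\nabla^{2}_{\hat{g}(t)}\hat{\phi}(t)|^{2}_{\hat{g}(t)}\leq C'_{2}.
\end{equation*}

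Next, I would construct the $1$-parameter family of diffeomorphisms $\Phi(t)\colon M\to M$ by solving the ODE (\ref{3.136}) generated by the time-dependent vector field $\alpha_{2}\nabla_{\hat{g}(t)}\hat{\phi}(t)$ with $\Phi(0)={\rm Id}_{M}$. This is the main technical point: on a noncompact manifold, one needs a global existence statement for the flow of a time-dependent vector field. However, the bounds just obtained show that $|\alpha_{2}\nabla_{\hat{g}(t)}\hat{\phi}(t)|_{\hat{g}(t)}$ and (via $|\nabla^{2}_{\hat{g}(t)}\hat{\phi}(t)|_{\hat{g}(t)}$) its covariant derivative are uniformly bounded on $M\times[0,T_{0}]$; together with the uniform equivalence of $\hat{g}(t)$ and $\tilde{g}$, this ensures that the integral curves of (\ref{3.136}) are defined for all $t\in[0,T_{0}]$ and that $\Phi(t)$ is a smooth diffeomorphism.

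Given $\Phi(t)$, I define $g(t):=[\Phi(t)]^{\ast}\hat{g}(t)$ and $\phi(t):=[\Phi(t)]^{\ast}\hat{\phi}(t)$. The direct computation already displayed in the paragraph preceding the theorem statement then shows that $(g(t),\phi(t))$ satisfies the full $(\alpha_{1},\alpha_{2},\beta_{1},\beta_{2})$-flow with the correct initial data $(\tilde{g},\tilde{\phi})$: the extra Lie-derivative term $\alpha_{2}\mathscr{L}_{\nabla_{\hat{g}(t)}\hat{\phi}(t)}\hat{g}(t)=2\alpha_{2}\nabla^{2}_{\hat{g}(t)}\hat{\phi}(t)$ produced by pullback supplies exactly the missing Hessian term in (\ref{2.4}), while the pullback adjustment on $\hat\phi$ produces the additional $\alpha_{2}|\nabla\phi|^{2}$ contribution that converts $\beta_{1}-\alpha_{2}$ back into $\beta_{1}$.

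Finally, because $\Phi(t)$ is a diffeomorphism, pullback preserves all pointwise tensor norms and the metric equivalence: $\frac{1}{C'_{1}}\tilde{g}\leq g(t)\leq C'_{1}\tilde{g}$ (up to absorbing an additional bounded distortion of $\tilde{g}$ under $\Phi(t)$, again controlled by the ODE bound on $\Phi$), and
\begin{equation*}
|{\rm Rm}_{g(t)}|^{2}_{g(t)}=|{\rm Rm}_{\hat{g}(t)}|^{2}_{\hat{g}(t)}\circ\Phi(t),\quad |\nabla^{k}_{g(t)}\phi(t)|^{2}_{g(t)}=|\nabla^{k}_{\hat{g}(t)}\hat{\phi}(t)|^{2}_{\hat{g}(t)}\circ\Phi(t),
\end{equation*}
so the $C^{2}$ bounds on $\hat\phi$ and the curvature bound on $\hat g$ transfer verbatim to $(g(t),\phi(t))$. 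Setting $T:=T_{0}$ and relabeling the constants yields the theorem. The only nontrivial point, as flagged above, is the global-in-space solvability of (\ref{3.136}), which is handled precisely by the uniform $C^{1}$-bounds on $\nabla_{\hat g(t)}\hat\phi(t)$ supplied by Theorem \ref{t3.15}.
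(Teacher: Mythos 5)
Your proposal is correct and is exactly the paper's intended argument: apply Theorem~\ref{t3.15} to the reduced $(\alpha_{1},0,\beta_{1}-\alpha_{2},\beta_{2})$-flow (regular because the hypothesis is $\star$-regularity), then pull back by the diffeomorphisms generated by $\alpha_{2}\nabla_{\hat g(t)}\hat\phi(t)$ as in (\ref{3.136})--(\ref{3.137}). You also correctly flag the two points the paper leaves implicit — global solvability of (\ref{3.136}) on a noncompact manifold via the uniform $C^{1}$-bounds on the vector field, and the comparison of $[\Phi(t)]^{*}\tilde g$ with $\tilde g$ needed to transfer the metric-equivalence estimate.
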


\subsection{Higher order derivatives estimates}\label{subsection3.5}

To complete the proof of Theorem \ref{t3.1}, we need only to prove the higher
order derivatives estimates (\ref{3.1}). Suppose we have a smooth
solution $(g(t),
\phi(t))$ on $M\times[0,T]$ and satisfies
\begin{eqnarray}
\partial_{t}g(t)&=&-2{\rm Ric}_{g(t)}+2\alpha_{2}\nabla_{g(t)}
\phi(t)\otimes\nabla_{g(t)}\phi(t)+2\alpha_{2}\nabla^{2}_{g(t)}
\phi(t),\nonumber\\
\partial_{t}\phi(t)&=&\Delta_{g(t)}\phi(t)
+\beta_{1}|\nabla_{g(t)}\phi(t)|^{2}_{g(t)}+\beta_{2}\phi(t),\label{3.138}\\
(g(0),\phi(0))&=&(\tilde{g},\tilde{\phi}),\nonumber
\end{eqnarray}
where $(M,\tilde{g})$ is a complete noncompact Riemannian $m$-manifold with bounded
curvature $|{\rm Rm}_{\tilde{g}}|^{2}_{\tilde{g}}\leq k_{0}$ and $\tilde{\phi}$ is a smooth function on $M$ satisfying $|\tilde{\phi}|^{2}+|\nabla_{\tilde{g}}\tilde{\phi}|^{2}_{\tilde{g}}\leq k_{1}$ and $|\nabla^{2}_{\tilde{g}}
\tilde{\phi}|^{2}_{\tilde{g}}\leq k_{2}$, and
\begin{equation}
g(t)\approx\tilde{g}, \ \ \ |{\rm Rm}_{g(t)}|^{2}_{g(t)}
+|\phi(t)|^{2}+|\nabla_{g(t)}\phi(t)|^{2}_{g(t)}
+|\nabla^{2}_{g(t)}\phi(t)|^{2}_{g(t)}\lesssim1\label{3.139}
\end{equation}
on $M\times[0,T]$, where $\lesssim$ or $\approx$ depends only on $m, k_{0},
k_{1}, k_{2}, \alpha_{1}, \alpha_{2}, \beta_{1},
\beta_{2}$.

\begin{lemma}\label{l3.17} For any nonnegative integer $n$, there exist uniform
positive constants $C_{k}$ depending only on $m, n, k_{0}, k_{1}, k_{2}, \alpha_{1},
\alpha_{2}, \beta_{1}, \beta_{2}$ such that
\begin{equation}
\left|\nabla^{n}_{g(t)}{\rm Rm}_{g(t)}\right|^{2}_{g(t)}+\left|\nabla^{n+2}_{g(t)}
\phi(t)\right|^{2}_{g(t)}\leq\frac{C_{n}}{t^{n}}\label{3.140}
\end{equation}
on $M\times[0,T]$.
\end{lemma}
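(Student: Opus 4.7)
The plan is to prove the estimate by induction on $n$, using a Bernstein--Shi type argument that treats $|\nabla^{n}\mathrm{Rm}|^{2}_{g(t)}$ and $|\nabla^{n+2}\phi|^{2}_{g(t)}$ simultaneously, since Lemmas~\ref{l2.5} and~\ref{l2.7} show that the evolutions of $\mathrm{Rm}$ and $\nabla^{2}\phi$ are genuinely coupled through terms involving $\alpha_{2}\nabla^{3}\phi$, $\alpha_{1}(\nabla^{2}\phi)^{\ast 2}$ and $\alpha_{2}\mathrm{Rm}\ast\nabla^{2}\phi$. The case $n=0$ is exactly the content of Theorem~\ref{t3.16}, which gives the required uniform bounds on $|\mathrm{Rm}|^{2}_{g(t)}$ and $|\nabla^{2}\phi|^{2}_{g(t)}$.

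The first step is to derive, by induction on $n$ using the commutator $[\Delta,\nabla]=\mathrm{Rm}\ast\nabla(\cdot)+\nabla\mathrm{Rm}\ast(\cdot)$ together with Lemmas~\ref{l2.5} and~\ref{l2.7}, schematic evolution equations of the form
\begin{align*}
\partial_{t}\nabla^{n}\mathrm{Rm} &= \Delta\nabla^{n}\mathrm{Rm} + \sum_{i+j=n}\nabla^{i}\mathrm{Rm}\ast\nabla^{j}\mathrm{Rm} + \sum_{i+j\leq n+2}\nabla^{i+2}\phi\ast\nabla^{j+2}\phi \\
&\quad + \alpha_{2}\sum_{i+j\leq n+1}\nabla^{i}\mathrm{Rm}\ast\nabla^{j+2}\phi + \alpha_{2}\,\nabla\phi\ast\nabla^{n+1}\mathrm{Rm},\\
\partial_{t}\nabla^{n+2}\phi &= \Delta\nabla^{n+2}\phi + \sum_{i+j\leq n}\nabla^{i}\mathrm{Rm}\ast\nabla^{j+2}\phi + \text{lower-order couplings}.
\end{align*}
Squaring and using Kato's inequality and the Cauchy--Schwarz inequality then gives differential inequalities of the form
\begin{equation*}
\partial_{t}|\nabla^{n}\mathrm{Rm}|^{2} \leq \Delta|\nabla^{n}\mathrm{Rm}|^{2} - 2|\nabla^{n+1}\mathrm{Rm}|^{2} + C\bigl(|\nabla^{n}\mathrm{Rm}|^{2}+|\nabla^{n+2}\phi|^{2}\bigr) + (\text{lower-order})
\end{equation*}
and an analogous inequality for $|\nabla^{n+2}\phi|^{2}$, where the constants depend on the inductively assumed bounds $|\nabla^{k}\mathrm{Rm}|^{2}\lesssim t^{-k}$ and $|\nabla^{k+2}\phi|^{2}\lesssim t^{-k}$ for $k<n$.

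The second step is the standard time-weighted test quantity. Setting
\begin{equation*}
F_{n}(x,t) := t^{n}\bigl(|\nabla^{n}\mathrm{Rm}|^{2}_{g(t)}+|\nabla^{n+2}\phi|^{2}_{g(t)}\bigr) + A_{n}\,t^{n-1}\bigl(|\nabla^{n-1}\mathrm{Rm}|^{2}_{g(t)}+|\nabla^{n+1}\phi|^{2}_{g(t)}\bigr),
\end{equation*}
for a sufficiently large constant $A_{n}$ chosen to absorb the bad cross-terms produced by $\partial_{t}t^{n}=nt^{n-1}$, a direct computation yields
\begin{equation*}
\partial_{t}F_{n} \leq \Delta F_{n} + C\,F_{n} + C',
\end{equation*}
where $C,C'$ depend only on $m,n,k_{0},k_{1},k_{2},\alpha_{1},\alpha_{2},\beta_{1},\beta_{2}$ and on the inductive bounds through $n-1$. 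At $t=0$ the initial term vanishes because of the factor $t^{n}$, and the absorbed lower-order term is bounded by induction; so $F_{n}(x,0)$ is uniformly bounded.

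Third, I apply the maximum principle. Because $(M,\tilde g)$ is complete with bounded geometry and $g(t)\approx\tilde g$ with uniformly bounded curvature derivatives in the sense already established, $F_{n}$ is a bounded function on $M\times[0,T]$ to which the Ecker--Huisken/Karp--Li maximum principle for complete noncompact manifolds applies. This gives $F_{n}\leq C_{n}'$ on $M\times[0,T]$, which rearranges to \eqref{3.140} for the index $n$. The main obstacle is organizing the bookkeeping of the coupling: the $\alpha_{2}$-terms in Lemma~\ref{l2.5} force $\nabla^{n}\mathrm{Rm}$ to interact with $\nabla^{n+2}\phi$ of \emph{the same differential weight}, which is why both quantities must sit inside the same $F_{n}$ and scale identically in $t$. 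Once this grouping is chosen and the constant $A_{n}$ is fixed large enough (depending on the $\alpha_{i}$, $\beta_{i}$ and on the $n-1$ step), the remaining computation is routine term-by-term bookkeeping of the $\ast$-polynomials, entirely parallel to Shi~\cite{S89} and List~\cite{L05} but with additional $\phi$-contributions handled uniformly by the inductive hypothesis.
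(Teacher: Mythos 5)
Your overall strategy (induction on $n$, coupling $\mathrm{Rm}$ with $\nabla^{2}\phi$ at equal differential weight, time-weighted test function, maximum principle) matches the skeleton of the paper's proof, which also works with the bundled quantity $\boldsymbol{\Lambda}=(\mathrm{Rm},\nabla\phi)$. However, there are two linked gaps that make the proposal as written fail.

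\textbf{The additive test quantity does not close.} You take
$F_{n}=t^{n}\bigl(|\nabla^{n}\boldsymbol{\Lambda}|^{2}\bigr)+A_{n}\,t^{n-1}|\nabla^{n-1}\boldsymbol{\Lambda}|^{2}$
and claim $\partial_{t}F_{n}\leq\Delta F_{n}+C F_{n}+C'$. Compute the $\partial_{t}$ of the second summand: one of its pieces is $(n-1)A_{n}\,t^{n-2}|\nabla^{n-1}\boldsymbol{\Lambda}|^{2}$. The inductive hypothesis gives only $|\nabla^{n-1}\boldsymbol{\Lambda}|^{2}\lesssim t^{-(n-1)}$, so this piece is of size $\sim 1/t$, which is not integrable in time and cannot be absorbed into $CF_{n}+C'$ on $[0,T]$. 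A longer telescoping sum $\sum_{k}A^{n-k}t^{k}|\nabla^{k}\boldsymbol{\Lambda}|^{2}$ would remove the singular forcing (the $-2|\nabla^{k+1}\boldsymbol{\Lambda}|^{2}$ from level $k$ eats the $t$-derivative of level $k+1$), but a two-term sum does not. The paper instead uses the \emph{multiplicative} Bando--Shi weight
$u=\bigl(a+|\boldsymbol{\Lambda}|^{2}\bigr)|\nabla\boldsymbol{\Lambda}|^{2}\,t$
(and for higher $n$, $v=(a+t^{n-1}|\nabla^{n-1}\boldsymbol{\Lambda}|^{2})|\nabla^{n}\boldsymbol{\Lambda}|^{2}t^{n}$). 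The point of the multiplicative form is that the good term $-c|\nabla\boldsymbol{\Lambda}|^{2}$ coming from the evolution of the first factor, multiplied by the second factor, produces a genuine \emph{quadratic} damping $-cu^{2}/t$ in the evolution of $u$ (see the paper's (3.154): $\partial_{t}u\leq\Delta u-\tfrac{1}{8a^{2}t}u^{2}+C_{7}t+\tfrac{1}{t}u$). That Riccati-type structure is what dominates the $u/t$ and $1/t$ contributions; your linear inequality has no analogous self-improving mechanism.

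\textbf{The global maximum principle is not available.} You invoke Ecker--Huisken/Karp--Li and assert that $F_{n}$ is ``a bounded function on $M\times[0,T]$'' because of ``uniformly bounded curvature derivatives in the sense already established''. But at the start of the induction step the only uniform bounds established on the \emph{whole} noncompact $M$ are those of Theorems~\ref{t3.15}--\ref{t3.16}, i.e.\ bounds on $|\mathrm{Rm}|$, $|\phi|$, $|\nabla\phi|$, $|\nabla^{2}\phi|$ (the $n=0$ level). There is no a priori statement that $|\nabla^{n}\mathrm{Rm}|$ or $|\nabla^{n+2}\phi|$ is bounded on $M$ for $n\geq 1$; that is exactly what the lemma is to prove. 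Hence you cannot apply a maximum principle that requires the test function to be bounded (or of controlled growth) at infinity. The paper sidesteps this by running Shi's \emph{localized} cutoff argument: set $F:=\xi u$ with $\xi$ supported in $B_{\tilde g}(x_{0},2)$, argue at an interior maximum of $F$, and there the quadratic term $-\tfrac{1}{8a^{2}t}u^{2}$ both absorbs the cutoff-derivative error terms and upgrades $C_{9}F^{2}\leq C_{17}F+C_{16}F^{3/2}$ at the max point into a uniform bound. This is precisely where your two issues interlock: the cutoff localization is what removes the need for a priori boundedness, but it only closes because of the Riccati damping, which the multiplicative weight supplies and your additive weight does not.
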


\begin{proof} As before, we always write ${\rm Rm}:={\rm Rm}_{g(t)}$, $\phi:=
\phi(t)$, etc. From Lemma \ref{l2.5}, we have
\begin{equation}
\partial_{t}{\rm Rm}=\Delta{\rm Rm}+g^{\ast-2}\ast{\rm Rm}^{\ast2}
+(\nabla^{2}\phi)^{\ast2}+g^{-1}\ast\nabla{\rm Rm}\ast\nabla\phi
+g^{-1}\ast{\rm Rm}\ast\nabla^{2}\phi.\label{3.141}
\end{equation}
Then the norm $|{\rm Rm}|^{2}$ of Riemann curvature evolves by
\begin{equation*}
\partial_{t}|{\rm Rm}|^{2}=2\langle{\rm Rm},\partial_{t}{\rm Rm}\rangle_{g}
+g^{\ast-3}\ast\partial_{t}g^{-1}\ast{\rm Rm}^{\ast2};
\end{equation*}
substituting (\ref{3.138}) and (\ref{3.141}) into above yields
\begin{eqnarray}
\partial_{t}|{\rm Rm}|^{2}&=&\Delta|{\rm Rm}|^{2}-2|\nabla{\rm Rm}|^{2}
+g^{\ast-6}\ast{\rm Rm}^{\ast3}\nonumber\\
&&+ \ g^{\ast-4}\ast{\rm Rm}\ast(\nabla^{2}\phi)^{\ast2}+g^{\ast-5}\ast{\rm Rm}^{\ast 2}\ast(\nabla\phi)^{\ast2}\label{3.142}\\
&&+ \ g^{\ast-5}\ast{\rm Rm}^{\ast2}\ast\nabla^{2}\phi+g^{\ast-5}
\ast{\rm Rm}\ast\nabla{\rm Rm}\ast\nabla\phi.\nonumber
\end{eqnarray}
Introduce a family of vector-valued tensor fields
\begin{equation}
\boldsymbol{\Lambda}:=({\rm Rm},\nabla\phi),\label{3.143}
\end{equation}
and define
\begin{equation*}
|\nabla^{k}\boldsymbol{\Lambda}|^{2}:=
|\nabla^{k}{\rm Rm}|^{2}+|\nabla^{k+1}\phi|^{2}
\end{equation*}
for each nonnegative integer $k$. According to (\ref{3.139}), (\ref{3.142}) and the
Cauchy-Schwarz inequality, we have
\begin{equation}
\partial_{t}|{\rm Rm}|^{2}\leq\Delta|{\rm Rm}|^{2}-\frac{3}{2}|\nabla{\rm Rm}|^{2}
+C_{1}.\label{3.144}
\end{equation}
Since $\partial_{t}\nabla{\rm Rm}=\nabla\partial_{t}{\rm Rm}+\partial_{t}\Gamma
\ast{\rm Rm}$ and
\begin{eqnarray*}
\partial_{t}\Gamma&=&g^{-1}\ast\nabla\partial_{t}g \ \ = \ \
g^{-1}\ast\nabla\left(g^{-1}\ast{\rm Rm}+(\nabla\phi)^{\ast2}+\nabla^{2}\phi\right)\\
&=&g^{\ast-2}\ast\nabla{\rm Rm}+g^{-1}\ast\nabla\phi\ast\nabla^{2}\phi
+g^{-1}\ast\nabla^{3}\phi,
\end{eqnarray*}
it follows that
\begin{eqnarray*}
\partial_{t}\nabla{\rm Rm}&=&\nabla\Delta{\rm Rm}+g^{\ast-2}\ast{\rm Rm}
\ast\nabla{\rm Rm}+\nabla^{2}\phi\ast\nabla^{3}\phi+g^{-1}\ast\nabla^{2}{\rm Rm}
\ast\nabla\phi\\
&&+ \ g^{-1}\ast\nabla{\rm Rm}\ast\nabla^{2}\phi
+g^{-1}\ast{\rm Rm}\ast\nabla^{3}\phi
+g^{-1}\ast{\rm Rm}\ast\nabla\phi\ast\nabla^{2}\phi\\
&=&\Delta\nabla{\rm Rm}+g^{\ast-2}\ast{\rm Rm}\ast\nabla{\rm Rm}
+\nabla^{2}\phi\ast\nabla^{3}\phi+g^{-1}\ast{\rm Rm}\ast\nabla^{3}\phi\\
&&+ \ g^{-1}\ast{\rm Rm}\ast\nabla\phi\ast\nabla^{2}\phi
+g^{-1}\ast\nabla{\rm Rm}\ast\nabla^{2}\phi+g^{-1}
\ast\nabla^{2}{\rm Rm}\ast\nabla\phi.
\end{eqnarray*}
From
\begin{eqnarray*}
\partial_{t}|\nabla{\rm Rm}|^{2}&=&2\langle\nabla{\rm Rm},
\partial_{t}\nabla{\rm Rm}\rangle+g^{\ast-4}\ast\partial_{t}g^{-1}\ast(\nabla{\rm Rm})^{\ast2}\\
&=&2\langle\nabla{\rm Rm},\partial_{t}\nabla{\rm Rm}\rangle
+g^{-6}\ast\left(g^{-1}\ast{\rm Rm}+(\nabla\phi)^{2}+\nabla^{2}\phi\right)
\ast(\nabla{\rm Rm})^{\ast2}
\end{eqnarray*}
we conclude that
\begin{eqnarray*}
\partial_{t}|\nabla{\rm Rm}|^{2}
&=&\Delta|\nabla{\rm Rm}|^{2}-2|\nabla^{2}{\rm Rm}|^{2}
+g^{\ast-7}\ast{\rm Rm}\ast(\nabla{\rm Rm})^{\ast2}\\
&&+ \ g^{\ast-5}
\ast\nabla{\rm Rm}\ast\nabla^{2}\phi\ast\nabla^{3}\phi+g^{\ast-6}\ast{\rm Rm}\ast\nabla{\rm Rm}\ast\nabla\phi
\ast\nabla^{2}\phi\\
&&+ \ g^{\ast-6}\ast(\nabla{\rm Rm})^{\ast2}\ast\nabla^{2}\phi
+g^{\ast-6}\ast\nabla{\rm Rm}\ast\nabla^{2}{\rm Rm}\ast\nabla\phi\\
&&+g^{\ast-6}\ast(\nabla{\rm Rm})^{\ast2}\ast(\nabla\phi)^{\ast2}
+g^{\ast-6}\ast{\rm Rm}\ast\nabla{\rm Rm}\ast\nabla^{3}\phi.
\end{eqnarray*}
Consequently,
\begin{eqnarray}
\partial_{t}|\nabla{\rm Rm}|^{2}&\leq&\Delta|\nabla{\rm Rm}|^{2}
-2|\nabla^{2}{\rm Rm}|^{2}+C_{2}|\nabla{\rm Rm}||\nabla^{2}{\rm Rm}|
+C_{2}|\nabla{\rm Rm}||\nabla^{3}\phi|\nonumber\\
&&+ \ C_{2}|\nabla{\rm Rm}|+C_{2}|\nabla{\rm Rm}|^{2}.\label{3.145}
\end{eqnarray}

On the other hand, Lemma \ref{l2.7} yields
\begin{eqnarray}
\partial_{t}\nabla^{2}\phi&=&\Delta\nabla^{2}\phi+g^{\ast-2}\ast{\rm Rm}
\ast\nabla^{2}\phi+\beta_{2}\nabla^{2}\phi+|\nabla\phi|^{2}\nabla^{2}\phi\nonumber\\
&&+ \ \nabla\phi\ast\nabla^{3}\phi+g^{-1}\ast(\nabla^{2}\phi)^{\ast2}
+g^{\ast-2}\ast{\rm Rm}\ast(\nabla\phi)^{\ast2}.\label{3.146}
\end{eqnarray}
Plugging (\ref{3.146}) into $\partial_{t}|\nabla^{2}\phi|^{2}
=2\langle\nabla^{2}\phi,\partial_{t}\nabla^{2}\phi\rangle
+g^{\ast-3}\ast\partial_{t}g\ast(\nabla^{2}\phi)^{\ast2}$, we arrive at
\begin{eqnarray*}
\partial_{t}|\nabla^{2}\phi|^{2}&=&
\Delta|\nabla^{2}\phi|^{2}-2|\nabla^{3}\phi|^{2}+g^{\ast-4}\ast{\rm Rm}
\ast(\nabla^{2}\phi)^{\ast2}+g^{\ast-2}\ast(\nabla^{2}\phi)^{\ast2}\\
&&+ \ g^{\ast-3}\ast(\nabla\phi)^{\ast2}
\ast(\nabla^{2}\phi)^{\ast2}
+g^{\ast-2}\ast\nabla\phi\ast\nabla^{2}\phi\ast\nabla^{3}\phi\\
&&+ \ g^{\ast-3}\ast(\nabla^{2}\phi)^{\ast3}
+g^{\ast-4}\ast{\rm Rm}\ast(\nabla\phi)^{\ast2}\ast\nabla^{2}\phi.
\end{eqnarray*}
Consequently,
\begin{equation}
\partial_{t}|\nabla^{2}\phi|^{2}\leq\Delta|\nabla^{2}\phi|^{2}
-2|\nabla^{3}\phi|^{2}+C_{3}|\nabla^{3}\phi|+C_{3}.\label{3.147}
\end{equation}
Combining (\ref{3.145}) and (\ref{3.147}), we have
\begin{equation}
\partial_{t}|\nabla\boldsymbol{\Lambda}|^{2}
\leq\Delta|\nabla\boldsymbol{\Lambda}|^{2}-\frac{3}{2}|\nabla^{2}
\boldsymbol{\Lambda}|^{2}+C_{4}|\nabla\boldsymbol{\Lambda}|^{2}+C_{4}.
\label{3.148}
\end{equation}
According to Lemma \ref{l2.6} and (\ref{3.144}), for any given positive
number $a$, we get
\begin{equation}
\partial_{t}\left(a+|\boldsymbol{\Lambda}|^{2}\right)
\leq\Delta\left(a+|\boldsymbol{\Lambda}|^{2}\right)-\frac{3}{2}|\nabla\boldsymbol{\Lambda}|^{2}
+C_{5}.\label{3.149}
\end{equation}
Therefore
\begin{eqnarray}
\partial_{t}\left[\left(a+|\boldsymbol{\Lambda}|^{2}\right)
|\nabla\boldsymbol{\Lambda}|^{2}\right]
&\leq&\Delta\left[\left(a+|\boldsymbol{\Lambda}|^{2}\right)|\nabla
\boldsymbol{\Lambda}|^{2}\right]-2\left\langle\nabla|\boldsymbol{\Lambda}|^{2},\nabla|\nabla
\boldsymbol{\Lambda}|^{2}\right\rangle\nonumber\\
&&- \ \frac{3}{2}|\nabla\boldsymbol{\Lambda}|^{4}
+C_{5}|\nabla\boldsymbol{\Lambda}|^{2}-\frac{3}{2}\left(a+|\boldsymbol{\Lambda}|^{2}
\right)|\nabla^{2}\boldsymbol{\Lambda}|^{2}\label{3.150}\\
&&+ \ C_{4}\left(a+|\boldsymbol{\Lambda}|^{2}\right)
|\nabla\boldsymbol{\Lambda}|^{2}+C_{4}\left(a+|\boldsymbol{\Lambda}|^{2}\right).\nonumber
\end{eqnarray}
By the definition, the second term on the right-hand side of (\ref{3.150}) is bounded from above by
\begin{eqnarray*}
&&g^{pq}\nabla_{p}\left(|{\rm Rm}|^{2}+|\nabla\phi|^{2}\right)
\nabla_{q}\left(|\nabla{\rm Rm}|^{2}+|\nabla^{2}\phi|^{2}\right)\\
&=&g^{-1}\ast\left(g^{-4}\ast{\rm Rm}\ast\nabla{\rm Rm}
+g^{-1}\ast\nabla\phi\ast\nabla^{2}\phi\right)\\
&&\ast\left(g^{\ast-5}\ast\nabla{\rm Rm}\ast\nabla^{2}{\rm Rm}
+g^{\ast-2}\ast\nabla^{2}\phi\ast\nabla^{3}\phi\right)\\
&\leq&C_{6}|\nabla{\rm Rm}|^{2}|\nabla^{2}{\rm Rm}|
+C_{6}|\nabla{\rm Rm}||\nabla^{2}{\rm Rm}|+C_{6}|\nabla{\rm Rm}||\nabla^{3}
\phi|+C_{6}|\nabla^{3}\phi|\\
&\leq&C_{6}|\nabla\boldsymbol{\Lambda}|^{2}|\nabla^{2}\boldsymbol{\Lambda}|
+C_{6}|\nabla\boldsymbol{\Lambda}||\nabla^{2}\boldsymbol{\Lambda}|
+C_{6}|\nabla\boldsymbol{\Lambda}||\nabla^{2}\boldsymbol{\Lambda}|
+C_{6}|\nabla^{2}\boldsymbol{\Lambda}|\\
&\leq&\frac{3}{8}a|\nabla^{2}\boldsymbol{\Lambda}|^{2}
+\frac{2C^{2}_{6}}{3a}|\nabla\boldsymbol{\Lambda}|^{4}
+\frac{6}{8}a|\nabla^{2}\boldsymbol{\Lambda}|^{2}+\frac{4C^{2}_{6}}{3a}
|\nabla\boldsymbol{\Lambda}|^{2}+\frac{3}{8}a|\nabla^{2}\boldsymbol{\Lambda}|^{2}
+\frac{2C^{2}_{6}}{3a}\\
&\leq&\frac{3}{2}a|\nabla^{2}\boldsymbol{\Lambda}|^{2}
+\frac{2C^{2}_{6}}{a}|\nabla\boldsymbol{\Lambda}|^{4}+
\frac{2C^{2}_{6}}{a}
\end{eqnarray*}
where we used the inequality $x^{2}\leq x^{4}+1$ for any $x\geq0$. Substituting
this inequality into (\ref{3.150}) implies
\begin{eqnarray}
\partial_{t}\left[\left(a+|\boldsymbol{\Lambda}|^{2}\right)
|\nabla\boldsymbol{\Lambda}|^{2}\right]
&\leq&\Delta\left[\left(a+|\boldsymbol{\Lambda}|^{2}\right)
|\nabla\boldsymbol{\Lambda}|^{2}\right]
-\left(\frac{3}{2}-\frac{2C^{2}_{6}}{a}\right)|\nabla\boldsymbol{\Lambda}|^{4}
+C_{5}|\nabla\boldsymbol{\Lambda}|^{2}\nonumber\\
&&+ \ C_{4}\left(a+|\boldsymbol{\Lambda}|^{2}\right)|\nabla\boldsymbol{\Lambda}|^{2}
+C_{4}\left(a+|\boldsymbol{\Lambda}|^{2}\right)+\frac{2C^{2}_{6}}{a}.\label{3.151}
\end{eqnarray}
By (\ref{3.139}), we can choose $a$ so that
\begin{equation*}
a\geq 4C^{2}_{6}, \ \ \ a\geq\max_{M\times[0,T]}\left(|{\rm Rm}|^{2}+|\nabla^{2}\phi|^{2}\right);
\end{equation*}
then $\frac{3}{2}-\frac{2C^{2}_{6}}{a}\geq1$ and $a\leq a+|\boldsymbol{\Lambda}|^{2}
\leq 2a$. Consequently, we can deduce from (\ref{3.151}) that
\begin{eqnarray*}
\partial_{t}\left[\left(a+|\boldsymbol{\Lambda}|^{2}\right)|\nabla\boldsymbol{\Lambda}|^{2}\right]
&\leq&\Delta\left[\left(a+|\boldsymbol{\Lambda}|^{2}\right)|\nabla\boldsymbol{\Lambda}|^{2}\right]
-|\nabla\boldsymbol{\Lambda}|^{4}\\
&&+ \ \left(C_{4}+\frac{C_{4}}{a}\right)
\left(a+|\boldsymbol{\Lambda}|^{2}\right)|\nabla\boldsymbol{\Lambda}|^{2}
+2C_{4}a+\frac{2C^{2}_{6}}{a}\\
&\leq&\Delta\left[\left(a+|\boldsymbol{\Lambda}|^{2}\right)
|\nabla\boldsymbol{\Lambda}|^{2}\right]-\frac{1}{4a^{2}}
\left[\left(a+|\boldsymbol{\Lambda}|^{2}\right)|\nabla\boldsymbol{\Lambda}|^{2}
\right]^{2}\\
&&+ \ \left(C_{4}+\frac{C_{4}}{a}\right)
\left(a+|\boldsymbol{\Lambda}|^{2}\right)|\nabla\boldsymbol{\Lambda}|^{2}
+2C_{4}a+\frac{2C^{2}_{6}}{a}\\
&\leq&\Delta\left[\left(a+|\boldsymbol{\Lambda}|^{2}\right)
|\nabla\boldsymbol{\Lambda}|^{2}\right]-\frac{1}{8a^{2}}
\left[\left(a+|\boldsymbol{\Lambda}|^{2}\right)|\nabla\boldsymbol{\Lambda}|^{2}
\right]^{2}+C_{7}.
\end{eqnarray*}
Consider the function
\begin{equation}
u:=\left(a+|\boldsymbol{\Lambda}|^{2}\right)|\nabla\boldsymbol{\Lambda}|^{2}t
\label{3.152}
\end{equation}
defined on $M\times[0,T]$. Then
\begin{eqnarray}
u&=&0 \ \ \ \text{on} \ M\times\{0\},\label{3.153}\\
\partial_{t}u&\leq&\Delta u-\frac{1}{8a^{2}t}u^{2}
+C_{7}t+\frac{1}{t}u \ \ \ \text{on} \ M\times[0,T].\label{3.154}
\end{eqnarray}

Fix a point $x_{0}\in M$, consider the cutoff function $\xi(x)$ on $M$, introduced
in \cite{S89}, such that
\begin{eqnarray*}
\xi&=&1 \ \ \ \text{on} \ B_{\tilde{g}}(x_{0},1), \ \ \
\xi \ \ = \ \ 0 \ \ \ \text{on} \ M\setminus B_{\tilde{g}}(x_{0},2), \ \ \ \
0\leq\xi\leq1,\\
|\tilde{\nabla}\xi|^{2}_{\tilde{g}}&\leq&16\xi, \ \ \ \tilde{\nabla}^{2}
\xi \ \ \geq \ \ -c\tilde{g}, \ \ \ \text{on} \ M,
\end{eqnarray*}
where $c$ depends on $k_{0}$. As in \cite{S89}, we define
\begin{equation}
F:=\xi u\label{3.155}
\end{equation}
on $M\times[0,T]$. Then from the definition, we have
\begin{equation*}
F=0 \ \text{on} \ M\times\{0\}, \ \
F=0 \ \text{on} \ (M\setminus B_{\tilde{g}}(x_{0},2))\times[0,T], \ \
F\geq0 \ \text{on} \ M\times[0,T].
\end{equation*}
Without loss of generality, we may assume that $F$ is not identically zero on
$M\times[0,T]$. In this case, we can find a point $(x_{1},t_{1})\in B_{\tilde{g}}(x_{0},2)\times[0,T]$ such that
\begin{equation*}
F(x_{1},t_{1})=\max_{M\times[0,T]}F(x,t)>0,
\end{equation*}
which implies $t_{1}>0$ and
\begin{equation*}
\partial_{t}F(x_{1},t_{1})>0, \ \ \ \nabla F(x_{1},t_{1})
=0, \ \ \ \Delta F(x_{1},t_{1})\leq0.
\end{equation*}
If $u(x_{1},t_{1})\leq1$, then $F(x_{1},t_{1})\leq1$ by (\ref{3.155}) and hence
\begin{equation*}
at|\nabla\boldsymbol{\Lambda}|^{2}\leq u
=F\leq1 \ \ \ \text{on} \ B_{\tilde{g}}(x_{0},1)\times[0,T];
\end{equation*}
in particular,
\begin{equation}
|\nabla{\rm Rm}|^{2}+|\nabla^{2}\phi|^{2}\lesssim\frac{1}{t} \ \ \
\text{on} \ M\times[0,T].\label{3.156}
\end{equation}

In the following we assume that $u(x_{1},t_{1})\geq1$. Under this assumption,
together with (\ref{3.154}) and $\xi\partial_{t}u\geq0$ at $(x_{1},t_{1})$,
we arrive at, at the point $(x_{1},t_{1})$,
\begin{eqnarray*}
0&\leq&\xi\partial_{t}u \ \ \leq \ \ \xi\Delta u+
\left(\frac{1}{t}u+C_{7}t-\frac{1}{8a^{2}t}u^{2}\right)\xi\\
&\leq&\xi\Delta u+\left(\frac{1}{t}u+C_{7}tu-\frac{1}{8a^{2}t}u^{2}\right)
\xi\\
&\leq&\xi\Delta u+\frac{u}{t}\left(1+C_{7}t^{2}-\frac{1}{8a^{2}}u\right)
\xi \ \ \leq \ \ \xi\Delta u+\frac{u}{t}\left(C_{8}-C_{9}u\right)\xi;
\end{eqnarray*}
thus
\begin{equation*}
\xi\Delta u+\frac{\xi u}{t}\left(C_{8}-C_{9}u\right)\geq0 \ \ \ \text{at} \
(x_{1},t_{1}).
\end{equation*}
By the same argument in \cite{S89} (equations (28)--(35) in page 293), we find that
\begin{equation}
\frac{\xi u}{t}(C_{9}u-C_{8})\leq C_{10}u-u\Delta\xi \ \ \ \text{at} \ (x_{1},t_{1}).
\label{3.157}
\end{equation}
According to the equation (38) in page 294 of \cite{S89}, we get
\begin{equation}
-\Delta\xi\leq C_{11}+g^{\alpha\beta}(\Gamma^{\gamma}_{\alpha\beta}
-\tilde{\Gamma}^{\gamma}_{\alpha\beta})\tilde{\nabla}_{\gamma}\xi.\label{3.158}
\end{equation}
Since
\begin{equation*}
\partial_{t}\Gamma=g^{\ast-2}\ast\nabla{\rm Rm}+g^{-1}\ast\nabla^{3}\phi
+g^{-1}\ast\nabla\phi\ast\nabla^{2}\phi,
\end{equation*}
it follows that
\begin{equation*}
|\partial_{t}\Gamma|\leq C_{11}|\nabla\boldsymbol{\Lambda}|+C_{12}
\leq\frac{C_{11}}{\sqrt{at}}u^{1/2}+C_{12};
\end{equation*}
Since $\xi(x_{1})u(x_{1},t)=F(x_{1},t)\leq F(x_{1},t_{1})$ for $t\in[0,T]$, we obtain
\begin{equation*}
|\partial_{t}(x_{1},t)|\leq C_{11}\left[\frac{F(x_{1},t_{1})}{a\xi(x_{1})}\right]^{1/2}
\frac{1}{\sqrt{t}}+C_{12}
\end{equation*}
for any $t\in[0,T]$. As showed in \cite{S89} (the equation (45) in page 295),
together with $|\tilde{\nabla}\xi|_{\tilde{g}}\leq 4\xi^{1/2}$, we find that
\begin{equation}
g^{\alpha\beta}(\Gamma^{\gamma}_{\alpha\beta}
-\tilde{\Gamma}^{\gamma}_{\alpha\beta})\tilde{\nabla}_{\gamma}
\xi\leq C_{13}F(x_{1},t_{1})^{1/2}+C_{14};\label{3.159}
\end{equation}
substituting (\ref{3.159}) into (\ref{3.158}) implies
\begin{equation}
-\Delta\xi\leq C_{15}+C_{13}F(X_{1},t_{1})^{1/2} \ \ \ \text{at} \ (x_{1},t_{1}).
\label{3.160}
\end{equation}
According to (\ref{3.157}) and (\ref{3.160}), we have the following inequality
\begin{equation*}
\xi u(C_{9}u-C_{8})\leq C_{10}tu+C_{15}tu+C_{13}tu F^{1/2}
\leq C_{16}u+C_{16}u F^{1/2} \ \ \ \text{at} \ (x_{1},t_{1});
\end{equation*}
multiplying by $\xi(x_{1})$ yields
\begin{equation*}
C_{9}F^{2}\leq C_{17}F+C_{16}F^{3/2} \ \ \ \text{at} \ (x_{1},t_{1}),
\end{equation*}
from which we deduce that $F(x_{1},t_{1})\lesssim1$ and therefore
\begin{equation*}
\xi u\lesssim 1 \ \ \ \text{on} \ M\times[0,T].
\end{equation*}
In particular, $u\lesssim1$ on $M\times[0,T]$ since $x_{0}$ was arbitrary. From
the definition (\ref{3.152}), this tells us the estimate $|\nabla\boldsymbol{\Lambda}|^{2}
\lesssim1/t$ on $M\times[0,T]$, where $\lesssim$ depends only on $m, k_{0}, k_{1},
k_{2}, \alpha_{1}, \alpha{2}, \beta_{1}, \beta_{2}$. Hence the lemma holds for $n=1$.

By induction, suppose for $s=1,\cdots,n-1$ we have
\begin{equation*}
\left|\nabla^{s}{\rm Rm}\right|^{2}+\left|\nabla^{s+2}\phi\right|^{2}
\lesssim\frac{1}{t^{s}}
\end{equation*}
on $M\times[0,T]$. As in \cite{S89}, we define a function
\begin{equation*}
v:=\left(a+t^{n-1}|\nabla^{n-1}\boldsymbol{\Lambda}|^{2}
\right)|\nabla^{n}\boldsymbol{\Lambda}|^{2}t^{n}
\end{equation*}
and choose $a$ sufficiently large. Similarly to (\ref{3.154}) we can show
that
\begin{equation*}
\partial_{t}v\leq\Delta v-\frac{C_{18}}{a^{2}t}v^{2}+C_{19}+\frac{C_{20}}{t}v
\end{equation*}
on $M\times[0,T]$. Using the same cutoff function $\xi$ and arguing in the
same way, we obtain that $v\lesssim1$ on $M\times[0,T]$. Hence the inequality
(\ref{3.140}) holds for $s=n$.
\end{proof}




\begin{thebibliography}{10}


\bibitem{B11} Brendle, Simon. \textit{Uniqueness of gradient Ricci solitons}, Math. Res. Lett., {\bf 18}(2011), no. 3, 531--538. MR 2802586 (2012e: 53073)


\bibitem{B12a} Brendle, Simon. \textit{Rotational symmetry of self-similar solutions to the Ricci flow}, arXiv: 1202.1264v3. (to appear in Invent. Math.)

\bibitem{B12b} Brendel, Simon.\textit{Rotational symmetry of Ricci solitons in higher dimensions},arXiv: 1203.0270v3. (to appear in J. Differential Geom.)


\bibitem{Cao94} Cao, Huai-Dong. \textit{Existence of gradient K\"ahler-Ricci solitons}. Elliptic and parabolic methods
    in geometry (Minneapolis, MN, 1994), 1--16, AK Peters, Wellesley, MA,
    1996. MR 1417944 (98a: 53058)

\bibitem{Cao10} Cao, Huai-Dong. \textit{Recent progress on Ricci solitons}. Recent adcances in geometry analysis, 1--38, Adv. Lect. Math. (ALM), {\bf 11}, Int. Press, Someville, MA, 2010. MR 2648937 (2011d: 53061)


\bibitem{CaoChen12} Cao, Huai-dong; Chen, Qiang. \textit{On locally conformally flat gradient steady Ricci solitons}, Trans. Amer. Math. Soc., {\bf 364}(2012), no. 5, 2377--2391. MR 2888210

\bibitem{CM11} Catino, Giovanni; Mantegazza, Carlo. \textit{Evolution of the Weyl tensor under the Ricci flow}, Ann. Inst. Fourier (Grenoble), {\bf 61}(2011), no. 4, 1407--1435. MR 2951497


\bibitem{Chen09} Chen, Bing-Long. \textit{Strong uniqueness
of the Ricci flow}, J. Differential Geom., {\bf 82}(2009), no. 2, 363--382.
MR 25207960 (2019h: 53095)

\bibitem{CLN06} Chow, Bennett; Lu, Peng; Ni, Lei. \textit{Hamilton's Ricci flow}. Graduate Studies in Mathematics, {\bf 77}. American Mathematical Society, Providence, RI; Science Press,
    New York, 2006. xxxv+608 pp. MR 2274812 (2008a: 53068)


\bibitem{CLY11} Chow, Bennett; Lu, Peng; Yang, Bo. \textit{Lower bounds for
the scalar curvatures of noncompact gradient Ricci solitons}, C. R.Math. Acad. Sci. Paris, {\bf 349}(2011), no. 23-24, 1265--1267. MR 2861997

\bibitem{CCGGIIKLLN07} Chow Bennett; Chu Sun-Chin; Glickenstein
David; Guenther Christine; Isenberg James; Ivey Tom; Knopf Dan;
Lu Peng; Luo Feng; Ni Lei. \textit{The Ricci flow: Techniques and Applications. Part I. Geometric aspects}, Mathematical Surveys and Monographs, {\bf 135}, American Mathematical Society, Providence, RI, 2007. xxiv+536 pp. MR2302600 (2008 f: 53088)


\bibitem{FG13} Fern\'andez-L\'opez, Manuel; Garc\'ia-R\'io, Eduardo. \textit{A
sharp lower bound for the scalar curvature of certain steady gradient Ricci
solitons}, Proc. Amer. Math. Soc., {\bf 141}(2013), no. 6, 2145--2148. MR
3034440

\bibitem{H82} Hamilton, Richard S. \textit{Three-manifolds with positive Ricci curvature}, J. Differential Geom., {\bf 17}(1982), no. 2, 255--306. MR0664497 (84a: 53050)

\bibitem{HHKL08} He, Chun-Lei; Hu, Sen; Kong, De-Xing; Liu, kefeng. \textit{Generalized Ricci flow I: Local existence and
    uniqueness}. Topology and physis, 151--171, Nankai Tracts Math., {\bf 12}, World Sci. Publ., Hackensack, NJ, 2008. MR 2503395 (2010k: 53098)

\bibitem{H93} Hamilton, Richard S. \textit{The formation of singularities in the Ricci flow}. Surveys in differential geometry, Vol. {\bf II} (Cambridge, MA, 1993), 7--136, Int. Press, Cambridge, MA, 1995. MR 1375255 (97e: 53075)

\bibitem{Ivey94} Ivey, Thomas. \textit{New examples of complete Ricci solitons}, Proc. Amer. Math. Soc., {\bf 122}(1994), no. 1, 241--245. MR 1207538 (94k: 53057)

\bibitem{LY12} Li, Yi. \textit{Generalized Ricci flow I: higher derivatives estimates for compact manifolds}, Analysis $\&$ PDE., {\bf 5}(2012), no. 4, 747--775. MR 3006641

\bibitem{LY13} Li, Yi. \textit{Eigenvalues and entropies under
the harmonic-Ricci flow}, to appear in Pacific J. Math.

\bibitem{L05} List, B. \textit{Evolution of an extended Ricci flow
system}, PhD thesis. AEI Postdam, 2005.


\bibitem{M10} M\"uler, Reto. \textit{Monotone volume formulas
for geometric flow}, J. Reine Angew. Math., {\bf 643}(2010), 39--57.
MR 2658189 (2011k: 53086)

\bibitem{M12} M\"uler, Reto. \textit{Ricci flow coupled with
harmonic map flow}, Ann. Sci. \'Ec. Norm. Sup\'er. (4) {\bf 45}
(2012), no. 1, 101--142. MR 2961788.

\bibitem{P02} Perelman, Grisha. \textit{The entropy formula for the Ricci flow and its geometric applications}, arXiv: math/0211159

\bibitem{P03a} Perelman, Grisha. \textit{Ricci flow with surgery
on three-manifolds}, arXiv: math/0303109

\bibitem{P03b} Perelman, Grisha. \textit{Finite extinction time for
the solutions to the Ricci flow on certain three-manifolds},
arXiv: math/0307245


\bibitem{PW09} Petersen, Peter; Wylie, William. \textit{Rigidity of gradient Ricci solitons}, Pacific J. Math., {\bf 241}(2009), no. 2, 329--345. MR 2507581 (2010j: 53071)


\bibitem{S89} Shi, Wan-Xiong. \textit{Deforming the metric on complete Riemannian manifolds}, J. Differential Geom., {\bf 30}(1989), no. 1, 223--301. MR 1001277 (90i: 58202)

\bibitem{HS12} Hu, Xue; Shi, YuGang. \textit{Static flow on complete
noncompact manifolds I: short-time existence and asymptotic expansions
at conformal infinity}, Sci. China Math., {\bf 55}(2012), no. 9, 1883--1900. MR 2960867


\bibitem{Yau75} Yau, Shing-Tung. \textit{Harmonic functions on complete Riemannian manifolds}, Comm. Pure Appl. Math., {\bf 28}(1975), 201--228. MR 0431040 (55$\#$ 4042).

\bibitem{Yau78} Yau, Shing-Tung. \textit{On the Ricci curvature of a compact K\"ahler manifold and the complex Monge-Amp\`ere equation. I.}, Comm. Pure Appl. Math., {\bf 31}(1978), no. 3, 339--411. MR 0480350 (81d: 53045).

\end{thebibliography}
\end{document}